\documentclass[12pt]{amsart}
\usepackage{amsmath,amsfonts,amssymb,amsthm}
\usepackage[shortalphabetic,abbrev,nobysame]{amsrefs}
\usepackage{mathrsfs}
\usepackage[all]{xy}
 \usepackage{relsize}
\usepackage{hyperref}
\usepackage{accents}
\usepackage{color}
\usepackage{soul}
\topmargin -1.3cm
\textwidth 15.5cm
\textheight 22cm
\oddsidemargin  0.5cm
\pagestyle{plain}
\pagenumbering{arabic}

\allowdisplaybreaks

\DeclareMathOperator{\Res}{Res}
\DeclareMathOperator{\Hom}{Hom}

\DeclareMathOperator{\End}{End}

\begin{document}

\newtheorem{thm}{Theorem}[section]
\newtheorem{prop}[thm]{Proposition}
\newtheorem{coro}[thm]{Corollary}
\newtheorem{conj}[thm]{Conjecture}
\newtheorem{example}[thm]{Example}
\newtheorem{lem}[thm]{Lemma}
\newtheorem{rem}[thm]{Remark}
\newtheorem{hy}[thm]{Hypothesis}
\newtheorem*{acks}{Acknowledgements}
\theoremstyle{definition}
\newtheorem{de}[thm]{Definition}
\newtheorem{ex}[thm]{Example}

\newtheorem{convention}[thm]{Convention}

\newtheorem{bfproof}[thm]{{\bf Proof}}
\xymatrixcolsep{5pc}

\newcommand{\C}{{\mathbb{C}}}
\newcommand{\Z}{{\mathbb{Z}}}
\newcommand{\N}{{\mathbb{N}}}
\newcommand{\Q}{{\mathbb{Q}}}
\newcommand{\te}[1]{\textnormal{{#1}}}
\newcommand{\set}[2]{{
    \left.\left\{
        {#1}
    \,\right|\,
        {#2}
    \right\}
}}
\newcommand{\sett}[2]{{
    \left\{
        {#1}
    \,\left|\,
        {#2}
    \right\}\right.
}}

\newcommand{\choice}[2]{{
\left[
\begin{array}{c}
{#1}\\{#2}
\end{array}
\right]
}}
\def \<{{\langle}}
\def \>{{\rangle}}

\def\({\left(}
\def\){\right)}

\newcommand{\overit}[2]{{
    \mathop{{#1}}\limits^{{#2}}
}}
\newcommand{\belowit}[2]{{
    \mathop{{#1}}\limits_{{#2}}
}}

\newcommand{\wt}[1]{\widetilde{#1}}

\newcommand{\wh}[1]{\widehat{#1}}

\newcommand{\no}[1]{{
    \mathopen{\overset{\circ}{
    \mathsmaller{\mathsmaller{\circ}}}
    }{#1}\mathclose{\overset{\circ}{\mathsmaller{\mathsmaller{\circ}}}}
}}

\newcommand{\nob}[1]{{
    \mathopen{\overset{\bullet}{
    \mathsmaller{\mathsmaller{\bullet}}}
    }{#1}\mathclose{\overset{\bullet}{\mathsmaller{\mathsmaller{\bullet}}}}
}}

\newlength{\dhatheight}
\newcommand{\dwidehat}[1]{%
    \settoheight{\dhatheight}{\ensuremath{\widehat{#1}}}%
    \addtolength{\dhatheight}{-0.45ex}%
    \widehat{\vphantom{\rule{1pt}{\dhatheight}}%
    \smash{\widehat{#1}}}}
\newcommand{\dhat}[1]{%
    \settoheight{\dhatheight}{\ensuremath{\hat{#1}}}%
    \addtolength{\dhatheight}{-0.35ex}%
    \hat{\vphantom{\rule{1pt}{\dhatheight}}%
    \smash{\hat{#1}}}}

\newcommand{\dwh}[1]{\dwidehat{#1}}

\newcommand{\ck}[1]{\check{#1}}

\newcommand{\dis}{\displaystyle}

\newcommand{\pd}[1]{\frac{\partial}{\partial {#1}}}

\newcommand{\pdiff}[2]{\frac{\partial^{#2}}{\partial #1^{#2}}}


\newcommand{\g}{{\frak g}}
\newcommand{\fg}{\g}
\newcommand{\ff}{{\frak f}}
\newcommand{\f}{\ff}
\newcommand{\gc}{{\bar{\g'}}}
\newcommand{\h}{{\frak h}}
\newcommand{\cent}{{\frak c}}
\newcommand{\notc}{{\not c}}
\newcommand{\Loop}{{\mathcal L}}
\newcommand{\G}{{\mathcal G}}
\newcommand{\D}{\mathcal D}
\newcommand{\Y}{\mathcal Y}
\newcommand{\T}{\mathcal T}
\newcommand{\Free}{\mathcal F}
\newcommand{\Cfk}{\mathcal C}
\newcommand{\nil}{\mathfrak n}
\newcommand{\al}{\alpha}
\newcommand{\alck}{\al^\vee}
\newcommand{\be}{\beta}
\newcommand{\beck}{\be^\vee}
\newcommand{\ssl}{{\mathfrak{sl}}}
\newcommand{\id}{\te{id}}
\newcommand{\rtu}{{\xi}}
\newcommand{\period}{{N}}
\newcommand{\half}{{\frac{1}{2}}}
\newcommand{\quar}{{\frac{1}{4}}}
\newcommand{\oct}{{\frac{1}{8}}}
\newcommand{\hex}{{\frac{1}{16}}}
\newcommand{\reciprocal}[1]{{\frac{1}{#1}}}
\newcommand{\inverse}{^{-1}}
\newcommand{\inv}{\inverse}
\newcommand{\SumInZm}[2]{\sum\limits_{{#1}\in\Z_{#2}}}
\newcommand{\uce}{{\mathfrak{uce}}}
\newcommand{\Rcat}{\mathcal R}


\newcommand{\orb}[1]{|\mathcal{O}({#1})|}
\newcommand{\up}{_{(p)}}
\newcommand{\uq}{_{(q)}}
\newcommand{\upq}{_{(p+q)}}
\newcommand{\uz}{_{(0)}}
\newcommand{\uk}{_{(k)}}
\newcommand{\nsum}{\SumInZm{n}{\period}}
\newcommand{\ksum}{\SumInZm{k}{\period}}
\newcommand{\overN}{\reciprocal{\period}}
\newcommand{\df}{\delta\left( \frac{\xi^{k}w}{z} \right)}
\newcommand{\dfl}{\delta\left( \frac{\xi^{\ell}w}{z} \right)}
\newcommand{\ddf}{\left(D\delta\right)\left( \frac{\xi^{k}w}{z} \right)}

\newcommand{\ldfn}[1]{{\left( \frac{1+\xi^{#1}w/z}{1-{\xi^{#1}w}/{z}} \right)}}
\newcommand{\rdfn}[1]{{\left( \frac{{\xi^{#1}w}/{z}+1}{{\xi^{#1}w}/{z}-1} \right)}}
\newcommand{\ldf}{{\ldfn{k}}}
\newcommand{\rdf}{{\rdfn{k}}}
\newcommand{\ldfl}{{\ldfn{\ell}}}
\newcommand{\rdfl}{{\rdfn{\ell}}}

\newcommand{\kprod}{{\prod\limits_{k\in\Z_N}}}
\newcommand{\lprod}{{\prod\limits_{\ell\in\Z_N}}}
\newcommand{\E}{{\mathcal{E}}}
\newcommand{\F}{{\mathcal{F}}}

\newcommand{\SY}{{\mathcal{S}}}
\newcommand{\Etopo}{{\mathcal{E}_{\te{topo}}}}

\newcommand{\Ye}{{\mathcal{Y}_\E}}

\newcommand{\rh}{{{\bf h}}}
\newcommand{\rp}{{{\bf p}}}
\newcommand{\rrho}{{{\pmb \varrho}}}
\newcommand{\ral}{{{\pmb \al}}}

\newcommand{\comp}{{\mathfrak{comp}}}
\newcommand{\ctimes}{{\widehat{\boxtimes}}}
\newcommand{\ptimes}{{\widehat{\otimes}}}
\newcommand{\ptimeslt}{{
{}_{\te{t}}\ptimes
}}
\newcommand{\ptimesrt}{{\ot_{\te{t}} }}
\newcommand{\ttp}[1]{{
    {}_{{#1}}\ptimes
}}
\newcommand{\bigptimes}{{\widehat{\bigotimes}}}
\newcommand{\bigptimeslt}{{
{}_{\te{t}}\bigptimes
}}
\newcommand{\bigptimesrt}{{\bigptimes_{\te{t}} }}
\newcommand{\bigttp}[1]{{
    {}_{{#1}}\bigptimes
}}

\newcommand{\ot}{\otimes}
\newcommand{\Ot}{\bigotimes}

\newcommand{\affva}[1]{V_{\wh\g}\(#1,0\)}
\newcommand{\saffva}[1]{L_{\wh\g}\(#1,0\)}
\newcommand{\saffmod}[1]{L_{\wh\g}\(#1\)}

\newcommand{\otcopies}[2]{\belowit{\underbrace{{#1}\ot \cdots \ot {#1}}}{{#2}\te{-times}}}

\newcommand{\twotcopies}[3]{\belowit{\underbrace{{#1}\wh\ot_{#2} \cdots \wh\ot_{#2} {#1}}}{{#3}\te{-times}}}


\newcommand{\tar}{{\mathcal{DY}}_0\(\mathfrak{gl}_{\ell+1}\)}
\newcommand{\U}{{\mathcal{U}}}
\newcommand{\V}{{\mathcal{V}}}
\newcommand{\htar}{\mathcal{DY}_\hbar\(A\)}
\newcommand{\hhtar}{\widetilde{\mathcal{DY}}_\hbar\(A\)}
\newcommand{\htarz}{\mathcal{DY}_0\(\mathfrak{gl}_{\ell+1}\)}
\newcommand{\hhtarz}{\widetilde{\mathcal{DY}}_0\(A\)}
\newcommand{\qhei}{\U_\hbar\left(\hat{\h}\right)}
\newcommand{\n}{{\mathfrak{n}}}
\newcommand{\vac}{{{\bf 1}}}
\newcommand{\vtar}{{{
    \mathcal{V}_{\hbar,\tau}\left(\ell,0\right)
}}}

\newcommand{\qtar}{
    \U_q\(\wh\g_\mu\)}
\newcommand{\rk}{{\bf k}}

\newcommand{\hctvs}[1]{Hausdorff complete linear topological vector space}
\newcommand{\hcta}[1]{Hausdorff complete linear topological algebra}
\newcommand{\ons}[1]{open neighborhood system}
\newcommand{\B}{\mathcal{B}}
\newcommand{\rx}{{\bf x}}
\newcommand{\re}{{\bf e}}
\newcommand{\bk}{{\bf k}}
\newcommand{\rphi}{{\boldsymbol{ \phi}}}

\newcommand{\der}{\mathcal D}


\makeatletter
\renewcommand{\BibLabel}{%
    \Hy@raisedlink{\hyper@anchorstart{cite.\CurrentBib}\hyper@anchorend}%
    [\thebib]%
}
\@addtoreset{equation}{section}
\def\theequation{\thesection.\arabic{equation}}
\makeatother \makeatletter


\title[Quantum affine VA]{Double Yangians and quantum vertex algebras, I}

\author{Fei Kong$^1$}
\address{Key Laboratory of Computing and Stochastic Mathematics (Ministry of Education), School of Mathematics and Statistics, Hunan Normal University, Changsha, China 410081} \email{kongmath@hunnu.edu.cn}
\thanks{$^1$Partially supported by NSF of China (No. 12371027, No. 12471029).}

\author{Haisheng Li}
\address{Department of Mathematical Sciences, Rutgers University, Camden, NJ 08102}
\email{hli@camden.rutgers.edu}

\subjclass[2010]{17B69}
\keywords{Quantum vertex algebra, affine vertex algebra, centrally extended double Yangian}

\begin{abstract}
For any symmetrizable generalized Cartan matrix $A$, we introduce an algebra $\wh{\mathcal{DY}}(A)$,
which is essentially the centrally extended double Yangian when $A$ is of finite type, and we give
a new field (current) presentation of $\wh{\mathcal{DY}}(A)$.
Among the main results, for any $\ell\in \C$ we construct a universal vacuum $\wh{\mathcal{DY}}(A)$-module
$\mathcal{V}_A(\ell)$ of level $\ell$,
prove that there exists a natural $\hbar$-adic weak quantum vertex algebra structure on $\mathcal{V}_A(\ell)$,
and give an isomorphism between the category of restricted $\wh{\mathcal{DY}}(A)$-modules of level $\ell$
and the category of $\mathcal{V}_A(\ell)$-modules.
\end{abstract}
\maketitle

\section{Introduction}
It is well known that vertex algebras are naturally associated to affine Lie algebras, giving an important class of examples.
In the general vertex algebra theory, an important subject arguably is establishing and exploring natural associations
of quantum vertex algebras (in some sense) to quantum affine algebras and double Yangians.

As a fundamental work, Etingof and Kazhdan (see \cite{EK-qva}) introduced a theory of
quantum vertex operator algebras and constructed a family of examples as formal
deformations of the (universal) affine vertex algebras of type $A$. Since then,
this theory has been extensively studied and developed in various directions.

In \cite{Li-nonlocal,Li-qva2}, notions of nonlocal vertex algebra and (weak) quantum vertex algebra
were introduced and systematically studied, where nonlocal vertex algebras are vertex algebra analogues of
noncommutative associative algebras while (weak) quantum vertex algebras are generalizations of vertex superalgebras
(including vertex algebras).
As slight generalizations of quantum vertex operator algebras in the sense of Etingof-Kazhdan,
 $\hbar$-adic (weak) quantum vertex algebras, which are formal deformations of (weak) quantum vertex algebras,
  were studied in \cite{Li-h-adic}.
 In particular, a conceptual construction of $\hbar$-adic weak quantum vertex algebras
and their modules was established by using ``$\SY$-local'' sets of fields (vertex operators).
 As an application, $\hbar$-adic quantum vertex algebras were
 associated to the centrally extended double Yangian of $\mathfrak{sl}_2$.

As for quantum affine algebras, a theory of what were called $\phi$-coordinated (quasi) modules
 for (weak) quantum vertex algebras was developed in \cite{Li-phi-coor} and
 a conceptual association of quantum vertex algebras to quantum affine algebras
 in terms of $\phi$-coordinated (quasi) modules was obtained.
 (Quantum vertex algebras were {\em explicitly} associated to
 some other algebras, e.g., to the deformed Virasoro algebra $\mathcal{V}_{p,q}$ with $q=-1$ in \cite{Li-phi-jmp}
 and to the Ding-Iohara algebra with $c=0$ in \cite{Li-Tan-Wang}.)

On the other hand, Butorac, Jing and Ko\v{z}i\'{c} (see \cite{BJK-qva-BCD}) extended Etingof-Kazhdan's construction
to types $B$, $C$ and $D$ (with rational $R$-matrices),
by using the $R$-matrix type realization of double Yangians in \cite{JYL-R-mat-DY}.
It was proved in \cite{K-qva-phi-mod-BCD} that modules for these quantum vertex operator algebras are in one-to-one
correspondence with restricted modules for the corresponding double Yangians.
Recently,  by utilizing the $R$-matrix presentation of quantum affine algebras
(see \cite{DF-qaff-RTT-Dr,JLM-qaff-RTT-Dr-BD,JLM-qaff-RTT-Dr-C}),
Ko\v{z}i\'{c}  (see \cite{Kozic-qva-tri-A, K-qva-phi-mod-BCD}) constructed the quantum vertex operator algebras associated with trigonometric
$R$-matrices of types $A$, $B$, $C$ and $D$,
and established a one-to-one correspondence between $\phi$-coordinated modules and
restricted modules for quantum affine algebras.

In a previous paper \cite{JKLT-22} joint with Jing and Tan, by using the smash product construction of
nonlocal vertex algebras (see \cite{Li-smash}), we constructed a family of $\hbar$-adic quantum vertex algebras
by explicitly deforming lattice vertex algebras. 
It is remarkable that Drinfeld's quantum affinization process can be extended to general symmetrizable quantum Kac-Moody algebras (\cite{GKV,J-KM,Naka-quiver,CJKT}).
In \cite{Kong-23}, mainly by deforming affine vertex algebras
the first named author of this paper constructed a family of $\hbar$-adic quantum affine vertex algebras
and their $\phi$-coordinated quasi modules,
based on Drinfeld presentations.


In this current paper, we continue studying centrally extended double Yangians
in the context of $\hbar$-adic quantum vertex algebras and their modules, using the Drinfeld-like generator-and-defining-relation approach.
We first introduce an algebra $\wh{\mathcal{DY}}(A)$ for any symmetrizable generalized Cartan matrix $A$,
which essentially coincides with the centrally extended double Yangian when $A$ is of finite type, and
we then give a new field (current) presentation of $\wh{\mathcal{DY}}(A)$.
Among the main results, for any $\ell\in \C$, we construct a universal vacuum $\wh{\mathcal{DY}}(A)$-module
$\mathcal{V}_A(\ell)$ of level $\ell$,
prove that there exists a natural $\hbar$-adic weak quantum vertex algebra structure on $\mathcal{V}_A(\ell)$, and
give an isomorphism between the category of restricted $\wh{\mathcal{DY}}(A)$-modules of level $\ell$
and the category of $\mathcal{V}_A(\ell)$-modules.

In the following, we discuss some technical details and mention some relevant results.
Note that  for any (possibly infinite-dimensional) Lie algebra $\g$ equipped with a non-degenerate invariant bilinear form,
 a general affine Lie algebra $\widehat{\g}$ can be defined explicitly in the sense that $\widehat{\g}$
 is linearly spanned by the standard central element ${\bf k}$ and the coefficients of the generating functions
 $a(z)$ for $a\in \g$, and commutators $[a(x),b(z)]$ are ``linearly closed.''
Unlike affine Lie algebras, centrally extended double Yangians (and as well as quantum affine algebras) are defined
by Chevalley generators and relations, including Serre relations for the high rank case.
As for $\mathfrak{sl}_2$, the centrally extended double Yangian  is defined
in terms of four generating functions $E(z), F(z), H^{+}(z), H^{-}(z)$.
In \cite{Li-h-adic}, these original currents were used as fields to generate an $\hbar$-adic quantum vertex algebra,
especially, with $H^{+}(z)$ (annihilation) and $H^{-}(z)$ (creation) considered as two fields.
Consequently, the resulted $\hbar$-adic quantum vertex algebra is a formal deformation of the affine vertex algebra
associated to a $4$-dimensional Lie algebra. Naturally, one hopes to have an $\hbar$-adic quantum vertex algebra
which is a formal deformation of the affine vertex algebra associated to $\mathfrak{sl}_2$.

With this as one of the motivations, for the algebra $\wh{\mathcal{DY}}(A)$ we introduce a new set of generating fields
denoted by $x_{i,\Y}^{\pm}(z), h_{i,\Y}(z)$ for $i\in I$,
where in particular the modified Cartan annihilation and creation currents are combined.
Then we give a characterization of $\wh{\mathcal{DY}}(A)$ in terms of this new set of fields.
In the new presentation, the same vertex-operator Serre relations are proved to hold, and
 ``$\SY$-commutators'' $[x_{i,\Y}^{+}(z_1),x_{i,\Y}^{-}(z_2)]_{\SY}$ replace the usual Lie bracket.
For proving the equivalence, we give two characterizations of the vertex-operator Serre relation.
On the other hand, to deal with the $\mathcal{W}$-algebra type nonlinear relations in
the $\SY$-commutators, we prove and use a vertex-operator iterate formula.
Just as with the association of vertex algebras to affine Lie algebras, we need to construct
``universal vacuum modules'' for $\wh{\mathcal{DY}}(A)$ and show they have a natural
$\hbar$-adic quantum vertex algebra structure.
While the construction of universal vacuum modules $\mathcal{V}_A(\ell)$ is mostly tautological,
to get topologically free $\C[[\hbar]]$-modules in several stages,
we make use of a new gadget---notions of strong submodule and ideal.

In a sequel,  we plan to determine the structure of the universal vacuum $\wh{\mathcal{DY}}(A)$-module $\mathcal{V}_A(\ell)$
and hopefully show that ${\mathcal{V}}_A(\ell)/\hbar {\mathcal{V}}_A(\ell)$ is a universal vacuum $\wh{\g}$-module of level $\ell$
with $\g=\g(A)$ for $A$ of a finite type.
 This implies that the $\hbar$-adic weak quantum vertex algebra ${\mathcal{V}}_A(\ell)$
 is a non-degenerate quantum vertex operator algebra in the sense of Etingof and Kazhdan.

This paper is organized as follows. Section 2 is preliminary, in which we recall some basics on
nonlocal vertex algebras and (weak) quantum vertex algebras.
In Section 3, we recall some basic results on $\hbar$-adic (weak) quantum vertex algebras.
 In Section 4, we establish several technical results including a vertex operator iterate formula and
 equivalent characterizations of the vertex-operator Serre relation.
 In Section \ref{sec:DY}, we study algebra $\wh{\mathcal{DY}}(A)$ and $\hbar$-adic weak quantum vertex algebra $\V_A(\ell)$,
and present the natural connection of  $\wh{\mathcal{DY}}(A)$-modules with
$\V_A(\ell)$-modules.

In this paper, with $\C$ (complex numbers) as our scalar field
we use the formal variable notations and conventions as established in \cite{FLM} and \cite{FHL}.
In particular,  $\C(x)$ and $\C(x,y)$ denote the fields of rational functions.
We also use $\partial_{x}$ for the formal differential operator $\frac{\partial}{\partial x}$.
In addition, we use $\Z_+$ and $\N$ for the sets of positive integers and nonnegative integers, respectively.

\section{Vertex algebras and quantum vertex algebras}\label{sec:cla-va}

In this section, we review some basic notions and results,
to introduce some frequently used terminologies and notations.

We begin by recalling the definition of a vertex algebra.
A {\em vertex algebra} is a vector space $V$ equipped with a linear map
\begin{align}
  Y(\cdot,x):\ & V\to  (\te{End}V)[[x,x^{-1}]]\nonumber\\
  &v\mapsto Y(v,x)=\sum_{n\in\Z}v_nx^{-n-1}\  \  (\te{where }v_n\in \te{End} V)
\end{align}
and equipped with a vector $\vac\in V$,
called the \emph{vacuum vector}, such that
\begin{align}
&Y(u,x)v\in V((x))\quad \text{for }u,v\in V,\\
  &Y(\vac,x)=1\ \  \ (\text{the identity operator on }V),\label{eq:vacuum-left}\\
  &Y(v,x)\vac\in V[[x]]\quad\te{and}\quad \lim_{x\to 0}Y(v,x)\vac=v\quad\te{for }v\in V,\label{eq:vacuum-right}
\end{align}
and such that the following \emph{Jacobi identity} holds for $u,v,w\in V$:
\begin{align}\label{eq:Jacobi}
x_0\inv\delta\!\(\frac{x_1-x_2}{x_0}\)&Y(u,x_1)Y(v,x_2)w
-x_0\inv\delta\!\(\frac{x_2-x_1}{-x_0}\)\!Y(v,x_2)Y(u,x_1)w\nonumber\\
&=x_1\inv\delta\!\(\frac{x_2+x_0}{x_1}\)\!Y(Y(u,x_0)v,x_2)w.
\end{align}

A consequence of the Jacobi identity is Borcherds' {\em commutator formula}
\begin{align}
[Y(u,x_1),Y(v,x_2)]=\sum_{n\ge 0}Y(u_nv,x_2)\frac{1}{n!}\left(\frac{\partial}{\partial x_2}\right)^n
x_1^{-1}\delta\!\left(\frac{x_2}{x_1}\right)\!.
\end{align}
This formula further implies the {\em weak commutativity,} namely {\em locality}:
For any $u,v\in V$, there exists $k\in \N$ such that
\begin{align}
(x_1-x_2)^kY(u,x_1)Y(v,x_2)=(x_1-x_2)^kY(v,x_2)Y(u,x_1).
\end{align}

The following analogue of the notion of (noncommutative) associative algebra
 (see \cite{BK},  \cite{Li-qva2}) plays a key role in the study of quantum vertex algebras:

\begin{de}
A \emph{nonlocal vertex algebra} is defined by using all the axioms that define a vertex algebra
except the Jacobi identity axiom which is replaced by the {\em weak associativity:}
For any $u,v,w\in V$, there exists $l\in \N$ such that
\begin{align}\label{nonlva-weak-asso}
(x_0+x_2)^lY(u,x_0+x_2)Y(v,x_2)w=(x_0+x_2)^lY(Y(u,x_0)v,x_2)w.
\end{align}
\end{de}

Let $V$ be a nonlocal vertex algebra. Denote by $\D$ the linear operator on $V$ given by
\begin{align}\label{D-def}
\D (v)=v_{-2}{\bf 1}=\left(\frac{d}{dz}Y(v,z){\bf 1}\right)|_{z=0}\quad \text{ for }v\in V.
\end{align}
The following are the basic properties:
\begin{align}
&[\D,Y(v,x)]=Y(\D (v),x)=\frac{d}{dx}Y(v,x),\label{D-bracket-der}\\
&e^{z\D}Y(v,x)e^{-z\D}=Y(e^{z\D}v,x)=Y(v,x+z),\label{D-exp}\\
&Y(v,x){\bf 1}=e^{x\D}v\quad \quad \text{ for }v\in V.\label{D-creation}
\end{align}

 An important class of nonlocal vertex algebras consists of what are called weak quantum vertex algebras (see \cite{Li-nonlocal}).

\begin{de}\label{weak-qva-def}
A {\em weak quantum vertex algebra} is defined by using all the axioms that define a vertex algebra $V$ except the Jacobi identity
which is replaced with the property that
for any $u,v\in V$, there exist (finitely many)
$$u^{(i)}, v^{(i)}\in V,\ f_i(x)\in \C((x))\ \ \text{ for }1\le i\le r$$
such that the following {\em $\SY$-Jacobi identity} holds:
\begin{align}\label{S-Jacobi-C}
&x_0\inv\delta\!\(\frac{x_1-x_2}{x_0}\)\!Y(u,x_1)Y(v,x_2)\\
&\quad -x_0\inv\delta\!\(\frac{x_2-x_1}{-x_0}\)\sum_{i=1}^rf_i(x_2-x_1)Y(v^{(i)},x_2)Y(u^{(i)},x_1)\nonumber\\
=\ & x_1\inv\delta\!\(\frac{x_2+x_0}{x_1}\)\!Y(Y(u,x_0)v,x_2).\nonumber
\end{align}
\end{de}

The $\SY$-Jacobi identity axiom above is equivalent to the combination of the weak associativity
and the {\em $\SY$-locality} (see \cite{EK-qva}): For any $u,v\in V$, there exist
$$u^{(i)}, v^{(i)}\in V,\ f_i(x)\in \C((x))\ \ \text{ for }1\le i\le r$$
and a nonnegative integer $k$ such that
\begin{align}\label{S-locality-1}
(x_1-x_2)^kY(u,x_1)Y(v,x_2)=(x_1-x_2)^k\sum_{i=1}^rf_i(x_2-x_1)Y(v^{(i)},x_2)Y(u^{(i)},x_1).
\end{align}

\begin{rem}
{\em Let $V$ be a weak quantum vertex algebra.
From definition, there is a linear map $\SY(x):\ V\ot V\rightarrow V\ot V\ot \C((x))$ such that for $u,v\in V$,
(\ref{S-Jacobi-C}) holds with
\begin{align}
\SY(x)(v\ot u)=\sum_{i=1}^r v^{(i)}\ot u^{(i)}\ot f_i(x)\in V\ot V\ot \C((x)).
\end{align}
Such a linear map $\SY(x)$ is called an {\em $\SY$-locality operator} of $V$.}
\end{rem}

Let $V$ be a general nonlocal vertex algebra. Following \cite{EK-qva}, let
\begin{align}
Y(x):\ V\ot V\rightarrow V((x))
\end{align}
denote the canonical linear map associated to the vertex-operator map $Y(\cdot,x)$ with
$$Y(x)(u\otimes v)=Y(u,x)v\ \ \te{ for }u,v\in V.$$

A {\em rational quantum Yang-Baxter operator} on a vector space $U$ is a linear map
$$\SY(x):\  U\ot U\rightarrow U\ot U\ot \C((x)),$$
satisfying the following \emph{rational quantum Yang-Baxter equation}:
\begin{align}\label{eq:qyb}
  \SY^{12}(z)\SY^{13}(z+w)\SY^{23}(w)=\SY^{23}(w)\SY^{13}(z+w)\SY^{12}(z)
\end{align}
on $U\ot U \ot U$. It is said to be {\em unitary} if
\begin{align}\label{eq:qyb-unitary}
  \SY^{21}(x)\SY(-x)=1,
\end{align}
where $\SY^{21}(x)=\sigma \SY(x)\sigma$ with $\sigma$ denoting the flip operator on $U\ot U$.

 \begin{de}
 A {\em quantum vertex algebra} is a weak quantum vertex algebra $V$ equipped with
 a unitary rational quantum Yang-Baxter operator $\SY(x)$ on $V$,
 such that $\SY(x)$ is an $\SY$-locality operator of $V$ and such that
 \begin{align}
 &\hspace{1.5cm} \SY(x)(v\ot {\bf 1})=v\ot {\bf 1}\quad \text{ for }v\in V,\\
&  [\mathcal{D}\ot 1,\SY(x)]=-\frac{d}{dx}\SY(x)\quad \text{(the \emph{shift condition})}, \label{eq:qyb-shift}
\end{align}
\begin{align}\label{eq:qyb-hex-id}
    \SY(z)(Y(w)\ot 1)=(Y(w)\ot 1)\SY^{23}(z)\SY^{13}(z+w)\quad \text{ (the \emph{hexagon identity})}
  \end{align}
 on $V\ot V\ot V$.
 \end{de}

The following notion is due to Etingof and Kazhdan (see \cite{EK-qva}):

\begin{de}\label{non-degenerate}
A nonlocal vertex algebra $V$ is said to be {\em non-degenerate} if for every positive integer $n$,
the linear map $Z_n: \ V^{\otimes n}\otimes \C((x_1))\cdots ((x_n))\rightarrow V((x_1))\cdots ((x_n))$ is injective, where
\begin{align}
Z_n(v^{(1)}\otimes \cdots \otimes v^{(n)}\otimes f)=f(x_1,\dots,x_n)Y(v^{(1)},x_1)\cdots Y(v^{(n)},x_n){\bf 1}.
\end{align}
\end{de}

The following result, which was formulated in \cite{Li-nonlocal}, is essentially due to \cite{EK-qva}:

\begin{prop}\label{nondeg-wqva}
Let $V$ be a weak quantum vertex algebra. Assume that $V$ is non-degenerate.
Then there exists a unique $\SY$-locality operator $\SY(x)$.
Furthermore, $\SY(x)$ is a unitary rational quantum Yang-Baxter operator and $(V,\SY(x))$ is a quantum vertex algebra.
\end{prop}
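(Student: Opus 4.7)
The plan is to exploit non-degeneracy --- the injectivity of every $Z_n$ --- to transfer identities about $\SY(x)$ from the ``vertex operator side'' to the ``algebraic side'': whenever two expressions built from $\SY$-data become equal after being applied to $\vac$ inside $V((x_1))\cdots((x_n))$, injectivity of $Z_n$ lets us strip off the vertex operators and read off the resulting equality in $V^{\ot n}\ot\C((x_1))\cdots((x_n))$. Every property required of $\SY(x)$ in the definition of a quantum vertex algebra is produced by this mechanism.

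First I would show that $\SY(x)$ is a well-defined bilinear map. For each ordered pair $(u,v)$, the weak quantum vertex algebra axiom produces some finite sum $\sum_i v^{(i)}\ot u^{(i)}\ot f_i(x)$ satisfying \eqref{S-locality-1}; if a second family also works for the same $(u,v)$, then applying \eqref{S-locality-1} to $\vac$ via \eqref{eq:vacuum-right} and cancelling the common factor $(x_1-x_2)^k$ (which is injective on the relevant series space) reduces to an identity of the form $Z_2(\xi)=0$ with $\xi\in V^{\ot 2}\ot\C((x_2))((x_1))$. Non-degeneracy forces $\xi=0$, yielding uniqueness and bilinearity of $\SY(x)$. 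The vacuum property $\SY(x)(v\ot\vac)=v\ot\vac$ then follows by setting $u=\vac$ in \eqref{S-locality-1} and using $Y(\vac,x)=1$; the shift condition \eqref{eq:qyb-shift} comes from applying $\D\ot 1$ to \eqref{S-locality-1}, commuting $\D$ past each $Y(\cdot,x)$ via \eqref{D-bracket-der} (and using $\D\vac=0$), and reinvoking uniqueness.

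For unitarity \eqref{eq:qyb-unitary}, I would apply $\SY$-locality to $(u,v)$ to swap the two vertex operators in $Y(u,x_1)Y(v,x_2)\vac$, and then apply it again to each resulting pair $(v^{(i)},u^{(i)})$ to swap back; the composite must reproduce the original expression, so $Z_2$ injectivity gives $\SY^{21}(x_2-x_1)\SY(x_1-x_2)=1$. For the quantum Yang--Baxter equation \eqref{eq:qyb}, I would start from $Y(u_1,x_1)Y(u_2,x_2)Y(u_3,x_3)\vac$ and reorder it into the fully reversed product by two braid-equivalent sequences of adjacent transpositions: the two sequences produce, respectively, the two sides of \eqref{eq:qyb} acting on $u_3\ot u_2\ot u_1$, and $Z_3$ injectivity (after clearing the joint polynomial factor $\prod_{i<j}(x_i-x_j)^{k_{ij}}$) converts the equality of the two outputs into \eqref{eq:qyb}.

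Finally, for the hexagon identity \eqref{eq:qyb-hex-id}, I would apply a two-route argument to the iterate $Y(Y(u,x_0)v,x_2)Y(w,x_3)\vac$. The first route applies $\SY$-locality directly, extended $\C((x_0))$-linearly, to commute $Y(w,x_3)$ past the composite $Y(Y(u,x_0)v,x_2)$, producing the left side of \eqref{eq:qyb-hex-id}. The second route first uses weak associativity \eqref{nonlva-weak-asso} to replace $Y(Y(u,x_0)v,x_2)$ by $Y(u,x_0+x_2)Y(v,x_2)$ (after multiplying by a power of $(x_0+x_2)$), then commutes $Y(w,x_3)$ leftward past $Y(v,x_2)$ via $\SY^{23}$ and thereafter past $Y(u,x_0+x_2)$ via $\SY^{13}$ with shifted argument, producing the right side. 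Equating the two outputs and applying $Z_3$ injectivity yields \eqref{eq:qyb-hex-id}. I expect this hexagon step to be the main obstacle: it simultaneously uses weak associativity and $\SY$-locality, requires a careful choice of the joint polynomial factor that must be cleared so that all intermediate rational-series expressions live in a common space of formal power series, and demands that all expansions be performed consistently in a single region before $Z_3$ injectivity can be legitimately invoked.
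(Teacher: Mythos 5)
The paper does not prove this proposition; it explicitly cites it as ``formulated in \cite{Li-nonlocal}'' and ``essentially due to \cite{EK-qva},'' so there is no internal proof to compare against. Your sketch follows the same underlying strategy as those references: apply every identity to $\vac$, land in a space $V((x_1))\cdots((x_n))$, then invoke injectivity of $Z_n$ to strip the vertex operators and recover the algebraic relation among $\SY$-data. The uniqueness/bilinearity step, vacuum property, shift condition, unitarity, and quantum Yang--Baxter equation all go through exactly as you describe.

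The one place I would push you to tighten is the hexagon step, and you have already flagged it. Two caveats worth spelling out. First, commuting $Y(a,x_3)$ past the \emph{iterate} $Y(Y(u,x_0)v,x_2)$ by extending $\SY$-locality $\C((x_0))$-linearly is not automatic from the $\SY$-locality axiom, which is stated only for pairs in $V$; you must argue that the integer $k$ in \eqref{S-locality-1} can be chosen uniformly over the coefficients of $Y(u,x_0)v$, which is where weak associativity (providing the factor $(x_0+x_2)^l$) is needed and where the ``two-route'' trick really enters. Second, the comparison of the two routes involves series living a priori in $V((x_2))((x_0))((x_3))$ and in $V((x_2))((x_3))((x_0))$ (or similar iterated fields), so before invoking $Z_3$ you must multiply by a joint polynomial in $(x_0+x_2-x_3)$, $(x_2-x_3)$ that moves both sides into $\Hom(W,W((x_2,x_0,x_3)))$, and only then cancel that polynomial --- exactly the consistency-of-expansion issue you named. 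These are precisely the technical points that \cite{EK-qva} (Proposition 1.11) and \cite{Li-nonlocal} handle; your outline is sound but incomplete until that bookkeeping is done.
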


\begin{rem}
{\em Note that an $\SY$-locality operator is part of a quantum vertex algebra structure by definition.
 In view of Proposition \ref{nondeg-wqva}, the term ``non-degenerate quantum vertex algebra'' (without specifying
 an $\SY$-locality operator) is unambiguous.}
\end{rem}

\begin{de}
Let $V$ be a nonlocal vertex algebra. A {\em $V$-module} is a vector space $W$ equipped with a linear map
\begin{align*}
  Y_W(\cdot,x):\ & V\to  (\te{End}W)[[x,x^{-1}]]; \quad  v\mapsto Y_W(v,x),
\end{align*}
satisfying the conditions that $Y_W({\bf 1},x)=1_W$ (the identity operator on $W$),
$$Y_W(v,x)w\in W((x))\quad \text{ for }v\in V,\ w\in W,$$
and that for $u,v\in V,\ w\in W$, there exists $l\in \N$ such that
\begin{align}\label{module-weak-asso}
(x_0+x_2)^lY_W(u,x_0+x_2)Y_W(v,x_2)w=(x_0+x_2)^lY_W(Y(u,x_0)v,x_2)w.
\end{align}
\end{de}



Let $W$ be a general vector space (over $\C$). Set
\begin{align}
\E(W)=\Hom (W,W((x)))\subset (\End W)[[x,x^{-1}]].
\end{align}
A typical element of $\E(W)$ is the identity operator on $W$, denoted by $1_W$.

\begin{de}
A finite sequence $a^{[1]}(x),\dots, a^{[r]}(x)$ in $\E(W)$ is said to be \emph{compatible}
if  there exists a nonnegative integer $k$ such that
\begin{align}
 \( \prod_{1\le i<j\le r}(x_i-x_j)^k\) a^{[1]}(x_1)\cdots a^{[r]}(x_r)
  \in \Hom (W,W((x_1,\dots,x_r))).
\end{align}
A subset $U$ of $\E(W)$ is said to be \emph{compatible}
if every finite sequence in $U$ is compatible.
\end{de}

Let $(a(x),b(x))$ be a compatible ordered pair in $\E(W)$.
Define $a(x)_nb(x)\in \E(W)$ for $n\in \Z$ in terms of generating function
$$Y_{\E}(a(x),z)b(x)=\sum_{n\in \Z}a(x)_nb(x) z^{-n-1}$$
by
\begin{align}
Y_{\E}(a(x),z)b(x)=z^{-k}\left((x_1-x)^ka(x_1)b(x)\right)|_{x_1=x+z},
\end{align}
where $k$ is any nonnegative integer such that
$$(x_1-x)^ka(x_1)b(x)\in \Hom (W,W((x,x_1))).$$

A compatible subspace $U$ of $\E(W)$ is said to be {\em $Y_\E$-closed} if
$$a(x)_mb(x)\in U\ \ \text{ for all }a(x),b(x)\in U,\ m\in \Z.$$

The following result was obtained in  \cite{Li-nonlocal}:

\begin{thm}
Let $V$ be a $Y_{\E}$-closed compatible subspace of $\E(W)$ with $1_W\in V$.
Then $(V,Y_\E,1_W)$ is a nonlocal vertex algebra and $W$ is a $V$-module with $Y_W(a(x),z)=a(z)$ for $a(x)\in V$.
On the other hand,  for any compatible subset $U$ of $\E(W)$, there exists a (unique) smallest
$Y_\E$-closed compatible subspace $\<U\>$ such that $U\cup \{ 1_W\}\subset \<U\>$, where
\begin{eqnarray}
\<U\>={\rm span}\{ u^{(1)}(x)_{m_1}\cdots u^{(r)}(x)_{m_r}1_W\ |\ r\ge 0,\ u^{(i)}(x)\in U,\ m_i\in \Z\}.
\end{eqnarray}
\end{thm}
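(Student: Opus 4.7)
The plan is to prove the two assertions in sequence: first the nonlocal vertex algebra and module structure, then the existence and explicit form of the smallest $Y_\E$-closed compatible subspace.

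For the first assertion, I would begin by checking that $Y_\E$ is well-defined: compatibility guarantees that for any $a(x),b(x)\in V$ there exists $k\in\N$ with $(x_1-x)^k a(x_1)b(x)\in\Hom(W,W((x,x_1)))$, so substituting $x_1=x+z$ produces an element of $\Hom(W,W((x)))((z))$, and multiplying by $z^{-k}$ stays inside $\Hom(W,W((x,z)))$; moreover, the resulting expression is independent of the chosen $k$. The vacuum axioms are then quick: $Y_\E(1_W,z)a(x)=a(x)$ by direct computation, while $Y_\E(a(x),z)1_W=a(x+z)$, which indeed lies in $V[[z]]$ because its coefficients are $\frac{1}{n!}\partial_x^n a(x)=a(x)_{-n-1}1_W$ and $V$ is $Y_\E$-closed. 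This also identifies the canonical operator $\D$ on $V$ with $\partial_x$. The heart of the matter is weak associativity: given $a(x),b(x),c(x)\in V$ compatible, pick $k$ large enough that
\[
(x_1-x_2)^k(x_1-x_3)^k(x_2-x_3)^k\, a(x_1)b(x_2)c(x_3)\in\Hom(W,W((x_1,x_2,x_3))).
\]
Setting $x_1=x_0+x_2$, $x_2=x_2$, $x_3=x_3$ (and then relabeling), one expands on each side and observes that both $Y_\E(a,z+w)Y_\E(b,w)c$ and $Y_\E(Y_\E(a,z)b,w)c$ are obtained as different truncations of the same rational expression; multiplying by a suitable power of $(z+w)$ clears the denominators and yields the identity. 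The module axioms for $W$ then follow verbatim, since $Y_W(a(x),z)=a(z)$ is just the specialization of $Y_\E$ acting on $W$.

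For the second assertion, let $\langle U\rangle$ be the span displayed in the theorem. I would first verify that any $Y_\E$-closed compatible subspace of $\E(W)$ containing $U\cup\{1_W\}$ must contain every iterated $n$-th product $u^{(1)}(x)_{m_1}\cdots u^{(r)}(x)_{m_r}1_W$, which immediately gives minimality once $\langle U\rangle$ is shown to be $Y_\E$-closed and compatible. The $Y_\E$-closedness is almost tautological from the definition of $\langle U\rangle$ together with the weak associativity proved in the first part, which allows one to rewrite $\alpha(x)_m\beta(x)$ for $\alpha,\beta\in\langle U\rangle$ as another iterated $n$-th product of elements of $U$.

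The main obstacle, and the step that needs the most care, is proving that $\langle U\rangle$ is \emph{compatible}: one must produce, for any finite collection of iterated $n$-th products, a single power $(x_i-x_j)^k$ that clears all poles when these elements act on $W$. The key is a uniform estimate: if $a^{[1]}(x),\dots,a^{[s]}(x)\in U$ is compatible and $\beta(x)=a^{[1]}(x)_{n_1}\cdots a^{[s]}(x)_{n_s}1_W$, then acting $\beta(x_1)$ on an arbitrary product $a^{[s+1]}(x_2)\cdots a^{[s+t]}(x_{t+1})$ is controlled by the compatibility bound for the enlarged system $a^{[1]},\dots,a^{[s+t]}$, because $Y_\E(a,z)b$ is defined by the substitution formula from $(x_i-x_j)^k a(x_i)b(x_j)$. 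Iterating this and using the already-established weak associativity to rewrite any iterated product in $\langle U\rangle$ as a substitution from a product over $U$ itself, one obtains a single compatibility exponent, completing the proof.
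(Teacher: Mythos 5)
This theorem is \emph{recalled} in the paper from \cite{Li-nonlocal}; the paper contains no proof of it, so your attempt can only be measured against Li's original argument and against its own internal logic.

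Your first part is essentially the standard approach and is basically sound: well-definedness of $Y_\E$ from compatibility and the independence of the choice of $k$, the creation and vacuum properties $Y_\E(1_W,z)a(x)=a(x)$ and $Y_\E(a(x),z)1_W=a(x+z)$, weak associativity via the three-variable rational expression argument, and then the module structure on $W$. (The claim that the module axioms "follow verbatim" is a bit glib---the weak associativity identity $(x_0+x_2)^l a(x_0+x_2)b(x_2)w=(x_0+x_2)^l(Y_\E(a(x),x_0)b(x))(x_2)w$ on $W$ is a separate, if routine, verification---but the mechanism is the same substitution computation, so this is cosmetic.)

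The genuine gap is in the second part, and it is a circularity you half-notice but do not resolve. You want to prove that $\langle U\rangle$ is (i) compatible and (ii) $Y_\E$-closed, and for both you invoke "the already-established weak associativity." But the weak associativity proved in the first part is proved \emph{under the hypothesis that} $V$ is a $Y_\E$-closed compatible subspace, and precisely those hypotheses are what you are trying to verify for $\langle U\rangle$. Even the operations $\alpha(x)_m\beta(x)$ for $\alpha,\beta\in\langle U\rangle$ are only defined once the pair $(\alpha,\beta)$ is known to be compatible. The correct way to break the cycle is what your "uniform estimate" paragraph gestures at, but it must be stated and proved as a standalone formal-series lemma, with no appeal to the nonlocal vertex algebra structure: if $(a^{[1]}(x),\dots,a^{[r]}(x))$ is a compatible sequence, then replacing any adjacent pair $a^{[i]}(x),a^{[i+1]}(x)$ by $a^{[i]}(x)_na^{[i+1]}(x)$ again yields a compatible sequence, with a compatibility exponent that can be bounded in terms of the original one. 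This "iterate lemma" is the true engine of Li's proof: it is proved by direct manipulation of the substitution formula defining $Y_\E$, and it simultaneously gives compatibility of $\langle U\rangle$ (by induction on the number of iterated products) and $Y_\E$-closedness (since it shows $\alpha(x)_m\beta(x)$ can be rewritten as an iterated product over $U$ applied to $1_W$). Once that lemma is in hand, minimality is the easy step you correctly describe, and the first part of the theorem then applies to $\langle U\rangle$. So your plan is recoverable, but as written the argument borrows weak associativity for $\langle U\rangle$ before its hypotheses are available, and the missing ingredient---the compatibility-preservation lemma proved independently of the algebra axioms---needs to be made explicit.
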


A subset $U$ of $\E(W)$ is said to be {\em $\SY$-local} if for any $a(x),b(x)\in U$, there exist
$$c_i(x),\ d_i(x)\in U,\ f_i(x)\in \C((x))\  \te{ for }1\le i\le r$$
and a nonnegative integer $k$ such that
\begin{align}
(x_1-x_2)^ka(x_1)b(x_2)=(x_1-x_2)^k\sum_{i=1}^r f_i(x_2-x_1)c_i(x_2)d_i(x_1).
\end{align}

With this notion, we have (see \cite{Li-nonlocal}):

\begin{prop}
Let $U$ be an $\SY$-local subset of $\E(W)$. Then $U$ is compatible and $\<U\>$ is a weak quantum vertex algebra.
\end{prop}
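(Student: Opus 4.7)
The plan is to prove the proposition in four stages.

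First, I would establish that $\SY$-locality of $U$ forces compatibility of every finite sequence in $U$. For the base case $r=2$, the defining identity
$$(x_1-x_2)^k a(x_1)b(x_2)=(x_1-x_2)^k\sum_{i} f_i(x_2-x_1)c_i(x_2)d_i(x_1)$$
applied to any $w\in W$ makes compatibility manifest: each $d_i(x_1)w\in W((x_1))$, each $c_i(x_2)$ contributes only finitely many negative powers of $x_2$ when acting coefficient-wise, and $f_i(x_2-x_1)$ expanded in nonnegative powers of $x_1$ contributes only finitely many negative powers of $x_2$; hence the right-hand side lies in $W((x_1,x_2))$, and so does the left-hand side. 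For $r\geq 3$, I would proceed by induction on $r$, using $\SY$-locality repeatedly to swap adjacent operators at the cost of polynomial factors $(x_i-x_j)^k$ and rational coefficients in $x_j-x_i$, and combining with the inductive hypothesis on the resulting reordered products.

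Second, with compatibility in hand, the theorem immediately preceding in the excerpt gives that $(\<U\>, Y_\E, 1_W)$ is a nonlocal vertex algebra; in particular, weak associativity holds throughout $\<U\>$.

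Third, the central technical step is to promote $\<U\>$ from a nonlocal vertex algebra to a weak quantum vertex algebra by extending $\SY$-locality from pairs in $U$ to every ordered pair in $\<U\>$. The key closure lemma is: if $(\alpha(x),\beta(x))$ and $(\alpha(x),\gamma(x))$ are both $\SY$-local pairs in $\E(W)$, then so is $(\alpha(x),\beta(x)_n\gamma(x))$ for every $n\in\Z$; the symmetric statement for the left slot also holds. This is proved by direct formal calculus: combine the two $\SY$-locality relations, reassociate using weak associativity in $\E(W)$, and extract the residue in the auxiliary variable to obtain the $n$-th product. With this closure, the set of $v\in\<U\>$ that are $\SY$-local with every element of $U$ is $Y_\E$-closed and contains $U$, hence equals $\<U\>$; a symmetric second application then yields that every pair in $\<U\>$ is $\SY$-local.

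Fourth, once $\<U\>$ is a nonlocal vertex algebra in which every pair is $\SY$-local, the equivalence noted immediately after Definition~\ref{weak-qva-def} — that the $\SY$-Jacobi identity is equivalent to weak associativity together with $\SY$-locality — yields the $\SY$-Jacobi identity on $\<U\>$, so $\<U\>$ is a weak quantum vertex algebra. The main obstacle is the closure lemma of stage three: the rational coefficients $f_i(x_2-x_1)$ must be tracked carefully through substitution, binomial re-expansion and residue extraction, and one must verify that the resulting expression still has the required finite form with coefficients in $\C((x))$ after reassociating the threefold product.
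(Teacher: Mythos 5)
Your four-stage plan is the right outline — it is essentially the strategy of the cited reference \cite{Li-nonlocal} — and Stages 1, 2, and 4 are fine. The gap is in the closure lemma of Stage 3. As you state it — ``if $(\alpha,\beta)$ and $(\alpha,\gamma)$ are $\SY$-local pairs then so is $(\alpha,\beta_n\gamma)$'' — the lemma cannot be proved by the scheme you sketch. To put $\alpha(x_1)(\beta_n\gamma)(x_2)$ in $\SY$-local form you must commute $\alpha(x_1)$ past $\beta(x_2+z)\gamma(x_2)$ before extracting $\Res_z$. Commuting past $\beta$ via $\SY$-locality of $(\alpha,\beta)$ replaces $\alpha$ by its partners $\alpha^{(i)}$; to continue past $\gamma$ you then need $(\alpha^{(i)},\gamma)$ to be $\SY$-local, and this is \emph{not} among your hypotheses. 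Unlike ordinary locality, where the commuted operator is still $\alpha$ and Dong's lemma iterates without friction, $\SY$-locality changes the left operator at every step, and your closure lemma discards exactly the information needed to iterate. The same leak reappears in your auxiliary set: for $\{v\in\<U\>:(u,v)\ \SY\text{-local for every }u\in U\}$, membership of $\beta$ gives no control over where the partners $u^{(i)}$ of $(u,\beta)$ live, so you cannot invoke the $\SY$-locality of $\gamma$ with them.

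The repair is precisely the feature built into the definition of an $\SY$-local \emph{subset}: for $a,b\in U$ the partners $c_i,d_i$ are required to lie back in $U$, and you must carry this constraint through the closure argument. Take $S$ to be the set of $v\in\<U\>$ such that for every $u\in U$ the pair $(u,v)$ is $\SY$-local with the $u$-side partners $d_i$ in $U$ and the $v$-side partners $c_i$ in $\<U\>$. Then $U\subseteq S$ by hypothesis, and now the two commutations chain: the left operators stay in $U$ throughout, so the second commutation past $\gamma$ is licensed by $\gamma\in S$, the resulting $u$-side partners are again in $U$, and the $v$-side partners are $Y_\E$-combinations of elements of $\<U\>$ and hence lie in $\<U\>$. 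Your symmetric second pass over the left slot, now allowing partners anywhere in $\<U\>$, then works as you describe. The bookkeeping of the rational coefficients $f_i$ through substitution and residue extraction that you flag as the main obstacle is real but routine formal calculus; the essential missing ingredient is the one above, and it is also needed — quietly — in your Stage 1 induction for $r\ge 3$, where the swapped operators must remain in $U$ for the inductive hypothesis to apply.
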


\section{$\hbar$-adic quantum vertex algebras and their modules}

 In this section, we recall some basic notions and results on $\hbar$-adic nonlocal vertex algebras
and $\hbar$-adic (weak) quantum vertex algebras.

\subsection{Topologically free $\C[[\hbar]]$-modules}
Let $\hbar$ be a formal variable. A $\C[[\hbar]]$-module $V$
is said to be \emph{torsion-free} if $\hbar v\ne 0$ for any nonzero vector $v\in V$,
and said to be \emph{separated} if $\cap_{n\ge1}\hbar^n V=0$.
For a $\C[[\hbar]]$-module $V$, using subsets $v+\hbar^nV$ for $v\in V$, $n\ge 1$
as the basis of open sets one obtains a topology on $V$, which is called the
\emph{$\hbar$-adic topology}.
A $\C[[\hbar]]$-module $V$ is said to be \emph{$\hbar$-adically complete}
if every Cauchy sequence in $V$ with respect to the $\hbar$-adic topology has a limit in $V$.
A $\C[[\hbar]]$-module $V$ is \emph{topologically free} if $V=V_0[[\hbar]]$
for some $\C$-subspace $V_0$ of $V$.

Let $U$ and $V$ be topologically free $\C[[\hbar]]$-modules.
The $\hbar$-adically completed tensor product $U\wh\ot V$ of $U$ and $V$
is defined to be the $\hbar$-adic completion of $U\otimes V$.
If $U=U_0[[\hbar]]$ and $V=V_0[[\hbar]]$, then $U\wh\ot V=(U_0\ot V_0)[[\hbar]]$.

The following basic facts can be either found (cf. \cite{Ka}) or proved straightforwardly:

\begin{lem}\label{tpfree-basics}
(1)  A $\C[[\hbar]]$-module is topologically free
if and only if it is torsion-free, separated, and $\hbar$-adically complete.
(2) If $W$ is a topologically free $\C[[\hbar]]$-module, then $W=U[[\hbar]]$
for any $\C$-subspace $U$ of $W$ such that $W=U\oplus \hbar W$ over $\C$.
(3) If $U$ is a $\C[[\hbar]]$-submodule of a torsion-free and separated $\C[[\hbar]]$-module
and if $U$ itself is $\hbar$-adically complete, then $U$ is topologically free.
\end{lem}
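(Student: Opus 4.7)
The plan is to prove part (2) first, since it encodes the key iterative construction that also drives the nontrivial direction of part (1); part (3) will then reduce immediately to part (1).

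For part (2), I would construct an explicit $\C[[\hbar]]$-linear isomorphism $\phi\colon U[[\hbar]]\to W$ sending $\sum_{n\ge 0}u_n\hbar^n$ to the $\hbar$-adic limit in $W$ of the partial sums $s_n=\sum_{i=0}^n u_i\hbar^i$ (well-defined because $W$, being topologically free, is $\hbar$-adically complete). To establish surjectivity, given $w\in W$, I iteratively apply the decomposition $W=U\oplus\hbar W$: write $w=u_0+\hbar w_1$, then $w_1=u_1+\hbar w_2$, and continue. This produces sequences $(u_n)$ in $U$ and $(w_n)$ in $W$ with $w-s_n=\hbar^{n+1}w_{n+1}\in\hbar^{n+1}W$, so $s_n\to w$ and thus $w=\phi(\sum u_n\hbar^n)$. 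For injectivity, suppose $\phi(\sum u_n\hbar^n)=0$ with some coefficient nonzero and let $m$ be the smallest index with $u_m\ne 0$; torsion-freeness of $W$ cancels the $\hbar^m$, forcing $u_m=-\hbar\sum_{k\ge 1}u_{m+k}\hbar^{k-1}\in\hbar W$, which contradicts $U\cap\hbar W=0$.

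The forward direction of part (1) is immediate: if $V=V_0[[\hbar]]$, each of torsion-freeness, separatedness, and $\hbar$-adic completeness follows by inspection. For the converse, I would choose a $\C$-linear section $s\colon V/\hbar V\to V$ of the canonical projection and set $V_0=s(V/\hbar V)$; then $V_0\cap\hbar V=0$ since $s$ is injective, and $V_0+\hbar V=V$ since $v-s(\bar v)\in\hbar V$, so $V=V_0\oplus\hbar V$ over $\C$. The same iterative construction as in part (2) now applies verbatim: completeness provides the limits of the partial sums, separatedness identifies these limits with the original vectors (since the successive differences lie in $\bigcap_n\hbar^n V=0$), and torsion-freeness together with $V_0\cap\hbar V=0$ gives uniqueness of coefficients. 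Thus $V\cong V_0[[\hbar]]$.

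For part (3), I would simply verify the hypotheses of part (1) for $U$: torsion-freeness passes to any submodule of a torsion-free module, separatedness of $U$ follows from $\bigcap_n\hbar^n U\subseteq\bigcap_n\hbar^n V=0$, and $\hbar$-adic completeness of $U$ is given by hypothesis. The main obstacle — modest here but conceptually central — is the interlocking role of the three conditions in the construction of $\phi$ in part (2): the direct-sum decomposition determines the coefficients $u_n$, torsion-freeness makes their choice unique and underlies injectivity, and completeness is what allows the formal expression $\sum u_n\hbar^n$ to represent an honest element of $W$ rather than remain a purely formal object.
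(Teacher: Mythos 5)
Your proof is correct. Note that the paper does not actually supply a proof of this lemma — it refers to Kassel's book and remarks that the facts "can be proved straightforwardly" — so there is no argument in the paper to compare against; your iterative construction (peel off the $U$-component modulo $\hbar$, use completeness to sum, separatedness to identify limits, torsion-freeness plus $U\cap\hbar W=0$ for uniqueness of coefficients) is the standard argument, and you have correctly identified which hypothesis carries each step of the construction.
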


\begin{lem}\label{def-K[[h]]'}
Let $W$ be a topologically free $\C[[\hbar]]$-module and let $K$ be a $\C$-subspace of $W$.
Define $K[[\hbar]]'$ to be the image of $K[[\hbar]]$ in $W$ under the canonical $\C[[\hbar]]$-module morphism from $K[[\hbar]]$ to $W$.
Then $K[[\hbar]]'$ is topologically free. On the other hand, a $\C[[\hbar]]$-submodule $K$ of $W$ itself is $\hbar$-adically complete
if and only if $K=K[[\hbar]]'$.
\end{lem}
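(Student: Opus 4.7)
For the first assertion, note that $K[[\hbar]]'$ is a $\C[[\hbar]]$-submodule of the topologically free (hence torsion-free and separated) module $W$. By Lemma \ref{tpfree-basics}(3), it suffices to show $K[[\hbar]]'$ is $\hbar$-adically complete in its intrinsic topology. The plan is to take a Cauchy sequence $(v_n)$ in $K[[\hbar]]'$, pass to a subsequence so that $v_{n+1} - v_n \in \hbar^{n+1} K[[\hbar]]'$, and use the torsion-freeness of $W$ to write $v_{n+1} - v_n = \hbar^{n+1} u_n$ with $u_n \in K[[\hbar]]'$. Choosing expansions $v_0 = \sum_{m \ge 0} v_{0,m} \hbar^m$ and $u_k = \sum_{m \ge 0} u_{k,m} \hbar^m$ convergent in $W$ with all coefficients in $K$, I would set
\begin{align*}
\tilde v := \sum_{r \ge 0} c_r \hbar^r, \qquad c_r := v_{0,r} + \sum_{k=0}^{r-1} u_{k, r-k-1},
\end{align*}
where each $c_r$ lies in $K$ as a finite $\C$-linear combination of elements of $K$. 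Then $\tilde v \in K[[\hbar]]'$ by construction, and a direct comparison modulo $\hbar^{n+1} W$ using the telescoping expression $v_n = v_0 + \sum_{k=0}^{n-1} \hbar^{k+1} u_k$ shows $\tilde v \equiv v_n \pmod{\hbar^{n+1} W}$. Separatedness of $W$ then forces $\tilde v = v := \lim_n v_n$, and a short computation gives $v - v_n \in \hbar^{n+1} K[[\hbar]]'$, confirming convergence in the intrinsic topology of $K[[\hbar]]'$.

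For the second assertion, the implication $K = K[[\hbar]]' \Rightarrow K$ is $\hbar$-adically complete follows immediately from the first assertion combined with Lemma \ref{tpfree-basics}(1). Conversely, assume $K$ is a $\C[[\hbar]]$-submodule of $W$ that is $\hbar$-adically complete. The inclusion $K \subset K[[\hbar]]'$ is trivial (view $k \in K$ as the constant series $k$). For the reverse inclusion, given $v = \sum_{m \ge 0} k_m \hbar^m \in K[[\hbar]]'$ with $k_m \in K$, the partial sums $V_n = \sum_{m=0}^n k_m \hbar^m$ lie in $K$ (since $K$ is a $\C[[\hbar]]$-submodule) and satisfy $V_{n'} - V_n \in \hbar^{n+1} K$ for $n' > n$, hence form a Cauchy sequence in $K$. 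By assumed completeness they converge to some $V \in K$. Since the intrinsic topology on $K$ is finer than the subspace topology inherited from $W$, the sequence also converges to $V$ in $W$, and by uniqueness of limits in the Hausdorff space $W$ we get $V = v$, so $v \in K$.

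The principal subtle point in the plan is justifying the rearrangement $\tilde v \equiv v_n \pmod{\hbar^{n+1} W}$. Care is needed because the canonical map $\varphi:K[[\hbar]] \to W$ need not be injective, so the expansions of $v_0$ and of the $u_k$ as $\sum_m (\cdot) \hbar^m$ with coefficients in $K$ are not unique; however, the congruence is a statement about equality in $W$ modulo $\hbar^{n+1}W$ and follows from a clean change of variable $r = k+m+1$ in the double sum for $\sum_{k=0}^{n-1} \hbar^{k+1} u_k$. A perhaps cleaner alternative proof of the first assertion is to observe that $\ker \varphi$ is closed in $K[[\hbar]]$ (by continuity of $\varphi$), so the topological quotient $K[[\hbar]]/\ker \varphi$ is $\hbar$-adically complete, and the induced $\C[[\hbar]]$-linear bijection $K[[\hbar]]/\ker \varphi \to K[[\hbar]]'$ carries $\hbar^n(K[[\hbar]]/\ker \varphi)$ onto $\hbar^n K[[\hbar]]'$, hence is a homeomorphism transferring completeness to $K[[\hbar]]'$.
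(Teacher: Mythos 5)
The paper states this lemma among ``basic facts [that] can be either found (cf.\ \cite{Ka}) or proved straightforwardly'' and supplies no proof of its own, so there is nothing of the authors' to compare against; your argument must be judged on its own merits. It is correct. In the main route, the diagonal construction of $\tilde v$ with the reindexing $r=k+m+1$ does give $\tilde v\equiv v_n\pmod{\hbar^{n+1}W}$, and the final point you flag---showing $v-v_n\in\hbar^{n+1}K[[\hbar]]'$, not merely in $\hbar^{n+1}W$---does go through, since $v-v_n=\hbar^{n+1}\sum_{k\ge n}\hbar^{k-n}u_k$ and the inner series visibly has a preimage in $K[[\hbar]]$ (its coefficients are finite sums of coefficients of the $\tilde u_k$), hence lies in $K[[\hbar]]'$. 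One tiny misattribution: writing $v_{n+1}-v_n=\hbar^{n+1}u_n$ with $u_n\in K[[\hbar]]'$ is immediate from membership in $\hbar^{n+1}K[[\hbar]]'$; torsion-freeness of $W$ only supplies uniqueness of $u_n$, which you never actually use. Your alternative sketch is sound as well, and is arguably the cleaner route; one can even avoid invoking closedness of $\ker\varphi$ and the topological-group quotient theorem by observing directly (via a lift-and-adjust argument on a Cauchy sequence of cosets) that the quotient of an $\hbar$-adically complete module by \emph{any} submodule is again $\hbar$-adically complete, and then letting Lemma~\ref{tpfree-basics}(3), applied to the separated submodule $K[[\hbar]]'\subset W$, take care of the Hausdorff issue. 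Your treatment of the second assertion is clean; the essential observation is that $\hbar^nK\subset K\cap\hbar^n W$, so convergence in the intrinsic topology of $K$ passes to convergence in the Hausdorff space $W$ and the two limits $V$ and $v$ must coincide.
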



The following result can be found in \cite{Ka}:

\begin{lem}\label{free-top}
Let $W_1,W_2$ be topologically free $\C[[\hbar]]$-modules and let
$\psi:W_1\rightarrow W_2$ be a continuous $\C[[\hbar]]$-module map.
 If the derived $\C$-linear map $\bar{\psi}: W_1/\hbar W_1\rightarrow W_2/\hbar W_2$ is one-to-one (resp. onto),
 then $\psi$ is one-to-one (resp. onto).
\end{lem}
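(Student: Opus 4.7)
The plan is to treat the two statements (injectivity and surjectivity) separately, in each case exploiting the fact that, by Lemma \ref{tpfree-basics}(1), $W_1$ and $W_2$ are torsion-free, separated, and $\hbar$-adically complete. The continuity of $\psi$ just means $\psi(\hbar^n W_1)\subset \hbar^n W_2$ for all $n$, which is automatic for any $\C[[\hbar]]$-module map, so the main leverage comes from these three properties together with the hypothesis on $\bar\psi$.

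For the injectivity statement, I would argue by contradiction. Suppose $w\in W_1$ is nonzero with $\psi(w)=0$. Since $W_1$ is separated, there is a largest $n\ge 0$ with $w\in \hbar^n W_1$; write $w=\hbar^n w'$ with $w'\notin \hbar W_1$. Then $\hbar^n\psi(w')=\psi(w)=0$, and since $W_2$ is torsion-free this forces $\psi(w')=0$. But the image $\overline{w'}$ of $w'$ in $W_1/\hbar W_1$ is nonzero, while $\bar\psi(\overline{w'})=0$, contradicting the injectivity of $\bar\psi$.

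For the surjectivity statement, given $u\in W_2$, I would build a preimage by successive approximation. Since $\bar\psi$ is onto, there is $w_0\in W_1$ with $u-\psi(w_0)\in \hbar W_2$, so write $u-\psi(w_0)=\hbar u_1$. Inductively, choose $w_n\in W_1$ so that $u_n-\psi(w_n)\in \hbar W_2$, and set $u_{n+1}=\hbar^{-1}(u_n-\psi(w_n))\in W_2$ (well-defined because $W_2$ is torsion-free). Then
\begin{equation*}
u-\psi\!\left(\sum_{k=0}^{n}\hbar^k w_k\right)=\hbar^{n+1}u_{n+1}\in \hbar^{n+1}W_2.
\end{equation*}
The partial sums $\sum_{k=0}^{n}\hbar^k w_k$ form a Cauchy sequence in the $\hbar$-adic topology, which converges to some $w\in W_1$ by $\hbar$-adic completeness. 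Continuity of $\psi$ gives $\psi(w)=\lim_{n}\psi\!\left(\sum_{k=0}^{n}\hbar^k w_k\right)$, and by the estimate above this limit equals $u$ since $W_2$ is separated.

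I do not anticipate any serious obstacle: the argument is a textbook application of the three structural properties of topologically free $\C[[\hbar]]$-modules, and the only point that deserves a moment of care is verifying at each stage that the relevant difference actually lies in the expected power of $\hbar$ so that the torsion-freeness of $W_2$ can be used to extract the next approximation. Everything else is bookkeeping about Cauchy sequences and continuity.
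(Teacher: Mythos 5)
Your proof is correct: the injectivity argument via extracting the maximal power of $\hbar$ and the surjectivity argument by successive approximation are exactly the standard ones, and the small points you flag (torsion-freeness to cancel $\hbar$, completeness to form the limit, separatedness to identify it) are precisely what is needed. The paper itself does not give a proof but simply cites Kassel's book, where this same argument appears, so there is nothing further to reconcile.
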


We also have the following simple facts (see \cite{Li-h-adic}; the proof of Proposition 3.7):

\begin{lem}\label{[U]}
Let $U$ be a $\C[[\hbar]]$-submodule of a $\C[[\hbar]]$-module $W$. Set
\begin{align}
[U]=\{ w\in W\ |\ \hbar^n w\in U\ \text{ for some }n\in \N\}.
\end{align}
Then $\left[ [U]\right]=[U]$. On the other hand, if $[U]=U$, then $U\cap \hbar^n W =\hbar^n U$ for $n\in \N$.
\end{lem}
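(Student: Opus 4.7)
The plan is to prove each of the two statements by a direct set-theoretic argument, handling them separately.

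For the first assertion $[[U]] = [U]$, I would first note the trivial inclusion $[U] \subseteq [[U]]$, which follows from $U \subseteq [U]$ (take $n = 0$ in the defining condition). For the reverse inclusion $[[U]] \subseteq [U]$, I would take an arbitrary $w \in [[U]]$. By definition of $[\,\cdot\,]$ applied to $[U]$, there exists $m \in \N$ with $\hbar^m w \in [U]$. Applying the definition again to $[U]$, there exists $n \in \N$ such that $\hbar^n(\hbar^m w) \in U$. But $\hbar^n \hbar^m w = \hbar^{n+m} w$, so $w$ satisfies the defining condition of $[U]$ with exponent $n+m$, giving $w \in [U]$.

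For the second assertion, assume $[U] = U$, fix $n \in \N$, and show $U \cap \hbar^n W = \hbar^n U$ by mutual inclusion. The inclusion $\hbar^n U \subseteq U \cap \hbar^n W$ is immediate, since $U$ is a $\C[[\hbar]]$-submodule of $W$. For the reverse inclusion, take $w \in U \cap \hbar^n W$. Write $w = \hbar^n v$ for some $v \in W$. Since $\hbar^n v = w \in U$, the element $v$ satisfies the defining condition of $[U]$ (with exponent $n$), so $v \in [U] = U$. Therefore $w = \hbar^n v \in \hbar^n U$, as desired.

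Both parts are essentially unpacking the definition of $[U]$ together with the hypothesis $[U]=U$; there is no genuine obstacle. The only subtle point to be careful about is that we never need $W$ to be torsion-free (so $v$ is not a priori unique in the factorization $w = \hbar^n v$), but this does not matter: we only need to exhibit some $v \in W$ with $\hbar^n v = w$ and conclude that $v$ lies in $U$ via the hypothesis $[U]=U$. Thus the proof will consist of two short paragraphs, one for each claim.
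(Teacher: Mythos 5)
Your proof is correct. The paper itself gives no proof of this lemma, only a pointer to the proof of Proposition~3.7 in \cite{Li-h-adic}, and your two-paragraph argument is exactly the straightforward unwinding of the definition that one would expect there. Both inclusions in each part are handled cleanly, and your remark that torsion-freeness of $W$ is not needed --- you only need the existence of some $v$ with $\hbar^n v = w$, not its uniqueness --- is the right thing to flag.
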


\begin{lem}\label{simple-fact-2}
Let $U$ be a $\C[[\hbar]]$-submodule of a topologically free $\C[[\hbar]]$-module $W$ such that $[U]=U$. Then
$\left[\bar{U}\right]=\bar{U}$, $\overline{\left[\bar{U}\right]}=\left[\bar{U}\right]$, and $\bar{U}$ is
topologically free.
\end{lem}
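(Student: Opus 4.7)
The plan is as follows. The statement packages three assertions: \emph{(i)} $[\bar U]=\bar U$, \emph{(ii)} $\overline{[\bar U]}=[\bar U]$, and \emph{(iii)} topological freeness of $\bar U$. Observation: \emph{(ii)} will be automatic once \emph{(i)} is known, because $\bar U$ is $\hbar$-adically closed by construction, so $\overline{[\bar U]}=\overline{\bar U}=\bar U=[\bar U]$. The real work is therefore in \emph{(i)} and \emph{(iii)}.

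For \emph{(i)} I plan to use the standard description of the $\hbar$-adic closure in a topologically free module,
\[
\bar U=\bigcap_{m\ge 1}\bigl(U+\hbar^m W\bigr),
\]
which is immediate from the definition of the $\hbar$-adic topology (basic open neighborhoods of $w\in W$ are $w+\hbar^m W$). Since $U\subseteq[\bar U]$ is obvious, only the inclusion $[\bar U]\subseteq\bar U$ needs argument. Take $w\in W$ with $\hbar^n w\in\bar U$ for some $n$, and fix an arbitrary $m\ge 1$. By the display above (applied with exponent $n+m$) write $\hbar^n w=u+\hbar^{n+m}w'$ with $u\in U$ and $w'\in W$. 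Then
\[
u=\hbar^n w-\hbar^{n+m}w'=\hbar^n(w-\hbar^m w')\in U\cap \hbar^n W.
\]
By Lemma \ref{[U]}, the hypothesis $[U]=U$ yields $U\cap\hbar^n W=\hbar^n U$, so $u=\hbar^n u'$ for some $u'\in U$. Since $W$ is torsion-free, canceling $\hbar^n$ in $\hbar^n w=\hbar^n u'+\hbar^{n+m}w'$ gives $w=u'+\hbar^m w'\in U+\hbar^m W$. As $m$ was arbitrary, $w\in\bar U$.

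For \emph{(iii)} I will invoke Lemma \ref{tpfree-basics}(3): a $\C[[\hbar]]$-submodule of a torsion-free separated $\C[[\hbar]]$-module that is itself $\hbar$-adically complete is topologically free. The set $\bar U$ is genuinely a $\C[[\hbar]]$-submodule of $W$ because multiplication by elements of $\C[[\hbar]]$ is continuous, so closures of submodules are submodules. It inherits torsion-freeness and separatedness from $W$. Finally, being a closed subset of the $\hbar$-adically complete module $W$, every Cauchy sequence in $\bar U$ converges in $W$ with limit in $\bar U$, so $\bar U$ is itself $\hbar$-adically complete. All hypotheses of Lemma \ref{tpfree-basics}(3) are met, so $\bar U$ is topologically free.

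\textbf{Main obstacle.} No step is genuinely difficult once the description $\bar U=\bigcap_{m}(U+\hbar^m W)$ is in hand. The only piece of real content is the index-shift trick in \emph{(i)} that produces an element of $U\cap\hbar^n W$, followed by the clean cancellation $U\cap\hbar^n W=\hbar^n U$ supplied by Lemma \ref{[U]} and the torsion-freeness of $W$; everything else is bookkeeping combined with already-stated general facts about topologically free $\C[[\hbar]]$-modules.
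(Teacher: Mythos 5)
The paper does not give a proof of this lemma---it only points the reader to \cite{Li-h-adic}---so there is nothing paper-internal to compare against. Your argument is correct in substance and the route is a natural one; in particular the identity $\bar U=\bigcap_{m\ge1}(U+\hbar^m W)$ together with the index-shift and the cancellation $U\cap\hbar^n W=\hbar^n U$ from Lemma \ref{[U]} handles $[\bar U]=\bar U$ cleanly, and part (ii) then follows for free.

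There is, however, one real gap in part (iii) that you should close. The definition of $\hbar$-adic completeness in the paper is intrinsic: a Cauchy sequence in $\bar U$ is one for which $a_k-a_l\in\hbar^r\bar U$ eventually, and the limit must exist in the $\hbar$-adic topology of $\bar U$ itself, i.e.\ $a_k-a\in\hbar^r\bar U$ eventually, not merely $a_k-a\in\hbar^r W$. Closedness of $\bar U$ in $W$ only gives you convergence in $W$ with limit landing in $\bar U$; it does not by itself say the subspace topology on $\bar U$ coincides with its intrinsic $\hbar$-adic topology. This is exactly what $[\bar U]=\bar U$ buys you: by Lemma \ref{[U]} it yields $\bar U\cap\hbar^r W=\hbar^r\bar U$, so $a_k-a\in\bar U\cap\hbar^r W=\hbar^r\bar U$ and the limit holds intrinsically. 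The fix is a one-line insertion, but without it the final ``so'' is not justified. Once that is added, invoking Lemma \ref{tpfree-basics}(3) completes the argument as you describe.
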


\begin{lem}\label{simple-fact-3}
Let $U$ and $V$ be topologically free $\C[[\hbar]]$-modules and let $\psi: U\rightarrow V$
be a $\C[[\hbar]]$-module map. Then $[\ker (\psi)]=\ker (\psi)$, $\overline{\ker (\psi)}=\ker (\psi)$, and
$\ker (\psi)$ is topologically free.
\end{lem}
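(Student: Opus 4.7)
The plan is to establish the three claims in sequence, bootstrapping each from the previous one and from the earlier lemmas already available.

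First I would verify $[\ker(\psi)]=\ker(\psi)$. One inclusion is trivial from the definition of $[\,\cdot\,]$. For the other, if $w\in U$ satisfies $\hbar^n w\in\ker(\psi)$ for some $n\in\N$, then $\hbar^n\psi(w)=\psi(\hbar^n w)=0$ in $V$; since $V$ is topologically free and hence torsion-free (Lemma \ref{tpfree-basics}(1)), this forces $\psi(w)=0$, so $w\in\ker(\psi)$.

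Next I would prove that $\ker(\psi)$ is closed in $U$ for the $\hbar$-adic topology, i.e.\ $\overline{\ker(\psi)}=\ker(\psi)$. The key observation is that any $\C[[\hbar]]$-module map between $\C[[\hbar]]$-modules is automatically continuous in the $\hbar$-adic topology: if $u_k\to u$ in $U$, then for each $N\ge 1$ eventually $u-u_k\in\hbar^N U$, hence $\psi(u)-\psi(u_k)=\psi(u-u_k)\in\hbar^N V$, so $\psi(u_k)\to\psi(u)$. Since $V$ is topologically free it is separated (Lemma \ref{tpfree-basics}(1)), so the singleton $\{0\}$ is closed in $V$ and therefore $\ker(\psi)=\psi^{-1}(\{0\})$ is closed in $U$.

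Finally I would conclude that $\ker(\psi)$ is topologically free. Having shown $[\ker(\psi)]=\ker(\psi)$, Lemma \ref{simple-fact-2} applies with $K=\ker(\psi)$ and tells us that $\overline{\ker(\psi)}$ is topologically free; combined with the closedness $\overline{\ker(\psi)}=\ker(\psi)$ established in the previous step, this yields that $\ker(\psi)$ itself is topologically free. (Alternatively, one can verify the three criteria of Lemma \ref{tpfree-basics}(1) directly: torsion-freeness and separatedness are inherited from $U$, and $\hbar$-adic completeness of $\ker(\psi)$ follows by taking Cauchy sequences in $\ker(\psi)$, using completeness of $U$ to find a limit, and using closedness of $\ker(\psi)$ to place the limit in $\ker(\psi)$.)

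I do not anticipate a genuine obstacle: the only point that requires a moment of care is that $\C[[\hbar]]$-linearity of $\psi$ automatically yields $\hbar$-adic continuity, which in turn makes $\ker(\psi)$ closed; all remaining claims are formal consequences of the already-established Lemmas \ref{tpfree-basics} and \ref{simple-fact-2}.
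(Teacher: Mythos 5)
Your proof is correct, and the paper does not actually print a proof of this lemma (it is stated as one of several "simple facts" with a pointer to the proof of Proposition 3.7 of the cited paper), so there is nothing to compare against in detail. Your three-step argument — torsion-freeness of $V$ gives $[\ker\psi]=\ker\psi$, $\C[[\hbar]]$-linearity implies $\hbar$-adic continuity and separatedness of $V$ makes $\{0\}$ closed so $\ker\psi$ is closed, and then Lemma \ref{simple-fact-2} gives topological freeness — is exactly the standard route; the one small point worth being explicit about in the alternative argument is that $[\ker\psi]=\ker\psi$ combined with Lemma \ref{[U]} guarantees $\ker\psi\cap\hbar^nU=\hbar^n\ker\psi$, so the intrinsic $\hbar$-adic topology on $\ker\psi$ agrees with the subspace topology, which is what makes "Cauchy in $\ker\psi$, complete in $U$, closed in $U$" actually close the loop.
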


\begin{lem}\label{basic-facts-top-algebra}
Let $A$ be an associative algebra over $\C[[\hbar]]$, which is topologically free as a $\C[[\hbar]]$-module.

1) For any left (right) ideal $J$ of $A$ as a $\C[[\hbar]]$-algebra, $[J]$ is a  left (right) ideal.

2) For any  left (right) ideal $J$ of $A$ such that $[J]=J$, $\overline{J}$ is a  left (right) ideal of $A$
such that $[\overline{J}]=\overline{J}$ and $\overline{J}$ is topologically free.
 \end{lem}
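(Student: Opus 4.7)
The plan is straightforward, exploiting the fact that $A$ is topologically free over $\C[[\hbar]]$ and that left/right multiplication in $A$ is $\C[[\hbar]]$-linear (hence continuous in the $\hbar$-adic topology). Both parts reduce to routine checks once Lemmas \ref{[U]} and \ref{simple-fact-2} are in hand.

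For part (1), I will argue only for left ideals; the right-ideal case is symmetric. First I would check that $[J]$ is a $\C[[\hbar]]$-submodule: if $w_1,w_2\in [J]$ with $\hbar^{n_i}w_i\in J$, and $f(\hbar)\in\C[[\hbar]]$, then $\hbar^{\max(n_1,n_2)}(w_1+f(\hbar)w_2)\in J$, since $J$ is a $\C[[\hbar]]$-submodule of $A$. Next, for $a\in A$ and $w\in [J]$ with $\hbar^n w\in J$, one has $\hbar^n(aw)=a(\hbar^n w)\in J$ because $J$ is a left ideal, so $aw\in [J]$. Hence $[J]$ is a left ideal.

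For part (2), suppose $J$ is a left ideal with $[J]=J$. Applying Lemma \ref{simple-fact-2} to the $\C[[\hbar]]$-submodule $U=J$ of the topologically free module $A$ immediately yields $[\overline{J}]=\overline{J}$ and that $\overline{J}$ is topologically free. It remains to verify that $\overline{J}$ is a left ideal. Fix $a\in A$ and $w\in\overline{J}$. For every $n\in\N$ there exists $v_n\in J$ with $w-v_n\in\hbar^n A$. Because the left-multiplication map $L_a\colon A\to A$, $x\mapsto ax$, is $\C[[\hbar]]$-linear, it sends $\hbar^n A$ into $\hbar^n A$; thus $aw-av_n=a(w-v_n)\in\hbar^n A$. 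Since $J$ is a left ideal, $av_n\in J$, so $aw$ lies in the $\hbar$-adic closure of $J$, i.e., $aw\in\overline{J}$.

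The only real subtlety, and the place to be careful, is not to confuse $\overline{J}$ (the topological closure of $J$ in $A$) with $J[[\hbar]]'$ from Lemma \ref{def-K[[h]]'}; the topological freeness of $\overline{J}$ must come from Lemma \ref{simple-fact-2} together with the hypothesis $[J]=J$, not from a direct construction. Once one notes that $\C[[\hbar]]$-linearity of left multiplication on a topologically free module automatically guarantees $\hbar$-adic continuity, no further ideas are needed beyond the previously established properties of the operator $[\,\cdot\,]$ and its interaction with closures.
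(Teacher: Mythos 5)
Your proof is correct. The paper leaves this lemma unproved (it is grouped with other "simple facts" stated without argument), so there is no paper proof to compare against; your argument fills the gap in a natural way. Part (1) is the standard check that $[J]$ inherits the $\C[[\hbar]]$-submodule and left-ideal properties from $J$, using that multiplication by $\hbar^n$ commutes with left multiplication by $a\in A$. Part (2) correctly splits into two pieces: the conclusions $[\overline{J}]=\overline{J}$ and topological freeness of $\overline{J}$ come directly from Lemma \ref{simple-fact-2} applied to $U=J$ (which is a $\C[[\hbar]]$-submodule because $J$ is an ideal of the $\C[[\hbar]]$-algebra $A$ with $[J]=J$), and the left-ideal property of $\overline{J}$ follows from $\hbar$-adic continuity of the $\C[[\hbar]]$-linear map $L_a$, exactly as you write: $a(w-v_n)\in a\cdot\hbar^nA\subseteq\hbar^nA$ and $av_n\in J$, so $aw\in J+\hbar^nA$ for every $n$, i.e.\ $aw\in\overline{J}$. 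Your caveat about not confusing $\overline{J}$ with $J[[\hbar]]'$ is sensible; with $[J]=J$ the two indeed coincide by Lemma \ref{def-K[[h]]'} once one knows $\overline{J}$ is topologically free, but you are right that topological freeness must be derived (via Lemma \ref{simple-fact-2}) rather than assumed.
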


\begin{lem}\label{strong-submodule}
Let $W$ be a topologically free $\C[[\hbar]]$-module and let $U$ be a $\C[[\hbar]]$-submodule such that
$[U]=U$ and $U$ is $\hbar$-adically complete. Then there exist a $\C$-subspace $W_0$ of $W$ and a $\C$-subspace $U_0$ of $W_0$ such that
$W=W_0[[\hbar]]$ and $U=U_0[[\hbar]]$. Furthermore, $W/U=(W_0/U_0)[[\hbar]]$. In particular, $W/U$ is topologically free.
\end{lem}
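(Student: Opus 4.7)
The plan is to first show that $U$ itself is topologically free, then select a $\C$-subspace $U_0$ of $U$ complementary to $\hbar U$ and enlarge it to an analogous complement $W_0$ for $\hbar W$ in $W$; the quotient claim will then read off from the resulting direct-sum decomposition.

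First I would verify that $U$ is topologically free. As a $\C[[\hbar]]$-submodule of the topologically free module $W$, $U$ is automatically torsion-free and separated; combined with the hypothesis that $U$ is $\hbar$-adically complete, Lemma \ref{tpfree-basics}(1) yields that $U$ is topologically free. Next, pick a $\C$-subspace $U_0$ of $U$ with $U = U_0 \oplus \hbar U$, so that Lemma \ref{tpfree-basics}(2) gives $U = U_0[[\hbar]]$. The crucial observation is that the projection $\pi \colon W \to W/\hbar W$ restricts to an injection on $U_0$: since $[U]=U$, Lemma \ref{[U]} gives $U \cap \hbar W = \hbar U$, so any $u_0 \in U_0$ that lies in $\hbar W$ already sits in $U_0 \cap \hbar U = 0$.

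Having established $\pi|_{U_0}$ injective, I would choose a $\C$-linear complement $V$ of $\pi(U_0)$ inside $W/\hbar W$ and lift it to a $\C$-subspace $W_0''$ of $W$ (so $\pi$ carries $W_0''$ isomorphically onto $V$). Setting $W_0 = U_0 \oplus W_0''$, a direct check using $U_0 \cap \hbar W = 0$ and $W_0'' \cap \hbar W = 0$ shows $W = W_0 \oplus \hbar W$, hence $W = W_0[[\hbar]]$ by Lemma \ref{tpfree-basics}(2). Because $W_0[[\hbar]] = U_0[[\hbar]] \oplus W_0''[[\hbar]] = U \oplus W_0''[[\hbar]]$ as $\C[[\hbar]]$-modules, we conclude $W/U \cong W_0''[[\hbar]] \cong (W_0/U_0)[[\hbar]]$, which is topologically free.

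The only delicate point is the compatibility of the two splittings, namely that a complement $U_0$ of $\hbar U$ chosen purely inside $U$ nevertheless remains linearly independent modulo $\hbar W$ when viewed inside the ambient module $W$. This is precisely where the hypothesis $[U]=U$ is indispensable: via Lemma \ref{[U]} it yields $U \cap \hbar W = \hbar U$, without which elements of $U_0$ could acquire extra divisibility by $\hbar$ in $W$ and the desired subspace $W_0 \supseteq U_0$ with $W = W_0 \oplus \hbar W$ would not exist. Everything else reduces to the two basic facts from Lemma \ref{tpfree-basics} and routine linear algebra.
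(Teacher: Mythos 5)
Your proposal is correct and follows essentially the same route as the paper: establish that $U$ is topologically free, choose $U_0$ with $U=U_0[[\hbar]]$, use $[U]=U$ together with Lemma \ref{[U]} to deduce $U_0\cap\hbar W=0$, and then extend $U_0$ to a $\C$-subspace $W_0$ with $W=W_0\oplus\hbar W$. The one small divergence is at the end: the paper verifies $W/U\cong(W_0/U_0)[[\hbar]]$ by constructing an explicit surjection $\bar\theta$ and checking injectivity coefficientwise, whereas you read the isomorphism off from the $\C[[\hbar]]$-module decomposition $W_0[[\hbar]]=U_0[[\hbar]]\oplus W_0''[[\hbar]]$; your finish is marginally cleaner but the underlying argument is the same.
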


\begin{proof} As a $\C[[\hbar]]$-submodule of $W$, $U$ is torsion-free and separated. In addition, $U$ is
$\hbar$-adically complete by assumption.
 Then $U$ is topologically free, i.e., $U=U_0[[\hbar]]$ for some $\C$-subspace $U_0$ of $U$.
 Noticing that $U\cap \hbar W=\hbar U$ by Lemma \ref{[U]},  we have $U_0\cap \hbar W=U_0\cap U\cap \hbar W=U_0\cap \hbar U=0$.
 Then there exists a $\C$-subspace $W_0'$ of $W$ such that  $W=W_0'\oplus U_0\oplus \hbar W$ over $\C$.
 Set $W_0=W_0'+U_0$.
 Then $U_0\subset W_0$ and $W=W_0[[\hbar]]$. Next, we prove $W/U=(W_0/U_0)[[\hbar]]$.
The natural $\C$-linear map $\theta: W_0\rightarrow W_0/U_0$ gives
a surjective $\C[[\hbar]]$-morphism $\tilde{\theta}: W=W_0[[\hbar]]\rightarrow (W_0/U_0)[[\hbar]]$.
It is clear that $U=U_0[[\hbar]]\subset \ker \tilde{\theta}$, so
we have a surjective $\C[[\hbar]]$-morphism $\bar{\theta}: W/U\rightarrow (W_0/U_0)[[\hbar]]$.
Suppose
$$v+U=(v_0+\hbar v_1+\hbar^2v_2+\cdots)+U\in \ker\bar{\theta}$$
with $v_n\in W_0$ for $n\ge 0$.
Then $\theta(v_0)+\hbar \theta (v_1)+\hbar^2\theta(v_2)+\cdots =0$ in $(W_0/U_0)[[\hbar]]$, which implies
 $\theta (v_n)=0$, i.e., $v_n\in U_0$ for all $n\ge 0$. Thus $v\in U$. This proves that $\bar{\theta}$ is also injective,
 and hence $\bar{\theta}$ is a $\C[[\hbar]]$-isomorphism.
 \end{proof}

Let $M$ be a $\C[[\hbar]]$-module. For a sequence $\{ w(n)\}_{n\in \N}$ in $M$,
we write $\lim_{n\rightarrow \infty}w(n)=0$ to signify that for every $r\in \Z_{+}$,
there exists $k\in \Z_{+}$ such that $w(n)\in \hbar^rM$ for all $n\ge k$. Define
\begin{align}
M_{\hbar}((x))=\left\{ \left.\psi(x)=\sum_{n\in \Z}\psi(n)x^{n}\in M[[x,x^{-1}]]\ \right|\ \lim_{n\rightarrow \infty}\psi(-n)=0\right\}.
\end{align}
If $M=M_0[[\hbar]]$ is a topologically free $\C[[\hbar]]$-module, then $M_{\hbar}((x))=(M_0((x)))[[\hbar]]$, which is topologically free.

Let $W=W_0[[\hbar]]$ be a topologically free $\C[[\hbar]]$-module.
Then $\End W=(\End W_0)[[\hbar]]$, which is a topologically free $\C[[\hbar]]$-module.
Define
\begin{align}
 \E_\hbar(W)=\Hom (W, W_{\hbar}((x)))\subset (\te{End} W)[[x,x^{-1}]].
\end{align}
Note that $\E_\hbar(W)=\E(W_0)[[\hbar]]$ is a topologically free $\C[[\hbar]]$-module.
On the other hand, in terms of the natural $\C[[\hbar]]$-module maps
\begin{align}
\pi_n:\ (\te{End} W)[[x,x^{-1}]]\longrightarrow (\te{End} (W/\hbar^nW))[[x,x^{-1}]]
\end{align}
for $n\in \Z_{+}$, we have
\begin{eqnarray}
\quad\quad\quad
 \E_\hbar(W)=\{\psi(x)\in (\te{End} W)[[x,x^{-1}]]\ |\ \pi_n(\psi(x))\in \E(W/\hbar^nW)\ \ \text{ for }n\ge 1\}.
\end{eqnarray}

It is clear that we have a natural inverse system
\begin{eqnarray*}
    0\longleftarrow W/\hbar W\longleftarrow W/\hbar^2 W\longleftarrow \cdots\longleftarrow,
\end{eqnarray*}
which induces an inverse system
\begin{eqnarray*}
    0\longleftarrow \E(W/\hbar W)\longleftarrow \E(W/\hbar^2 W)\longleftarrow \cdots\longleftarrow.
\end{eqnarray*}
Then
\begin{align}
  \E_\hbar(W)=\varprojlim_{n\ge1}\E(W/\hbar^nW).
\end{align}


\subsection{$\hbar$-adic nonlocal vertex algebras and modules}
The following notion was introduced in \cite{Li-h-adic}:

\begin{de}\label{de:h-adic-nonlocal-va}
An {\em $\hbar$-adic nonlocal vertex algebra} is a topologically free $\C[[\hbar]]$-module $V$,
equipped with a $\C[[\hbar]]$-module map
$Y(\cdot,x):V\to (\End V)[[x,x^{-1}]]$ and a distinguished vector $\vac\in V$,
satisfying the conditions that
$$Y(u,x)v\in V_{\hbar}((x))\quad \text{ for }u,v\in V,$$
$$Y({\bf 1},x)=1_V, \quad Y(v,x){\bf 1}\in V[[x]]\ \ \text{ and }\ \ (Y(v,x){\bf 1})|_{x=0}=v\ \text{ for }v\in V,$$
and that for any $u,v,w\in V$ and for any $n\in \Z_{+}$, there exists $l\in \N$ such that
\begin{align}
(x_0+x_2)^lY(u,x_0+x_2)Y(v,x_2)w\equiv (x_2+x_0)^lY(Y(u,x_0)v,x_2)w
\end{align}
modulo $\hbar^n V[[x_0^{\pm 1},x_2^{\pm 1}]]$.
\end{de}

We have (see \cite{Li-h-adic}):

\begin{prop}
 Let $V$ be a topologically free $\C[[\hbar]]$-module equipped with a $\C[[\hbar]]$-module map
$Y(\cdot,x):V\to (\End V)[[x,x^{-1}]]$ and a vector $\vac\in V$. Then $(V,Y,{\bf 1})$ is an $\hbar$-adic nonlocal vertex algebra
if and only if for every positive integer $n$, $(V/\hbar^nV, \overline{Y}_n,\vac)$
is a nonlocal vertex algebra over $\C$, where $\overline{Y}_n(\cdot,x):V/\hbar^nV\to(\End(V/\hbar^nV))[[x,x^{-1}]]$
is the canonical map induced from $Y(\cdot,x)$.
\end{prop}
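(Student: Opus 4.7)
The plan is to prove the biconditional by translating each axiom defining an $\hbar$-adic nonlocal vertex algebra into an equivalent condition on the tower of quotients $V/\hbar^n V$, using that $V = V_0[[\hbar]]$ for some $\C$-subspace $V_0$. The key identifications will be the inverse limit description
\[
V_\hbar((x)) = \varprojlim_{n \ge 1}(V/\hbar^n V)((x)),
\]
together with the fact that the $\C[[\hbar]]$-linear map $Y(\cdot,x): V \to (\End V)[[x,x^{-1}]]$ canonically induces, via $\pi_n$, a $\C$-linear map $\overline{Y}_n(\cdot,x): V/\hbar^n V \to (\End(V/\hbar^n V))[[x,x^{-1}]]$ compatibly in $n$, while the vacuum vector $\vac$ restricts to $\vac + \hbar^n V$.

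For the forward direction, assume $(V, Y, \vac)$ is an $\hbar$-adic nonlocal vertex algebra and fix $n \ge 1$. I would verify the nonlocal vertex algebra axioms from Section~2 for $(V/\hbar^n V, \overline{Y}_n, \vac)$: the truncation $\overline{Y}_n(\bar u, x)\bar v \in (V/\hbar^n V)((x))$ is inherited from $Y(u,x)v \in V_\hbar((x))$ under $\pi_n$, the vacuum and creation axioms pass directly to the quotient, and weak associativity of $\overline{Y}_n$ follows from the $\hbar$-adic weak associativity by choosing lifts $u,v,w \in V$ of elements of $V/\hbar^n V$ and applying the congruence statement with this specific $n$, which becomes an equality modulo $\hbar^n V$.

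For the converse, assume each $(V/\hbar^n V, \overline{Y}_n, \vac)$ is a nonlocal vertex algebra over $\C$. To obtain $Y(u,x)v \in V_\hbar((x))$, I would observe that for every $n$ the projected series $\overline{Y}_n(\bar u, x)\bar v$ lies in $(V/\hbar^n V)((x))$, so there exists $k_n$ with $(Y(u,x)v)_{-m} \in \hbar^n V$ for all $m \ge k_n$; this is exactly the defining condition of $V_\hbar((x))$. The vacuum axioms $Y(\vac,x) = 1_V$, $Y(v,x)\vac \in V[[x]]$, and $(Y(v,x)\vac)|_{x=0} = v$ would then be deduced from their quotient-level versions by invoking separatedness: any coefficient that vanishes modulo $\hbar^n V$ for every $n$ lies in $\bigcap_n \hbar^n V = 0$. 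Finally, for $\hbar$-adic weak associativity, given $u,v,w \in V$ and $n \in \Z_+$, I would apply the weak associativity in $V/\hbar^n V$ to $\bar u, \bar v, \bar w$ to obtain an integer $l = l(u,v,w,n)$ making the required congruence hold modulo $\hbar^n V$.

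The main obstacle, though mild, will be ensuring on the converse side that the pointwise information at every finite level $n$ glues back to the strict axioms on $V$ itself. This uses the topologically free structure in two essentially independent ways: torsion-freeness together with separatedness lets me lift ``this element equals zero'' assertions from the quotients, while the inverse limit identification of $V_\hbar((x))$ lets me recognize the truncation condition. No positive construction within an inverse limit is required, so neither a Mittag-Leffler argument nor the full $\hbar$-adic completeness of $V$ will enter beyond these two elementary consequences of topological freeness.
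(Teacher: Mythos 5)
The paper does not give its own proof of this proposition---it is quoted from \cite{Li-h-adic} without argument---so there is no in-text proof to compare against. Evaluating your proposal on its own: it is correct, and it is the expected argument. You correctly observe that each of the four axioms of an $\hbar$-adic nonlocal vertex algebra (truncation $Y(u,x)v\in V_\hbar((x))$, the two vacuum axioms, and the $\hbar$-adic weak associativity ``for every $n$ there exists $l$'') reduces level by level to the corresponding axiom of a plain nonlocal vertex algebra on $V/\hbar^nV$: the truncation condition is precisely the statement that each $\pi_n(Y(u,x)v)$ lands in $(V/\hbar^nV)((x))$; the vacuum axioms over $\C[[\hbar]]$ are strict equalities, and your use of separatedness ($\bigcap_n\hbar^nV=0$) to lift them back from the quotients is exactly what is needed; and the $\hbar$-adic weak associativity is quantified so that its statement at level $n$ is verbatim the weak associativity of $(V/\hbar^nV,\overline{Y}_n,\vac)$, once one checks that $\pi_n$ intertwines the iterate $Y(Y(u,x_0)v,x_2)w$ with $\overline{Y}_n(\overline{Y}_n(\bar u,x_0)\bar v,x_2)\bar w$ --- which holds by definition of $\overline{Y}_n$ as the map induced by $Y$. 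You are also right that no inverse-limit construction or Mittag--Leffler condition is required here, since $Y$ is given globally rather than assembled from the tower; the only properties of topological freeness you invoke are torsion-freeness and separatedness, together with the elementary identification $V_\hbar((x))=V_0((x))[[\hbar]]$, and these suffice.

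One small point worth spelling out if you write this up fully: in the forward direction you take lifts $u,v,w$ of $\bar u,\bar v,\bar w$; the integer $l$ produced by $\hbar$-adic weak associativity may a priori depend on the chosen lifts, but since any two lifts differ by elements of $\hbar^nV$ and one is working modulo $\hbar^n$, the resulting identity in $V/\hbar^nV$ is independent of that choice, so $\overline{Y}_n$ is well-defined and satisfies weak associativity for the chosen $l$. This is implicit in your outline and is not a gap, but it is the sort of thing a referee would want to see stated.
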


For any $\hbar$-adic nonlocal vertex algebra $V=V_0[[\hbar]]$, define a $\C[[\hbar]]$-module map
$$Y(x_1,x_2):\ V\wh\ot V\wh\ot \C((x))[[\hbar]]\rightarrow (\End V)[[x_1^{\pm 1},x_2^{\pm}]]$$
by
\begin{align}
Y(x_1,x_2)(u\otimes v\otimes f(x))=f(x_1-x_2)Y(u,x_1)Y(v,x_2)
\end{align}
for $u,v\in V_0,\ f(x)\in \C((x))$.

\begin{de}\label{def-hwqva}
{\em An $\hbar$-adic weak quantum vertex algebra} is an $\hbar$-adic nonlocal vertex algebra $V$
which satisfies the ($\hbar$-adic) {\em $\SY$-locality:} For any $u,v\in V$,
there exists $F(u,v,x)\in V\wh\ot V\wh\ot \C((x))[[\hbar]]$,
satisfying the condition that for any $n\in \Z_{+}$, there is $k\in \N$ such that
\begin{align}
(x_1-x_2)^kY(u,x_1)Y(v,x_2)\equiv (x_1-x_2)^kY(x_2,x_1)(F(u,v,x))
\end{align}
modulo $\hbar^n(\End V)[[x_1^{\pm 1},x_2^{\pm 1}]]$.
\end{de}

\begin{de}
Let $M$ be a $\C[[\hbar]]$-module. For $A(x_1,x_2),B(x_1,x_2)\in M[[x_1^{\pm 1},x_2^{\pm 1}]]$,
we write $A(x_1,x_2)\sim B(x_1,x_2)$ if for any $n\in \Z_{+}$, there exists $k\in\N$ such that
\begin{align}
  (x_1-x_2)^kA(x_1,x_2)\equiv (x_1-x_2)^kB(x_1,x_2)\   \mod \hbar^n M[[x_1^{\pm 1},x_2^{\pm 1}]].
\end{align}
\end{de}

Let $V$ be an $\hbar$-adic weak quantum vertex algebra. From definition, there exists a $\C[[\hbar]]$-module map
$F(x): V\wh\ot V\rightarrow V\wh\ot V\wh\ot \C((x))[[\hbar]]$ such that
\begin{align}
Y(u,x_1)Y(v,x_2)\sim Y(x_2, x_1)(F(x)(v\otimes u))\quad \text{ for }u,v\in V.
\end{align}
As before, such a $\C[[\hbar]]$-module map $F(x)$ is called an {\em $\SY$-locality operator} of $V$.

We have (see \cite[(2.25), Prop. 2.19]{Li-h-adic}):

 \begin{prop}\label{lem:q-Jacobi}
 Let $V$ be an $\hbar$-adic weak quantum vertex algebra and let $\SY(x)$ be an $\SY$-locality operator of $V$. Then
 \begin{align}
&x_0\inv\delta\!\(\frac{x_1-x_2}{x_0}\)\!  Y(u,x_1)Y(v,x_2)w\\
&\quad -x_0\inv\delta\!\(\frac{x_2-x_1}{-x_0}\)\! Y(x_2,x_1)(\SY(x)(v\ot u))w\nonumber\\
  =\ & x_1\inv\delta\!\(\frac{x_2+x_0}{x_1}\)\!Y(Y(u,x_0)v,x_2)w\nonumber
\end{align}
 for $u,v,w\in V$, which is called the {\em $\SY$-Jacobi identity for the ordered triple $(u,v,w)$.}
 \end{prop}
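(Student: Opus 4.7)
The plan is to imitate the classical derivation of the Jacobi identity from weak associativity plus (weak) locality, carried out modulo $\hbar^n$ for each $n\in\Z_+$, and then to conclude by separatedness of $V$. Since everything in sight is a $\C[[\hbar]]$-module map into a topologically free target, it suffices to prove the asserted equality in $(\End V)[[x_0^{\pm 1},x_1^{\pm 1},x_2^{\pm 1}]]/\hbar^n$ for every $n$.

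Fix $u,v,w\in V$ and $n\in\Z_+$. First, by the $\SY$-locality axiom of Definition \ref{def-hwqva} applied with $F(u,v,x)=\SY(x)(v\ot u)$, pick $k\in\N$ such that
\begin{align*}
(x_1-x_2)^kY(u,x_1)Y(v,x_2)w\equiv (x_1-x_2)^kY(x_2,x_1)(\SY(x)(v\ot u))w\pmod{\hbar^n}.
\end{align*}
Next, by weak associativity (Definition \ref{de:h-adic-nonlocal-va}), enlarge $k$ to some $l\ge k$ so that
\begin{align*}
(x_0+x_2)^lY(u,x_0+x_2)Y(v,x_2)w\equiv (x_0+x_2)^lY(Y(u,x_0)v,x_2)w\pmod{\hbar^n}.
\end{align*}
Both congruences are genuinely classical, living in $V/\hbar^nV$, so all subsequent delta-function manipulations reduce to standard identities in the ordinary formal calculus.

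Now apply the Frenkel--Lepowsky--Meurman delta identity
\begin{align*}
x_1^{-1}\delta\!\(\frac{x_2+x_0}{x_1}\)=x_0^{-1}\delta\!\(\frac{x_1-x_2}{x_0}\)-x_0^{-1}\delta\!\(\frac{x_2-x_1}{-x_0}\),
\end{align*}
together with the fundamental substitution rule
\begin{align*}
x_0^{-1}\delta\!\(\frac{x_1-x_2}{x_0}\)G(x_1,x_2)=x_0^{-1}\delta\!\(\frac{x_1-x_2}{x_0}\)G(x_2+x_0,x_2)
\end{align*}
(valid whenever the substitution is well defined), and its analogue for $x_0^{-1}\delta(\frac{x_2-x_1}{-x_0})$. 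Multiplying the weak-associativity congruence by $x_1^{-1}\delta(\frac{x_2+x_0}{x_1})$ and splitting via the delta identity, the $(x_0+x_2)^l$ factors are absorbed; the right-hand side becomes $x_1^{-1}\delta(\frac{x_2+x_0}{x_1})Y(Y(u,x_0)v,x_2)w$, while the left-hand side becomes
\begin{align*}
x_0^{-1}\delta\!\(\frac{x_1-x_2}{x_0}\)Y(u,x_1)Y(v,x_2)w-x_0^{-1}\delta\!\(\frac{x_2-x_1}{-x_0}\)Y(u,x_1)Y(v,x_2)w.
\end{align*}
In the second summand, the $\SY$-locality congruence (whose factor $(x_1-x_2)^k$ is cleared against the delta, using $l\ge k$) permits replacement of $Y(u,x_1)Y(v,x_2)w$ by $Y(x_2,x_1)(\SY(x)(v\ot u))w$ modulo $\hbar^n$. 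This gives the $\SY$-Jacobi identity modulo $\hbar^n$.

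Finally, since $V$ is topologically free, hence separated, and the identity holds modulo $\hbar^n$ for each $n$, it holds in $V$. The main technical point to verify carefully is the legitimacy of multiplying the $\hbar$-adic congruences by the formal delta distributions (and of the associated substitution $x_1\mapsto x_2+x_0$) inside the $\SY$-locality operator $\SY(x)$, which is a $\C[[\hbar]]$-module map with values in $V\wh\ot V\wh\ot\C((x))[[\hbar]]$; because the deltas have $\C$-coefficients and convergence only needs to be checked mod $\hbar^n$, this reduces to the classical verification and introduces no new difficulty beyond the ordinary weak quantum vertex algebra case.
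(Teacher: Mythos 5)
The paper itself gives no proof of this proposition; it is cited from [Li5], eq.\ (2.25) and Proposition~2.19. So your proposal must be judged on its own terms. The $\hbar$-adic scaffolding is fine: reducing modulo $\hbar^n$ and invoking separatedness is the right first step, and it correctly reduces the statement to the corresponding fact for the ordinary weak quantum vertex algebra $V/\hbar^nV$. The gap is in the classical derivation you then sketch, which you present as a direct formal consequence of the FLM delta identity when it is not.

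Concretely, the problematic step is ``splitting via the delta identity.'' You write the left-hand side (after the $x_0+x_2\mapsto x_1$ substitution) as $x_0\inv\delta\!\(\frac{x_1-x_2}{x_0}\)Y(u,x_1)Y(v,x_2)w-x_0\inv\delta\!\(\frac{x_2-x_1}{-x_0}\)Y(u,x_1)Y(v,x_2)w$, and only then replace $Y(u,x_1)Y(v,x_2)w$ by $Y(x_2,x_1)(\SY(x)(v\ot u))w$ in the second summand. But that second summand is not a well-defined formal series, even over $V/\hbar^nV$: the product $Y(u,x_1)Y(v,x_2)w$ lies in $(V/\hbar^nV)((x_1))((x_2))$, so its $x_1$-pole order is unbounded as the power of $x_2$ increases, while $x_0\inv\delta\!\(\frac{x_2-x_1}{-x_0}\)$ pairs arbitrarily high nonnegative powers of $x_1$ with arbitrarily negative powers of $x_2$; the coefficient of a fixed monomial $x_0^px_1^qx_2^r$ in the would-be product is therefore an infinite sum. (A related issue: the $\iota_{x_0,x_2}$-expansion built into $Y(u,x_0+x_2)$ in the weak-associativity axiom is in the \emph{opposite} direction from the $\iota_{x_2,x_0}$-expansion built into $x_1\inv\delta\!\(\frac{x_2+x_0}{x_1}\)$, so the substitution you appeal to is not the one that the delta licenses.) The correct route --- which is the actual content of Li's cited proposition and of the classical analogue in [Li2] and [LL, Thm.\ 3.6.3] --- is to first use $\SY$-locality to identify the common element $F(x_1,x_2):=(x_1-x_2)^kY(u,x_1)Y(v,x_2)w=(x_1-x_2)^kY(x_2,x_1)(\SY(x)(v\ot u))w$, observe that $F$ lies in the intersection $(V/\hbar^nV)((x_1))((x_2))\cap(V/\hbar^nV)((x_2))((x_1))=(V/\hbar^nV)[[x_1,x_2]][x_1\inv,x_2\inv]$ (hence has uniformly bounded pole order in \emph{both} variables), and then carry out the delta-identity and substitution manipulations on $F$, where they are legitimate, finally invoking weak associativity and extracting the factor $x_0^k$. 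Your sketch elides exactly the steps that make this a theorem rather than a delta-function exercise.
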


 As an immediate consequence of the $\SY$-Jacobi identity, we have
\begin{align}
  &Y(u,x_1)Y(v,x_2)-Y(x_2,x_1)(\mathcal{S}(x)(v\ot u))\\
  =\ & \Res_{x_0}x_1\inv\delta\!\(\frac{x_2+x_0}{x_1}\)\!Y(Y(u,x_0)v,x_2)\nonumber
\end{align}
(the {\em $\mathcal{S}$-commutator formula}) for $u,v\in V$.

For an $\hbar$-adic nonlocal vertex algebra $V$, define a $\C[[\hbar]]$-linear operator $\D$ on $V$
just as for a nonlocal vertex algebra (see (\ref{D-def})). Then the properties (\ref{D-bracket-der})--(\ref{D-creation}) hold.



Let $W$ be a topologically free $\C[[\hbar]]$-module.
A {\em rational quantum Yang-Baxter operator} on $W$ is a $\C[[\hbar]]$-module map
\begin{align}
  \SY(x):\ W\wh\ot W\to W\wh\ot W\wh\ot \C((x))[[\hbar]],
\end{align}
satisfying the rational quantum Yang-Baxter equation.

The following notion, formulated in \cite{Li-h-adic}, slightly generalizes Etingof-Kazhdan's notion of
quantum vertex operator algebra (see \cite{EK-qva}; cf. Remark \ref{hqva-EKqvoa} below):

\begin{de}\label{h-qva}
An \emph{$\hbar$-adic quantum vertex algebra}
is an $\hbar$-adic nonlocal vertex algebra $V$ equipped with a unitary rational quantum Yang-Baxter operator
$\SY(x)$ on $V$, such that $\SY(x)$ is an $\SY$-locality operator and such that the shift condition (\ref{eq:qyb-shift})
and  the hexagon identity (\ref{eq:qyb-hex-id}) on $V\wh\ot V\wh \ot V$ hold.
 \end{de}

\begin{rem}\label{hqva-EKqvoa}
{\em Note that a quantum vertex operator algebra in the sense of \cite{EK-qva}
is an $\hbar$-adic quantum vertex algebra $V$ such that $V/\hbar V$ is a vertex algebra.
For a general $\hbar$-adic quantum vertex algebra $V$,
$V/\hbar V$ is a quantum vertex algebra.}
\end{rem}

The following is a reformulation of Proposition 1.11 of \cite{EK-qva}:

\begin{prop}\label{nondegenerate-wqva}
Let $V$ be an $\hbar$-adic weak quantum vertex algebra which is non-degenerate in the sense that
 the nonlocal vertex algebra $V/\hbar V$ is non-degenerate.
Then there exists a unique $\SY$-locality operator
$\SY(x)$. Furthermore, $\SY(x)$ is a unitary rational quantum Yang-Baxter operator and $(V,\SY(x))$
is an $\hbar$-adic quantum vertex algebra.
\end{prop}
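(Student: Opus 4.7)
The strategy is to reduce to the non-$\hbar$-adic analogue, Proposition~\ref{nondeg-wqva}, via the successive approximations $V/\hbar^n V$ together with the topologically-free/Hausdorff-complete structure of $V$. Since $V/\hbar V$ is a non-degenerate weak quantum vertex algebra over $\C$, Proposition~\ref{nondeg-wqva} provides a unique $\SY$-locality operator $\bar\SY(x)$ for $V/\hbar V$ which is a unitary rational quantum Yang--Baxter operator satisfying the shift condition and the hexagon identity. The plan is to lift each of these properties order-by-order in $\hbar$, using torsion-freeness of $V$ at each step.

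For uniqueness of $\SY(x)$, suppose $\SY_1(x), \SY_2(x)$ are both $\SY$-locality operators on $V$ and set $\Delta(x)=\SY_1(x)-\SY_2(x)$. By Proposition~\ref{lem:q-Jacobi}, each produces the $\SY$-commutator formula, and subtracting gives $Y(x_2,x_1)(\Delta(x)(v\otimes u))=0$ for all $u,v\in V$. Applying this to $\vac$ and using $Y(a,x)\vac=e^{x\D}a$ identifies the vanishing with the map $Z_2$ of Definition~\ref{non-degenerate} applied to $\Delta(x)(v\otimes u)$. Reducing modulo $\hbar$ and invoking injectivity of $Z_2$ on $V/\hbar V$ yields $\Delta(x)\equiv 0\pmod{\hbar}$; since $V$ is torsion-free, the same argument applied to $\hbar^{-1}\Delta(x)$ and iterated gives $\Delta(x)\in\hbar^n(V\wh\ot V\wh\ot \C((x))[[\hbar]])$ for every $n$, whence $\Delta(x)=0$ by Hausdorff completeness.

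For unitarity, the QYBE, the shift condition, and the hexagon identity, I would apply the $\SY$-Jacobi identity repeatedly to products of two or three vertex operators acting on $\vac$. Unitarity arises from applying $\SY$-locality twice: $Y(u,x_1)Y(v,x_2)\sim Y(x_2,x_1)(\SY(x)(v\otimes u))\sim Y(x_1,x_2)(\SY^{21}(x)\SY(-x)(u\otimes v))$, whose image on $\vac$, reduced modulo $\hbar$ and compared via $Z_2$, forces $\SY^{21}(x)\SY(-x)\equiv \te{id}\pmod\hbar$; the induction used for uniqueness then lifts this to all orders. The QYBE follows from computing $Y(u,x_1)Y(v,x_2)Y(w,x_3)\vac$ by moving the three factors through each other in the two orders corresponding to the braid relation, producing on one side $Y(x_3,x_2,x_1)(\SY^{12}(x)\SY^{13}(x+y)\SY^{23}(y)(w\otimes v\otimes u))\vac$ and on the other $Y(x_3,x_2,x_1)(\SY^{23}(y)\SY^{13}(x+y)\SY^{12}(x)(w\otimes v\otimes u))\vac$; injectivity of $Z_3$ on $V/\hbar V$ identifies them modulo $\hbar$, and the same induction concludes. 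The shift condition follows by differentiating the $\SY$-commutator formula in $x$, using $[\D,Y(a,x)]=\frac{d}{dx}Y(a,x)$, and appealing to the uniqueness just established; the hexagon identity comes from computing a triple product in two distinct ways---once by iterate followed by $\SY$-locality, once by $\SY$-locality followed by iterate---and equating the results by the same non-degeneracy-plus-induction mechanism.

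The main obstacle is bookkeeping the $(x_i-x_j)^k$-factors that accompany each $\SY$-locality relation: the integer $k$ may depend on the $\hbar$-order we are working modulo, so uniform cancellation is unavailable. The inductive framework is exactly what circumvents this; at each stage we reduce to $V/\hbar V$, where non-degeneracy of the $Z_n$ allows us to cancel these factors and extract the classical identity via Proposition~\ref{nondeg-wqva}, after which torsion-freeness provides the vehicle for lifting to $V$.
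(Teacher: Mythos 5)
The paper does not actually prove this proposition; it states it as ``a reformulation of Proposition 1.11'' of Etingof--Kazhdan and offers no argument of its own, so there is no in-paper proof to compare against. Your sketch is therefore a reconstruction rather than a comparison, and its architecture is the right one: prove the statement for $V/\hbar V$ via Proposition~\ref{nondeg-wqva}, then lift to $V$ using topological freeness. The uniqueness argument you give is complete (what you call ``Hausdorff completeness'' is really separatedness, $\bigcap_{n}\hbar^{n}W=0$, which topological freeness of $W=V\wh\ot V\wh\ot\C((x))[[\hbar]]$ supplies). For unitarity, the QYBE, the shift condition, and the hexagon identity the outlines are plausible, but your closing paragraph mislocates where the $(x_i-x_j)^k$-ambiguity actually gets resolved: it is not the $\hbar$-induction that disposes of these factors, but the passage from the $\sim$-form of $\SY$-locality to the \emph{exact} $\SY$-Jacobi identity of Proposition~\ref{lem:q-Jacobi}. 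Applying the $\SY$-Jacobi identity (not bare $\SY$-locality) repeatedly produces an exact $\C[[\hbar]]$-valued equation of the form $Y(\cdots)(F(\cdots))\vac=0$, and it is this exact equation that one feeds first to the classical $Z_n$-injectivity on $V/\hbar V$ and then to the torsion-freeness iteration --- precisely the two-step mechanism you already set up correctly in the uniqueness step. Rewritten with that clarification, the argument closes and recovers what the cited Etingof--Kazhdan result establishes.
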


Let $V$ be an $\hbar$-adic nonlocal vertex algebra and let $U$ be a subset. Set $U^{(1)}=\C[[\hbar]] (U\cup\{{\bf 1}\})$
and then inductively define $U^{(n+1)}$ for $n\ge 1$ by
\begin{align}
U^{(n+1)}={\rm span}_{\C[[\hbar]]}\{ a_mb\ |\ a,b\in U^{(n)},\ m\in \Z\}.
\end{align}
We have ${\bf 1}\in U^{(n)}$ and $U^{(n)}\subset U^{(n+1)}$ for all $n\in \Z_{+}$.
Set $U^{(\infty)}=\cup_{n\ge 1}U^{(n)}$. Then $U^{(\infty)}$ is $Y$-closed and contains $U\cup \{{\bf 1}\}$.
Furthermore, $\overline{[U^{(\infty)}]}$ is an $\hbar$-adic nonlocal vertex subalgebra,
which is called the {\em $\hbar$-adic nonlocal vertex subalgebra generated by $U$}.

The following are some straightforward facts:

\begin{lem}\label{U-infinity-facts}
Let $U$ be a subset of an $\hbar$-adic nonlocal vertex algebra $V$. Set
\begin{align}
U^{\infty}={\rm span}_{\C[[\hbar]]}\{ u^{(1)}_{m_1}\cdots u^{(r)}_{m_r}{\bf 1}\ |\ r\ge 0,\ u^{(i)}\in U,\ m_i\in \Z\}.
\end{align}
Then
\begin{align}
U^{\infty}\subset U^{(\infty)}, \quad U^{(\infty)}\subset \overline{U^{\infty}}, \quad \overline{U^{(\infty)}}=\overline{U^{\infty}},
\quad \overline{[U^{(\infty)}]}=\overline{[U^{\infty}]}.
\end{align}
In particular, if $V= \overline{[U^{\infty}]}$, then $V= \overline{[U^{(\infty)}]}=\<U\>$.
\end{lem}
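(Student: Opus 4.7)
The plan is to establish the two nested inclusions $U^{\infty}\subset U^{(\infty)}\subset \overline{U^{\infty}}$, after which the two closure equalities and the ``in particular'' assertion follow formally.

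The containment $U^{\infty}\subset U^{(\infty)}$ I would get by induction on $r$: since $\vac\in U^{(1)}$ and $U\subset U^{(1)}\subset U^{(n)}$ for every $n$, from $u^{(2)}_{m_2}\cdots u^{(r)}_{m_r}\vac\in U^{(r)}$ one applies the mode $u^{(1)}_{m_1}$ with $u^{(1)}\in U\subset U^{(r)}$ to land in $U^{(r+1)}$; taking $\C[[\hbar]]$-spans finishes this inclusion.

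The technical heart is $U^{(\infty)}\subset \overline{U^{\infty}}$. I would run an outer induction on $n$ with hypothesis $U^{(n)}\subset \overline{U^{\infty}}$; the base $n=1$ is clear from $U\cup\{\vac\}\subset U^{\infty}$. The auxiliary statement needed for the step is that $a_m b\in\overline{U^{\infty}}$ whenever $a,b\in\overline{U^{\infty}}$ and $m\in\Z$. Since $Y(\cdot,x)$ is $\C[[\hbar]]$-linear, $(\hbar^k v)_m w\in \hbar^k V$; $\hbar$-adic approximation of $a$ and $b$ by elements of $U^{\infty}$ then reduces this to $a,b\in U^{\infty}$, and by bilinearity to $a=u^{(1)}_{j_1}\cdots u^{(r)}_{j_r}\vac$ with $b\in U^{\infty}$, which I would treat by a second (inner) induction on the depth $r$. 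The base $r=0$ is immediate since $\vac_m b=\delta_{m,-1}b\in U^{\infty}$. For $r\ge 1$, write $a=u^{(1)}_{j_1}a'$ with $a'$ of depth $r-1$, fix a precision $p\in\Z_{+}$, and invoke weak associativity to get $l\in\N$ with
\begin{align*}
(x_0+x_2)^l Y(u^{(1)},x_0+x_2)Y(a',x_2)b \equiv (x_0+x_2)^l Y(Y(u^{(1)},x_0)a',x_2)b
\end{align*}
modulo $\hbar^p V[[x_0^{\pm 1},x_2^{\pm 1}]]$. Because $Y(u^{(1)},x_0)a'\in V_{\hbar}((x_0))$, only finitely many modes $u^{(1)}_j a'$ are nonzero, so a downward recursion on $j_1$, extracting the coefficient of $x_0^{-j_1-1}x_2^{-m-1}$ from this congruence, expresses $(u^{(1)}_{j_1}a')_m b$ modulo $\hbar^p V$ as a finite $\C$-linear combination of terms $u^{(1)}_i a'_k b$. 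By the inner hypothesis each $a'_k b\in\overline{U^{\infty}}$ can be approximated modulo $\hbar^p$ by some $c_k\in U^{\infty}$; since $u^{(1)}\in U$, each $u^{(1)}_i c_k$ is itself a basic generator of $U^{\infty}$, so the whole combination lies in $U^{\infty}+\hbar^p V$. As $p$ was arbitrary, $a_m b\in\overline{U^{\infty}}$.

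From $U^{\infty}\subset U^{(\infty)}\subset \overline{U^{\infty}}$, the equality $\overline{U^{(\infty)}}=\overline{U^{\infty}}$ is automatic. For $\overline{[U^{(\infty)}]}=\overline{[U^{\infty}]}$, the direction $\supseteq$ is automatic; for $\subseteq$, Lemma \ref{[U]} gives $[[U^{\infty}]]=[U^{\infty}]$, Lemma \ref{simple-fact-2} then gives $[\overline{[U^{\infty}]}]=\overline{[U^{\infty}]}$, and the chain $U^{(\infty)}\subset \overline{U^{\infty}}\subset \overline{[U^{\infty}]}$ yields $[U^{(\infty)}]\subset \overline{[U^{\infty}]}$ and hence $\overline{[U^{(\infty)}]}\subset \overline{[U^{\infty}]}$. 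The ``in particular'' statement is immediate from the definition $\<U\>=\overline{[U^{(\infty)}]}$. The main obstacle will be the weak-associativity step of the inner induction: because the associativity identity holds only modulo $\hbar^p$, the two nested inductions must be coupled carefully with the $\hbar$-adic approximation, and the downward recursion used to isolate $(u^{(1)}_{j_1}a')_m b$ from the combinatorial relation must be controlled uniformly so that each finite-precision statement genuinely assembles into an element of the closure.
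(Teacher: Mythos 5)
The paper states this lemma without proof, labeling the claims ``some straightforward facts,'' so there is no authorial argument to compare against; your proof is the natural one to supply, and it is correct. It is the $\hbar$-adic adaptation of the classical iterate-formula argument from \cite{Li-nonlocal}: the inclusion $U^{\infty}\subset U^{(\infty)}$ by induction on word length, the base case $n=1$ of the outer induction, and the reduction of $a_m b$ for $a,b\in\overline{U^{\infty}}$ to monomial $a$ and $b\in U^{\infty}$ are all exactly right, as is the closing argument via Lemmas \ref{[U]} and \ref{simple-fact-2} for $\overline{[U^{(\infty)}]}=\overline{[U^{\infty}]}$.

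One wording slip worth flagging, though it does not affect the argument: the phrase ``only finitely many modes $u^{(1)}_j a'$ are nonzero'' is false as stated in the $\hbar$-adic setting; $Y(u^{(1)},x_0)a'\in V_\hbar((x_0))$ only guarantees $u^{(1)}_j a'\to 0$ $\hbar$-adically as $j\to\infty$, so finitely many modes are nonzero \emph{modulo} $\hbar^p$. You clearly intend this (you fix the precision $p$ before invoking weak associativity), and the downward recursion, the finiteness of the $\C$-linear combination of terms $u^{(1)}_i(a'_k b)$, and the truncation of $Y(a',x_2)b$ in $x_2$ all hold only modulo $\hbar^p$; since $p$ is arbitrary this suffices, but the recursion and the finiteness should be stated as congruences modulo $\hbar^p$ throughout. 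With that caveat made explicit, the coupled induction is sound and the precision bookkeeping closes without circularity: the inner induction on $r$ supplies $a'_k b\in\overline{U^{\infty}}$, which is approximated to the \emph{same} precision $p$ by $d_k\in U^{\infty}$, and $\C[[\hbar]]$-linearity of each mode $u^{(1)}_i$ carries the approximation through.
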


A subset $U$ of an $\hbar$-adic nonlocal vertex algebra $V$ is called an {\em $\SY$-local subset} if for any $u,v\in U$, there exists
$A(x)\in (\C U\ot \C U\ot \C((x)))[[\hbar]]$
such that $$Y(u,x_1)Y(v,x_2)\sim Y(x_2,x_1)(A(x)).$$

We have (see \cite[Prop. 3.12]{Li-h-adic}):

\begin{lem}\label{weak-quantum-subalgebra}
Let $V$ be an $\hbar$-adic nonlocal vertex algebra. Suppose $V=U^{(\infty)}[[\hbar]]'$ for some
$\SY$-local subset $U$. Then $V$ is an $\hbar$-adic weak quantum vertex algebra.
\end{lem}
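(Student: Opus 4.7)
The plan is to verify the $\SY$-locality axiom for $V$, since $V$ being an $\hbar$-adic nonlocal vertex algebra is already given. The strategy proceeds in two stages: first establish the $\SY$-locality property for every pair in $U^{(\infty)}$, then extend by $\hbar$-adic continuity to all of $V=U^{(\infty)}[[\hbar]]'$.

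For the first stage, the natural tool is reduction modulo $\hbar^n$ combined with the classical counterpart stated at the end of Section \ref{sec:cla-va}: every $\SY$-local subset of $\E(W)$ generates a weak quantum vertex algebra. Fix $n\in \Z_{+}$ and consider the nonlocal vertex algebra $V/\hbar^n V$ over $\C$. Unpacking Definition \ref{def-hwqva}, the $\SY$-locality of $U$ in $V$ implies that the image $\bar U$ of $U$ in $V/\hbar^n V$ is $\SY$-local in the classical sense (as a subset of $\E(V/\hbar^n V)$ via left multiplication). The classical result then gives that $\langle \bar U\rangle\subset \E(V/\hbar^n V)$ is a weak quantum vertex algebra, and an induction on the inductive definition of $U^{(k+1)}$ from $U^{(k)}$ shows the image of $U^{(\infty)}$ in $V/\hbar^n V$ lies inside $\langle \bar U\rangle$. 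Consequently, for every pair $u,v\in U^{(\infty)}$ one obtains an $\SY$-locality operator modulo $\hbar^n$.

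The delicate technical step is to lift this family of mod-$\hbar^n$ operators, which are not a priori canonical, into a single element of $V\wh\ot V\wh\ot \C((x))[[\hbar]]$. I would proceed by inductive adjustment: suppose we already have a representative $F_n$ working modulo $\hbar^n$; the ambiguity in any two valid choices of $F_{n+1}$ working modulo $\hbar^{n+1}$ is (after multiplication by a suitable $(x_1-x_2)^k$ and insertion into $Y(x_2,x_1)(\cdot)$) annihilated modulo $\hbar^{n+1}$. Since $V\wh\ot V\wh\ot \C((x))[[\hbar]]$ is the inverse limit of the mod-$\hbar^n$ tensor products (using Lemma \ref{tpfree-basics}), one selects $F_{n+1}$ to reduce to $F_n$ modulo $\hbar^n$, assembling a Cauchy sequence whose $\hbar$-adic limit is the required $F(u,v,x)$.

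For the second stage, given arbitrary $u,v\in V$, use $V=U^{(\infty)}[[\hbar]]'$ to pick $u_k,v_k\in U^{(\infty)}$ with $u-u_k,\,v-v_k\in \hbar^k V$. Since $Y(u,x)-Y(u_k,x)$ and $Y(v,x)-Y(v_k,x)$ lie in $\hbar^k(\End V)[[x,x^{-1}]]$, the operator $F(u_k,v_k,x)$ constructed in the previous stage automatically realizes $\SY$-locality for $(u,v)$ modulo $\hbar^k$. A second application of the inductive adjustment argument produces a genuine $F(u,v,x)\in V\wh\ot V\wh\ot \C((x))[[\hbar]]$, completing the verification. The main obstacle throughout is this assembly step: mod-$\hbar^n$ operators are non-unique, so the inductive lifting must be handled carefully to ensure that successive approximations converge in the completed tensor product.
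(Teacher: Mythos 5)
The paper itself does not prove this lemma but cites \cite[Prop.~3.12]{Li-h-adic}, so I will assess your proposal on its own terms.

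Your first stage is sound as far as it goes: reduction modulo $\hbar^n$ does turn $\bar U\subset \E(V/\hbar^nV)$ into a classically $\SY$-local set, and the classical result (together with the classical analogue of Lemma \ref{module-rep}) does place the image of $U^{(\infty)}$ inside the weak quantum vertex algebra $\langle\bar U\rangle$. The difficulty is precisely where you flag it, and your ``inductive adjustment'' does not close it. The step ``one selects $F_{n+1}$ to reduce to $F_n$ modulo $\hbar^n$'' presupposes that the reduction map from the set of valid mod-$\hbar^{n+1}$ witnesses to the set of valid mod-$\hbar^n$ witnesses is surjective. Your observation that the ambiguity at level $n+1$ is annihilated (after multiplying by $(x_1-x_2)^k$ and applying $Y(x_2,x_1)$) modulo $\hbar^{n+1}$ says nothing about this surjectivity: given your fixed $F_n$ and an arbitrary classical output $F_{n+1}'$, the discrepancy $F_n - F_{n+1}'$ is only known to lie in the ambiguity group mod $\hbar^n$, and there is no a priori reason that this discrepancy lifts to an element of the ambiguity group mod $\hbar^{n+1}$. (If $V$ were known to be non-degenerate the witnesses would be unique and the whole issue would evaporate, but the lemma makes no such hypothesis.) The same gap reappears in your second stage when you try to assemble the $F(u_k,v_k,x)$ into a Cauchy sequence: those witnesses are themselves only unique up to ambiguity, so nothing forces them to converge.

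The way to avoid this is to never produce a mere family of incompatible mod-$\hbar^n$ approximations in the first place. Two routes are available within this paper's toolkit. One is to run the induction on the filtration $U^{(m)}$ directly over $\C[[\hbar]]$: starting from a fixed choice $A(x)\in (\C U\ot\C U\ot\C((x)))[[\hbar]]$ for each pair in $U$, the $\SY$-Jacobi identity of Proposition \ref{lem:q-Jacobi} (applied repeatedly, as in the classical argument of \cite{Li-nonlocal}) yields an explicit formula for a witness $F(a,b,x)\in V\wh\ot V\wh\ot\C((x))[[\hbar]]$ whenever $a,b\in U^{(m)}$, and this formula is manifestly a $\C[[\hbar]]$-linear, $\hbar$-adically continuous recipe. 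That continuity then extends the construction uniquely to $V=U^{(\infty)}[[\hbar]]'$, with no patching of approximations needed. The other route — closer to the machinery already set up in this paper — is to embed $V$ into $\E_\hbar(V)$ via $v\mapsto Y(v,x)$, observe that $Y(U)$ is an $\SY$-local subset of $\E_\hbar(V)$, apply the result that such a subset generates an $\hbar$-adic weak quantum vertex algebra $\langle Y(U)\rangle$ (the content underlying Theorem \ref{hwqva-construction}), and then identify $Y(V)=\langle Y(U)\rangle$ using $V=U^{(\infty)}[[\hbar]]'$, the creation property, and Lemma \ref{module-rep} (one checks that $Y(V)$ is $Y_\E$-closed, $\hbar$-adically complete, and satisfies $[Y(V)]=Y(V)$, hence coincides with the minimal subalgebra). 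Both arguments deliver the locality witness as a single element of $V\wh\ot V\wh\ot\C((x))[[\hbar]]$ in one stroke, which is the feature your proposal is missing.
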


 \begin{de}
 Let be an $\hbar$-adic nonlocal vertex algebra. A {\em $V$-module} is
 a topologically free $\C[[\hbar]]$-module $W$ equipped with a $\C[[\hbar]]$-module map
 $$Y_W(\cdot,x):\ V\rightarrow  (\End W)[[x,x^{-1}]],$$
satisfying the following conditions:
$$Y_W(v,x)\in \E_{\hbar}(W)\quad \text{ for }v\in V,$$
 $Y_W({\bf 1},x)=1_W$, and
the {\em ($\hbar$-adic) weak associativity} holds: For any $u,v\in V,\ w\in W$ and for every positive integer $n$,
there exists $l\in \N$ such that
\begin{align}
(x_0+x_2)^lY_W(u,x_0+x_2)Y_W(v,x_2)w\equiv (x_2+x_0)^lY_W(Y(u,x_0)v,x_2)w
\end{align}
modulo $\hbar^n W[[x_0^{\pm 1},x_2^{\pm 1}]]$.
\end{de}

 From definition we immediately have:

\begin{prop}
Let $V$ be an $\hbar$-adic nonlocal vertex algebra and let $W$ be a topologically free $\C[[\hbar]]$-module
equipped with a $\C[[\hbar]]$-module map
 $$Y_W(\cdot,x):\ V\rightarrow  (\End W)[[x,x^{-1}]].$$
Then $W$ is a $V$-module if and only if $W/\hbar^nW$ is a $(V/\hbar^nV)$-module for every $n\in \Z_+$.
 \end{prop}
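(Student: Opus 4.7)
The plan is to mimic the analogous characterization already recorded in the excerpt for $\hbar$-adic nonlocal vertex algebras themselves (where $(V,Y,\vac)$ is an $\hbar$-adic nonlocal vertex algebra iff every $(V/\hbar^n V, \overline{Y}_n, \vac)$ is a nonlocal vertex algebra). The key structural input is the identification
$$\E_\hbar(W)=\varprojlim_{n\ge 1}\E(W/\hbar^nW),$$
which says that a series $\psi(x)\in (\End W)[[x,x^{-1}]]$ belongs to $\E_\hbar(W)$ precisely when each image $\pi_n(\psi(x))\in \E(W/\hbar^nW)$. Throughout, let $\rho_n:W\to W/\hbar^nW$ and $V\to V/\hbar^nV$ denote the natural projections, and write $\overline{Y_W}_n:V/\hbar^nV\to (\End(W/\hbar^nW))[[x,x^{-1}]]$ for the induced $\C$-linear map.

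For the forward direction, suppose $W$ is a $V$-module. The inverse-limit description immediately gives $\overline{Y_W}_n(\bar v,x)\in \E(W/\hbar^nW)$ for every $v\in V$. The identity $Y_W(\vac,x)=1_W$ descends to $\overline{Y_W}_n(\overline{\vac},x)=1_{W/\hbar^nW}$. The $\hbar$-adic weak associativity, read modulo $\hbar^n$, is precisely the weak associativity axiom for the $(V/\hbar^nV)$-module $W/\hbar^nW$. Hence every $(W/\hbar^nW, \overline{Y_W}_n)$ is a $(V/\hbar^nV)$-module.

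For the converse, assume each $W/\hbar^nW$ is a $(V/\hbar^nV)$-module. The conditions $\overline{Y_W}_n(\bar v,x)\in \E(W/\hbar^nW)$ for all $n$ together with the inverse-limit description force $Y_W(v,x)\in \E_\hbar(W)$. The relations $\rho_n(Y_W(\vac,x)-1_W)=0$ for all $n$ combined with the fact that $W$ is separated (being topologically free, by Lemma~\ref{tpfree-basics}) yield $Y_W(\vac,x)=1_W$. Finally, given $u,v\in V$, $w\in W$ and $n\in\Z_+$, the weak associativity for the module $W/\hbar^nW$ over $V/\hbar^nV$ produces $l\in\N$ with
$$(x_0+x_2)^lY_W(u,x_0+x_2)Y_W(v,x_2)w\equiv(x_2+x_0)^lY_W(Y(u,x_0)v,x_2)w$$
modulo $\hbar^nW[[x_0^{\pm 1},x_2^{\pm 1}]]$, which is the required $\hbar$-adic weak associativity. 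The main point to verify carefully — really bookkeeping rather than a genuine obstacle — is the inverse-limit identification of $\E_\hbar(W)$, which is already established in the excerpt; once it is in hand, the proof is a transparent modulo-$\hbar^n$ translation perfectly parallel to the algebra-level proposition above.
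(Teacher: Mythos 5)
Your proof is correct and matches the paper's intent exactly; the paper states this proposition as an immediate consequence of the definitions (with no written proof), and your argument is just the straightforward unpacking of those definitions — in particular using the inverse-limit description of $\E_\hbar(W)$ and separatedness of $\End W$ (which is what you really need in the vacuum-identity step, rather than separatedness of $W$ itself, though this is a trivial slip since both follow from topological freeness).
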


 For the rest of this subsection, assume that $V$ is an $\hbar$-adic nonlocal vertex algebra and
$(W,Y_W)$ is a $V$-module. Just as for $V$, define a $\C[[\hbar]]$-module map
\begin{align}
Y_W(x_1,x_2):\ V\widehat{\otimes}V\widehat{\otimes} \C((x))[[\hbar]]\rightarrow (\End W)[[x_1^{\pm 1},x_2^{\pm 1}]]
\end{align}
by
\begin{align*}
Y_W(x_1,x_2)(u\otimes v\otimes f(x))=f(x_1-x_2)Y_W(u,x_1)Y_W(v,x_2)
\end{align*}
for $u,v\in V_0,\ f(x)\in \C((x))$, where $V=V_0[[\hbar]]$.

The following are some standard results (see \cite{Li-h-adic}, Prop. 2.25; cf. \cite{Li-nonlocal}):

\begin{lem}\label{sim-commutator}
Let
$$u,v \in V,\ A(x)\in V\widehat{\otimes}V\widehat{\otimes} \C((x))[[\hbar]].$$
Then
$$Y_W(u,x_1)Y_W(v,x_2)\sim Y_W(x_2,x_1)(A(x))$$
if and only if
 \begin{align}
&x_0^{-1}\delta\!\(\frac{x_1-x_2}{x_0}\)\! Y_W(u,x_1)Y_W(v,x_2)
-x_0^{-1}\delta\!\(\frac{x_2-x_1}{-x_0}\)\! Y_W(x_2,x_1)(A(x))\nonumber\\
 &\hspace{2cm} = x_1^{-1}\delta\!\(\frac{x_2+x_0}{x_1}\)\!Y_W(Y(u,x_0)v,x_2).\nonumber
\end{align}
\end{lem}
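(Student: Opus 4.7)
The statement is the $V$-module version of the well-known classical equivalence between weak $\SY$-locality and the $\SY$-Jacobi identity (in the presence of weak associativity) for nonlocal vertex algebras; the classical case is established in \cite{Li-nonlocal}. My plan is to prove each direction modulo $\hbar^n$ for an arbitrary $n\in\Z_+$, using that the weak associativity for $Y_W$ is part of the module axioms together with standard formal delta-function manipulations. Letting $n$ vary then yields the $\hbar$-adic statement.

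For $(\Longleftarrow)$, I apply $\Res_{x_0}$ to both sides of the $\SY$-Jacobi identity to obtain
$$Y_W(u,x_1)Y_W(v,x_2)w-Y_W(x_2,x_1)(A(x))w=\Res_{x_0}x_1^{-1}\delta\!\left(\frac{x_2+x_0}{x_1}\right)\!Y_W(Y(u,x_0)v,x_2)w.$$
Fix $n\in\Z_+$. Since $Y(u,x_0)v\in V_\hbar((x_0))$, modulo $\hbar^n$ it has only finitely many negative powers of $x_0$, so the right-hand side reduces mod $\hbar^n$ to a finite linear combination of expressions of the form $Y_W(u_mv,x_2)w\cdot\frac{1}{m!}(-\partial_{x_2})^m x_1^{-1}\delta(x_2/x_1)$ with $m\geq0$. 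Multiplying by $(x_1-x_2)^k$ for $k$ exceeding the largest relevant $m$ annihilates all such terms, producing the $\sim$-relation mod $\hbar^n$.

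For $(\Longrightarrow)$, fix $n\in\Z_+$ and choose a common exponent $k$ large enough for both the $\sim$-relation and the weak associativity for $(u,v,w)$ to hold mod $\hbar^n$. Using the standard formal identity
$$x_0^{-1}\delta\!\left(\frac{x_1-x_2}{x_0}\right)-x_0^{-1}\delta\!\left(\frac{x_2-x_1}{-x_0}\right)=x_1^{-1}\delta\!\left(\frac{x_2+x_0}{x_1}\right),$$
the left-hand side of the target $\SY$-Jacobi identity splits as
$$x_0^{-1}\delta\!\left(\frac{x_1-x_2}{x_0}\right)\!\bigl[Y_W(u,x_1)Y_W(v,x_2)-Y_W(x_2,x_1)(A(x))\bigr]w+x_1^{-1}\delta\!\left(\frac{x_2+x_0}{x_1}\right)\!Y_W(x_2,x_1)(A(x))w.$$
The first summand vanishes mod $\hbar^n$ via the delta-function substitution property $x_0^{-1}\delta((x_1-x_2)/x_0)g(x_1)=x_0^{-1}\delta((x_1-x_2)/x_0)g(x_2+x_0)$, which converts the factor $(x_1-x_2)^k$ in the bracket to $x_0^k$ on the delta support, whereupon the $\sim$-relation kills the remainder. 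The second summand, after applying the same substitution property to replace $x_1$ by $x_2+x_0$ and invoking weak associativity mod $\hbar^n$, becomes $x_1^{-1}\delta((x_2+x_0)/x_1)Y_W(Y(u,x_0)v,x_2)w$, which is exactly the right-hand side of the $\SY$-Jacobi identity. The principal technical subtlety lies here: interleaving the formal delta-function substitutions with weak associativity while tracking congruences mod $\hbar^n$ through a uniform choice of exponents is where the real bookkeeping sits; once that choice is made, the argument reduces to the classical formal-variable identities.
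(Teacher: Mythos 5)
The $(\Longleftarrow)$ direction is essentially correct: applying $\Res_{x_0}$ and using that $Y(u,x_0)v\in V_\hbar((x_0))$ has, mod $\hbar^n$, only finitely many negative $x_0$-powers, one gets a finite sum of $\partial_{x_2}^m\,x_1^{-1}\delta(x_2/x_1)$-terms, which is killed by $(x_1-x_2)^k$ for $k$ large. (You wrote $(-\partial_{x_2})^m$ where it should be $\partial_{x_2}^m$, but this does not affect the argument.)

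The $(\Longrightarrow)$ direction, however, has a genuine gap at the very first move. The decomposition
\begin{align*}
\te{LHS}\,=\,\ & x_0^{-1}\delta\!\(\frac{x_1-x_2}{x_0}\)\!\big[Y_W(u,x_1)Y_W(v,x_2)-Y_W(x_2,x_1)(A(x))\big]\\
&\ +x_1^{-1}\delta\!\(\frac{x_2+x_0}{x_1}\)\!Y_W(x_2,x_1)(A(x))
\end{align*}
is obtained by adding and subtracting $x_0^{-1}\delta\!\(\frac{x_1-x_2}{x_0}\)\!Y_W(x_2,x_1)(A(x))$, but this product is not a well-defined formal series: the delta $x_0^{-1}\delta\!\(\frac{x_1-x_2}{x_0}\)$ is expanded with nonnegative powers of $x_2$ and hence pairs with operators in $\Hom(W,W((x_1))((x_2)))$, while $Y_W(x_2,x_1)(A(x))w$ lies in $W_\hbar((x_2))((x_1))$ (the opposite iterated Laurent order); the coefficient of a fixed monomial can then involve an infinite, divergent sum. (This is precisely why the two terms of the $\SY$-Jacobi identity carry \emph{different} deltas, $\delta\!\(\frac{x_1-x_2}{x_0}\)$ and $\delta\!\(\frac{x_2-x_1}{-x_0}\)$.) Moreover, even set aside well-definedness, your two claims are both false: the "first summand" does not vanish — by the $(\Longleftarrow)$ direction the bracket itself equals a nonzero sum of $\partial_{x_2}^m\,x_1^{-1}\delta(x_2/x_1)$-terms, so multiplying by another delta in $x_1-x_2$ certainly does not give zero — and knowing $(x_1-x_2)^k B\equiv 0$ does not let you conclude $x_0^{-1}\delta\!\(\frac{x_1-x_2}{x_0}\)B\equiv 0$, since multiplication by $(x_1-x_2)^k$ is far from injective. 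Likewise for the "second summand": weak associativity controls $Y_W(u,x_0+x_2)Y_W(v,x_2)$, whereas after the substitution $x_1\mapsto x_2+x_0$ you obtain products of the reversed form $Y_W(v^{(i)},x_2)Y_W(u^{(i)},x_2+x_0)$, which weak associativity says nothing about. The actual proof (as in \cite[Prop.~2.25]{Li-h-adic}, cf.\ \cite{Li-nonlocal}) multiplies the entire would-be $\SY$-Jacobi identity by $x_0^k(x_0+x_2)^l$, uses delta-substitution together with the $\sim$-relation \emph{and} weak associativity to show the two sides agree after that multiplication, and then removes the polynomial factor by an injectivity argument on the appropriate iterated Laurent series space — a rather different line of reasoning from what you have sketched.
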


\begin{prop}\label{prop-2.25}
Let
$$u,v, c^{(0)},c^{(1)},\dots \in V,\ A(x)\in V\widehat{\otimes}V\widehat{\otimes} \C((x))[[\hbar]]$$
with $\lim_{n\rightarrow \infty}c^{(n)}=0$. If
\begin{align*}
Y(u,x_1)Y(v,x_2)-Y(x_2,x_1)(A(x))
=\sum_{n\ge 0}Y(c^{(n)},x_2)\frac{1}{n!}\!\left(\frac{\partial}{\partial x_2}\right)^nx_1^{-1}\delta\!\left(\frac{x_2}{x_1}\right)
\end{align*}
on $V$, then
\begin{align*}
Y_W(u,x_1)Y_W(v,x_2)-Y_W(x_2,x_1)(A(x))
=\sum_{n\ge 0}Y_W(c^{(n)},x_2)\frac{1}{n!}\!\left(\frac{\partial}{\partial x_2}\right)^nx_1^{-1}\delta\!\left(\frac{x_2}{x_1}\right)\!.
\end{align*}
Furthermore,  if $(W,Y_W)$ is faithful, the converse is also true with $u_nv=c^{(n)}$ for $n\ge 0$.
\end{prop}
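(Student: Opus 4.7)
The plan is to link the hypothesis on $V$ and the desired conclusion on $W$ through the $\mathcal{S}$-locality ($\sim$) relation combined with the $\mathcal{S}$-Jacobi identity of Lemma~\ref{sim-commutator}, together with the identification $c^{(n)}=u_n v$ that both sides implicitly encode via the iterate $Y(u,x_0)v$.

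For the forward direction, the first step will be to extract $\sim$ on $V$ from the hypothesis. For each $n\in\Z_+$, I will choose $N$ with $c^{(m)}\in\hbar^n V$ for all $m\ge N$ (possible since $\lim c^{(m)}=0$); multiplying the hypothesis by $(x_1-x_2)^N$ kills the terms with $m<N$ on the right (using $(x_1-x_2)^{m+1}\partial_{x_2}^m x_1^{-1}\delta(x_2/x_1)=0$) and leaves a tail with coefficients in $\hbar^n V$. I will then invoke Lemma~\ref{sim-commutator} applied to the regular $V$-module to obtain the $\mathcal{S}$-Jacobi identity on $V$; taking $\Res_{x_0}$ and using the Taylor expansion $x_1^{-1}\delta((x_2+x_0)/x_1)=\sum_{m\ge 0}\frac{x_0^m}{m!}\partial_{x_2}^m x_1^{-1}\delta(x_2/x_1)$ together with $Y(u,x_0)v=\sum_m u_m v\cdot x_0^{-m-1}$ will yield the commutator formula on $V$ with $u_n v$ in place of $c^{(n)}$. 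Comparing with the hypothesis and matching coefficients of $x_1^{-n-1}$ inductively in $n$ will give $Y(c^{(n)}-u_n v,x_2)=0$; applying this to $\vac$ at $x_2=0$ via the creation property will then force $c^{(n)}=u_n v$ for all $n\ge 0$.

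To obtain the conclusion on $W$, I will transfer $\sim$ to $W$. For each $w\in W$ and $n\in\Z_+$, the weak associativity of the module will furnish $l$ with $(x_0+x_2)^l Y_W(u,x_0+x_2)Y_W(v,x_2)w\equiv (x_0+x_2)^l Y_W(Y(u,x_0)v,x_2)w$ modulo $\hbar^n$, and the parallel congruence will hold on $V$. Since $Y(u,x_0)v$ is a finite Laurent polynomial in $x_0^{-1}$ modulo $\hbar^n$, the substitution $x_1=x_0+x_2$ is legitimate, and the $\sim$ identity on $V$ (whose only $V$-dependent data are $u_m v$ and the components of $A(x)$) will transfer verbatim to $W$ upon replacing $Y$ by $Y_W$. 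Applying Lemma~\ref{sim-commutator} to $W$ and taking $\Res_{x_0}$ as before will then deliver the desired formula on $W$.

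For the converse under faithfulness of $Y_W$, running the same residue argument directly on $W$ starting from the identity on $W$ will produce $Y_W(c^{(n)},x_2)=Y_W(u_n v,x_2)$ in $\E_\hbar(W)$, whence faithfulness forces $c^{(n)}=u_n v$ in $V$. The $\sim$ relation on $W$ will reverse back to $V$ through the analogous weak-associativity step (the underlying formal-variable identities reduce to equalities of single vertex operators $Y_W(\cdot,x)$ of elements of $V$, which lift to $Y(\cdot,x)$-identities on $V$ via faithfulness), after which Lemma~\ref{sim-commutator} plus $\Res_{x_0}$ will recover the identity on $V$. The main technical obstacle is this bidirectional transfer of $\sim$ between $V$ and $W$, requiring careful handling of the substitution $x_1=x_0+x_2$ in the formal-variable setting and of the $\hbar$-adic truncation of $Y(u,x_0)v$.
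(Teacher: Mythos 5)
Your forward direction follows the correct and standard route: truncate modulo $\hbar^n$ and multiply by a high power of $(x_1-x_2)$ to extract $\mathcal{S}$-locality on $V$, convert to the $\mathcal{S}$-Jacobi identity via Lemma~\ref{sim-commutator}, take $\Res_{x_0}$ to pin down the $n$-th mode coefficient, compare with the hypothesis (applying $\Res_{x_1}(x_1-x_2)^n$ cleanly isolates the $n$-th delta-derivative term, rather than literally matching powers of $x_1$) to obtain $c^{(n)}=u_nv$, and then transfer $\mathcal{S}$-locality to $W$ (this is Proposition~\ref{va-S-locality-module}) and repeat. That roadmap is sound; the only soft spots are cosmetic — e.g., $Y(u,x_0)v$ modulo $\hbar^n$ lies in $V((x_0))$, so only its \emph{singular} part is a finite Laurent polynomial, which is what the substitution $x_1=x_0+x_2$ actually needs.

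The converse, however, has a genuine gap. Once faithfulness gives $c^{(n)}=u_nv$, you still need to establish the identity back on $V$, and for that you must know $Y(u,x_1)Y(v,x_2)\sim Y(x_2,x_1)(A(x))$ holds on $V$ — i.e., the converse of Proposition~\ref{va-S-locality-module}. Your justification, that ``the underlying formal-variable identities reduce to equalities of single vertex operators $Y_W(\cdot,x)$ of elements of $V$,'' does not actually describe a mechanism: the $\mathcal{S}$-locality and $\mathcal{S}$-Jacobi statements involve the \emph{double} products $Y_W(u,x_1)Y_W(v,x_2)$, which are not of the form $Y_W(a,x)$ for a single $a\in V$, so faithfulness cannot be applied to them directly. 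The correct argument is roughly: using weak associativity on $V$, for each $v'\in V$ and $n$ one writes $(x_0+x_2)^lY(u,x_0+x_2)Y(v,x_2)v'$ as $(x_0+x_2)^lY(Y(u,x_0)v,x_2)v'$ modulo $\hbar^n$, so the locality defect of $Y(u,x_1)Y(v,x_2)$ relative to $Y(x_2,x_1)(A(x))$ is controlled entirely by the singular part of $Y(u,x_0)v$ (which is finite modulo $\hbar^n$) together with the cross-relation between $Y(u,x_0)v$ and $A(x)$; the knowledge obtained from $W$ (that the defect on $W$ is a finite sum of delta-derivatives with coefficients $Y_W(u_nv,x_2)$) combined with faithfulness is what pins this down. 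You correctly flag this transfer as ``the main technical obstacle,'' but you need to actually supply the argument, not merely name it — as written, the converse does not go through.
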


\begin{prop}\label{va-S-locality-module}
Let
$$u,v \in V,\ A(x)\in V\widehat{\otimes}V\widehat{\otimes} \C((x))[[\hbar]].$$
If
$$Y(u,x_1)Y(v,x_2)\sim Y(x_2,x_1)(A(x))$$
then
$$Y_W(u,x_1)Y_W(v,x_2)\sim Y_W(x_2,x_1)(A(x)).$$
The converse is also true if $(W,Y_W)$  is a faithful module.
\end{prop}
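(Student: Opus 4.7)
The strategy is to translate the $\sim$ relations on both sides into the $\SY$-Jacobi identity via Lemma \ref{sim-commutator}, take $\Res_{x_0}$ to land in the exact input/output form of Proposition \ref{prop-2.25}, and then recover the $\sim$ relation by truncation.

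For the forward direction, assume $Y(u,x_1)Y(v,x_2)\sim Y(x_2,x_1)(A(x))$. By Lemma \ref{sim-commutator} this is equivalent to the $\SY$-Jacobi identity for $(u,v)$ on $V$. Writing $Y(u,x_0)v=\sum_{n\in\Z}(u_nv)x_0^{-n-1}$, membership in $V_\hbar((x_0))$ forces $\lim_{n\to\infty}u_nv=0$ in the $\hbar$-adic sense. Applying $\Res_{x_0}$ to the $\SY$-Jacobi identity, together with the standard delta-function calculation
\begin{align*}
\Res_{x_0}x_1^{-1}\delta\!\(\frac{x_2+x_0}{x_1}\)\!x_0^{-n-1}=\frac{1}{n!}\!\(\frac{\partial}{\partial x_2}\)^n\!x_1^{-1}\delta\!\(\frac{x_2}{x_1}\),\quad n\ge 0,
\end{align*}
yields
\begin{align*}
Y(u,x_1)Y(v,x_2)-Y(x_2,x_1)(A(x))=\sum_{n\ge 0}Y(u_nv,x_2)\frac{1}{n!}\!\(\frac{\partial}{\partial x_2}\)^n\!x_1^{-1}\delta\!\(\frac{x_2}{x_1}\).
\end{align*}
This is precisely the hypothesis of Proposition \ref{prop-2.25} with $c^{(n)}=u_nv$, so the same formula holds on $W$. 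Finally, for any $N\in\Z_+$ pick $n_0$ with $u_nv\in\hbar^N V$ for $n\ge n_0$; modulo $\hbar^N$ the right-hand side (with $Y_W$) then truncates to a finite sum over $n<n_0$, each term of which is annihilated by $(x_1-x_2)^{n_0}$. Hence $Y_W(u,x_1)Y_W(v,x_2)\sim Y_W(x_2,x_1)(A(x))$.

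For the converse under the assumption that $(W,Y_W)$ is faithful, I would simply reverse the chain: applying Lemma \ref{sim-commutator} on $W$ and then $\Res_{x_0}$ produces the same commutator formula on $W$, now with coefficients $Y_W(u_nv,x_2)$ (the $u_nv$ still defined via $Y$ on $V$). The converse clause of Proposition \ref{prop-2.25}, which is exactly where faithfulness is used, transfers this commutator formula back to $V$. The same truncation by $(x_1-x_2)^{n_0}$ then recovers $Y(u,x_1)Y(v,x_2)\sim Y(x_2,x_1)(A(x))$.

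There is no genuine obstacle; Lemma \ref{sim-commutator} and Proposition \ref{prop-2.25} already encapsulate the technical work, and the only thing left is routine bookkeeping of $\hbar$-adic convergence, which reduces to the two facts $\lim_{n\to\infty}u_nv=0$ and $(x_1-x_2)^{n+1}\frac{1}{n!}\partial_{x_2}^n x_1^{-1}\delta(x_2/x_1)=0$.
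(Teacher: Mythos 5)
Your proof is correct. The paper cites this result from [Li-h-adic] without reproducing an argument, but the route you take — equate the $\sim$ relation with the $\SY$-Jacobi identity via Lemma \ref{sim-commutator} (applied to the adjoint module and to $W$), extract the $\SY$-commutator formula by $\Res_{x_0}$, transfer between $V$ and $W$ using Proposition \ref{prop-2.25} (with faithfulness feeding the converse clause), and then truncate modulo $\hbar^N$ using $\lim_n u_nv=0$ together with $(x_1-x_2)^{n+1}\tfrac{1}{n!}\partial_{x_2}^n x_1^{-1}\delta(x_2/x_1)=0$ — is exactly the mechanism the surrounding lemmas are set up to supply, and every step is justified.
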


We also have (see \cite[Prop. 3.13]{Li-h-adic}):

\begin{lem}\label{U-vacuum-like}
Let  $e\in W$ and let $U$ be an $\SY$-local subset of $V$ such that
$$Y_W(u,x)e\in W[[x]]\quad \text{ for }u\in U.$$
Then $Y_W(v,x)e\in W[[x]]$ for all $v\in \<U\>.$  Furthermore, the map
$\theta: \ \<U\>\rightarrow W,$ defined by $\theta(v)=v_{-1}e$ for $v\in \<U\>$, is a $\<U\>$-module homomorphism.
\end{lem}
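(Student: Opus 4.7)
The plan is to define
\[
S := \{v \in V : Y_W(v, x) e \in W[[x]]\}
\]
and show $\<U\> \subseteq S$; from there the module-homomorphism property will follow from an elementary specialization of the weak associativity. In light of Lemma \ref{U-infinity-facts}, $\<U\> = \overline{[U^{(\infty)}]}$, so it suffices to verify that $S$ is a $\C[[\hbar]]$-submodule of $V$ containing $U \cup \{\vac\}$ satisfying: (i) $Y$-closure, i.e.\ $a, b \in S$ and $m \in \Z$ imply $a_m b \in S$; (ii) $[S] = S$; and (iii) $\hbar$-adic closedness in $V$. The hypothesis on $U$, together with $Y_W(\vac, x) = 1_W$, places $U \cup \{\vac\}$ inside $S$, and the $\C[[\hbar]]$-submodule property is immediate from $\C[[\hbar]]$-linearity of $Y_W$.

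The core step is (i). Given $a, b \in S$ and $n \in \Z_+$, I would invoke the weak associativity of $(W, Y_W)$ applied to $e$ to produce $l \in \N$ with
\begin{align*}
(x_0+x_2)^l Y_W(a, x_0+x_2) Y_W(b, x_2) e \equiv (x_0+x_2)^l Y_W(Y(a, x_0) b, x_2) e \pmod{\hbar^n}.
\end{align*}
Since $Y_W(b, x_2) e \in W[[x_2]]$ and $Y_W(a, y) \in \E_\hbar(W)$, the left-hand side lies in $W_\hbar((x_0))[[x_2]]$, so modulo $\hbar^n$ the same holds for the right-hand side. Writing $Y_W(Y(a, x_0) b, x_2) e = \sum_m Y_W(a_m b, x_2) e \cdot x_0^{-m-1}$ (with only finitely many negative $x_0$-powers mod $\hbar^n$), a descending induction on $x_0$-degree of the product $(x_0+x_2)^l \cdot (\,\cdot\,)$ peels off each $Y_W(a_m b, x_2) e$ as a leading coefficient and forces it into $(W/\hbar^n W)[[x_2]]$. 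Since this holds for every $n$ and $W$ is topologically free, $Y_W(a_m b, x) e \in W[[x]]$. Closures (ii) and (iii) then follow from, respectively, $\C[[\hbar]]$-torsion-freeness of $W$ and the $\hbar$-adic closedness of $W[[x]]$ inside $W[[x, x^{-1}]]$ (combined with $\C[[\hbar]]$-linearity of $v \mapsto Y_W(v, x) e$).

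For the homomorphism assertion, fix $v, u \in \<U\>$. Since $\<U\>$ is a nonlocal vertex subalgebra, $v_m u \in \<U\> \subseteq S$, so by the first part each $Y_W(v_m u, x_2) e \in W[[x_2]]$. Applying weak associativity to the triple $(v, u, e)$, both sides of
\begin{align*}
(x_0+x_2)^l Y_W(v, x_0+x_2) Y_W(u, x_2) e \equiv (x_0+x_2)^l Y_W(Y(v, x_0) u, x_2) e \pmod{\hbar^n}
\end{align*}
lie in $W_\hbar((x_0))[[x_2]]$ mod $\hbar^n$. I would then set $x_2 = 0$: the left side collapses to $x_0^l Y_W(v, x_0) \theta(u)$ and the right to $x_0^l \theta(Y(v, x_0) u)$. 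Cancelling $x_0^l$ by torsion-freeness of $W$ and letting $n \to \infty$ yields $Y_W(v, x_0) \theta(u) = \theta(Y(v, x_0) u)$, which is precisely the intertwining property of a module homomorphism.

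The main obstacle I anticipate is the combinatorial extraction in step (i): multiplication by $(x_0+x_2)^l$ smears each coefficient $Y_W(a_m b, x_2) e$ across several $x_0$-powers, so isolating them mod $\hbar^n$ demands careful bookkeeping of the $n$-dependent bound on the negative $x_0$-powers of $Y(a, x_0) b$. Everything else — the passage from mod-$\hbar^n$ identities to exact identities via topological freeness, and the closures (ii) and (iii) — is essentially routine verification.
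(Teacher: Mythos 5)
Your overall strategy---introduce $S := \{v \in V : Y_W(v,x)e \in W[[x]]\}$ and show $\<U\> \subseteq S$---is the right frame, and the second half (specializing $x_2 = 0$ once $a_m b \in S$ is known) is fine. The gap is in the core closure step, exactly where you flagged trouble. After multiplying the weak-associativity relation by $(x_0+x_2)^l$ with $l > 0$, the coefficient of $x_0^{j}$ on the right-hand side is not a single $Y_W(a_m b, x_2)e$ but the mixture $\sum_{i=0}^{l}\binom{l}{i}x_2^{i}\,Y_W(a_{l-i-j-1}b,x_2)e$. Knowing these mixtures lie in $(W/\hbar^nW)[[x_2]]$ does not isolate the individual $Y_W(a_m b,x_2)e$: from the most negative $x_0$-degree $j=-M-1$ one extracts only $x_2^{l}\,Y_W(a_M b,x_2)e \in (W/\hbar^nW)[[x_2]]$, hence merely $Y_W(a_M b,x_2)e \in x_2^{-l}(W/\hbar^nW)[[x_2]]$, and descending in $j$ the allowed pole order in $x_2$ gets worse, not better. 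So weak associativity alone does not suffice; indeed it cannot, since your argument never engages the crucial hypothesis that $U$ is $\SY$-local.

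The missing idea is to upgrade to weak associativity with $l = 0$ by invoking the $\SY$-Jacobi identity for the module (Lemma \ref{sim-commutator}, made available on $W$ via Proposition \ref{va-S-locality-module}). For $a, b \in U$, the $\SY$-locality of $U$ supplies $A(x) \in (\C U\ot \C U\ot\C((x)))[[\hbar]]$; apply the resulting $\SY$-Jacobi identity to $e$ and take $\Res_{x_1}$. The term
$x_0^{-1}\delta\left(\frac{x_2-x_1}{-x_0}\right)\sum_i f_i(x_2-x_1)\,Y_W(b^{(i)},x_2)Y_W(a^{(i)},x_1)e$
involves only nonnegative powers of $x_1$, because $a^{(i)} \in \C U \subseteq S$ forces $Y_W(a^{(i)},x_1)e \in W[[x_1]]$ and the remaining factors are likewise regular in $x_1$; hence it is killed by $\Res_{x_1}$. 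What survives is $Y_W(a, x_0+x_2)\,Y_W(b,x_2)e = Y_W(Y(a,x_0)b, x_2)e$, whose left-hand side is visibly in $W_{\hbar}((x_0))[[x_2]]$, so the $x_0$-coefficients $Y_W(a_m b, x_2)e$ on the right lie in $W[[x_2]]$ with no deterioration. Passing from $U$ to all of $\<U\>$ then requires propagating the $\SY$-locality (with components kept inside $S$) along the generating chain $U^{(n)}$ before taking $[\,\cdot\,]$ and the $\hbar$-adic closure; that bookkeeping, and the module-homomorphism conclusion via $x_2 \to 0$, are the genuinely routine parts, but the $\Res_{x_1}$ step exploiting $\SY$-locality is what your proposal is missing.
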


We present the following two nonstandard results for studying double Yangians:

\begin{lem}\label{Y-V-W-a-b-relation}
Let $a,b, u, v\in V, \ R(x), f(x), g(x)\in \C((x))[[\hbar]].$
 If
\begin{align}\label{Y-aba=gba-CD}
Y(a,x)Y(b,z)-R(z-x)Y(b,z)Y(a,x)=f(x-z)Y(u,z)+g(z-x)Y(v,z)
\end{align}
on $V$, then
\begin{align}\label{prop-Y-aba=gba-CD-module}
&\ \quad Y_W(a,x)Y_W(b,z)-R(z-x)Y_W(b,z)Y_W(a,x)\\
&=f(x-z)Y_W(u,z)+g(z-x)Y_W(v,z)\nonumber
\end{align}
on $W$. Furthermore, the converse is also true if $(W,Y_W)$ is faithful.
\end{lem}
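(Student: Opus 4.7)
The plan is to recast \eqref{Y-aba=gba-CD} as an exact operator identity of the form $Y(a,x_1)Y(b,x_2)=Y(x_2,x_1)(B(x))$ on $V$ for a suitable $B(x)\in V\wh\ot V\wh\ot \C((x))[[\hbar]]$, transfer the associated $\sim$-relation to $W$ via Proposition \ref{va-S-locality-module}, and then use the $\SY$-commutator formula (obtained through Lemma \ref{sim-commutator}) on both $V$ and $W$ to pin down \eqref{prop-Y-aba=gba-CD-module}. Using $Y(\vac,x_1)=1_V$, set
$$B(x):=b\otimes a\otimes R(x)+u\otimes \vac\otimes f(-x)+v\otimes \vac\otimes g(x)\in V\wh\ot V\wh\ot \C((x))[[\hbar]],$$
so that $Y(x_2,x_1)(B(x))=R(x_2-x_1)Y(b,x_2)Y(a,x_1)+f(x_1-x_2)Y(u,x_2)+g(x_2-x_1)Y(v,x_2)$. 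With $x=x_1$ and $z=x_2$, the hypothesis \eqref{Y-aba=gba-CD} becomes exactly $Y(a,x_1)Y(b,x_2)=Y(x_2,x_1)(B(x))$ on $V$, and the conclusion \eqref{prop-Y-aba=gba-CD-module} is the same identity with $Y_W$ on $W$.

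This exact identity trivially implies $Y(a,x_1)Y(b,x_2)\sim Y(x_2,x_1)(B(x))$ on $V$, and by Proposition \ref{va-S-locality-module} the $\sim$-relation holds on $W$ as well. Applying Lemma \ref{sim-commutator} to $V$ (viewed as a module over itself) and to $W$, and taking $\Res_{x_0}$ of the resulting $\SY$-Jacobi identities, we obtain
\begin{align*}
Y(a,x_1)Y(b,x_2)-Y(x_2,x_1)(B(x))&=\sum_{n\ge 0}Y(a_nb,x_2)\frac{1}{n!}\partial_{x_2}^n x_1^{-1}\delta(x_2/x_1),\\
Y_W(a,x_1)Y_W(b,x_2)-Y_W(x_2,x_1)(B(x))&=\sum_{n\ge 0}Y_W(a_nb,x_2)\frac{1}{n!}\partial_{x_2}^n x_1^{-1}\delta(x_2/x_1).
\end{align*}
By hypothesis the left-hand side of the first formula vanishes on $V$; extracting $\Res_{x_1}x_1^k$ for $k=0,1,2,\ldots$ inductively, together with the creation property $Y(c,x)\vac|_{x=0}=c$ (which forces $c=0$ whenever $Y(c,x)=0$), yields $a_nb=0$ in $V$ for every $n\ge 0$. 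Substituting into the second formula collapses its right-hand side to zero, yielding \eqref{prop-Y-aba=gba-CD-module}.

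For the converse, when $(W,Y_W)$ is faithful, the argument is symmetric: \eqref{prop-Y-aba=gba-CD-module} together with the $W$-formula forces $Y_W(a_nb,x_2)=0$ for $n\ge 0$, hence $a_nb=0$ by faithfulness, and the $V$-formula (available via the converse half of Proposition \ref{va-S-locality-module}) then collapses to \eqref{Y-aba=gba-CD}. The main point is the observation that the \emph{exact} equality in \eqref{Y-aba=gba-CD} is strictly stronger than mere $\sim$-locality: it forces $a_nb=0$ for $n\ge 0$, so that all singular OPE information of $a(x)$ and $b(z)$ is already carried by the $f,g,u,v$ terms. This vanishing is what enables the identity to transport faithfully between $V$ and $W$, since the only potential obstruction (the delta-function tail in the $\SY$-commutator formula) is killed on both sides simultaneously.
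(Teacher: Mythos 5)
The pivotal step in your argument is the claim that, with $B(x)=b\otimes a\otimes R(x)+u\otimes \vac\otimes f(-x)+v\otimes \vac\otimes g(x)$, the hypothesis \eqref{Y-aba=gba-CD} becomes the \emph{exact} operator identity $Y(a,x_1)Y(b,x_2)=Y(x_2,x_1)(B(x))$ on $V$. This is false, and the gap is exactly the subtlety the paper's proof is built around. By definition, $Y(x_2,x_1)$ applied to $u\otimes\vac\otimes f(-x)$ yields $f(-x_2+x_1)Y(u,x_2)$, that is, the Laurent series $f$ evaluated at $x_1-x_2$ but expanded in nonnegative powers of $x_1$. The corresponding term in \eqref{Y-aba=gba-CD} is $f(x-z)Y(u,z)=f(x_1-x_2)Y(u,x_2)$, with $f$ expanded in nonnegative powers of $x_2$. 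Because $f\in\C((x))[[\hbar]]$ genuinely has negative powers of $x$, these two expansions are different formal series; their difference is precisely the delta-function tail
$$f(x_1-x_2)-f(-x_2+x_1)=\tilde f(\partial_{x_2})\,x_1^{-1}\delta\!\left(\frac{x_2}{x_1}\right),\qquad \tilde f(w)=\Res_x f(x)e^{wx},$$
so your claimed exact identity is off by a nonzero delta-function contribution.

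Consequently the deduction ``$a_nb=0$ for all $n\ge 0$'' is wrong: comparing that delta term against the $\SY$-commutator formula gives $a_nb=f(-n-1)\,u$, which is nonzero whenever $f$ has a nontrivial singular part. Your emphasized ``main point''---that the exact equality \eqref{Y-aba=gba-CD} kills all singular OPE data of $a$ and $b$---is therefore the heart of the error, not an observation. The paper handles the same situation by making the tail explicit: it rewrites \eqref{Y-aba=gba-CD} as
$$Y(a,x)Y(b,z)-\bigl(R(z-x)Y(b,z)Y(a,x)+f(-z+x)Y(u,z)+g(z-x)Y(v,z)\bigr)=Y(u,z)\tilde f(\partial_z)x^{-1}\delta\!\left(\frac{z}{x}\right),$$
reads off $c^{(n)}=f(-n-1)u$, checks $\lim_n c^{(n)}=0$ (this uses $f\in\C((x))[[\hbar]]$ in an essential way, a hypothesis your argument never touches), and applies Proposition \ref{prop-2.25}. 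Your skeleton---establish the $\sim$-relation, transfer to $W$, and recover the exact identity from the $\SY$-commutator formula---could be made to work, but only by carrying the nonzero $a_nb$ along; declaring them zero deletes the very term that must cancel against the delta-function discrepancy on the $W$ side to produce \eqref{prop-Y-aba=gba-CD-module}.
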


\begin{proof} Write $f(x)=\sum_{m\in \Z}f(m)x^m$ with $f(m)\in \C[[\hbar]]$ and define
\begin{align}
\tilde{f}(z)=\Res_xf(x)e^{zx}=\sum_{n\ge 0}\frac{1}{n!}f(-n-1)z^n\in \C[[\hbar]][[z]].
\end{align}
Note that
\begin{align*}
f(x-z)-f(-z+x)=\tilde{f}\!\left(\frac{\partial}{\partial z}\right)x^{-1}\delta\!\left(\frac{z}{x}\right)\!.
\end{align*}
Using this we rewrite (\ref{Y-aba=gba-CD}) as
\begin{align}
&Y(a,x)Y(b,z)\\
&\ \  -\left(R(z-x)Y(b,z)Y(a,x)+f(-z+x)Y(u,z)+g(z-x)Y(v,z)\right)\nonumber\\
 =\ &(f(x-z)-f(-z+x))Y(u,z)\nonumber\\
=\ &Y(u,z)\tilde{f}\!\left(\frac{\partial}{\partial z}\right)x^{-1}\delta\!\left(\frac{z}{x}\right)\!.\nonumber
\end{align}
Similarly,  we rewrite (\ref{prop-Y-aba=gba-CD-module}) as
\begin{align*}
&Y_W(a,x)Y_W(b,z)\\
&\ \  -\left(R(z-x)Y_W(b,z)Y_W(a,x)+f(-z+x)Y_W(u,z)+g(z-x)Y_W(v,z)\right)\\
=\ &Y_W(u,z)\tilde{f}\!\left(\frac{\partial}{\partial z}\right)x^{-1}\delta\!\left(\frac{z}{x}\right)\!.\nonumber
\end{align*}
Note that  $\lim_{n\rightarrow \infty}f(-n)=0$ as $f(x)\in \C((x))[[\hbar]]$.
Then it follows  immediately from Propositions \ref{prop-2.25}.
\end{proof}

\begin{lem}\label{pq-va-module-relation}
Let
$$u,v\in V,\ B(x)\in V\wh\ot V\wh\ot \C((x))[[\hbar]], \ p(x,z), q(x,z)\in \C[x,z]$$
 with $p(x,0)\ne 0$. If
\begin{align}\label{pq-uv-va}
p(x_1-x_2,\hbar)Y(u,x_1)Y(v,x_2)=q(x_2-x_1,\hbar)Y(x_2,x_1)(B(x))
\end{align}
on $V$, then
\begin{align}\label{pq-uv-module}
p(x_1-x_2,\hbar)Y_W(u,x_1)Y_W(v,x_2)=q(x_2-x_1,\hbar)Y_W(x_2,x_1)(B(x))
\end{align}
on $W$. The converse is also true if $(W,Y_W)$ is a faithful $V$-module.
\end{lem}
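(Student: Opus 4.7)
I plan to recast the hypothesis in the form handled by Proposition~\ref{prop-2.25}, transfer the relation to $W$, and recover the polynomial form.

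First, set $A(x) := p(-x,\hbar)^{-1}q(x,\hbar)B(x) \in V\wh\ot V\wh\ot \C((x))[[\hbar]]$; this is well-defined because $p(-x,0)$ is a nonzero polynomial in $\C[x]$, hence invertible in $\C((x))$, and $p(-x,\hbar)$ is then invertible in $\C((x))[[\hbar]]$ via the geometric series. A direct calculation using $Y(x_2,x_1)(g(x)B(x)) = g(x_2-x_1)Y(x_2,x_1)(B(x))$ yields
\[p(x_1-x_2,\hbar)Y(x_2,x_1)(A(x)) = Y(x_2,x_1)(p(-x,\hbar)A(x)) = Y(x_2,x_1)(q(x,\hbar)B(x)) = q(x_2-x_1,\hbar)Y(x_2,x_1)(B(x)).\]
Combined with the hypothesis this gives $p(x_1-x_2,\hbar)\Delta = 0$ on $V$, where $\Delta := Y(u,x_1)Y(v,x_2) - Y(x_2,x_1)(A(x))$.

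The central step is to show $\Delta$ admits the form $\Delta = \sum_{n\ge 0}Y(c^{(n)},x_2)\frac{1}{n!}\partial_{x_2}^n x_1^{-1}\delta(x_2/x_1)$ on $V$ for some $c^{(n)}\in V$ with $\lim_{n\to\infty}c^{(n)}=0$ in the $\hbar$-adic topology. The idea is that the kernel of multiplication by $p(x_1-x_2,\hbar)$ on expressions arising from vertex operator compositions is supported on the diagonal $x_1=x_2$: reducing modulo $\hbar^n$ and invoking weak associativity, one confines $(x_1-x_2)^K\Delta\,w$ (for each $w\in V$ and some $K$) to $(V/\hbar^nV)((x_1,x_2))$, in which the polynomial $p(x_1-x_2,\hbar)$ is a non-zero-divisor (its $\hbar=0$ reduction is a nonzero polynomial, a unit in the field $\C((x_1,x_2))$); this forces $\Delta$ to collapse to a delta-derivative expansion, and the $\hbar$-adic machinery assembles these into $c^{(n)}$ converging to zero. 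With $\Delta$ in this form on $V$, Proposition~\ref{prop-2.25} yields the analogous equation on $W$.

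Multiplying both sides of the $W$-equation by $p(x_1-x_2,\hbar)$ and using the $W$-analogue of the above direct calculation, one obtains
\[p(x_1-x_2,\hbar)Y_W(u,x_1)Y_W(v,x_2) - q(x_2-x_1,\hbar)Y_W(x_2,x_1)(B(x)) = p(x_1-x_2,\hbar)\sum_{n\ge 0}Y_W(c^{(n)},x_2)\tfrac{1}{n!}\partial_{x_2}^n x_1^{-1}\delta(x_2/x_1).\]
The right-hand side of this equality vanishes on $V$ (since the left-hand side does, by hypothesis); expanding $p(x_1-x_2,\hbar)\cdot\partial_{x_2}^n x_1^{-1}\delta(x_2/x_1)$ by the Taylor formula for $p$ around $y=0$ and using the linear independence of the derivatives $\partial_{x_2}^k x_1^{-1}\delta(x_2/x_1)$, the vanishing on $V$ reduces to finitely many algebraic identities among the $c^{(n)}$, which transfer verbatim from $V$ to $W$. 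Hence the polynomial equation holds on $W$. The converse direction follows by running the same argument on $W$ and invoking the converse clause of Proposition~\ref{prop-2.25}, which uses faithfulness of $(W,Y_W)$. The main obstacle is the central structural step of showing that $\Delta$ has the claimed delta-derivative form; it requires a careful analysis, through weak associativity and the factorization structure of $p(y,\hbar)$, of the kernel of polynomial multiplication on vertex-operator-composition expressions in the $\hbar$-adic setting.
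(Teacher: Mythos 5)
Your overall strategy is essentially the one the paper uses: you replace the polynomial relation by the equivalent ``$\SY$-locality'' data $A(x)=p(-x,\hbar)^{-1}q(x,\hbar)B(x)$, extract the $\SY$-commutator form of $\Delta:=Y(u,x_1)Y(v,x_2)-Y(x_2,x_1)(A(x))$ on $V$, transfer it to $W$ (you via Proposition~\ref{prop-2.25}, the paper via Proposition~\ref{va-S-locality-module} and Lemma~\ref{sim-commutator}), and then undo the manipulation using the identities among the $c^{(n)}=u_nv$ (your ``linear independence of $\partial_{x_2}^k x_1^{-1}\delta(x_2/x_1)$'' argument is the coefficient-wise version of the paper's auxiliary fact~(\ref{pY-uv-regular})). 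Those surrounding manipulations are correct.

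The central step, however, is not correctly argued, and you flag it yourself as the main obstacle. You propose to show that $(x_1-x_2)^K\Delta w$ lies in $(V/\hbar^nV)((x_1,x_2))$ ``by weak associativity,'' and then apply the fact that $p(x_1-x_2,\hbar)$ is a non-zero-divisor there. But neither clause holds as stated. Weak associativity controls $Y(u,x_0+x_2)Y(v,x_2)$ versus $Y(Y(u,x_0)v,x_2)$ and says nothing about the reversed product $Y(x_2,x_1)(A(x))$; and $(x_1-x_2)^K\Delta w$ does not lie in $(V/\hbar^nV)((x_1,x_2))$ for any $K$, because modulo $\hbar^n$ the term $Y(u,x_1)Y(v,x_2)w$ has arbitrarily negative $x_1$-powers and multiplying by $(x_1-x_2)^K$ cannot fix that. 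Having both $(x_1-x_2)^K Y(u,x_1)Y(v,x_2)w$ and $(x_1-x_2)^K Y(x_2,x_1)(A(x))w$ land in $(V/\hbar^nV)((x_1,x_2))$ is precisely the $\hbar$-adic compatibility you are trying to establish, so the argument is circular. What actually works is the denominator-clearing device of Lemma~\ref{h-adic-compatibility} (the paper invokes it through [Li-h-adic, Lemma 4.25]): write $p(x,\hbar)^{-1}=\sum_{i\ge 0}p_i^-(x)\hbar^i$ with $p_i^-(x)\in\C((x))$, choose $k$ so that $x^kp_i^-(x)\in\C[[x]]$ for all $i<n$, and note that each $(x_1-x_2)^kp_i^-(x_1-x_2)$ is then an honest element of $\C[[x_1,x_2]]$, independent of expansion direction. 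Multiplying $p(x_1-x_2,\hbar)Y(u,x_1)Y(v,x_2)w=q(x_2-x_1,\hbar)Y(x_2,x_1)(B(x))w$ — whose common value does lie in $(V/\hbar^nV)((x_1,x_2))$, by comparing the finiteness of negative powers from the two sides — by $\sum_{i<n}(x_1-x_2)^kp_i^-(x_1-x_2)\hbar^i$ yields $(x_1-x_2)^k\Delta w\equiv 0\pmod{\hbar^n}$ directly. Replacing the weak-associativity/non-zero-divisor reasoning by this clearing-denominators step would close the gap.
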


\begin{proof} Rewrite  (\ref{pq-uv-va}) as
$$p(x_1-x_2,\hbar)Y(u,x_1)Y(v,x_2)=p(x_1-x_2,\hbar)\left( \frac{q(x_2-x_1,\hbar)}{p(-x_2+x_1,\hbar)}\right)Y(x_2,x_1)(B(x)),$$
where as a convention,
$$ \frac{q(x_2-x_1,\hbar)}{p(-x_2+x_1,\hbar)}:=e^{-x_1\frac{\partial}{\partial x_2}}\iota_{x_2,\hbar}
\left( \frac{q(x_2,\hbar)}{p(-x_2,\hbar)}\right)\!.$$
The same argument in \cite[Lemma 4.25]{Li-h-adic} proves
\begin{align}
Y(u,x_1)Y(v,x_2)\sim \frac{q(x_2-x_1,\hbar)}{p(-x_2+x_1,\hbar)}Y(x_2,x_1)(B(x)).\label{similar-YuYv-A}
\end{align}
Then by Lemma \ref{sim-commutator} we have
\begin{align}\label{S-Jacobi-uv-B}
&x_0^{-1}\delta\!\(\frac{x_1-x_2}{x_0}\)\!  Y(u,x_1)Y(v,x_2)\\
 &\quad -x_0^{-1}\delta\!\(\frac{x_2-x_1}{-x_0}\)\frac{q(x_2-x_1,\hbar)}{p(-x_2+x_1,\hbar)} Y(x_2,x_1)(B(x))\nonumber\\
 =\ &x_1^{-1}\delta\!\(\frac{x_2+x_0}{x_1}\)\!Y(Y(u,x_0)v,x_2).\nonumber
\end{align}
Using this and (\ref{pq-uv-va}) we get
\begin{align*}
\Res_{x_0}x_1^{-1}\delta\!\(\frac{x_2+x_0}{x_1}\)p(x_0,\hbar)Y(Y(u,x_0)v,x_2)=0.
\end{align*}
Then
\begin{align}\label{pY-uv-regular}
&\Res_{x_0}x_1^{-1}\delta\!\(\frac{x_2+x_0}{x_1}\)p(x_0,\hbar)Y(u,x_0)v\\
=\ & e^{-x_2\D}\Res_{x_0}x_1^{-1}\delta\!\(\frac{x_2+x_0}{x_1}\)p(x_0,\hbar)Y(Y(u,x_0)v,x_2){\bf 1}\nonumber\\
=\ &0.\nonumber
\end{align}
With (\ref{similar-YuYv-A}),  by Proposition \ref{va-S-locality-module} we have
$$Y_W(u,x_1)Y_W(v,x_2)\sim \frac{q(x_2-x_1,\hbar)}{p(-x_2+x_1,\hbar)}Y_W(x_2,x_1)(B(x)),$$
which by Lemma \ref{sim-commutator} is equivalent to
 \begin{align}\label{pq-Sjacobi-module}
&x_0^{-1}\delta\!\(\frac{x_1-x_2}{x_0}\)\! Y_W(u,x_1)Y_W(v,x_2)\\
&\quad -x_0^{-1}\delta\!\(\frac{x_2-x_1}{-x_0}\)\! \frac{q(x_2-x_1,\hbar)}{p(-x_2+x_1,\hbar)} Y_W(x_2,x_1)(B(x))\nonumber\\
  =\ & x_1^{-1}\delta\!\(\frac{x_2+x_0}{x_1}\)\!Y_W(Y(u,x_0)v,x_2).\nonumber
\end{align}
Applying $\Res_{x_0}p(x_1-x_2,\hbar)$ and then using (\ref{pY-uv-regular}) we obtain (\ref{pq-uv-module}) as
\begin{align*}
&p(x_1-x_2,\hbar)Y_W(u,x_1)Y_W(v,x_2)-Y_W(x_2,x_1)(B(x))\\
=\ &\Res_{x_0}x_1^{-1}\delta\!\(\frac{x_2+x_0}{x_1}\)\!p(x_0,\hbar)Y_W(Y(u,x_0)v,x_2)\\
=\ &0.
\end{align*}

Conversely, assume that $(W,Y_W)$ is a faithful $V$-module and  (\ref{pq-uv-module}) holds.
By Proposition \ref{va-S-locality-module},  (\ref{similar-YuYv-A}) holds, and then by Lemma \ref{sim-commutator},
(\ref{S-Jacobi-uv-B}) holds.  On the other hand,  with (\ref{pq-uv-module}) we have (\ref{pq-Sjacobi-module}).
As $(W,Y_W)$ is faithful, we conclude that (\ref{pY-uv-regular}) holds.
Then we obtain (\ref{pq-uv-va}).
\end{proof}

\begin{de}
Let $V$ be an $\hbar$-adic nonlocal vertex algebra. An {\em $\hbar$-adic nonlocal vertex subalgebra} of $V$
is a $\C[[\hbar]]$-submodule $U$ such that $U$ itself is $\hbar$-adically complete,
\begin{eqnarray}\label{subalgebra-closure}
{\bf 1}\in U,\ \ u_mv\in U\quad \text{ for }u,v\in U,\ m\in \Z,
\end{eqnarray}
and $\lim_{m\rightarrow \infty}u_mv=0$ with respect to the $\hbar$-adic topology of $U$.
Furthermore, $U$ is called a {\em strong $\hbar$-adic nonlocal vertex subalgebra} if $U=[U]$ in $V$.
\end{de}

\begin{rem}
{\em From definition, a $\C[[\hbar]]$-submodule $U$ is an $\hbar$-adic nonlocal vertex subalgebra if and only if
there is an $\hbar$-adic nonlocal vertex algebra structure on $U$ such that the natural embedding of $U$ into $V$ is a homomorphism.
On the other hand, a $\C[[\hbar]]$-submodule $U$ is a strong $\hbar$-adic nonlocal vertex subalgebra if and only if
$[U]=U$, $U$ is $\hbar$-adically complete, and (\ref{subalgebra-closure}) holds.}
\end{rem}

\begin{de}
Let $V$ be an $\hbar$-adic nonlocal vertex algebra. A {\em strong ideal} of $V$ is a $\C[[\hbar]]$-submodule $J$ such that
$[J]=J$, $J$ is $\hbar$-adically complete, and
\begin{align}
v_m J\subset J,\ \ a_m V\subset J\quad \text{ for }v\in V,\ m\in \Z,\ a\in J.
\end{align}
An $\hbar$-adic nonlocal vertex algebra $V$ is said to be {\em relatively simple}
if it is nonzero and if $V$ is its only nonzero strong ideal.
\end{de}


The following is straightforward:

\begin{lem}\label{kernal-strong-ideal}
Let $\psi: V\rightarrow K$ be a homomorphism of $\hbar$-adic nonlocal vertex algebras.
Then $\ker (\psi)$ is a strong ideal of $V$.
\end{lem}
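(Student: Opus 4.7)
The plan is to verify directly the three defining conditions of a strong ideal for $J := \ker(\psi)$, relying on two distinct inputs: the module-theoretic behaviour of kernels between topologically free $\C[[\hbar]]$-modules, and the compatibility of $\psi$ with the vertex-operator maps.

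First I would handle the purely topological conditions $[J]=J$ and $\hbar$-adic completeness. Since $V$ and $K$ are topologically free $\C[[\hbar]]$-modules (being $\hbar$-adic nonlocal vertex algebras) and $\psi$ is in particular a $\C[[\hbar]]$-module map between them, Lemma \ref{simple-fact-3} applies and yields $[\ker(\psi)] = \ker(\psi)$, $\overline{\ker(\psi)} = \ker(\psi)$, and moreover $\ker(\psi)$ itself is topologically free. In particular, $\ker(\psi)$ is $\hbar$-adically complete, which takes care of the first two bullets in the definition of a strong ideal.

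Next I would check the two-sided ``$Y$-absorption'' condition. Because $\psi$ is a homomorphism of $\hbar$-adic nonlocal vertex algebras, we have $\psi(Y(u,x)v) = Y(\psi(u),x)\psi(v)$ for all $u,v\in V$, hence extracting coefficients gives $\psi(u_m v) = \psi(u)_m \psi(v)$ for every $m\in\Z$. Now for any $a\in J$, $v\in V$, $m\in \Z$:
\begin{align*}
\psi(a_m v) &= \psi(a)_m \psi(v) = 0, \\
\psi(v_m a) &= \psi(v)_m \psi(a) = 0,
\end{align*}
so both $a_m v$ and $v_m a$ lie in $J$. This gives $a_m V\subset J$ and $v_m J\subset J$, which is the remaining condition in the definition of a strong ideal.

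There is essentially no genuine obstacle: the statement is a bookkeeping lemma and the only slightly non-trivial input is Lemma \ref{simple-fact-3}, which already encapsulates the work of showing that the $\C[[\hbar]]$-module-theoretic kernel behaves well (is $\hbar$-saturated, closed, and topologically free) when both source and target are topologically free. If anything deserves a sentence of justification, it is the use of that lemma to conclude $\hbar$-adic completeness of $J$; the algebraic closure condition is immediate from $\psi$ respecting the vertex operator maps.
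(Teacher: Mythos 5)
Your proof is correct and is essentially the straightforward verification the paper intends when it labels this lemma as such (the paper gives no written proof). The two inputs you identify — Lemma \ref{simple-fact-3} for the $\hbar$-adic conditions on $\ker(\psi)$, and compatibility of $\psi$ with the vertex-operator maps for the absorption conditions — are exactly what is needed.
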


Using Lemma \ref{strong-submodule} we immediately have:

\begin{lem}\label{strong-ideal}
Let $J$ be a strong ideal of an $\hbar$-adic nonlocal vertex algebra $V$.
Then the $\C[[\hbar]]$-module $V/J$ is topologically free and it is an $\hbar$-adic nonlocal vertex algebra.
\end{lem}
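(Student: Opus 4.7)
The plan is to combine Lemma \ref{strong-submodule} with a routine transfer of the vertex structure through the quotient. For the first assertion, the hypotheses on a strong ideal $J$---namely $[J]=J$ and $J$ is $\hbar$-adically complete---are precisely those of Lemma \ref{strong-submodule} applied to the pair $(V,J)$. That lemma yields $\C$-subspaces $J_0\subset V_0$ of $V$ with $V=V_0[[\hbar]]$, $J=J_0[[\hbar]]$ and $V/J\cong (V_0/J_0)[[\hbar]]$, so $V/J$ is topologically free.

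For the vertex structure, I would define $\bar Y$ on $V/J$ by
\begin{align*}
\bar Y(u+J,x)(v+J)=Y(u,x)v+J_\hbar((x)).
\end{align*}
The key well-definedness check is that if $u\in J$ or $v\in J$, then $Y(u,x)v\in J_\hbar((x))$. That each coefficient $u_mv$ lies in $J$ is immediate from the strong ideal conditions $v_mJ\subset J$ and $a_mV\subset J$. For the decay condition, Lemma \ref{[U]} applied to $[J]=J$ gives $J\cap \hbar^nV=\hbar^nJ$, so the topology on $J$ induced from the $\hbar$-adic topology on $V$ coincides with its own $\hbar$-adic topology; since $Y(u,x)v\in V_\hbar((x))$, its coefficients tend to $0$ in $V$, hence in $J$, so $Y(u,x)v\in J_\hbar((x))$. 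Consequently $\bar Y$ is a well-defined $\C[[\hbar]]$-module map $V/J\to (\End(V/J))[[x,x^{-1}]]$ landing in $(V/J)_\hbar((x))$.

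The remaining axioms of an $\hbar$-adic nonlocal vertex algebra pass directly to the quotient. The vacuum axiom $\bar Y(\bar\vac,x)=1_{V/J}$ and the creation property $\bar Y(\bar v,x)\bar\vac\in (V/J)[[x]]$ with value $\bar v$ at $x=0$ are inherited verbatim. For the $\hbar$-adic weak associativity, given $u,v,w\in V$ and $n\in\Z_+$ pick $l\in\N$ as guaranteed by the weak associativity on $V$; applying the quotient map and noting $\hbar^n(V/J)=(\hbar^nV+J)/J$, the congruence in $V$ modulo $\hbar^nV[[x_0^{\pm 1},x_2^{\pm 1}]]$ descends to the required congruence on $V/J$ modulo $\hbar^n(V/J)[[x_0^{\pm 1},x_2^{\pm 1}]]$.

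No step is a serious obstacle; the only point that is not entirely formal is ensuring that the coefficients of $Y(u,x)v$ with $u\in J$ actually decay in the intrinsic topology of $J$, which is where Lemma \ref{[U]} is essential. Everything else is a direct descent through the canonical surjection $V\to V/J$.
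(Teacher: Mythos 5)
Your proof is correct, and it follows the same route as the paper, which simply cites Lemma~\ref{strong-submodule} for the topological freeness and leaves the transfer of the vertex structure implicit. One small remark on the well-definedness step: the invocation of Lemma~\ref{[U]} to show $Y(u,x)v\in J_{\hbar}((x))$ is harmless but not actually needed. What matters for defining $\bar Y$ and for verifying $\bar Y(\bar u,x)\bar v\in (V/J)_{\hbar}((x))$ is just that (i) $u_m v\in J$ for all $m$ when $u\in J$ or $v\in J$ (immediate from the strong ideal property), and (ii) the coefficient-wise quotient map $V_{\hbar}((x))\to (V/J)_{\hbar}((x))$ is well-defined, which follows directly from $\pi(\hbar^r V)\subset \hbar^r (V/J)$. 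Since $\hbar^n(V/J)=(\hbar^n V+J)/J$, the quotient topology and the intrinsic $\hbar$-adic topology on $V/J$ coincide automatically, without invoking $[J]=J$ at that point. So the reference to Lemma~\ref{[U]} is an overstatement of what is ``essential''; the $[J]=J$ hypothesis is already fully consumed inside Lemma~\ref{strong-submodule}.
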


\begin{lem}\label{strong-ideal-by-T}
Let $T$ be a subset of an $\hbar$-adic nonlocal vertex algebra.
Denote by $(T)$ the intersection of all strong ideals of $V$ containing $T$.
Then $(T)$ is the smallest strong ideal containing $T$.
\end{lem}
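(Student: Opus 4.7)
The plan is to verify directly that the intersection $(T)$ of the family of all strong ideals containing $T$ satisfies each of the defining conditions of a strong ideal; minimality is then automatic by construction.

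First, I would observe that the family is nonempty: $V$ itself is a strong ideal of $V$, since $V$ is topologically free (hence $[V]=V$ and $V$ is $\hbar$-adically complete by Lemma \ref{tpfree-basics}(1)) and the closure conditions are vacuous. Hence $(T)=\bigcap_{J}J$, where $J$ ranges over strong ideals containing $T$, is well-defined. That $(T)$ is a $\C[[\hbar]]$-submodule containing $T$ is immediate from the fact that an intersection of submodules is a submodule.

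Next, I would check the three defining properties of a strong ideal in turn. For the property $[(T)]=(T)$: if $w\in V$ satisfies $\hbar^n w\in (T)$ for some $n$, then $\hbar^n w\in J$ for every strong ideal $J\supset T$, and since $[J]=J$ we get $w\in J$, hence $w\in(T)$. For $\hbar$-adic completeness of $(T)$: given a Cauchy sequence $\{w^{(k)}\}$ in $(T)$, by topological freeness of $V$ it has a limit $w\in V$; for each strong ideal $J\supset T$, the sequence is Cauchy in $J$, and because $J$ is itself $\hbar$-adically complete as a subspace of $V$, the limit must lie in $J$, so $w\in(T)$. For the closure under vertex operations: given $a\in(T)$, $v\in V$, $m\in\Z$, for every strong ideal $J\supset T$ we have $a_mV\subset J$ and $v_ma\in v_mJ\subset J$; intersecting over $J$ yields $a_mV\subset(T)$ and $v_ma\in(T)$.

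Combining these verifications, $(T)$ is a strong ideal of $V$ containing $T$. By construction it is contained in every strong ideal containing $T$, so it is the smallest such ideal.

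I do not anticipate a genuine obstacle here; the only point requiring a little care is the completeness step, where one must invoke topological freeness of $V$ to produce the candidate limit in $V$ and then use $\hbar$-adic completeness of each individual $J$ to place the limit inside $(T)$. Everything else is a direct intersection argument.
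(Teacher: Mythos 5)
Your argument is essentially sound, and since the paper does not supply a proof of Lemma~\ref{strong-ideal-by-T} (it is stated as obvious), the only thing to assess is whether the verification goes through. The nonemptiness, the $\C[[\hbar]]$-submodule structure, the property $[(T)]=(T)$, and the closure under the vertex operations are all handled correctly, and minimality is indeed automatic.

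The one place where you should be more careful is the completeness step. You show that the Cauchy sequence has a limit $w\in V$ and that $w$ lies in every strong ideal $J\supset T$, hence $w\in(T)$; but ``$\hbar$-adically complete'' in the paper's sense means complete for the \emph{intrinsic} $\hbar$-adic topology of $(T)$, whose basic neighborhoods of $0$ are $\hbar^r(T)$, not $(T)\cap\hbar^r V$. These can genuinely differ for a general submodule, so it is not enough to know $w^{(k)}\to w$ in $V$ with $w\in(T)$. The missing piece is exactly what the condition $[(T)]=(T)$ buys you: by Lemma~\ref{[U]}, $(T)\cap\hbar^r V=\hbar^r(T)$ for all $r$, and combined with the fact that $\hbar^r V$ is closed in the topologically free module $V$, this upgrades the convergence $w^{(k)}\to w$ from $V$'s topology to $(T)$'s topology. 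Inserting that observation (or, equivalently, verifying $(T)=(T)[[\hbar]]'$ via Lemma~\ref{def-K[[h]]'}) closes the gap; the rest of your write-up needs no change.
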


\begin{prop}\label{simple-hva}
Let $V$ be an $\hbar$-adic nonlocal vertex algebra such that $V/\hbar V$ is a simple nonlocal vertex algebra over $\C$.
Then $V$ is relatively simple.
\end{prop}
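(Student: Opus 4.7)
The plan is to analyze a nonzero strong ideal $J$ of $V$ via the reduction map $\pi\colon V\to V/\hbar V$. First I would verify that $\pi(J)$ is an ideal of the nonlocal vertex algebra $V/\hbar V$: since the vertex-operator map on $V/\hbar V$ is induced from $Y$, the strong-ideal conditions $v_m J\subset J$ and $a_mV\subset J$ for $v\in V,\ a\in J,\ m\in\Z$ pass directly to $\pi(v)_m \pi(J)\subset \pi(J)$ and $\pi(a)_m (V/\hbar V)\subset \pi(J)$. By simplicity of $V/\hbar V$ there are then only two possibilities, $\pi(J)=0$ or $\pi(J)=V/\hbar V$, which I would rule out the first and conclude from the second.

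In the case $\pi(J)=0$, we have $J\subset \hbar V$. For any $a\in J$, write $a=\hbar b$ with $b\in V$; then $\hbar b=a\in J$, and since $[J]=J$ by assumption we obtain $b\in J$, i.e.\ $J\subset \hbar J$. Combined with the automatic containment $\hbar J\subset J$, this yields $J=\hbar J$, and iterating gives $J\subset \hbar^n V$ for every $n\geq 1$. Because $V$ is topologically free, hence separated, we conclude $J=0$, contradicting the hypothesis that $J$ is nonzero.

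In the case $\pi(J)=V/\hbar V$, we have $V=J+\hbar V$. Given any $v\in V$, I would recursively choose $a_k\in J$ and $v_{k+1}\in V$ with $v_k=a_k+\hbar v_{k+1}$, starting from $v_0=v$. The partial sums $s_n=\sum_{k=0}^{n}\hbar^k a_k$ lie in $J$ and satisfy $v-s_n=\hbar^{n+1}v_{n+1}$, so $s_n\to v$ in the $\hbar$-adic topology of $V$. Since $[J]=J$, Lemma \ref{[U]} gives $J\cap \hbar^n V=\hbar^n J$ for all $n$, so the subspace topology induced on $J$ from $V$ coincides with its own $\hbar$-adic topology; hence $(s_n)$ is Cauchy in $J$, and $\hbar$-adic completeness of $J$ produces a limit $s\in J$, which by separation of $V$ must equal $v$. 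Thus $v\in J$ and $J=V$. The only mildly delicate point is the matching of topologies in this second case, and that is precisely what Lemma \ref{[U]} is designed to supply, so I expect no real obstacle.
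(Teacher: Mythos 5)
Your proof is correct, but it follows a genuinely different route from the paper's. The paper invokes Lemma~\ref{strong-submodule} to produce compatible free decompositions $V=V_0[[\hbar]]$ and $J=J_0[[\hbar]]$ with $J_0\subset V_0$; it then observes that the natural map $\theta\colon J\to V/\hbar V$ has kernel $J\cap\hbar V=\hbar J$, so that $J/\hbar J$ embeds into $V/\hbar V$ as a \emph{nonzero} ideal (nonzero because $J\neq 0$ forces $J_0\neq 0$), and simplicity at once gives $J_0=V_0$ and hence $J=V$. You avoid the structure lemma entirely and instead split on $\pi(J)=0$ versus $\pi(J)=V/\hbar V$. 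Your handling of the first case (using $[J]=J$ to show $J=\hbar J$ and then iterating against separatedness of $V$) is a clean substitute for the paper's appeal to $J=J_0[[\hbar]]$, and your handling of the second case is a hands-on Nakayama-type completion argument: approximate $v\in V$ by partial sums $s_n\in J$ with $v-s_n\in\hbar^{n+1}V$, note the $s_n$ form a Cauchy sequence in $J$, use $\hbar$-adic completeness of $J$ to extract a limit $s\in J$, and use separatedness of $V$ to force $s=v$. One small remark: the appeal to Lemma~\ref{[U]} and the matching of topologies in your final step is not actually needed — Cauchyness of $(s_n)$ in $J$'s own $\hbar$-adic topology is immediate from $s_{n+1}-s_n=\hbar^{n+1}a_{n+1}\in\hbar^{n+1}J$, and the conclusion $s=v$ follows from $s-v\in\bigcap_n\hbar^nV=0$ without comparing topologies. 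The paper's route is shorter because the structure lemma does the heavy lifting; your route is more elementary and makes explicit the completeness/separatedness mechanism that the structure lemma packages away.
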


\begin{proof} Let $J$ be any nonzero strong ideal of $V$. In view of Lemma \ref{strong-submodule},
 there exist a $\C$-subspace $V_0$ of $V$ and a $\C$-subspace $J_0$ of $V_0$ such that
$V=V_0[[\hbar]]$ and $J=J_0[[\hbar]]$. Consider the natural $\C[[\hbar]]$-morphism
 $\theta: J\rightarrow V\rightarrow V/\hbar V.$
 We have
 $$\ker \theta=J\cap \hbar V=\hbar J.$$
Then $\theta$ reduces to an injective $\C[[\hbar]]$-morphism $\bar{\theta}: J/\hbar J\rightarrow V/\hbar V$.
 It is clear that $J/\hbar J$ is an ideal of $V/\hbar V$.
Note that since $J\ne 0$, we have $J\ne \hbar J$, i.e., $J/\hbar J\ne 0$.
As $V/\hbar V$ is a simple nonlocal vertex algebra (over $\C$),
we conclude that $J/\hbar J=V/\hbar V$, which implies $J_0=V_0$. Therefore, we have $J=J_0[[\hbar]]=V_0[[\hbar]]=V$.
\end{proof}

Let $W$ be a module for an $\hbar$-adic nonlocal vertex algebra $V$.
A  {\em strong $V$-submodule} of $W$ is a $\C[[\hbar]]$-submodule
$U$ such that $[U]=U$, $U$ is $\hbar$-adically complete, and
\begin{align}
a_m u\subset U\quad \text{ for }a\in V,\ m\in \Z,\ u\in U.
\end{align}
A $V$-module $W$ is said to be {\em relatively simple (irreducible)} if $W\ne 0$ and
 if $W$ is its only nonzero strong $V$-submodule.

Using Lemma \ref{strong-submodule}, we immediately have:

\begin{lem}\label{hva-quotient-module}
Let $W$ be a module for an $\hbar$-adic nonlocal vertex algebra $V$ and let $U$ be a strong $V$-submodule of $W$.
Then $W/U$ is topologically free and it is a $V$-module.
\end{lem}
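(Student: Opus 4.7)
The plan is to apply Lemma \ref{strong-submodule} directly to get topological freeness of $W/U$, and then push forward the module vertex operator through the quotient, checking that the closure and topology conditions in the definition of a strong $V$-submodule are exactly what is needed.

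First I would invoke Lemma \ref{strong-submodule}: since $W$ is topologically free over $\C[[\hbar]]$ and $U\subset W$ is a $\C[[\hbar]]$-submodule with $[U]=U$ and $U$ $\hbar$-adically complete, there exist $\C$-subspaces $U_0\subset W_0$ of $W$ with $W=W_0[[\hbar]]$, $U=U_0[[\hbar]]$, and $W/U=(W_0/U_0)[[\hbar]]$. This immediately gives that $W/U$ is topologically free.

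Next I would define the candidate module structure
\begin{align*}
Y_{W/U}(v,x)(w+U):=Y_W(v,x)w+U[[x,x^{-1}]]\quad (v\in V,\ w\in W).
\end{align*}
Well-definedness on cosets follows because the strong-submodule axiom $a_mU\subset U$ (for all $a\in V$, $m\in\Z$) gives $Y_W(v,x)U\subset U[[x,x^{-1}]]$. To check that the image lies in $(W/U)_\hbar((x))$, I would use that $[U]=U$ combined with Lemma \ref{[U]} yields $U\cap\hbar^nW=\hbar^nU$ for all $n$. Consequently, the natural projection $W\to W/U$ is continuous and the $\hbar$-adic topology on $W/U$ (as constructed above, $(W_0/U_0)[[\hbar]]$) coincides with the quotient of the $\hbar$-adic topology on $W$. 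Hence if the coefficients of $Y_W(v,x)w$ eventually lie in $\hbar^nW$, their images eventually lie in $\hbar^n(W/U)$, so $Y_{W/U}(v,x)(w+U)\in(W/U)_\hbar((x))$.

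Finally I would verify the module axioms for $Y_{W/U}$. The vacuum property $Y_{W/U}(\vac,x)=1_{W/U}$ is immediate from $Y_W(\vac,x)=1_W$. The $\C[[\hbar]]$-linearity is clear. For the $\hbar$-adic weak associativity, given $u,v\in V$, $w+U\in W/U$, and $n\in\Z_+$, applying the projection $W\to W/U$ to the corresponding congruence on $W$ modulo $\hbar^nW$ yields the desired congruence on $W/U$ modulo $\hbar^n(W/U)$, using once more that $\hbar^nW$ maps into $\hbar^n(W/U)$. Since the author flags this as immediate from Lemma \ref{strong-submodule}, there is no real obstacle; the only point that warrants care is the compatibility of the $\hbar$-adic topologies on $W$, $U$, and $W/U$, and this is exactly what $[U]=U$ plus $\hbar$-adic completeness of $U$ guarantees.
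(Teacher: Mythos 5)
Your proposal is correct and takes essentially the same route the paper intends: the paper dispatches the lemma with the phrase ``Using Lemma \ref{strong-submodule}, we immediately have,'' and what you have written is precisely the expansion of that remark, namely invoking the decomposition $W=W_0[[\hbar]]$, $U=U_0[[\hbar]]$, $W/U=(W_0/U_0)[[\hbar]]$ to get topological freeness, then pushing $Y_W$ to the quotient and checking $\hbar$-adic weak associativity by reduction modulo $\hbar^n$.
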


On the other hand, from the proof of Lemma \ref{simple-hva} we immediately have:

\begin{prop}\label{simple-hva-module}
Let $V$ be an $\hbar$-adic nonlocal vertex algebra and let $W$ be a $V$-module such that
$W/\hbar W$ is an irreducible $V/\hbar V$-module. Then $W$ is relatively simple.
\end{prop}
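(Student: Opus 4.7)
The plan is to imitate the proof of Lemma \ref{simple-hva} essentially verbatim, translated from the algebra setting to the module setting. Let $U$ be an arbitrary nonzero strong $V$-submodule of $W$; the goal is to show $U=W$.

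First, because $[U]=U$ and $U$ is $\hbar$-adically complete, Lemma \ref{strong-submodule} produces a $\C$-subspace $W_0$ of $W$ together with a $\C$-subspace $U_0\subset W_0$ such that $W=W_0[[\hbar]]$ and $U=U_0[[\hbar]]$. Next, consider the natural composite
$$\theta\colon U\hookrightarrow W\twoheadrightarrow W/\hbar W.$$
Its kernel is $U\cap\hbar W$, and Lemma \ref{[U]} applied to the hypothesis $[U]=U$ gives $U\cap \hbar W=\hbar U$. Hence $\theta$ descends to an injective $\C$-linear map
$$\bar\theta\colon U/\hbar U\hookrightarrow W/\hbar W.$$

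The image of $\bar\theta$ is a $(V/\hbar V)$-submodule of $W/\hbar W$, since the strong-submodule condition $a_m u\in U$ for all $a\in V$, $u\in U$, $m\in\Z$ passes to the quotient. Moreover $U\ne 0$ together with $U=U_0[[\hbar]]$ forces $U_0\ne 0$, so $U/\hbar U\ne 0$. By the assumed irreducibility of $W/\hbar W$ as a $V/\hbar V$-module, $\bar\theta$ must also be surjective, so $U_0=W_0$ and therefore $U=W$.

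The single technical point is the identification $U\cap\hbar W=\hbar U$, which rests on $[U]=U$ via Lemma \ref{[U]}; beyond this, the argument is completely mechanical, matching step-for-step the one given for strong ideals in Lemma \ref{simple-hva}. In particular, no genuine obstacle arises, which is consistent with the authors' remark that the result follows immediately from that earlier proof.
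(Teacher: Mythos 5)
Your proof is correct and follows exactly the same route the paper intends: the authors state that Proposition \ref{simple-hva-module} follows "from the proof of Lemma \ref{simple-hva}," and your argument is precisely that proof transcribed from strong ideals to strong $V$-submodules, using Lemma \ref{strong-submodule} for the decomposition $W=W_0[[\hbar]]$, $U=U_0[[\hbar]]$ and Lemma \ref{[U]} for $U\cap\hbar W=\hbar U$.
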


\subsection{General constructions}
Let $W=W_0[[\hbar]]$ be a topologically free $\C[[\hbar]]$-module. Recall
$$\E_\hbar(W)=\E(W_0)[[\hbar]]=(\te{Hom}(W_0,W_0((x))))[[\hbar]]$$
and for each positive integer $n$, we have a natural linear map
\begin{align}
\pi_n:\   (\End W)[[x,x^{-1}]]\longrightarrow (\End (W/\hbar^n W))[[x,x^{-1}]].
\end{align}
Let $a(x)\in (\End W)[[x,x^{-1}]]$. Then $a(x)\in \E_{\hbar}(W)$ if and only if
 $\pi_n(a(x))\in \E(W/\hbar^n W)$ for every positive integer $n$.

\begin{de}
A finite sequence in $\E_\hbar(W)$ is said to be \emph{$\hbar$-adically compatible}
if it is compatible in $ \E(W/\hbar^n W)$ for every positive integer $n$.
A subset $U$ of $\E_\hbar(W)$ is said to be \emph{$\hbar$-adically compatible}
if every finite sequence in $U$ is $\hbar$-adically compatible, i.e.,
for every positive integer $n$, $\pi_n(U)$ is a compatible subset of $ \E(W/\hbar^n W)$.
\end{de}

Let $(a(x),b(x))$ be an $\hbar$-adically compatible pair in $\E_\hbar(W)$.
For every positive integer $n$, as $(\pi_n(a(x)),\pi_n(b(x))$ is a compatible pair in $\E(W/\hbar^nW)$,
we have $\pi_n(a(x))_m\pi_n(b(x))\in \E(W/\hbar^nW)$ for $m\in \Z$.
For $m\in \Z$, define $a(x)_mb(x)\in \E_\hbar(W)$ by
\begin{eqnarray}\label{eq:def-y-E-op}
a(x)_mb(x)=\varprojlim_{n\ge 1}\pi_n(a(x))_m\pi_n(b(x)).
\end{eqnarray}
Form a generating function
\begin{eqnarray}
Y_{\E}(a(x),z)b(x)=\sum_{m\in \Z}a(x)_mb(x) z^{-m-1}.
\end{eqnarray}
An $\hbar$-adically compatible $\C[[\hbar]]$-submodule $K$ of $\E_\hbar(W)$ is said to be {\em $Y_{\E}$-closed} if
$$a(x)_mb(x)\in K\quad \te{for }a(x),b(x)\in K,\ m\in \Z.$$

The following result was obtained  in \cite[Theorem 4.16]{Li-h-adic}:

\begin{thm}\label{thm:h-adic-va-abs-construct}
Let $W$ be a topologically free $\C[[\hbar]]$-module. Assume that
$V$ is an $\hbar$-adically compatible $\C[[\hbar]]$-submodule of $\E_\hbar(W)$
such that $V$ contains $1_W$, $V$ is $Y_\E$-closed, $[V]=V$, and $V$ is $\hbar$-adically complete.
Then $(V,Y_\E,1_W)$ carries the structure of an $\hbar$-adic nonlocal vertex algebra
and $W$ is a faithful $V$-module with $Y_W(a(x),z)=a(z)$ for $a(x)\in V$.
On the other hand, let $U$ be any $\hbar$-adically compatible subset of $\E_\hbar(W)$. Then
there exists a (unique) smallest $\hbar$-adically compatible $\C[[\hbar]]$-submodule $\<U\>$ of $\E_\hbar(W)$
such that $\<U\>$ contains $U\cup \{1_W\}$, $\<U\>$ is $Y_\E$-closed, $[\<U\>]=\<U\>$, and $\<U\>$ is $\hbar$-adically complete.
\end{thm}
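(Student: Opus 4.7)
The strategy is to reduce everything to the classical $\C$-linear construction of nonlocal vertex algebras inside $\E(W_0)$ recalled in Section~\ref{sec:cla-va} by working level-by-level modulo $\hbar^n$ and assembling via the identification $\E_\hbar(W) = \varprojlim_n \E(W/\hbar^n W)$. Because $[V] = V$ and $V$ is $\hbar$-adically complete, Lemma \ref{tpfree-basics}(3) makes $V$ topologically free, so its inverse-limit presentation $V \cong \varprojlim_n \pi_n(V)$ is valid.

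For the first assertion, fix $n \geq 1$ and set $V_n := \pi_n(V) \subseteq \E(W/\hbar^n W)$. The $\hbar$-adic compatibility of $V$ is exactly classical compatibility of $V_n$; the $Y_\E$-closure of $V$, together with the defining formula for $a(x)_m b(x)$ as a level-wise limit, forces $V_n$ to be $Y_\E$-closed; and $1_{W/\hbar^n W} = \pi_n(1_W) \in V_n$. The classical theorem therefore endows $V_n$ with a nonlocal vertex algebra structure for which $W/\hbar^n W$ is a faithful $V_n$-module. One then defines $Y_\E$ on $V$ level-wise by (\ref{eq:def-y-E-op}) and verifies each axiom of Definition \ref{de:h-adic-nonlocal-va} modulo $\hbar^n$ for every $n$: the membership $Y_\E(u,x)v \in V_\hbar((x))$, $Y_\E(1_W, x) = 1_V$, the creation axiom, and $\hbar$-adic weak associativity all reduce to their classical analogues in $V_n$. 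The module axioms for $W$ (with $Y_W(a(x), z) = a(z)$) follow in the same way, and faithfulness is immediate from this formula, since $Y_W(a(x), z) = 0$ forces $a(x) = 0$ in $\E_\hbar(W)$.

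For the second assertion, I would construct $\<U\>$ explicitly. Define
\[
U^{\infty} := \mathrm{span}_{\C[[\hbar]]}\bigl\{\, u^{(1)}(x)_{m_1} \cdots u^{(r)}(x)_{m_r} 1_W \;\big|\; r \geq 0,\ u^{(i)}(x) \in U,\ m_i \in \Z \,\bigr\},
\]
and set $\<U\> := \overline{[U^{\infty}]}$. At each level, $\pi_n(U^{\infty})$ is the smallest classical $Y_\E$-closed compatible subspace of $\E(W/\hbar^n W)$ containing $\pi_n(U) \cup \{1_{W/\hbar^n W}\}$, so $U^{\infty}$ is $\hbar$-adically compatible and $Y_\E$-closed. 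Applying Lemma \ref{simple-fact-2} gives $[\<U\>] = \<U\>$ and makes $\<U\>$ topologically free; that compatibility and $Y_\E$-closedness are preserved under $[\cdot]$ and $\hbar$-adic completion is then checked level-by-level. For minimality and uniqueness, any submodule $V' \supseteq U \cup \{1_W\}$ with the four listed properties necessarily contains $U^{\infty}$ and, being $[\cdot]$-closed and $\hbar$-adically complete, contains $\overline{[U^{\infty}]} = \<U\>$.

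The main obstacle I anticipate is the continuity argument required to show that $Y_\E$-closedness survives both closure operations $K \mapsto [K]$ and $K \mapsto \overline{K}$. The delicate point is that $Y_\E$ is defined only on $\hbar$-adically compatible subsets and neither operation commutes with the infinite-sum structure of $Y_\E(a, x)b$ in an automatic way. I would resolve this by fixing $n$, reducing modulo $\hbar^n$ where both $[\cdot]$-closure and $\hbar$-adic completion become transparent (since $\hbar$ acts nilpotently and there is no further \emph{a priori} torsion to pass through), and invoking the classical theorem applied to $\pi_n(U^\infty) \subseteq \E(W/\hbar^n W)$; the required inverse-system compatibility then follows from the commutative diagrams obtained by applying $\pi_n$ to (\ref{eq:def-y-E-op}).
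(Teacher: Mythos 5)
The paper does not give its own proof of this statement—it is cited directly from \cite[Theorem 4.16]{Li-h-adic}—but the surrounding material (the construction of $U^{(\infty)}$ and Lemma \ref{U-infinity-facts}) makes the intended argument clear, and your level-by-level strategy via $\E_\hbar(W)=\varprojlim_n\E(W/\hbar^n W)$ is the right framework. Your treatment of the first assertion is essentially sound: $[V]=V$ guarantees $V\cap \hbar^n\E_\hbar(W)=\hbar^n V$, so the mod-$\hbar^n$ vanishing of high modes really lands the modes in $\hbar^n V$ and membership $Y_\E(u,x)v\in V_\hbar((x))$ follows, and all axioms reduce cleanly to their images in $\pi_n(V)$.

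There is, however, a genuine gap in the second assertion, centered on your claim that ``$U^\infty$ is $\hbar$-adically compatible and $Y_\E$-closed.'' The $\C[[\hbar]]$-span $U^\infty$ of straight iterates is compatible but is \emph{not} $Y_\E$-closed in general. The $\hbar$-adic iterate formula expresses $(u_m a)_k b$ as a series $\sum_{i\ge 0}\binom{m}{i}(-1)^i\bigl(u_{m-i}(a_{k+i}b)-(-1)^m a_{m+k-i}(u_i b)\bigr)$ whose tails only vanish modulo each $\hbar^n$, not outright; so for $a,b\in U^\infty$ one can only conclude $a_k b\in\overline{U^\infty}$, not $a_k b\in U^\infty$. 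Your reduction shows $\pi_n(U^\infty)$ is $Y_\E$-closed for every $n$, and from $\bigcap_n\pi_n^{-1}(\pi_n(U^\infty))=\overline{U^\infty}$ this yields $Y_\E$-closedness of $\overline{U^\infty}$, not of $U^\infty$. (Relatedly, $\pi_n(U^\infty)$ is the $\C[\hbar]/(\hbar^n)$-span of iterates, strictly larger than the ``smallest classical $Y_\E$-closed compatible subspace'' which is the $\C$-span.) This is precisely why the paper works with the inductively defined $U^{(\infty)}=\bigcup_{k}U^{(k)}$, where $U^{(k+1)}=\mathrm{span}_{\C[[\hbar]]}\{a_m b : a,b\in U^{(k)},\ m\in\Z\}$: that object is $Y_\E$-closed \emph{by construction}, and only after taking $\overline{[\,\cdot\,]}$ does it coincide with $\overline{[U^\infty]}$—this is the content of Lemma \ref{U-infinity-facts} and already requires the iterate-formula estimate above. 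Your final paragraph correctly anticipates that passing $Y_\E$-closedness through $[\,\cdot\,]$ and $\overline{\phantom{K}}$ is delicate, but the proposed ``check level-by-level'' does not quite close it: $\pi_n$ does not commute with $[\,\cdot\,]$ (e.g.\ $\pi_1(\hbar\C[[\hbar]])=0$ while $\pi_1([\hbar\C[[\hbar]]])=\C$), so identifying $\pi_n(\<U\>)$ and establishing its $Y_\E$-closedness takes a separate argument. The clean route is: (i) prove $a_kb\in\overline{U^\infty}$ for $a,b\in U^\infty$ via the iterate formula; (ii) deduce $Y_\E$-closedness of $[U^\infty]$-plus-closure from $[\overline{[U^\infty]}]=\overline{[U^\infty]}$ and the fact that $\overline{[U^\infty]}$ is closed in $\E_\hbar(W)$ (here $[\,\cdot\,]=\mathrm{id}$ makes the subspace topology agree with the intrinsic $\hbar$-adic one); (iii) extend to limits by the level-wise continuity $\pi_n(a_k b)=\pi_n(a)_k\pi_n(b)$. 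Step (i) is the missing idea in your write-up.
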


By a straightforward argument we have:

\begin{lem}\label{module-rep}
Let $(W,Y_W)$ be a module for an $\hbar$-adic nonlocal vertex algebra $V$. For $u,v\in V$,
if $(Y_W(u,x),Y_W(v,x))$ is $\hbar$-adically compatible, then
\begin{align}
Y_W(Y(u,z)v,x)=Y_{\E}(Y_W(u,x),z)Y_W(v,x).
\end{align}
\end{lem}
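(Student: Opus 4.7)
The plan is to reduce the identity to its classical (non-$\hbar$-adic) analogue by working modulo $\hbar^n$ for each $n\in\Z_+$ and then passing to the inverse limit via (\ref{eq:def-y-E-op}). For each positive integer $n$, the reduction $V/\hbar^n V$ is a nonlocal vertex algebra over $\C[\hbar]/(\hbar^n)$, and $W/\hbar^n W$ is a $(V/\hbar^n V)$-module via $\pi_n Y_W$. The $\hbar$-adic compatibility assumption on the ordered pair $(Y_W(u,x), Y_W(v,x))$ says exactly that $(\pi_n Y_W(u,x), \pi_n Y_W(v,x))$ is a compatible pair in $\E(W/\hbar^n W)$ for every $n$.

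The key classical ingredient I would invoke is the following: for any module $W'$ over a nonlocal vertex algebra $V'$ and for $u',v'\in V'$ such that $(Y_{W'}(u',x), Y_{W'}(v',x))$ is a compatible pair in $\E(W')$, one has
\begin{align*}
Y_\E(Y_{W'}(u',x),z)\, Y_{W'}(v',x) = Y_{W'}(Y(u',z)v',x).
\end{align*}
Unpacking the definition of $Y_\E$, compatibility with parameter $k$ gives
\begin{align*}
Y_\E(Y_{W'}(u',x),z)\, Y_{W'}(v',x) = \bigl(Y_{W'}(u',x_1)\, Y_{W'}(v',x)\bigr)\bigr|_{x_1=x+z},
\end{align*}
while the module weak associativity produces some $l\in\N$ with $(x+z)^l$ times this equal to $(x+z)^l Y_{W'}(Y(u',z)v',x)$ on any given vector $w\in W'$; the common factor $(x+z)^l$ is then cancellable against its $\iota_{x,z}$-expanded inverse because both sides involve only finitely many negative powers of $x$ when applied to $w$. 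This is a standard consequence of the general theory in \cite{Li-nonlocal}.

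Applying this classical statement to $V' = V/\hbar^n V$ and $W' = W/\hbar^n W$ yields
\begin{align*}
\pi_n Y_W(Y(u,z)v, x) = Y_\E(\pi_n Y_W(u,x),z)\, \pi_n Y_W(v,x)
\end{align*}
for every positive integer $n$. Passing to the inverse limit in $n$ via the defining formula (\ref{eq:def-y-E-op}) gives the desired identity. I expect the main technical point to be the cancellation step in the classical argument, which relies on the careful interplay between weak associativity and the $\iota_{x,z}$-expansion convention built into $Y_\E$; once that is in place, the inverse-limit passage is completely formal, which matches the authors' description of the result as following ``by a straightforward argument.''
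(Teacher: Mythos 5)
Your overall strategy --- reduce modulo $\hbar^n$, apply the classical statement to the nonlocal vertex algebra $V/\hbar^n V$ acting on $W/\hbar^n W$, and then recover the $\hbar$-adic identity by passing to the inverse limit defining $Y_\E$ on $\E_\hbar(W)$ via \eqref{eq:def-y-E-op} --- is the correct one and is exactly what the paper's phrase ``by a straightforward argument'' points at. The reduction step and the limit passage are both handled correctly, and citing \cite{Li-nonlocal} for the classical ingredient is appropriate.

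One remark on your sketch of that classical ingredient, because it conceals the only non-trivial step. Your displayed formula is not quite the definition of $Y_\E$: the definition reads $Y_\E(a(x),z)b(x)=z^{-k}\bigl((x_1-x)^k a(x_1)b(x)\bigr)\bigr|_{x_1=x+z}$, with the substitution expanded in nonnegative powers of $z$, and the prefactor $(x_1-x)^k$ together with compatibility is what makes the substitution well defined at all. Module weak associativity, by contrast, is phrased with $Y_W(u,x_0+x_2)$ expanded in nonnegative powers of $x_2$ --- the opposite expansion convention after relabelling $x_0=z$, $x_2=x$. These two expansions of the ``same'' expression genuinely differ (their difference is a delta-function-supported series), and the real content of the classical lemma is that after multiplying by $(x+z)^l$ for $l$ at least the order of the $x_1$-pole of $(x_1-x)^k Y_W(u,x_1)Y_W(v,x)w$, both expansions collapse to the unambiguous polynomial substitution of an element of $\Hom(W,W((x,x_1)))$; only then does weak associativity apply, and only then can one cancel $(x+z)^l$ inside $(W((x)))((z))$. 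Your sketch jumps over this reconciliation of conventions and attributes all the difficulty to the final cancellation, but since you invoke \cite{Li-nonlocal} for the statement (and the paper itself records no proof), this does not undermine the proposal.
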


Define a $\C[[\hbar]]$-module map
\begin{eqnarray}
 Z_W(x_1,x_2):\  \ \E_\hbar(W)\widehat\ot\E_\hbar(W)\widehat\ot\C((z))[[\hbar]]\to (\End W)[[x_1^{\pm 1},x_2^{\pm1}]]\nonumber
\end{eqnarray}
by
 \begin{eqnarray}
 Z_W(x_1,x_2)( a(x)\ot b(x)\ot f(z))= f(x_1-x_2)a(x_1)b(x_2).
\end{eqnarray}

 \begin{de}
 A subset $U$ of $\E_\hbar(W)$ is said to be \emph{$\SY$-local}
 if for any $a(x),b(x)\in U$, there exists $A(z)\in(\C U\ot \C U\ot \C((z)))[[\hbar]]$ such that
\begin{eqnarray}
  a(x_1)b(x_2)\sim Z_W(x_2,x_1)(A(z)).
\end{eqnarray}
\end{de}

 We have (see \cite[Lemmas 4.23]{Li-h-adic}):

\begin{lem}\label{W-hwqva}
Let $U$ be an $\mathcal{S}$-local subset of $\E_{\hbar}(W)$.
Then $U$  is $\hbar$-adically compatible.
\end{lem}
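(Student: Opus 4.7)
The plan is to reduce the claim to the classical ``$\SY$-local-implies-compatible'' proposition recalled near the end of Section 2. Indeed, by the definition of $\hbar$-adic compatibility it suffices to show that for every positive integer $n$ the projected set $\pi_n(U)\subset \E(W/\hbar^n W)$ is compatible in the classical sense over $\C$.

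Fix such an $n$. For each pair $a(x),b(x)\in U$, the $\SY$-locality hypothesis furnishes an element
\begin{align*}
A(z)=\sum_{m\ge 0}\hbar^m A_m(z)\in (\C U\ot\C U\ot\C((z)))[[\hbar]],
\end{align*}
where each $A_m(z)=\sum_i c_{i,m}\ot d_{i,m}\ot f_{i,m}(z)$ is a finite sum in $\C U\ot\C U\ot\C((z))$, together with $k=k(a,b,n)\in\N$ such that
\begin{align*}
(x_1-x_2)^k a(x_1)b(x_2)\equiv (x_1-x_2)^k Z_W(x_2,x_1)(A(z))\pmod{\hbar^n(\End W)[[x_1^{\pm 1},x_2^{\pm 1}]]}.
\end{align*}
Applying $\pi_n$ and discarding the terms with $m\ge n$ (on which $\hbar^m$ acts as $0$ on $W/\hbar^n W$) produces the finite classical identity
\begin{align*}
&(x_1-x_2)^k\pi_n(a)(x_1)\pi_n(b)(x_2)\\
&\qquad =(x_1-x_2)^k\sum_{m=0}^{n-1}\sum_i f_{i,m}(x_2-x_1)\bigl(\hbar^m\pi_n(c_{i,m})\bigr)(x_2)\pi_n(d_{i,m})(x_1)
\end{align*}
in $(\End(W/\hbar^n W))[[x_1^{\pm 1},x_2^{\pm 1}]]$.

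Next I would introduce the $\C$-linear span $\tilde U_n=\Span_{\C}\{\hbar^m\pi_n(u)(x):u\in U,\ 0\le m<n\}\subset \E(W/\hbar^n W)$. Multiplying the identity above by $\hbar^{m_1+m_2}$ for arbitrary $m_1,m_2<n$ (and dropping the terms whose resulting $\hbar$-exponent reaches $n$) and extending by $\C$-bilinearity shows that $\tilde U_n$ is an $\SY$-local subset of $\E(W/\hbar^n W)$ in the classical sense of Section 2. By the classical proposition stated there ($\SY$-local implies compatible), $\tilde U_n$ is compatible in $\E(W/\hbar^n W)$; a fortiori $\pi_n(U)\subset\tilde U_n$ is compatible. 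Since $n$ was arbitrary, $U$ is $\hbar$-adically compatible.

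The only delicate point is that in the $\hbar$-adic $\SY$-locality definition the datum $A(z)$ lies in the \emph{uncompleted} tensor product $(\C U\ot\C U\ot\C((z)))[[\hbar]]$, so each coefficient $A_m(z)$ is a genuinely finite sum; without this finiteness the reduction mod $\hbar^n$ would produce an infinite sum and the classical proposition could not be applied directly. Once this bookkeeping is handled, no induction on sequence length is needed, because the classical proposition automatically delivers compatibility for every finite sequence drawn from $\tilde U_n$.
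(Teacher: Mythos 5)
Your argument is correct. The reduction modulo $\hbar^n$ to the classical ``$\SY$-local implies compatible'' proposition recalled at the end of Section~2 is the right strategy: you correctly note that, because $A(z)$ lies in the \emph{uncompleted} tensor product $(\C U\ot\C U\ot\C((z)))[[\hbar]]$, each coefficient $A_m(z)$ is a genuinely finite sum, so that after projecting by $\pi_n$ only finitely many terms survive; you also correctly enlarge $\pi_n(U)$ to the set $\tilde U_n$ spanned by $\hbar^m\pi_n(u)$ ($0\le m<n$), so that the $\C[\hbar]/(\hbar^n)$-scalars appearing on the right-hand side after multiplying by $\hbar^{m_1+m_2}$ stay inside the set being tested for $\SY$-locality, and then compatibility is inherited by the subset $\pi_n(U)$. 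The paper itself gives no proof here but refers to \cite[Lemma 4.23]{Li-h-adic}; your argument is the natural one and is in the same spirit as that reference.
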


The following is a slight modification of \cite[Theorem 4.24]{Li-h-adic}  (cf.  \cite[Thm 2.9]{Li-qva2}):

\begin{thm}\label{hwqva-construction}
Let $V$ be a topologically free $\C[[\hbar]]$-module, $U$ a subset, ${\bf 1}$ a vector of $V$, and $Y_0(\cdot,x)$ a map
from $U$ to $\E_{\hbar}(V)$. Assume all the following conditions hold:
\begin{align}
Y_0(u,x){\bf 1}\in V[[x]]\ \text{ and }\ \lim_{x\rightarrow 0}Y_0(u,x){\bf 1}=u\ \text{ for }u\in U,
\end{align}
$U(x):=\!\{ u(x):=Y_0(u,x)=\sum_{n\in \Z}u_nx^{-n-1} | u\in U\}$ is $\SY$-local, and
$V=\overline{[U^{\infty}]}$, where
\begin{align}
U^{\infty}={\rm span}_{\C[[\hbar]]}\{ u^{(1)}_{n_1}\cdots u^{(r)}_{n_r}{\bf 1}\ |\ r\ge 0,\ u^{(i)}\in U,\ n_i\in \Z\}.
\end{align}
In addition, assume that there exists a $\C[[\hbar]]$-module map $\psi_x: V\rightarrow \<U(x)\>$ such that $\psi_x({\bf 1})=1_V$ and
\begin{align}
\psi_x(u_nv)=u(x)_n\psi_x(v)\quad \text{ for }u\in U,\ n\in \Z,\ v\in V.
\end{align}
Then $Y_0(\cdot,x)$ can be extended uniquely to a $\C[[\hbar]]$-module map $Y(\cdot,x): V\rightarrow \E_{\hbar}(V)$ such that
$(V,Y,{\bf 1})$ carries the structure of an $\hbar$-adic nonlocal vertex algebra and
$\psi_x$ is an isomorphism of $\hbar$-adic nonlocal vertex algebras.
Furthermore, if $V=U^{\infty}[[\hbar]]'$, then $V$ is an $\hbar$-adic weak quantum vertex algebra.
\end{thm}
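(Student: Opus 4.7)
The plan is to identify $V$ with the $\hbar$-adic nonlocal vertex algebra $\<U(x)\>$ generated inside $\E_\hbar(V)$ via $\psi_x$, and to transport the structure back. Since $U(x)$ is $\SY$-local, Lemma \ref{W-hwqva} gives $\hbar$-adic compatibility, and then Theorem \ref{thm:h-adic-va-abs-construct} produces $\<U(x)\>$ as an $\hbar$-adic nonlocal vertex algebra with vacuum $1_V$, for which $V$ is a faithful $\<U(x)\>$-module via the evaluation $Y_V(a(x),z)=a(z)$.

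The central step is to establish that $\psi_x:V\to \<U(x)\>$ is a $\C[[\hbar]]$-module isomorphism. Since both are topologically free, this reduces by Lemma \ref{free-top} to showing $\bar\psi_x$ is a bijection of $\C$-vector spaces modulo $\hbar$. Iterating the hypothesis $\psi_x(u_nv)=u(x)_n\psi_x(v)$ starting from $\psi_x(\vac)=1_V$ yields
\[
\psi_x\bigl(u^{(1)}_{n_1}\cdots u^{(r)}_{n_r}\vac\bigr)=u^{(1)}(x)_{n_1}\cdots u^{(r)}(x)_{n_r}1_V,
\]
so $\psi_x(U^\infty)=U(x)^\infty$. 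Combined with the hypothesis $V=\overline{[U^\infty]}$ and the identity $\<U(x)\>=\overline{[U(x)^\infty]}$ from Lemma \ref{U-infinity-facts}, this handles surjectivity of $\bar\psi_x$. For injectivity I will use the vacuum creation property inside the nonlocal vertex algebra $\<U(x)\>/\hbar\<U(x)\>$: the identity $a(x)_{-1}1_V=a(x)$ gives a one-sided inverse via $a(x)\mapsto\lim_{z\to 0}(a(z)\vac)$, which is well defined modulo $\hbar$ by assumption (1) on the generators together with the module-level associativity inherited from the $\<U(x)\>$-action on $V$.

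Once $\psi_x$ is an isomorphism, define $Y(v,z):=\psi_z(v)\in\E_\hbar(V)$ for $v\in V$. Then $Y(\vac,z)=\psi_z(\vac)=1_V$; for $u\in U$ one checks $Y(u,z)=\psi_z(u_{-1}\vac)=u(z)_{-1}1_V=u(z)=Y_0(u,z)$ under the standard identification $u_{-1}\vac=u$, so $Y$ extends $Y_0$ on $U$. The vacuum-creation property on $V$ and weak associativity for $(V,Y,\vac)$ are pulled back directly from $(\<U(x)\>,Y_\E,1_V)$ using $\psi_x$ as an intertwiner, and uniqueness of the extension follows from the recursive formula on $U^\infty$ together with continuity of $\C[[\hbar]]$-module maps. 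For the additional claim, if $V=U^\infty[[\hbar]]'$, then by Lemma \ref{U-infinity-facts} also $V=U^{(\infty)}[[\hbar]]'$; and since $U\subset V$ corresponds via $\psi_x$ to the $\SY$-local subset $U(x)\subset\<U(x)\>$, the set $U$ is itself $\SY$-local in $V$, so Lemma \ref{weak-quantum-subalgebra} concludes that $V$ is an $\hbar$-adic weak quantum vertex algebra.

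The main obstacle I anticipate is the careful verification that $\bar\psi_x$ is injective modulo $\hbar$: producing the creation-property inverse and checking that it is well defined on all of $\<U(x)\>/\hbar\<U(x)\>$ (not just on its generating subset $U(x)$) and that it matches the inverse of $\bar\psi_x$ requires translating the recursion between $V$ and $\<U(x)\>$ modulo $\hbar$ with some care, in particular reconciling the closures $\overline{[U^\infty]}$ and $\overline{[U(x)^\infty]}$ through the intertwining formulas for $\psi_x$.
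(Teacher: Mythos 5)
Your overall strategy---exhibit the creation map as an inverse to $\psi_x$---is the same as the paper's, but two steps in your execution are problematic, and the paper's route avoids both.

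First, the passage through $\hbar$-modular reduction via Lemma \ref{free-top} is a detour that creates trouble precisely where you want help. You claim that $\psi_x(U^\infty)=U(x)^\infty$ together with $V=\overline{[U^\infty]}$ and $\<U(x)\>=\overline{[U(x)^\infty]}$ ``handles surjectivity of $\bar\psi_x$,'' but this doesn't follow: the torsion closure $[U^\infty]$ can contain elements $w$ with $\hbar w\in U^\infty$ but $w\notin U^\infty+\hbar V$, so the image of $U^\infty$ in $V/\hbar V$ need not exhaust it. Controlling how $[\cdot]$ and $\overline{\phantom{x}}$ interact with reduction modulo $\hbar$ is exactly the ``reconciling the closures'' issue you flag at the end, and it does not resolve itself. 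The paper sidesteps all of this by never reducing mod $\hbar$: it shows $\phi\circ\psi_x=1$ on $U^\infty$ and $\psi_x\circ\phi=1$ on $U(x)^\infty$ by direct iteration, then extends both identities to $\overline{[U^\infty]}=V$ and $\overline{[U(x)^{(\infty)}]}=\<U(x)\>$ using torsion-freeness and automatic continuity of $\C[[\hbar]]$-module maps.

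Second, and more fundamentally, the key technical input you are missing is Lemma \ref{U-vacuum-like}. You correctly identify that the creation map $a(x)\mapsto\lim_{z\to0}a(z)\vac$ must be shown well-defined on all of $\<U(x)\>$, not merely on the generating set $U(x)$, and you appeal vaguely to ``assumption (1) on the generators together with the module-level associativity.'' This is precisely the content of Lemma \ref{U-vacuum-like}: starting from an $\SY$-local subset $U(x)$ and the vacuum-like vector ${\bf 1}\in V$, it proves $a(x){\bf 1}\in V[[x]]$ for all $a(x)\in\<U(x)\>$ and, crucially, that the resulting map $\phi:\<U(x)\>\to V$, $\phi(a(x))=(a(x){\bf 1})|_{x=0}$, is a $\<U(x)\>$-module homomorphism. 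That last property is what makes the direct mutual-inverse computation with $\psi_x$ close up on $U^\infty$ and $U(x)^\infty$. Without invoking this lemma your proof has a genuine gap at exactly the place you foresaw the obstacle.

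Your treatment of the weak quantum vertex algebra clause (via Lemma \ref{weak-quantum-subalgebra}) and the uniqueness clause are fine and agree with the paper, which defers to the proof of \cite[Theorem 4.24]{Li-h-adic} for these.
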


\begin{proof} First, with $U(x)$ an $\SY$-local subset of $\E_{\hbar}(V)$ we have an $\hbar$-adic nonlocal vertex algebra $\<U(x)\>$ with
 $V$ as a module. Second, by Lemma \ref{U-vacuum-like} we get $a(x){\bf 1}\in V[[x]]$ for $a(x)\in \<U(x)\>$ and
 the map $\phi: \<U(x)\>\rightarrow V$, defined by $\phi(a(x))=(a(x){\bf 1})|_{x=0}$ for $a(x)\in \<U(x)\>$
  is a $\<U(x)\>$-module morphism with $\phi(1_V)={\bf 1}$. Notice that
$$\psi_x(u)=\psi_x(u_{-1}{\bf 1})=u(x)_{-1}1_V=u(x)\ \text{ and }\ \phi(u(x))=\lim_{x\rightarrow 0}u(x){\bf 1}=u$$
for $u\in U$. It follows that
$$\phi\circ \psi_x=1\ \text{ on }U^{\infty}\ \text{ and }\ \psi_x\circ \phi=1\ \text{ on }U(x)^{\infty}.$$
Then $\phi\circ \psi_x=1\ \text{ on }\overline{[U^{\infty}]}=V.$
On the other hand, we have $\psi_x\circ \phi=1$ on $\overline{U(x)^{\infty}}$. As $U(x)^{(\infty)}\subset \overline{U(x)^{\infty}}$
by Lemma \ref{U-infinity-facts}, we have
$\psi_x\circ \phi=1$ on $U(x)^{(\infty)}$, which implies $\psi_x\circ \phi=1$ on $\overline{[U(x)^{(\infty)}]}=\<U(x)\>$.
Thus, $\psi_x$ and $\phi$ are inverses each other.
The rest follows from the proof of \cite[Theorem 4.24]{Li-h-adic}.
\end{proof}

The following is a straightforward $\hbar$-adic analogue of (\cite[Theorem 2.11]{Li-qva2}):

\begin{thm}\label{hwqva-module}
Let $V$ be an $\hbar$-adic weak quantum vertex algebra, let $U$ be a subset of $V$ such that $V=\<U\>$,
let $W$ be a topologically free $\C[[\hbar]]$-module, and
let $Y_W^{0}(\cdot,x): U\rightarrow \E_{\hbar}(W)$ be a map.
Suppose that $Y_W^{0}(U)$ is $\SY$-local and that there
  exists a $\C[[\hbar]]$-module map $\psi_x: V\rightarrow \<Y_W^0(U)\>$ such that $\psi_x({\bf 1})=1_W$ and
\begin{align}
\psi_x(u_nv)=u(x)_n\psi_x(v)\quad \text{ for }u\in U,\ n\in \Z,\ v\in V,
\end{align}
where $u(x):=Y_W^0(u,x)$.
Then $Y_W^0(\cdot,x)$ can be extended uniquely to a $\C[[\hbar]]$-module map $Y_W(\cdot,x): V\rightarrow \E_{\hbar}(W)$
such that $(W,Y_W)$ carries the structure of a $V$-module.
\end{thm}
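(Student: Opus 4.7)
The plan is to define $Y_W(v,x) := \psi_x(v)$ for all $v \in V$ and to verify the module axioms by transporting the module structure of $W$ over $\langle Y_W^0(U)\rangle$ through $\psi_x$. By Lemma~\ref{W-hwqva} the $\SY$-local set $Y_W^0(U)$ is $\hbar$-adically compatible, so Theorem~\ref{thm:h-adic-va-abs-construct} equips $\langle Y_W^0(U)\rangle$ with the structure of an $\hbar$-adic nonlocal vertex algebra having $W$ as a faithful module via evaluation $a(x)\mapsto a(z)$. Applying $\psi_x$ to the identity $u = u_{-1}{\bf 1}$ and using the prescribed formula yields $\psi_x(u) = u(x)_{-1}\cdot 1_W = u(x)$ for $u \in U$, so the proposed $Y_W$ really extends $Y_W^0$; the vacuum axiom $Y_W({\bf 1},x) = 1_W$ and the containment $Y_W(v,x) \in \E_\hbar(W)$ are immediate from the hypotheses on $\psi_x$.

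The key step is to upgrade the prescribed intertwining from $U$ to all of $V$, that is, to establish
\[
\psi_x(Y(a,z)b) = Y_\E(\psi_x(a),z)\psi_x(b) \quad \text{for all } a,b \in V.
\]
Let $T$ denote the set of $a \in V$ for which $\psi_x(a_n b) = \psi_x(a)_n \psi_x(b)$ holds for every $b \in V$ and every $n \in \Z$. Then $T$ is a $\C[[\hbar]]$-submodule containing $U \cup \{{\bf 1}\}$; since $\psi_x$ and the vertex operations on both sides are $\hbar$-adically continuous, $T$ is $\hbar$-adically closed and satisfies $[T] = T$. The non-trivial point is that $T$ is closed under the product $a,b \mapsto a_m b$: this follows by expanding $(a_m b)_n c$ via the weak associativity in $V$ on one side and via the corresponding weak associativity in $\langle Y_W^0(U)\rangle$ on the other, so that the intertwining on generators propagates through iterated products. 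Combined with $V = \langle U \rangle = \overline{[U^{(\infty)}]}$, this forces $T = V$.

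With the intertwining in hand, the weak associativity for $(W,Y_W)$ as a $V$-module is obtained by transport: for $u,v \in V$, $w \in W$, $n \in \Z_+$, choose $l$ from the module weak associativity of $W$ over $\langle Y_W^0(U)\rangle$ applied to $\psi_x(u)$ and $\psi_x(v)$, and then replace $Y_\E(\psi_x(u),x_0)\psi_x(v)$ by $\psi_x(Y(u,x_0)v)$. Uniqueness is immediate because $V = \langle U \rangle$ and $Y_W$ is forced on $U$ by $Y_W^0$, on ${\bf 1}$ by the vacuum axiom, and on iterated products by the intertwining, whose closure fills $V$. I expect the principal obstacle to lie in the second paragraph, namely propagating the intertwining identity from $U$ to all of $V$. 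This step is a straightforward $\hbar$-adic analogue of the argument in \cite[Theorem 2.11]{Li-qva2} (and mirrors the proof of Theorem~\ref{hwqva-construction} above), the extra care being to verify that the intermediate $\C[[\hbar]]$-submodules remain $[\cdot]$-closed and $\hbar$-adically complete so that topological closures behave well.
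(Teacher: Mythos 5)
The paper gives no proof of this theorem, deferring to \cite[Theorem 2.11]{Li-qva2}; your argument is the expected $\hbar$-adic version and is correct in its essentials. Defining $Y_W$ via $\psi_x$ and pulling back the $\langle Y_W^0(U)\rangle$-module structure on $W$ is the right move, and the reduction of the whole theorem to showing that $\psi_x$ is a homomorphism of $\hbar$-adic nonlocal vertex algebras is exactly the crux. Your $T$-argument works: $T$ is a $\C[[\hbar]]$-submodule with $[T]=T$ (torsion-freeness of the target lets you cancel powers of $\hbar$), $T$ is $\hbar$-adically closed (continuity of $\psi_x$ and of $Y_{\E}$ modulo each $\hbar^N$), and $T$ is closed under iterated products by the two-sided weak-associativity argument, after which $V=\overline{[U^{(\infty)}]}\subset T$. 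The only place where detail is genuinely needed is the cancellation of the $(x_0+x_2)^L$ factor: one must observe that, modulo each $\hbar^N$, both $\psi_x\bigl(Y(Y(a,x_0)b,x_2)c\bigr)$ and $Y_{\E}(Y_{\E}(\psi_x a,x_0)\psi_x b,x_2)\psi_x c$ are lower-truncated in $x_0$ (because $Y(a,x_0)b$ and $Y_{\E}(\psi_x a,x_0)\psi_x b$ are), so $(x_0+x_2)^L$ admits a left inverse $\iota_{x_2,x_0}(x_0+x_2)^{-L}$ on the relevant space; you gesture at this but do not spell it out. One small inaccuracy: the claim that your argument ``mirrors the proof of Theorem~\ref{hwqva-construction}'' is not quite right, since that proof constructs a two-sided inverse $\phi$ of $\psi_x$ by evaluating at the vacuum of $V$, a device unavailable here because $W$ need not carry a vacuum-like vector; the module version must instead verify the homomorphism property directly, as you in fact do.
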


To conclude this subsection, we present some technical results which we need.
Let $W$ be a topologically free $\C[[\hbar]]$-module.
The following is a straightforward $\hbar$-adic version of a standard result:

\begin{lem}\label{iterate-module}
Let $a(x),b(x)\in \E_{\hbar}(W)$, $A(z)\in \E_{\hbar}(W)\wh\ot \E_{\hbar}(W)\wh\ot \C((z)))[[\hbar]]$ such that
\begin{eqnarray}
  a(x_1)b(x_2)\sim Z_W(x_2,x_1)(A(z)).
\end{eqnarray}
Then $(a(x),b(x))$ is $\hbar$-adically compatible and
\begin{align}
&x_0^{-1}\delta\!\left(\frac{x_1-x}{x_0}\right)a(x_1)b(x)-x_0^{-1}\delta\!\left(\frac{x-x_1}{-x_0}\right)\!Z_W(x,x_1)(A(z))\\
&\quad\quad =x_1^{-1}\delta\!\left(\frac{x+x_0}{x_1}\right)\!Y_\E(a(x),x_0)b(x).\nonumber
\end{align}
\end{lem}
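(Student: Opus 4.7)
The plan is to reduce both assertions to the classical iterate formula for $\SY$-local pairs in $\E(W/\hbar^n W)$ at each level $n$, and then pass to the inverse limit.

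First, the hypothesis $a(x_1)b(x_2)\sim Z_W(x_2,x_1)(A(z))$ is precisely the condition that $\{a(x),b(x)\}$ is an $\SY$-local subset of $\E_\hbar(W)$, so Lemma~\ref{W-hwqva} immediately gives that $(a(x),b(x))$ is $\hbar$-adically compatible, which is the first assertion. In particular, by \eqref{eq:def-y-E-op} the iterate $Y_\E(a(x),x_0)b(x)\in\E_\hbar(W)[[x_0^{\pm 1}]]$ is well-defined as the inverse limit of the classical iterates $Y_\E(\bar a(x),x_0)\bar b(x)\in\E(W/\hbar^n W)[[x_0^{\pm 1}]]$, where $\bar a(x):=\pi_n(a(x))$, $\bar b(x):=\pi_n(b(x))$.

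Next, fix $n\ge 1$ and let $\bar A(z)$ denote the image of $A(z)$ in $\E(W/\hbar^n W)\otimes\E(W/\hbar^n W)\otimes\C((z))$ under the natural projection. A direct check from the definition of $Z_W$ shows that $\pi_n$ intertwines $Z_W$ with $Z_{W/\hbar^n W}$, so that $(\bar a(x),\bar b(x))$ is a classical $\SY$-local (and hence compatible) pair with witness $\bar A(z)$. The classical iterate formula for $\SY$-local pairs---a consequence of the $\SY$-Jacobi identity for the weak quantum vertex algebra $\langle\{\bar a(x),\bar b(x)\}\rangle\subset\E(W/\hbar^n W)$ acting on $W/\hbar^n W$ (see \cite{Li-nonlocal}; cf.~Lemma~\ref{sim-commutator})---then yields the desired delta-function identity at level $n$.

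Finally, I would pass to the inverse limit as $n\to\infty$. For each fixed monomial $x_0^i x_1^j x^k$, the coefficient on each side of the level-$n$ identity is a $\C$-linear combination of finitely many coefficients of $\bar a$, $\bar b$ and $\bar A$, and these cohere under the projections $\pi_{n+1}\twoheadrightarrow\pi_n$. Via the identification $\E_\hbar(W)=\varprojlim_n\E(W/\hbar^n W)$, the level-$n$ identities assemble coefficient-by-coefficient into the claimed identity in $(\End W)[[x_0^{\pm 1},x_1^{\pm 1},x^{\pm 1}]]$. The only nontrivial point is verifying that $Z_W$, the delta-function expansions, and the iterate operation $Y_\E$ all commute with the projections $\pi_n$; each of these compatibilities is routine from the definitions, and once they are in place the remainder of the argument is a purely formal invocation of the classical theory.
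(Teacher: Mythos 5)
The paper states this lemma without proof, dismissing it as ``a straightforward $\hbar$-adic version of a standard result''; your strategy of reducing to the classical iterate formula at each level $n$ and then passing to the inverse limit is exactly the intended argument, and it does work. However, your first step is imprecise in a way worth fixing. The hypothesis
$a(x_1)b(x_2)\sim Z_W(x_2,x_1)(A(z))$
is \emph{not} ``precisely'' the condition that $\{a(x),b(x)\}$ is an $\SY$-local subset of $\E_\hbar(W)$: the paper's definition of an $\SY$-local subset $U$ requires a similarity relation for \emph{every} ordered pair from $U$ (here four relations, not one), with witnesses $A(z)$ constrained to the restricted space $(\C U\otimes\C U\otimes\C((z)))[[\hbar]]$, whereas the lemma only supplies one relation with $A(z)$ in the much larger space $\E_\hbar(W)\ptimes\E_\hbar(W)\ptimes\C((z))[[\hbar]]$. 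Consequently Lemma~\ref{W-hwqva} does not apply as stated. The conclusion you want — $\hbar$-adic compatibility of the ordered pair $(a(x),b(x))$ — still holds, but by a direct level-$n$ argument (mimicking the proof of Lemma~\ref{W-hwqva}, or of Lemma~\ref{a-b-abstract} in this paper): modulo $\hbar^n$, multiplying by $(x_1-x_2)^k$ makes $\bar a(x_1)\bar b(x_2)$ equal to $(x_1-x_2)^k\pi_n\bigl(Z_W(x_2,x_1)A(z)\bigr)$, which lies in $\Hom(W/\hbar^n W,(W/\hbar^n W)((x_2))((x_1)))$, while the left side a priori lies in $\Hom(\cdot,\cdot((x_1))((x_2)))$; membership in both forces membership in $\Hom(\cdot,\cdot((x_1,x_2)))$. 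Your second paragraph already does this at level $n$, so the lapse is essentially cosmetic. For the same reason, the classical iterate identity you then invoke should not be framed as ``the $\SY$-Jacobi identity for the weak quantum vertex algebra $\langle\{\bar a,\bar b\}\rangle$'' — that set need not be $\SY$-local and need not generate a weak quantum vertex algebra. The relevant classical fact (cf.\ \cite{Li-nonlocal}) is a standalone delta-function-calculus statement about a single compatible ordered pair admitting a similarity relation with \emph{arbitrary} coefficient fields, and that is what you should cite. With those two invocations rephrased, the proof is complete, and the passage to the inverse limit — noting that $Z_W$, the delta-function expansions, and $Y_\E$ (by its very definition \eqref{eq:def-y-E-op}) commute with the projections $\pi_n$ — is exactly right.
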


\begin{lem}\label{h-adic-compatibility}
Assume
$$a(x),b(x)\in \E_{\hbar}(W), \  A(z)\in \E_{\hbar}(W)\wh\otimes \E_{\hbar}(W)\wh\otimes \C((z))[[\hbar]],\ \  p(x,y)\in \C(x,y)$$
 with $p(x,y)\ne 0$ and $\iota_{x,y}(p(x,y)^{\pm 1})\in \C((x))[[y]]$ (analytic at $y=0$)
such that
\begin{align}
p(x_1-x_2,\hbar)a(x_1)b(x_2)=Z_W(x_2,x_1)(A(z)),
\end{align}
where $p(x_1-x_2,\hbar):=(\iota_{x,y}p(x,y))|_{x=x_1-x_2,y=\hbar}$.
Then
\begin{align}
a(x_1)b(x_2)\sim p(-x_2+x_1,\hbar)^{-1}Z_W(x_2,x_1)(A(z)).
\end{align}
\end{lem}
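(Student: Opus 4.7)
The plan is to deduce the desired $\sim$-relation by combining the hypothesis with a purely scalar congruence between the two natural formal expansions of the rational function $p$, and then formally dividing through by $p(-x_2+x_1,\hbar)$.

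First I would write $\iota_{x,y}p(x,y)=\sum_{j\ge 0}p_j(x)y^j$ with each $p_j\in\C((x))$ (available because $p$ is analytic at $y=0$), so that
\begin{align*}
p(x_1-x_2,\hbar)=\sum_{j\ge 0}\hbar^j\iota_{x_1,x_2}p_j(x_1-x_2),\qquad p(-x_2+x_1,\hbar)=\sum_{j\ge 0}\hbar^j\iota_{x_2,x_1}p_j(-x_2+x_1).
\end{align*}
For each $j$, the standard formal variable identity expresses $\iota_{x_1,x_2}p_j(x_1-x_2)-\iota_{x_2,x_1}p_j(-x_2+x_1)$ as a finite $\C$-linear combination of derivatives $\partial_{x_1}^\ell x_2^{-1}\delta(x_1/x_2)$ whose order is bounded by the pole order of $p_j$ at $0$, and hence is annihilated by $(x_1-x_2)^{N_j}$ for some $N_j\in\N$. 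Given $n\in\Z_+$ and setting $k=\max\{N_j:0\le j<n\}$, this yields the scalar congruence
\begin{align*}
(x_1-x_2)^kp(x_1-x_2,\hbar)\equiv (x_1-x_2)^kp(-x_2+x_1,\hbar)\pmod{\hbar^n},
\end{align*}
and both sides in fact lie in $\C[[x_1,x_2]][[\hbar]]$ modulo $\hbar^n$ (all poles at the diagonal having been absorbed).

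Next I would multiply the hypothesis $p(x_1-x_2,\hbar)a(x_1)b(x_2)=Z_W(x_2,x_1)(A(z))$ through by $(x_1-x_2)^k$ and substitute the scalar congruence on the left to obtain
\begin{align*}
(x_1-x_2)^kp(-x_2+x_1,\hbar)a(x_1)b(x_2)\equiv (x_1-x_2)^kZ_W(x_2,x_1)(A(z))\pmod{\hbar^n(\End W)[[x_1^{\pm},x_2^{\pm}]]}.
\end{align*}
Since the formal inverse $p(-x_2+x_1,\hbar)^{-1}$ lives in the same ring $\C[[x_1]]((x_2))[[\hbar]]$ as $p(-x_2+x_1,\hbar)$, we have $p(-x_2+x_1,\hbar)^{-1}p(-x_2+x_1,\hbar)=1$ there, and multiplying the previous congruence on the left by $p(-x_2+x_1,\hbar)^{-1}$ gives
\begin{align*}
(x_1-x_2)^ka(x_1)b(x_2)\equiv(x_1-x_2)^kp(-x_2+x_1,\hbar)^{-1}Z_W(x_2,x_1)(A(z))\pmod{\hbar^n(\End W)[[x_1^{\pm},x_2^{\pm}]]},
\end{align*}
which, as $n$ was arbitrary, is precisely $a(x_1)b(x_2)\sim p(-x_2+x_1,\hbar)^{-1}Z_W(x_2,x_1)(A(z))$.

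The main subtlety is that $p(x_1-x_2,\hbar)$ and $p(-x_2+x_1,\hbar)^{\pm 1}$ live in different formal series rings ($\C((x_1))[[x_2]][[\hbar]]$ and $\C[[x_1]]((x_2))[[\hbar]]$ respectively), so their products with the bi-infinite Laurent series $a(x_1)b(x_2)$ are not automatically well-defined. This is handled by observing that after the polynomial factor $(x_1-x_2)^k$ is absorbed, both $(x_1-x_2)^kp(x_1-x_2,\hbar)$ and $(x_1-x_2)^kp(-x_2+x_1,\hbar)$ reduce modulo $\hbar^n$ to the common subring $\C[[x_1,x_2]][[\hbar]]$, whose multiplication into $(\End W)[[x_1^{\pm},x_2^{\pm}]]$ is coefficient-wise finite and hence unambiguous; likewise $Z_W(x_2,x_1)(A(z))$ already lies in $(\End W)\wh\otimes\C[[x_1]]((x_2))[[\hbar]]$, so multiplication by $p(-x_2+x_1,\hbar)^{-1}$ stays inside that same ring. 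This is essentially the mechanism used in the proof of \cite[Lem.~4.25]{Li-h-adic}, which could be adapted almost verbatim.
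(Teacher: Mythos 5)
Your proof is essentially the same as the paper's: in both cases one multiplies by $(x_1-x_2)^k$ for a $k$ large enough to clear the denominators of the $\hbar$-expansion coefficients that matter modulo $\hbar^n$, uses that the two expansions of the rational function then agree in $\C[[x_1,x_2]]$ modulo $\hbar^n$, substitutes the hypothesis, and reassembles. The difference is purely in bookkeeping: the paper inserts $p^{-1}p$ into $(x_1-x_2)^k a(x_1)b(x_2)$ and then switches the expansion direction of the coefficients $p_i^-$ of $p^{-1}$, whereas you first derive a scalar congruence for $p$ and then divide by $p(-x_2+x_1,\hbar)$ at the end; the underlying mechanism is identical to that of \cite[Lemma 4.25]{Li-h-adic}, as you note.

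One thing in your well-definedness discussion is not quite right and should be fixed. You assert that multiplication of $\C[[x_1,x_2]][[\hbar]]$ into $(\End W)[[x_1^{\pm},x_2^{\pm}]]$ is ``coefficient-wise finite.'' It is not: the coefficient of $x_1^Mx_2^N$ in such a product is $\sum_{a,b\ge 0}f_{a,b}\,g_{M-a,N-b}$, an infinite sum, since a general element of $(\End W)[[x_1^{\pm},x_2^{\pm}]]$ has unbounded negative powers in both variables. What actually makes the manipulations legitimate is the more specific fact that, for $w\in W$ and each $n$, $\pi_n\bigl(a(x_1)b(x_2)w\bigr)$ lies in $(W/\hbar^nW)((x_1))((x_2))$ (apply $b(x_2)$ first, then $a(x_1)$ to each $x_2$-coefficient), and this space is simultaneously a module over $\C((x_1))[[x_2]]$ and over $\C[[x_1]]((x_2))$, with the two actions agreeing on the common subring $\C[[x_1,x_2]]$. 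Similarly $\pi_n\bigl(Z_W(x_2,x_1)(A(z))w\bigr)\in(W/\hbar^nW)((x_2))((x_1))$, which is likewise a $\C[[x_1]]((x_2))$-module, so multiplication by $p(-x_2+x_1,\hbar)^{-1}$ on the right-hand side of your congruence is unambiguous there. With this correction your argument goes through.
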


\begin{proof} Let $n$ be any positive integer.
Write $\iota_{x,y}(p(x,y)^{-1})=\sum_{i\ge 0}p_i^{-}(x)y^i$ with $p_i^{-}(x)\in \C((x))$.
Let $k\in \N$ such that
$x^kp_i^{-}(x)\in \C[[x]]$ for all $0\le i\le n-1$. Then we obtain
\begin{align*}
&(x_1-x_2)^ka(x_1)b(x_2)\\
=\ & \sum_{i\ge 0}(x_1-x_2)^kp_i^{-}(x_1-x_2)\hbar ^i \(p(x_1-x_2,\hbar)a(x_1)b(x_2)\)\\
\equiv \ & \sum_{0\le i<n}(x_1-x_2)^kp_i^{-}(x_1-x_2)\hbar ^i \(p(x_1-x_2,\hbar)a(x_1)b(x_2)\)\ \mod \hbar^n\\
= \ & \sum_{0\le i<n}\((-x_2+x_1)^kp_i^{-}(-x_2+x_1)\)\hbar ^i \(p(x_1-x_2,\hbar)a(x_1)b(x_2)\)\\
= \ & \sum_{0\le i<n}(-x_2+x_1)^kp_i^{-}(-x_2+x_1)\hbar ^i \(Z_W(x_2,x_1)A(z)\)\\
\equiv \ & \sum_{i\ge 0}(-x_2+x_1)^kp_i^{-}(-x_2+x_1)\hbar ^i \(Z_W(x_2,x_1)A(z)\)\ \mod \hbar^n\\
=\ &(x_1-x_2)^kp(-x_2+x_1,\hbar)^{-1}Z_W(x_2,x_1)(A(z)),
\end{align*}
as desired.
\end{proof}

Using Lemma \ref{h-adic-compatibility} we immediately have:

\begin{lem}\label{new-added}
Let $U$ be a subset of $\E_{\hbar}(W)$. Assume that for $a(x),b(x)\in U$, there exist
$$A(z)\in (\C U\otimes \C U\otimes \C((z)))[[\hbar]],\ \  p(x,y)\in \C(x,y)$$
 with $p(x,y)\ne 0$ and $\iota_{x,y}(p(x,y)^{\pm 1})\in \C((x))[[y]]$ such that
\begin{align}
p(x_1-x_2,\hbar)a(x_1)b(x_2)=Z_W(x_2,x_1)(A(z)).
\end{align}
Then $U$ is $\SY$-local.
\end{lem}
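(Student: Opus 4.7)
The statement follows almost immediately from Lemma \ref{h-adic-compatibility}; the only remaining task is to absorb the scalar factor $p(-x_2+x_1,\hbar)^{-1}$ into the $\C((z))[[\hbar]]$-component of $A(z)$. My plan is therefore to invoke that lemma to obtain an $\SY$-locality-style relation with a rational prefactor, and then to exhibit a concrete element $B(z)\in (\C U\otimes \C U\otimes \C((z)))[[\hbar]]$ such that $Z_W(x_2,x_1)(B(z))$ equals that prefactored expression.

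Concretely, I fix $a(x),b(x)\in U$ together with the data $A(z),p(x,y)$ provided by the hypothesis. Applying Lemma \ref{h-adic-compatibility} yields
\[
a(x_1)b(x_2)\sim p(-x_2+x_1,\hbar)^{-1}Z_W(x_2,x_1)(A(z)).
\]
Next I set $B(z):=p(-z,\hbar)^{-1}A(z)$. Since $\iota_{x,y}(p(x,y)^{-1})\in\C((x))[[y]]$, the substitution $x\mapsto -z$, $y\mapsto \hbar$ shows that $p(-z,\hbar)^{-1}\in\C((z))[[\hbar]]$, so multiplying $A(z)$ by this scalar keeps us inside $(\C U\otimes \C U\otimes \C((z)))[[\hbar]]$; in particular $B(z)$ lies in the required space.

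It then remains to verify that $Z_W(x_2,x_1)(B(z))=p(-x_2+x_1,\hbar)^{-1}Z_W(x_2,x_1)(A(z))$, which comes down to checking that the expansion of $p(-z,\hbar)^{-1}$ at $z=x_2-x_1$ (as prescribed by $Z_W(x_2,x_1)$) coincides with the expansion of $p(-x_2+x_1,\hbar)^{-1}$ used in Lemma \ref{h-adic-compatibility}; both are computed with $x_2$ as the ``large'' variable, so they match. Combining this with the $\sim$-relation above gives $a(x_1)b(x_2)\sim Z_W(x_2,x_1)(B(z))$, which is exactly the definition of $\SY$-locality for the pair $(a(x),b(x))$, and since $a(x),b(x)\in U$ were arbitrary, $U$ is $\SY$-local.

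The only subtlety—and hence the only step worth pausing on—is precisely the bookkeeping of expansion directions in the last paragraph, since in principle $p(x_1-x_2,\hbar)^{-1}$ and $p(-x_2+x_1,\hbar)^{-1}$ may represent different formal series. However, because the relevant substitution in $Z_W(x_2,x_1)$ is the binomial expansion with $x_2$ ``large,'' no genuine obstruction arises, and the proof reduces to a one-line application of Lemma \ref{h-adic-compatibility} together with the evident $\C[[\hbar]]$-linearity of $Z_W(x_2,x_1)$ in its $\C((z))$-slot.
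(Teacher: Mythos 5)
Your proposal is correct and is exactly the argument the paper has in mind: the paper introduces the lemma with ``Using Lemma \ref{h-adic-compatibility} we immediately have,'' and the ``immediately'' is precisely the absorption of the scalar $p(-z,\hbar)^{-1}\in\C((z))[[\hbar]]$ into the third tensor slot of $A(z)$, which you spell out carefully. The bookkeeping of expansion directions that you flag (both $Z_W(x_2,x_1)$ and the convention in Lemma \ref{h-adic-compatibility} expand with $x_1$ small) is the right thing to check and does indeed come out consistently, so there is no gap.
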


Using Lemmas \ref{Y-V-W-a-b-relation} and \ref{pq-va-module-relation}, we immediately have:

\begin{lem}\label{lem-a-b-C-D}
Let $U$ be an $\hbar$-adically compatible subset of $\E_{\hbar}(W)$, and let
$$a(z),b(z)\in U, \ R(x), f(x), g(x)\in \C((x))[[\hbar]],\ C(z),D(z)\in \E_{\hbar}(W)$$
such that
\begin{align}\label{aba=gba-CD}
a(z)b(w)-R(w-z)b(w)a(z)=f(z-w)C(w)+g(w-z)D(w).
\end{align}
Then $C(z),D(z)\in \<U\>$
 and
\begin{align}\label{YE-ab}
&Y_{\E}(a(x),z)Y_{\E}(b(x),w)-R(w-z)Y_{\E}(b(x),w)Y_{\E}(a(x),z)\nonumber\\
&\quad = f(z-w)Y_{\E}(C(x),w)+g(w-z)Y_{\E}(D(x),w).
\end{align}
\end{lem}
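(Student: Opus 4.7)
The plan is to first show $C(z), D(z) \in \<U\>$, and then to conclude the $Y_{\E}$-identity by invoking the converse direction of Lemma \ref{Y-V-W-a-b-relation}. Since $W$ is a faithful $\<U\>$-module via $Y_W(\alpha(x), z) = \alpha(z)$ for $\alpha(x) \in \<U\>$, that lemma transfers the given $W$-relation to the corresponding identity in $\<U\>$ as soon as $C, D \in \<U\>$ is known, yielding the second assertion at once.

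For the substantive step, I recast the hypothesis as an $\SY$-locality-type equality on $W$: define
$$A(x) = b \otimes a \otimes R(x) + C \otimes 1_W \otimes f(-x) + D \otimes 1_W \otimes g(x) \in \E_\hbar(W)\wh\ot\E_\hbar(W)\wh\ot\C((x))[[\hbar]],$$
so that the hypothesis reads $a(z) b(w) = Z_W(w, z)(A(x))$, whence $a(z)b(w) \sim Z_W(w,z)(A(x))$ trivially. By Lemma \ref{iterate-module} this produces an $\SY$-Jacobi identity on $W$ that expresses the iterate $Y_{\E}(a(x), x_0) b(x)$---whose modes $a(x)_n b(x)$ all lie in $\<U\>$ by $Y_{\E}$-closure---as a combination of the $a, b, R, f, g, C, D$ data. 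To extract $C$ and $D$ separately, I rewrite the right-hand side using the $\delta$-function identity $f(x_1-x) - f(-x+x_1) = \tilde{f}(\partial_x)x_1^{-1}\delta(x/x_1)$ and its analogue for $g$ (exactly as in the proof of Lemma \ref{Y-V-W-a-b-relation}), bringing the relation into the form treated by Proposition \ref{prop-2.25}. The identification therein of $\delta$-derivative coefficients with iterate modes exhibits concrete $\C[[\hbar]]$-linear combinations of $C$ and $D$ as elements of $\<U\>$; varying the mode index and inverting the resulting linear system (using the non-vanishing of $f, g$ in $\C((x))[[\hbar]]$) yields $C, D \in \<U\>$.

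The main obstacle is this last disentanglement of $C$ from $D$: it is immediate when $f$ and $g$ are linearly independent in a robust sense (e.g.\ when the matrix of leading Laurent coefficients is invertible over $\C[[\hbar]]$), but demands care in degenerate situations. A cleaner alternative, which I expect is what actually underlies the author's terse ``immediately we have'', is to establish a strengthened version of Lemma \ref{Y-V-W-a-b-relation} whose conclusion already includes $u, v \in V$ as part of the statement---mirroring the identification $c^{(n)} = u_n v$ in Proposition \ref{prop-2.25}---so that both assertions of the present lemma follow from a single application.
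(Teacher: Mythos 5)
Your two-step plan---first establish $C(z), D(z) \in \<U\>$, then obtain (\ref{YE-ab}) from the converse direction of Lemma \ref{Y-V-W-a-b-relation} applied to the faithful $\<U\>$-module $W$ with $Y_W(\alpha(x),z)=\alpha(z)$---is the right decomposition, and it is essentially what the authors' terse appeal to Lemma \ref{Y-V-W-a-b-relation} amounts to. You are also right to flag the membership $C(z), D(z)\in\<U\>$ as the substantive point: the converse of Lemma \ref{Y-V-W-a-b-relation} already presupposes $u,v\in V$, so it cannot be invoked until that membership is in hand, and your proposed ``strengthened'' version of that lemma meets the same circularity.

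Where your sketch goes wrong is in how $C$ and $D$ interact with the $\delta$-function rewriting. With the paper's $\iota$-conventions, the natural $\SY$-locality datum is
\[
A(x)=b\otimes a\otimes R(x) + C\otimes 1_W\otimes f(-x) + D\otimes 1_W\otimes g(x),
\]
so that $Z_W(w,z)(A(x))=R(w-z)b(w)a(z)+f(-w+z)C(w)+g(w-z)D(w)$, each factor expanded in nonnegative powers of $z$. Comparing with (\ref{aba=gba-CD}), the only expansion mismatch is in the $f$-term,
\[
f(z-w)-f(-w+z)=\tilde f(\partial_{w})\,z^{-1}\delta\!\left(\frac{w}{z}\right),
\]
while $g(w-z)D(w)$ is \emph{already} in $Z_W$-form and contributes no $\delta$-derivative at all. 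Hence the $\SY$-commutator formula (Lemma \ref{iterate-module} followed by $\Res_{x_0}$) gives $a(x)_n b(x)=f(-n-1)\,C(x)$ for $n\ge 0$. This puts $C(x)\in\<U\>$ as soon as some Laurent coefficient $f(-n-1)$ is a unit of $\C[[\hbar]]$, but $D$ never appears among these mode identities. There is thus no ``joint linear system in $C$ and $D$ to invert,'' no ``analogue of the $\delta$-identity for $g$,'' and no disentanglement problem: the issue is that $D(x)\in\<U\>$ requires an entirely separate argument, which neither your sketch nor the paper's ``immediately'' supplies.

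Your degeneracy instinct is nonetheless sound, though aimed at the wrong target. If $f$ (respectively $g$) has no invertible negative Laurent coefficient modulo $\hbar$, then $C$ (respectively $D$) can be altered arbitrarily in $\E_\hbar(W)$ without disturbing (\ref{aba=gba-CD}), and the bare assertion ``$C(z),D(z)\in\<U\>$'' fails. In every application in the paper---the current relations of Theorem \ref{main-DY}, or the Heisenberg computation in Lemma \ref{lem:exp-cal}---$C$ and $D$ are explicit elements of $\C U$ (often $1_W$, a generating current, or with $C=D$), so the inclusion is trivial there and only the identity (\ref{YE-ab}) is actually needed. For a fully rigorous reading of the lemma one should either impose the non-degeneracy of $f,g$ or simply add $C(z),D(z)\in\<U\>$ to the hypotheses; your review of the statement correctly senses this gap, even if the diagnosis of its source is off.
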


\begin{lem}\label{lem-fab=gba}
Let $U$ be an $\hbar$-adically compatible subset of $\E_{\hbar}(W)$ with $a(z),b(z)\in U$. Assume
\begin{eqnarray}\label{aba=gba}
p(z-w,\hbar)a(z)b(w)=q(w-z,\hbar)b(w)a(z),
\end{eqnarray}
where $p(x,z),q(x,z)\in \C[x,z]$ with $p(x,0)\ne 0$.
Then
\begin{align}\label{fY-gY}
p(z-w,\hbar)Y_{\E}(a(x),z)Y_{\E}(b(x),w)=q(w-z,\hbar)Y_{\E}(b(x),w)Y_{\E}(a(x),z).
\end{align}
\end{lem}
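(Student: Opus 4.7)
The plan is to derive (\ref{fY-gY}) from the hypothesis (\ref{aba=gba}) by transferring the relation, which is currently an equality of operators acting on $W$, back up to the ``vertex algebra of operators'' $\<U\>$, using Lemma \ref{pq-va-module-relation} in its converse direction. The faithful $\<U\>$-module structure on $W$ furnished by Theorem \ref{thm:h-adic-va-abs-construct} is what makes this passage possible.

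Concretely, I will first invoke Theorem \ref{thm:h-adic-va-abs-construct}: since $U$ is $\hbar$-adically compatible, $\<U\>$ exists as an $\hbar$-adic nonlocal vertex algebra, and $W$ is a faithful $\<U\>$-module with $Y_W(c(x),z)=c(z)$ for $c(x)\in \<U\>$. Next, I will set
\begin{align*}
B(x) := b(x)\wh\ot a(x)\wh\ot 1 \ \in\ \<U\>\wh\ot \<U\>\wh\ot \C((x))[[\hbar]],
\end{align*}
so that, by the definition of $Y_W(x_1,x_2)$, one has $Y_W(x_2,x_1)(B(x))=Y_W(b(x),x_2)Y_W(a(x),x_1)=b(x_2)a(x_1)$. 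With $z=x_1$ and $w=x_2$, the hypothesis (\ref{aba=gba}) then reads exactly
\begin{align*}
p(x_1-x_2,\hbar)\,Y_W(a(x),x_1)Y_W(b(x),x_2) = q(x_2-x_1,\hbar)\,Y_W(x_2,x_1)(B(x))
\end{align*}
on $W$, which is precisely the conclusion side of Lemma \ref{pq-va-module-relation} for the pair $(a(x),b(x))$ in $\<U\>$.

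Because $W$ is faithful over $\<U\>$, the converse half of Lemma \ref{pq-va-module-relation} applies and yields the same identity on $\<U\>$ itself:
\begin{align*}
p(x_1-x_2,\hbar)\,Y_\E(a(x),x_1)Y_\E(b(x),x_2) = q(x_2-x_1,\hbar)\,Y_\E(x_2,x_1)(B(x)).
\end{align*}
Unfolding $Y_\E(x_2,x_1)(B(x))=Y_\E(b(x),x_2)Y_\E(a(x),x_1)$ and relabeling $x_1\to z$, $x_2\to w$ gives exactly (\ref{fY-gY}).

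I do not anticipate any genuine obstacle: once the right $B(x)$ is chosen, the proof is a one-step application of Lemma \ref{pq-va-module-relation}. The only care needed is in the bookkeeping---verifying that the constant tensor $B(x)$ lies in the completed tensor product (which it does, since $\C\subset \C((x))[[\hbar]]$ and $a(x),b(x)\in U\subset\<U\>$) and that the hypotheses of Lemma \ref{pq-va-module-relation} (in particular $p(x,0)\neq 0$ and the polynomial form of $p,q$) are imported verbatim from the present statement. The faithful module property provided by Theorem \ref{thm:h-adic-va-abs-construct} is the essential ingredient that makes the module-to-algebra transfer work.
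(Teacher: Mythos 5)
Your proof is correct and is exactly the argument the paper intends: the paper presents Lemma \ref{lem-fab=gba} right after the remark "Using Lemmas \ref{Y-V-W-a-b-relation} and \ref{pq-va-module-relation}, we immediately have," and your application of the converse direction of Lemma \ref{pq-va-module-relation}, with $V=\<U\>$, $B(x)=b(x)\otimes a(x)\otimes 1$, and the faithfulness of $W$ supplied by Theorem \ref{thm:h-adic-va-abs-construct}, is precisely that one-step derivation.
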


\section{Vertex-operator Serre relation and  iterate formula}
In this section, we study a Serre-like relation, give two characterizations, and
establish a vertex-operator iterate formula.

\begin{lem}\label{sing-simple-fact}
Let $U$ be a $\C[[\hbar]]$-module and let $A(z)=\sum_{n\in \Z}u(n)z^{-n-1}\in U[[z,z^{-1}]]$, $\mu\in \C$.
Then $(z-\mu \hbar)A(z)\in U[[z]]$ if and only if ${\rm Sing}_zA(z)=u(0)(z-\mu\hbar)^{-1}$.
\end{lem}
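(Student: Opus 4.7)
The plan is a direct coefficient comparison, so there is really no conceptual obstacle to flag; the whole content is to match the expansion of $(z-\mu\hbar)^{-1}$ in nonnegative powers of $\hbar$ against the recursion forced by killing the singular part of $(z-\mu\hbar)A(z)$.

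First, I would fix the relevant expansions. By definition
\[
\Sing_z A(z)=\sum_{n\ge 0}u(n)z^{-n-1},
\]
and throughout I read $(z-\mu\hbar)^{-1}$ as the expansion
\[
(z-\mu\hbar)^{-1}=\sum_{k\ge 0}(\mu\hbar)^{k}z^{-k-1}\in \C[[\hbar]]((z)),
\]
which is the only sensible choice in the $\hbar$-adic setting (and which makes $u(0)(z-\mu\hbar)^{-1}$ an element of $U[[z,z^{-1}]]$). Thus the condition $\Sing_z A(z)=u(0)(z-\mu\hbar)^{-1}$ is equivalent, coefficient by coefficient, to
\[
u(n)=(\mu\hbar)^{n}u(0)\quad\text{for all }n\ge 0.
\]

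Next I would compute $(z-\mu\hbar)A(z)$. Since
\[
zA(z)=\sum_{n\in\Z}u(n)z^{-n},\qquad \mu\hbar A(z)=\sum_{n\in\Z}\mu\hbar\, u(n)z^{-n-1},
\]
the coefficient of $z^{-n-1}$ in $(z-\mu\hbar)A(z)$ is $u(n+1)-\mu\hbar\, u(n)$ for each $n\in\Z$. Therefore $(z-\mu\hbar)A(z)\in U[[z]]$ if and only if
\[
u(n+1)=\mu\hbar\, u(n)\qquad\text{for all }n\ge 0.
\]

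Finally I would close the loop by induction: the recursion $u(n+1)=\mu\hbar\, u(n)$ for $n\ge 0$ is equivalent to $u(n)=(\mu\hbar)^{n}u(0)$ for all $n\ge 0$, which by the first step is equivalent to $\Sing_z A(z)=u(0)(z-\mu\hbar)^{-1}$. This proves both implications simultaneously.
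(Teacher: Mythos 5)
Your proof is correct. It amounts to an explicit coefficient-by-coefficient chase: you show that killing the $z^{-n-1}$ coefficient of $(z-\mu\hbar)A(z)$ for $n\ge 0$ is the recursion $u(n+1)=\mu\hbar\,u(n)$, and that this recursion is exactly what it means for the singular part to equal $u(0)(z-\mu\hbar)^{-1}$ under the expansion $(z-\mu\hbar)^{-1}=\sum_{k\ge0}(\mu\hbar)^k z^{-k-1}$. The paper instead works at the level of the regular/singular decomposition of formal Laurent series: it first observes $(z-\mu\hbar)A(z)\in U[[z]]$ iff $(z-\mu\hbar)\Sing_z A(z)\in U[[z]]$, then notes that $(z-\mu\hbar)\Sing_z A(z)-u(0)$ always lies in $z^{-1}U[[z^{-1}]]$, so if it also lies in $U[[z]]$ it must vanish, giving $(z-\mu\hbar)\Sing_z A(z)=u(0)$ in one stroke. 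Both arguments are elementary and neither requires extra hypotheses on $U$; the paper's version is slightly slicker because it invokes the disjointness of $U[[z]]$ and $z^{-1}U[[z^{-1}]]$ rather than solving a recursion, but your route makes the role of the formal expansion of $(z-\mu\hbar)^{-1}$ completely transparent, which is a pedagogical plus.
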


\begin{proof}
Note that $(z-\mu\hbar)A(z)\in U[[z]]$ if and only if $(z-\mu\hbar){\rm Sing}_zA(z)\in U[[z]]$. As
$$(z-\mu \hbar){\rm Sing}_zA(z)-u(0)=z {\rm Sing}_zA(z)-u(0)-\mu\hbar {\rm Sing}_zA(z) \in z^{-1}U[[z^{-1}]],$$
we see that $(z-\mu\hbar){\rm Sing}_zA(z)\in U[[z]]$ implies $(z-\mu\hbar){\rm Sing}_zA(z)=u(0)$, which is equivalent to
 ${\rm Sing}_zA(z)=u(0)(z-\mu\hbar)^{-1}$. Conversely, if ${\rm Sing}_zA(z)=u(0)(z-\mu\hbar)^{-1}$, then
 $(z-\mu\hbar){\rm Sing}_zA(z)=u(0)$, which implies $(z-\mu\hbar)A(z)\in U[[z]]$.
\end{proof}

The following is a straightforward observation (see \cite{JKLT-22}):

\begin{lem}\label{lem-sing-fact}
Let $b\in \C,\ F(x,\hbar)\in \C[[x,\hbar]]$. Then
\begin{eqnarray}
&&{\rm Sing}_x (x-b\hbar)^{-1}F(x,\hbar)=(x-b\hbar)^{-1}F(b\hbar, \hbar),\label{singular-fact}\\
&&\Res_x(x-b \hbar)^{-1}F(x,\hbar)
=F(b \hbar,\hbar).
\end{eqnarray}
\end{lem}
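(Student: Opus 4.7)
The plan is to derive both identities directly from the preceding Lemma \ref{sing-simple-fact}, applied with $\mu = b$ to a cleverly chosen formal series. Set $A(x) := (x-b\hbar)^{-1} F(x,\hbar)$, viewed as an element of $U[[x,x^{-1}]]$ with $U = \C[[\hbar]]$; this makes sense once one expands $(x-b\hbar)^{-1} = \sum_{n\ge 0} b^n \hbar^n x^{-n-1}$ in $\C[[\hbar]]((x))$. The key observation is that $(x - b\hbar)A(x) = F(x,\hbar) \in \C[[x,\hbar]] = U[[x]]$, so the hypothesis of Lemma \ref{sing-simple-fact} is satisfied and one immediately concludes ${\rm Sing}_x A(x) = u(0)\,(x-b\hbar)^{-1}$, where $u(0)$ is the coefficient of $x^{-1}$ in $A(x)$.

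It then remains only to identify $u(0)$. Writing $F(x,\hbar) = \sum_{i,j\ge 0} c_{ij} x^i \hbar^j$ and multiplying by the expansion of $(x-b\hbar)^{-1}$, one collects the $x^{-1}$ coefficient: only the pairs with $i = n$ contribute, giving
\begin{equation*}
u(0) = \sum_{i,j\ge 0} c_{ij}\, b^i \hbar^{i+j} = F(b\hbar,\hbar).
\end{equation*}
This substitution is well defined because, for any fixed $k$, only finitely many $(i,j)$ satisfy $i + j = k$. Substituting back yields the first identity \eqref{singular-fact}.

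The residue formula follows at once from the first identity, using $\Res_x = \Res_x \circ {\rm Sing}_x$ together with the elementary computation $\Res_x (x-b\hbar)^{-1} = 1$ (read off the $n=0$ term of the expansion) and the fact that $F(b\hbar,\hbar) \in \C[[\hbar]]$ commutes with $\Res_x$ as an $x$-scalar. No serious obstacle is expected; the entire argument is a direct appeal to Lemma \ref{sing-simple-fact} followed by a routine coefficient extraction, and the only small point to keep in mind is that every infinite sum involved converges $\hbar$-adically term by term.
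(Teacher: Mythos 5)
Your proof is correct. A minor observation: the paper itself offers no argument for Lemma~\ref{lem-sing-fact}; it simply labels it ``a straightforward observation'' and cites \cite{JKLT-22}, so there is no internal proof to compare against. Your route — expand $(x-b\hbar)^{-1}=\sum_{n\ge 0}b^n\hbar^n x^{-n-1}$, observe that $A(x)=(x-b\hbar)^{-1}F(x,\hbar)$ lies in $\C[[\hbar]][[x,x^{-1}]]$, apply Lemma~\ref{sing-simple-fact} with $U=\C[[\hbar]]$ and $\mu=b$ (the hypothesis $(x-b\hbar)A(x)=F(x,\hbar)\in U[[x]]$ holds by telescoping), and then identify $u(0)=\Res_x A(x)$ by matching the $x^{-1}$ coefficient with $\sum_{i,j}c_{ij}b^i\hbar^{i+j}=F(b\hbar,\hbar)$ — is clean and complete, including the $\hbar$-adic convergence check that for each fixed total degree $k$ only finitely many $(i,j)$ with $i+j=k$ contribute. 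The residue statement then follows either directly from $u(0)=\Res_x A(x)$ or, as you do, by composing $\Res_x\circ{\rm Sing}_x$ with $\Res_x(x-b\hbar)^{-1}=1$; both are correct, and the former is even slightly shorter since it avoids invoking the first identity.
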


From now on, we assume that $W$ is a topologically free $\C[[\hbar]]$-module.

\begin{lem}\label{a-b-abstract}
Let $a(x),b(x)\in \E_\hbar(W)$. Assume
 \begin{eqnarray}\label{ab-weak-locality}
(x_1-x_2+\mu \hbar)a(x_1)b(x_2)=(x_1-x_2+\nu \hbar)b(x_2)a(x_1),
\end{eqnarray}
where  $\mu,\nu\in \C$. Then $(a(x),b(x))$ is $\hbar$-adically compatible and
\begin{eqnarray}
&&{\rm Sing}_zY_{\E}(a(x),z)b(x)=a(x)_0b(x)(z+\mu\hbar)^{-1},\label{singular-a(x)-b(x)}\\
&&x_1\inv\delta\!\(\frac{x-\mu \hbar}{x_1}\)\!a(x)_0b(x)=a(x_1)b(x)- \!\(\frac{x-x_1-\nu \hbar}{x-x_1-\mu \hbar} \)\!b(x)a(x_1).
\label{a(x)-0-b(x)}
\end{eqnarray}
\end{lem}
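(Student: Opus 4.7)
The plan is to first establish $\hbar$-adic compatibility of $\{a(x),b(x)\}$ and then derive both identities from the $\SY$-Jacobi identity provided by Lemma \ref{iterate-module}. For compatibility, I rewrite (\ref{ab-weak-locality}) as $p(x_1-x_2,\hbar)\,a(x_1)b(x_2) = Z_W(x_2,x_1)(A(z))$ with $p(x,y)=x+\mu y$ and $A(z)= b(x)\otimes a(x)\otimes(-z+\nu\hbar)$; since $\iota_{x,y}(p(x,y)^{\pm 1})\in\C((x))[[y]]$, Lemma \ref{new-added} yields $\SY$-locality and hence compatibility via Lemma \ref{W-hwqva}. Lemma \ref{h-adic-compatibility} further identifies the $\SY$-locality data as $a(x_1)b(x_2)\sim h(x_2-x_1)b(x_2)a(x_1)$ with $h(w)=(-w+\nu\hbar)/(-w+\mu\hbar)\in\C((w))[[\hbar]]$, so that $h(x-x_1)=(x-x_1-\nu\hbar)/(x-x_1-\mu\hbar)$ matches the rational factor appearing in (\ref{a(x)-0-b(x)}).

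Lemma \ref{iterate-module} then supplies the $\SY$-Jacobi identity
\begin{align*}
&x_0\inv\delta\!\left(\frac{x_1-x}{x_0}\right)\!a(x_1)b(x) - x_0\inv\delta\!\left(\frac{x-x_1}{-x_0}\right)\!h(x-x_1)b(x)a(x_1)\\
&\qquad = x_1\inv\delta\!\left(\frac{x+x_0}{x_1}\right)\!Y_\E(a(x),x_0)b(x).
\end{align*}
To obtain the singular part formula I multiply this identity by $(x_0+\mu\hbar)$: using the identity $x_0\cdot x_0\inv\delta=(x_1-x)\cdot x_0\inv\delta$ on each delta together with (\ref{ab-weak-locality}) and the algebraic identity $(x_1-x+\mu\hbar)h(x-x_1)=x_1-x+\nu\hbar$ (valid for the $\iota_{w,\hbar}$-expansion of $h$), the two left-hand delta terms acquire the common factor $(x_0+\nu\hbar)b(x)a(x_1)$ and combine via the standard identity $x_0\inv\delta((x_1-x)/x_0)-x_0\inv\delta((x-x_1)/(-x_0))=x_1\inv\delta((x+x_0)/x_1)$. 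Taking $\Res_{x_1}$ then yields $(x_0+\nu\hbar)b(x)a(x+x_0) = (x_0+\mu\hbar)Y_\E(a(x),x_0)b(x)$, where $a(x+x_0)$ denotes the iota-expansion substitution. Since every $(x+x_0)^m$ in this substitution expands in non-negative powers of $x_0$, the left-hand side lies in $\E_\hbar(W)[[x_0]]$, and Lemma \ref{sing-simple-fact} delivers (\ref{singular-a(x)-b(x)}).

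For the iterate formula (\ref{a(x)-0-b(x)}) I instead take $\Res_{x_0}$ of the original $\SY$-Jacobi identity: the left-hand side reduces to $a(x_1)b(x)-\frac{x-x_1-\nu\hbar}{x-x_1-\mu\hbar}b(x)a(x_1)$, while on the right the regular part of $Y_\E(a(x),x_0)b(x)$ is annihilated by the residue against $x_1\inv\delta((x+x_0)/x_1)$, so only the singular part $a(x)_0 b(x)(x_0+\mu\hbar)\inv$ (determined in the preceding paragraph) contributes. The Taylor-type identity
\[\Res_{x_0}x_1\inv\delta\!\left(\frac{x+x_0}{x_1}\right)\!(x_0+\mu\hbar)\inv = e^{-\mu\hbar\partial_x}x_1\inv\delta(x/x_1) = x_1\inv\delta\!\left(\frac{x-\mu\hbar}{x_1}\right)\]
then converts the right-hand side into $a(x)_0 b(x)\,x_1\inv\delta((x-\mu\hbar)/x_1)$, yielding (\ref{a(x)-0-b(x)}). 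The main technical subtlety throughout is careful $\hbar$-adic bookkeeping of the iota-expansion conventions---in particular, verifying the algebraic identity $(x_1-x+\mu\hbar)h(x-x_1)=x_1-x+\nu\hbar$ as an equality of formal series rather than merely of rational functions.
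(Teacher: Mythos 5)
Your proof is correct and follows essentially the same path as the paper's: establish the $\SY$-locality data from (\ref{ab-weak-locality}), invoke Lemma~\ref{iterate-module} for the $\SY$-Jacobi identity, show that $(x_0+\mu\hbar)Y_\E(a(x),x_0)b(x)$ is regular in $x_0$, and then apply $\Res_{x_0}$ together with the delta/residue identity to get the iterate formula. The one place where you diverge from the paper is in proving the regularity of $(x_0+\mu\hbar)Y_\E(a(x),x_0)b(x)$: the paper does this coefficient-by-coefficient, showing $\Res_{x_0}x_0^n(x_0+\mu\hbar)Y_\E(a(x),x_0)b(x)=0$ for all $n\ge 0$ by substituting $x_0\mapsto x_1-x$ into the delta and invoking (\ref{ab-weak-locality}), whereas you multiply the whole Jacobi identity by $(x_0+\mu\hbar)$, use the delta substitution to factor out $(x_0+\nu\hbar)b(x)a(x_1)$ on the left, apply the fundamental delta-function decomposition, and take $\Res_{x_1}$ to arrive at the closed form $(x_0+\mu\hbar)Y_\E(a(x),x_0)b(x)=(x_0+\nu\hbar)\,b(x)a(x+x_0)$, which is visibly free of negative powers of $x_0$. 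Both are valid; your version has the mild advantage of producing this explicit identity as a byproduct. One small imprecision: the object $(x_0+\nu\hbar)b(x)a(x+x_0)$ lies in $(\End W)[[x,x^{-1}]][[x_0]]$ rather than literally in $\E_\hbar(W)[[x_0]]$; what matters for Lemma~\ref{sing-simple-fact} is only that $(x_0+\mu\hbar)Y_\E(a(x),x_0)b(x)$ has no negative powers of $x_0$, which your computation does establish since $Y_\E(a(x),x_0)b(x)\in\E_\hbar(W)[[x_0,x_0^{-1}]]$ already.
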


\begin{proof} With the assumption (\ref{ab-weak-locality}), by Lemma \ref{lem-fab=gba} we have
$$a(x_1)b(x_2)\sim \(\frac{x_2-x_1-\nu \hbar}{x_2-x_1-\mu \hbar}\)b(x_2)a(x_1),$$
which implies that $(a(x),b(x))$ is $\hbar$-adically compatible, and
\begin{align}\label{Y-E-a(x)-b(x)}
\Res_{z}x_1^{-1}\delta\!\left(\frac{x+z}{x_1}\right)\!Y_{\E}(a(x),z)b(x)
=a(x_1)b(x)- \(\frac{x-x_1-\nu \hbar}{x-x_1-\mu \hbar}\)b(x)a(x_1)
\end{align}
by Lemma \ref{iterate-module}. Then for any nonnegative integer $n$,
\begin{eqnarray*}
&&\quad \Res_z z^n(z+\mu\hbar)Y_{\E}(a(x),z)b(x)\\
&&= \Res_z \Res_{x_1}(x_1-x)^n(x_1-x+\mu\hbar)x_1^{-1}\delta\!\left(\frac{x+z}{x_1}\right)\!Y_{\E}(a(x),z)b(x)\\
&&=\Res_{x_1}(x_1-x)^n\( (x_1-x+\mu \hbar)a(x_1)b(x)-(x_1-x+\nu \hbar)b(x)a(x_1)\)\\
&&=0.
\end{eqnarray*}
This implies that $(z+\mu\hbar)Y_\E(a(x),z)b(x)$ is regular in $z$.
Then (\ref{singular-a(x)-b(x)}) holds by Lemma \ref{sing-simple-fact}.
Furthermore, using Lemma \ref{lem-sing-fact} we have
\begin{align*}
\Res_{z}x_1^{-1}\delta\!\left(\frac{x+z}{x_1}\right)\!Y_{\E}(a(x),z)b(x)
=\ &\Res_{z}x_1^{-1}\delta\!\left(\frac{x+z}{x_1}\right)\!{\rm Sing}_z Y_{\E}(a(x),z)b(x)\nonumber\\
=\ &\Res_{z}x_1^{-1}\delta\!\left(\frac{x+z}{x_1}\right)(z+\mu \hbar)^{-1}a(x)_0b(x)\nonumber\\
=\ &x_1^{-1}\delta\!\left(\frac{x-\mu\hbar}{x_1}\right)a(x)_0b(x).
\end{align*}
Now, combining this with (\ref{Y-E-a(x)-b(x)}) we obtain (\ref{a(x)-0-b(x)}).
\end{proof}

The following is the Serre relation that we need:

\begin{de}\label{def-Serre-relation}
Let $a(x),b(x)\in \E_{\hbar}(W)$, and $m\in\Z_+$.  The relation
\begin{align}
\sum_{\sigma\in S_m}\sum_{r=0}^m\binom{m}{r}(-1)^r a(z_{\sigma(1)})a(z_{\sigma(2)})\cdots a(z_{\sigma(r)})b(w) a(z_{\sigma(r+1)})
\cdots a(z_{\sigma(m)})=0
\end{align}
is referred to as the {\em Serre-like relation for $(a(z), b(z),m)$}.
\end{de}

We use a similar but simpler method to the proof of \cite[Theorem 5.16]{CJKT}
to give another characterization of the Serre-like relation.
First, we present the following two technical results:

\begin{lem}\label{first}
 Let $a(x)\in \E_{\hbar}(W),\ \mu\in \C$ such that
\begin{align}\label{az-aw-comm}
(z-w-\mu \hbar)a(z)a(w)=(z-w+\mu \hbar)a(w)a(z).
\end{align}
Then
\begin{align}
& (z-w-\mu \hbar)a(z)a(w)\in\E_\hbar^{(2)}(W):=\Hom (W,W_{\hbar}((z,w))),\\
&  z\inv\delta\(\frac wz\) \left((z-w-\mu\hbar)a(z)a(w)\right)=0.
\end{align}
Furthermore, we have
\begin{align}
&(z-w)^{-1}(z-w-\mu \hbar)a(z)a(w)=(w-z)^{-1}(w-z-\mu\hbar)a(w)a(z),\label{a-a-(-1)}\\
&  (z-w)\inv (z-w-\mu \hbar)a(z)a(w)\in\E_\hbar^{(2)}(W).\label{a-a-strong}
\end{align}
\end{lem}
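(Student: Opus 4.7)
The plan is to prove the four assertions in order, exploiting a hidden antisymmetry inherent in the hypothesis (\ref{az-aw-comm}). Set $F(z,w) := (z-w-\mu\hbar)a(z)a(w)$, so that by hypothesis $F(z,w) = (z-w+\mu\hbar)a(w)a(z)$ as well, and the whole argument will turn on comparing these two formulas for the same object.

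First I would establish $F \in \E_\hbar^{(2)}(W)$ by working modulo $\hbar^n$ for each $n$. For any $v \in W$, the first formula places $F(z,w)v$ in $(W/\hbar^n W)((z))((w))$, providing a uniform lower bound on the $w$-powers. The second formula places $F(z,w)v$ in $(W/\hbar^n W)((w))((z))$; moreover, since $a(z)v \in W_\hbar((z))$ has a uniform lower bound in $z$ modulo $\hbar^n$, this bound propagates to $a(w)a(z)v$ and survives multiplication by $(z-w+\mu\hbar)$. Combining the two viewpoints, the common value has uniform lower bounds in both variables, so $F(z,w)v \in (W/\hbar^n W)((z,w))$ for every $n$; the inverse limit then gives $F \in \Hom(W, W_\hbar((z,w))) = \E_\hbar^{(2)}(W)$.

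Next I would deduce the antisymmetry $F(z,w) = -F(w,z)$ from the two formulas. Swapping $z$ and $w$ in (\ref{az-aw-comm}) gives $F(w,z) = (w-z-\mu\hbar)a(w)a(z) = (w-z+\mu\hbar)a(z)a(w)$, so
\begin{align*}
F(z,w) + F(w,z) = \left[(z-w-\mu\hbar) + (w-z+\mu\hbar)\right] a(z)a(w) = 0.
\end{align*}
Because $F \in \E_\hbar^{(2)}(W)$, the evaluation $F(w,w)$ is well-defined, and the antisymmetry forces $F(w,w) = -F(w,w) = 0$ (as $W_\hbar((w))$ is $\hbar$-torsion-free). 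The standard identity $f(z,w)z^{-1}\delta(w/z) = f(w,w) z^{-1}\delta(w/z)$ for $f \in \E_\hbar^{(2)}(W)$ then yields $z^{-1}\delta(w/z) F(z,w) = 0$, giving the second assertion.

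Finally, the third and fourth assertions follow from $F(w,w) = 0$ by a standard division argument: clearing denominators with $z^N w^N$ reduces the question to divisibility by $(z-w)$ in the topologically free power-series ring $W_\hbar[[z,w]]$ of an element vanishing at $z=w$, producing a unique $G \in \E_\hbar^{(2)}(W)$ with $F = (z-w) G$. Both sides of (\ref{a-a-(-1)}) are then identified with $G$ via the formal identities $\iota_{z,w}((z-w)^{-1})(z-w) = 1 = \iota_{w,z}((z-w)^{-1})(z-w)$, and (\ref{a-a-strong}) is precisely the statement $G \in \E_\hbar^{(2)}(W)$; alternatively, the difference of the two sides of (\ref{a-a-(-1)}) equals $z^{-1}\delta(w/z) F$, which is zero by the previous step. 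The main obstacle is the first assertion, where the $\hbar$-adic setting demands careful bookkeeping of the two distinct sources of uniform lower bounds on $z$- and $w$-powers modulo each $\hbar^n$; once this is in hand, the antisymmetry argument and the formal division by $(z-w)$ make the remaining items essentially automatic.
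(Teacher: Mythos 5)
Your proof is correct and follows essentially the same route as the paper's: introduce the combined series $F(z,w)=(z-w-\mu\hbar)a(z)a(w)$, use the two rewritings afforded by (\ref{az-aw-comm}) to get both membership in $\E_\hbar^{(2)}(W)$ and the antisymmetry $F(z,w)=-F(w,z)$, deduce $F(w,w)=0$ and hence the delta-function identity, and then conclude (\ref{a-a-(-1)}) and (\ref{a-a-strong}) from the resulting cancellation of $\iota_{z,w}(z-w)^{-1}$ against $\iota_{w,z}(z-w)^{-1}$. Your modulo-$\hbar^n$ elaboration of the first claim simply fills in a step the paper takes for granted.
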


\begin{proof} Set
\begin{align*}
 A(z,w)=(z-w-\mu \hbar)a(z)a(w).
\end{align*}
From (\ref{az-aw-comm}), we have $ A(z,w)\in \E_\hbar^{(2)}(W)$, so that $A(z,z)$ exists in $\E_{\hbar}(W)$.
Using (\ref{az-aw-comm}) again
we get
\begin{align*}
 A(z,w)=(z-w+\mu\hbar)a(w)a(z) =-(w-z-\mu\hbar)a(w)a(z)=-A(w,z),
\end{align*}
which implies $A(z,z)=0$. Then
$$ z\inv\delta\!\(\frac wz\)\! \left((z-w-\mu\hbar)a(z)a(w)\right)= z\inv\delta\!\(\frac wz\) A(z,w) = z\inv\delta\!\(\frac wz\) A(z,z) =0,$$
which amounts to
\begin{align}
(z-w)\inv \left((z-w-\mu \hbar)a(z)a(w)\right)=(-w+z)^{-1}((z-w-\mu \hbar)a(z)a(w)).
\end{align}
This together with (\ref{az-aw-comm}) gives  (\ref{a-a-(-1)}), which implies (\ref{a-a-strong}).
\end{proof}

\begin{lem}\label{second}
Let $a(x), b(x)\in \E_\hbar(W)$ and let $\mu\in \C^\times$, $\nu\in \C$ such that
\begin{align}
  &(z-w-2\mu\hbar)a(z)a(w)=(z-w+2\mu\hbar)a(w)a(z),\label{eq:a-a-relation}\\
  &(z-w+\mu\nu\hbar)a(z)b(w)=(z-w-\mu\nu\hbar)b(w)a(z).\label{eq:a-b-relation}
\end{align}
For $r\in\N$, we set
\begin{align*}
  &\nob{a(z_1)\cdots a(z_r)b(w)}=\prod_{1\le i<j\le r}\frac{z_i-z_j-2\mu\hbar}{z_i-z_j} \prod_{1\le i\le r}(z_i-w+\mu\nu\hbar)
  a(z_1)\cdots a(z_r)b(w).
\end{align*}
Then
\begin{align}
  &\nob{a(z_1)\cdots a(z_r)b(w)}\in\E_\hbar^{(r+1)}(W):=\Hom(W,W_\hbar((z_1,\dots, z_r,w))),\label{eq:no-a-a-b}\\
  &a(w)_0^rb(w)\label{eq:no-a-a-b-0}\\
  &\quad=\nob{a(w+(2r-2-\nu)\mu\hbar)a(w+(2r-4-\nu)\mu\hbar)\cdots a(w-\nu\mu\hbar)b(w)}, \nonumber\\
  &a(z)a(w)_0^rb(w)=\frac{w-z+\mu\nu\hbar}{w-z-\mu\nu\hbar}\prod_{s=1}^{r}\lim_{z_s\to w+(2r-2s-\mu\nu)\hbar}
  \frac{z_s-z-2\mu\hbar}{z_s-z+2\mu\hbar} a(w)_0^rb(w)a(z)\label{eq:a-a-b-relation}\\
  &\quad +z\inv\delta\left(\frac{w-(\nu-2r)\mu\hbar}{z}\right)a(w)_0^{r+1}b(w).\nonumber
\end{align}
\end{lem}

\begin{proof}
It is no harm to assume $\mu=1$
Applying Lemma \ref{first} to equation \eqref{eq:a-a-relation}, we get
\begin{align}
  &\frac{z-w-2\hbar}{z-w}a(z)a(w)=\frac{w-z-2\hbar}{w-z}a(w)a(z).\label{eq:a-a-relation-1}
\end{align}
Then the first relation follows immediate from \eqref{eq:a-a-relation-1} and \eqref{eq:a-b-relation}.
We next prove the relation \eqref{eq:no-a-a-b-0} by using induction on $r$.
If $r=0$, there is nothing need to prove.
Suppose it holds for $r$.
Then
\begin{align*}
  & (z-w+\nu\hbar)\prod_{s=1}^r\lim_{z_s\to w+(2r-2s-\nu)\hbar} \frac{z-z_s-2\hbar}{z-z_s} a(z)a(w)_0^rb(w)
   \\
  =&\nob{a(z)a(w+(2r-2-\nu)\hbar)a(w+(2r-4-\nu)\hbar)\cdots a(w-\nu\hbar)b(w)},\nonumber\\
  &(-w+z-\nu\hbar)\prod_{s=1}^{r}\lim_{z_s\to w+(2r-2s-\nu)\hbar}\frac{z_s-z-2\hbar}{z_s-z} a(w)_0^rb(w)a(z)
    \\
  =&\nob{a(z)a(w+(2r-2-\nu)\hbar)a(w+(2r-4-\nu)\hbar)\cdots a(w-\nu\hbar)b(w)}.\nonumber
\end{align*}
Notice that
\begin{align*}
  (z-w+\nu\hbar)\prod_{s=1}^r\lim_{z_s\to w+(2r-2s-\nu)\hbar} \frac{z-z_s-2\hbar}{z-z_s}
  =(z-w+(\nu-2r)\hbar).
\end{align*}
By Lemma \ref{a-b-abstract}, we get that
\begin{align}
  &z\inv\delta\left(\frac{w-(\nu-2r)\hbar}{z}\right)a(w)_0^{r+1}b(w)\nonumber\\
  =&a(z)a(w)_0^rb(w)-\frac{w-z+\nu\hbar}{w-z-\nu\hbar}\prod_{s=1}^{r}\lim_{z_s\to w+(2r-2s-\nu)\hbar}\frac{z_s-z-2\hbar}{z_s-z+2\hbar} a(w)_0^rb(w)a(z)
  \label{eq:a-a-b-relation-temp}\\
  =&z\inv\delta\left(\frac{w-(\nu-2r)\hbar}{z}\right)\nonumber\\
  &\times  \nob{a(w+(2r-\nu)\hbar)a(w+(2r-2-\nu)\hbar)\cdots a(w-\nu\hbar)b(w)}.\nonumber
\end{align}
Taking $\Res_z$ on both hand sides, we get that
\begin{align*}
  &a(w)_0^{r+1}b(w)=\nob{a(w+(2r-\nu)\hbar)a(w+(2r-2-\nu)\hbar)\cdots a(w-\nu\hbar)b(w)},
\end{align*}
as desired.
The last relation follows immediate from the relation \eqref{eq:a-a-b-relation-temp}.
\end{proof}

We introduce some notations and conventions.
For a set $A$, we denote by $|A|$ the cardinal of $A$.
For two integers $m\le n$, we denote by $[m,n]$ the set $\set{k \in \Z}{m\le k\le n}$.
Given a natural number $r\le n-m$ and a subset $I\subset [m,n]$ of cardinal $r$, we write
\begin{align*}
  &I={i_1<i_2<\cdots< i_r},\quad [0,n]\setminus I= {i'_1<i'_2<\cdots < i'_{n-r}}.
\end{align*}

\begin{lem}\label{third}
Let $a(x), b(x)\in \E_\hbar(W)$ and let $\mu\in\C^\times$, $\nu\in \C$ such that \eqref{eq:a-a-relation} and \eqref{eq:a-b-relation} hold.
Then for any $r\in\N$, we have that
\begin{align*}
  &a(z_1)a(z_2)\cdots a(z_r)b(w)
  =\sum_{I\subset [1,r]}\prod_{s=1}^{r-|I|}\bigg(
  \frac{w-z_{i'_s}+\mu\nu\hbar}{w-z_{i'_s}-\mu\nu\hbar}\prod_{\substack{1\le t\le |I|\\ i'_s< i_t}}
  \frac{z_{i_t}-z_{i'_s}-2\mu\hbar}{z_{i_t}-z_{i'_s}+2\mu\hbar}\bigg)
    \\
  &\times a(w)_0^{|I|}b(w)a(z_{i'_1})a(z_{i'_2})\cdots a(z_{i'_{r-|I|}})
  \prod_{t=1}^{|I|} z_{i_t}\inv\delta\left(\frac{w-(\nu-2(|I|-t))\mu\hbar}{z_{i_t}}\right).
\end{align*}
\end{lem}

\begin{proof}
It is no harm to assume $\mu=1$.
If $r=0$, then there is nothing need to prove.
We suppose this lemma holds true for $r$.
From Lemma \ref{second}
\begin{align*}
  &a(z_0)a(z_1)a(z_2)\cdots a(z_r)b(w)\\
  =&\sum_{I\subset [1,r]}\prod_{s=1}^{r-|I|}
    \bigg(\frac{w-z_{i'_s}+\nu\hbar}{w-z_{i'_s}-\nu\hbar}\prod_{\substack{1\le t\le |I|\\ i'_s< i_t}}\frac{z_{i_t}-z_{i'_s}-2\hbar}{z_{i_t}-z_{i'_s}+2\hbar}\bigg)\\
  &\times a(z_0)a(w)_0^{|I|}b(w)a(z_{i'_1})a(z_{i'_2})\cdots a(z_{i'_{r-|I|}})
  \prod_{t=1}^{|I|} z_{i_t}\inv\delta\left(\frac{w-(\nu-2(|I|-t))\hbar}{z_{i_t}}\right)\\
  =&\sum_{I\subset [1,r]}\prod_{s=1}^{r-|I|}
    \bigg(\frac{w-z_{i'_s}+\nu\hbar}{w-z_{i'_s}-\nu\hbar}\prod_{\substack{1\le t\le |I|\\ i'_s< i_t}}\frac{z_{i_t}-z_{i'_s}-2\hbar}{z_{i_t}-z_{i'_s}+2\hbar}\bigg)\\
  &\times a(w)_0^{|I|+1}b(w)a(z_{i'_1})a(z_{i'_2})\cdots a(z_{i'_{r-|I|}})
  \prod_{t=1}^{|I|} z_{i_t}\inv\delta\left(\frac{w-(\nu-2(|I|-t))\hbar}{z_{i_t}}\right)\\
  &\times z_0\inv \delta\left(\frac{w-2|I|\hbar}{z_0}\right)\\
  +&\sum_{I\subset [1,r]}\prod_{s=1}^{r-|I|}
    \bigg(\frac{w-z_{i'_s}+\nu\hbar}{w-z_{i'_s}-\nu\hbar}\prod_{\substack{1\le t\le |I|\\ i'_s< i_t}}\frac{z_{i_t}-z_{i'_s}-2\hbar}{z_{i_t}-z_{i'_s}+2\hbar}\bigg)
  \times \frac{w-z_0+\nu\hbar}{w-z_0-\nu\hbar} \prod_{t=1}^{|I|}\frac{z_{i_t}-z_0-2\hbar}{z_{i_t}-z_0+2\hbar} \\
  &\times a(w)_0^{|I|}b(w)a(z_0)a(z_{i'_1})a(z_{i'_2})\cdots a(z_{i'_{r-|I|}})
  \prod_{t=1}^{|I|} z_{i_t}\inv\delta\left(\frac{w-(\nu-2(|I|-t))\hbar}{z_{i_t}}\right)\\
  =&\sum_{I\subset [0,r]}\prod_{s=1}^{r+1-|I|}
    \bigg(\frac{w-z_{i'_s}+\nu\hbar}{w-z_{i'_s}-\nu\hbar}\prod_{\substack{1\le t\le |I|\\ i'_s< i_t}}\frac{z_{i_t}-z_{i'_s}-2\hbar}{z_{i_t}-z_{i'_s}+2\hbar}\bigg)\\
  &\times a(w)_0^{|I|}b(w)a(z_{i'_1})a(z_{i'_2})\cdots a(z_{i'_{r-|I|}})
  \prod_{t=1}^{|I|} z_{i_t}\inv\delta\left(\frac{w-(\nu-2(|I|-t))\hbar}{z_{i_t}}\right),
\end{align*}
as desired.
\end{proof}

For a positive integer $m$, we denote by $S_m$ the symmetric group on $[1,m]$.
Given an integer $p\in [0,m]$, we view $S_p$ as a subgroup of $S_m$ in the obvious way and introduce the following subgroups of $S_m$:
\begin{align*}
  &S_m^p:=\set{\sigma\in S_m}{\sigma([p+1,m])\subset [p+1,m],\,\, \te{and}\,\,\sigma(r)=r,\,\,\te{for }r\le p},\\
  &S_{m,p}:=\set{\sigma\in S_m}{\sigma(s)<\sigma(t),\,\,\te{for }s<t\le p\,\,\te{or }p<s<t}.
\end{align*}
Then we have that
\begin{lem}\label{forth}
Let $a(x), b(x)\in \E_\hbar(W)$ and let $\mu\in\C^\times$, $\nu\in\N$ such that \eqref{eq:a-a-relation} and \eqref{eq:a-b-relation} hold.
Set $m=\nu+1$.
Then
\begin{align*}
  &\sum_{\tau\in S_m}\sum_{r=0}^m\binom{m}{r}(-1)^r a(z_{\tau(1)})\cdots a(z_{\tau(r)})b(w)a(z_{\tau(r+1)})\cdots a(z_{\tau(m)})\\
  =&\sum_{\tau\in S_m}\sum_{p=0}^m
  a(w)_0^pb(w)a(z_{\tau(p+1)})a(z_{\tau(p+2)})\cdots a(z_{\tau(m)})
  \prod_{s=1}^{p} z_{\tau(s)}\inv\delta\left(\frac{w-(\nu-2(p-s))\mu\hbar}{z_{\tau(s)}}\right)\\
  &\times \sum_{\sigma\in S_{m,p}}\sum_{r=\sigma(p)}^m
  \binom{m}{r}(-1)^r
  \prod_{s=p+1}^{r}\frac{w-z_{\tau(s)}+\mu\nu\hbar}{w-z_{\tau(s)}-\mu\nu\hbar}
  \prod_{\substack{1\le s\le p<t\le r\\ \sigma(s)> \sigma(t)}}\frac{z_{\tau(s)}-z_{\tau(t)}-2\mu\hbar}{z_{\tau(s)}-z_{\tau(t)}+2\mu\hbar}.
\end{align*}
\end{lem}

\begin{proof}
It is no harm to assume that $\mu=1$.
For a subset $I\subset [1,m]$, we set $m(I)=\max I$ if $I\ne\emptyset$ or $0$ if $I=\emptyset$.
From Lemma \ref{third}, we have that
\begin{align*}
  &\sum_{r=0}^m\binom{m}{r}(-1)^r a(z_1)\cdots a(z_r)b(w)a(z_{r+1})\cdots a(z_{m})\\
  =&\sum_{r=0}^m\binom{m}{r}(-1)^r\sum_{I\subset[1,r]}
  \prod_{s=1}^{r-|I|}\bigg(\frac{w-z_{i'_s}+\nu\hbar}{w-z_{i'_s}-\nu\hbar}
  \prod_{\substack{1\le t\le |I|\\ i'_s< i_t}}\frac{z_{i_t}-z_{i'_s}-2\hbar}{z_{i_t}-z_{i'_s}+2\hbar}\bigg)\\
  &\times  a(w)_0^{|I|}b(w)a(z_{i'_1})a(z_{i'_2})\cdots a(z_{i'_{r-|I|}})a(z_{r+1})\cdots a(z_m)\\
  &\times \prod_{s=1}^{|I|} z_{i_s}\inv\delta\left(\frac{w-(\nu-2(|I|-s))\hbar}{z_{i_s}}\right)\\
  =&\sum_{I\subset[1,m]} \sum_{r=m(I)}^m
  \binom{m}{r}(-1)^r a(w)_0^{|I|}b(w)a(z_{i'_1})a(z_{i'_2})\cdots a(z_{i'_{m-|I|}})\\
  &\times \prod_{s=1}^{r-|I|}\bigg(\frac{w-z_{i'_s}+\nu\hbar}{w-z_{i'_s}-\nu\hbar}
  \prod_{\substack{1\le t\le |I|\\ i'_s< i_t}}\frac{z_{i_t}-z_{i'_s}-2\hbar}{z_{i_t}-z_{i'_s}+2\hbar}\bigg)
  \prod_{s=1}^{|I|} z_{i_s}\inv\delta\left(\frac{w-(\nu-2(|I|-s))\hbar}{z_{i_s}}\right)\\
  =&\sum_{p=0}^m\sum_{\sigma\in S_{m,p}}\sum_{r=\sigma(p)}^m
  \binom{m}{r}(-1)^r a(w)_0^pb(w)a(z_{\sigma(p+1)})a(z_{\sigma(p+2)})\cdots a(z_{\sigma(m)})\\
  &\times \prod_{s=p+1}^{r}\frac{w-z_{\sigma(s)}+\nu\hbar}{w-z_{\sigma(s)}-\nu\hbar}
  \prod_{\substack{1\le s\le p<t\le r\\ \sigma(s)> \sigma(t)}}\frac{z_{\sigma(s)}-z_{\sigma(t)}-2\hbar}{z_{\sigma(s)}-z_{\sigma(t)}+2\hbar}\\
  &\times\prod_{s=1}^{p} z_{\sigma(s)}\inv\delta\left(\frac{w-(\nu-2(p-s))\hbar}{z_{\sigma(s)}}\right),
\end{align*}
Then
\begin{align*}
  &\sum_{\tau\in S_m}\sum_{r=0}^m\binom{m}{r}(-1)^r a(z_{\tau(1)})\cdots a(z_{\tau(r)})b(w)a(z_{\tau(r+1)})\cdots a(z_{\tau(m)})\\
  =&\sum_{p=0}^m\sum_{\tau\in S_m}\sum_{\sigma\in S_{m,p}}\sum_{r=\sigma(p)}^m
  \binom{m}{r}(-1)^r  a(w)_0^pb(w)a(z_{\tau\sigma(p+1)})a(z_{\tau\sigma(p+2)})\cdots a(z_{\tau\sigma(m)})\\
  &\times\prod_{s=p+1}^{r}\frac{w-z_{\tau\sigma(s)}+\nu\hbar}{w-z_{\tau\sigma(s)}-\nu\hbar}
  \prod_{\substack{1\le t\le p<s\le r\\ \sigma(s)< \sigma(t)}}
    \frac{z_{\tau\sigma(t)}-z_{\tau\sigma(s)}-2\hbar}{z_{\tau\sigma(t)}-z_{\tau\sigma(s)}+2\hbar}\\
  &\times
  \prod_{s=1}^{p} z_{\tau\sigma(s)}\inv\delta\left(\frac{w-(\nu-2(p-s))\hbar}{z_{\tau\sigma(s)}}\right)\\
  =&\sum_{\tau\in S_m}\sum_{p=0}^m
  a(w)_0^pb(w)a(z_{\tau(p+1)})a(z_{\tau(p+2)})\cdots a(z_{\tau(m)})
  \prod_{s=1}^{p} z_{\tau(s)}\inv\delta\left(\frac{w-(\nu-2(p-s))\hbar}{z_{\tau(s)}}\right)\\
  &\times \sum_{\sigma\in S_{m,p}}\sum_{r=\sigma(p)}^m
  \binom{m}{r}(-1)^r \prod_{s=p+1}^{r}\frac{w-z_{\tau(s)}+\nu\hbar}{w-z_{\tau(s)}-\nu\hbar}
  \prod_{\substack{1\le s\le p<t\le r\\ \sigma(s)> \sigma(t)}}\frac{z_{\tau(s)}-z_{\tau(t)}-2\hbar}{z_{\tau(s)}-z_{\tau(t)}+2\hbar},
\end{align*}
as desired.
\end{proof}

\begin{lem}\label{fifth}
For integers $0\le p<r\le m$, $\mu\in\C^\times$, $\nu\in\C$ and $\sigma\in S_{m,p}$, we have that
\begin{align}
  &\prod_{\substack{1\le t\le p<s\le r\\ \sigma(s)< \sigma(t)}}\frac{z_t-z_s-2\mu\hbar}{z_t-z_s+2\mu\hbar}
  =\prod_{1\le s<t\le m}\frac{z_{\sigma\inv(s)}-z_{\sigma\inv(t)}+2\mu\hbar}{z_s-z_t+2\mu\hbar},\label{eq:fifth-1}\\
  &\prod_{s=p+1}^{r}\frac{w-z_s+\mu\nu\hbar}{w-z_s-\mu\nu\hbar}
  =\prod_{1\le s\le p}\frac{1}{w-z_s+\mu\nu\hbar}\prod_{p<s\le m}\frac{1}{w-z_s-\mu\nu\hbar}\label{eq:fifth-2}\\
  &\quad\times
  \prod_{1\le s\le r}(w-z_{\sigma\inv(s)}+\mu\nu\hbar)\prod_{r<s\le m}(w-z_{\sigma\inv(s)}-\mu\nu\hbar).\nonumber
\end{align}
\end{lem}

\begin{proof}
It is no harm to assume that $\mu=1$.
Notice that
\begin{align}\label{eq:shuffle}
  &\sigma(s)=s\quad\te{for }s>\sigma(p).
\end{align}
Then for any $s>\sigma(p)$, we have that
\begin{align*}
  &\sigma(s)=s>\sigma(p)\ge\sigma(t)\quad\te{for }t\le p.
\end{align*}
Since $\sigma\in S_{m,p}$, we also have that
\begin{align*}
  &\set{(t,s)}{1\le t\le p<s\le \nu+1,\,\,\sigma(s)<\sigma(t)}\\
  =&\set{(t,s)}{1\le t<s\le \nu+1,\,\,\sigma(s)<\sigma(t)}.
\end{align*}
Then
\begin{align}
  &\prod_{\substack{1\le t\le p<s\le r\\ \sigma(s)< \sigma(t)}}\frac{z_t-z_s-2\hbar}{z_t-z_s+2\hbar}
  =\prod_{\substack{1\le t\le p<s\le m\\ \sigma(s)< \sigma(t)}}\frac{z_t-z_s-2\hbar}{z_t-z_s+2\hbar}
  =\prod_{\substack{1\le t<s\le m\\ \sigma(s)< \sigma(t)}}\frac{z_t-z_s-2\hbar}{z_t-z_s+2\hbar}
  .\label{eq:fifth-1-1}
\end{align}
On the other hand,
\begin{align}
  &\prod_{1\le s<t\le m}\frac{z_{\sigma\inv(s)}-z_{\sigma\inv(t)}+2\hbar }{z_s-z_t+2\hbar}\nonumber\\
  =&\prod_{\substack{1\le s<t \le m\\ \sigma\inv(s)<\sigma\inv(t)}} \frac{z_{\sigma\inv(s)} -z_{\sigma\inv(t)}+2\hbar }{ z_s-z_t+2\hbar }
  \prod_{\substack{1\le s<t \le m\\ \sigma\inv(s)>\sigma\inv(t)}} \frac{z_{\sigma\inv(s)} -z_{\sigma\inv(t)}+2\hbar }{ z_s-z_t+2\hbar }
  \nonumber\\
  =&(-1)^{|\sigma|}\prod_{1\le s<t\le m}
  \frac{1}{z_s-z_t+2\hbar}
  \prod_{\substack{ s<t \\ \sigma\inv(s)<\sigma\inv(t)}} (z_{\sigma\inv(s)} -z_{\sigma\inv(t)}+2\hbar )\nonumber\\
  &\times\prod_{\substack{ t<s \\ \sigma\inv(s)<\sigma\inv(t)}} (z_{\sigma\inv(s)} -z_{\sigma\inv(t)}-2\hbar )\nonumber\\
  =&(-1)^{|\sigma|}\prod_{1\le s<t\le m}
  \frac{1}{z_s-z_t+2\hbar}
  \prod_{\substack{ s<t \\ \sigma(s)<\sigma(t)}} (z_s -z_t+2\hbar )
  \prod_{\substack{ s<t \\ \sigma(t)<\sigma(s)}} (z_s -z_t-2\hbar )\nonumber\\
  =&(-1)^{|\sigma|}\prod_{\substack{1\le s<t \le m\\ \sigma(t)<\sigma(s)}}
  \frac{z_s-z_t-2\hbar}{z_s-z_t+2\hbar}.\label{eq:fifth-1-2}
\end{align}
Combining \eqref{eq:fifth-1-1} and \eqref{eq:fifth-1-2}, we complete the proof of the relation \eqref{eq:fifth-1}.

Next, we start to prove the relation \eqref{eq:fifth-2}.
From \eqref{eq:shuffle}, we have that
\begin{align*}
  &\sigma(s)=s\quad\te{for }s>r>p=\sigma(r),\quad \sigma([1,r])=[1,r].
\end{align*}
Then
\begin{align*}
  &\prod_{s=p+1}^r\frac{w-z_s+\nu\hbar}{w-z_s-\nu\hbar}\\
  =&\prod_{1\le s\le p}\frac{1}{w-z_s+\nu\hbar}\prod_{p<s\le\nu+1}\frac{1}{w-z_s-\nu\hbar}
  \prod_{1\le s\le r}(w-z_s+\nu\hbar)\prod_{r<s\le\nu+1}(w-z_s-\nu\hbar)\\
  =&\prod_{1\le s\le p}\frac{1}{w-z_s+\nu\hbar}\prod_{p<s\le\nu+1}\frac{1}{w-z_s-\nu\hbar}
  \prod_{1\le s\le r}(w-z_{\sigma\inv(s)}+\nu\hbar)\\
  &\times\prod_{r<s\le\nu+1}(w-z_{\sigma\inv(s)}-\nu\hbar),
\end{align*}
as desired.
\end{proof}

Let $\mu\in\C^\times$, $\nu\in\N$ and let $m=\nu+1$.
For any $p\in [0,m]$, we set
\begin{align}
  &Dr_p(z_1,\dots,z_m,w)\nonumber\\
  =&a(w)_0^pb(w)a(z_{p+1})a(z_{p+2})\cdots a(z_m)
  \prod_{s=1}^{p} z_s\inv\delta\left(\frac{w-(\nu-2(p-s))\mu\hbar}{z_s}\right)\label{eq:def-Drp}\\
  &\times \sum_{\sigma\in S_{m,p}}\sum_{r=\sigma(p)}^m
  \binom{m}{r}(-1)^r
  \prod_{s=p+1}^{r}\frac{w-z_s+\mu\nu\hbar}{w-z_s-\mu\nu\hbar}
  \prod_{\substack{1\le s\le p<t\le r\\ \sigma(s)> \sigma(t)}}\frac{z_s-z_t-2\mu\hbar}{z_s-z_t+2\mu\hbar}.\nonumber
\end{align}
And for a subset $H\subset S_m$, we define
\begin{align*}
  &P_m(H,z_1,\dots, z_m,w)=\sum_{\sigma\in H}\sum_{r=0}^m\binom{m}{r}(-1)^{r+|\sigma|}
  \prod_{1\le s<t\le m}(z_{\sigma\inv (s)}-z_{\sigma\inv(t)}+2\mu\hbar)\\
  &\quad\times\prod_{s=1}^{r}(w-z_{\sigma\inv(s)}+\nu\hbar)\prod_{s=r+1}^m(w-z_{\sigma\inv(s)}-\mu\nu\hbar).
\end{align*}
\begin{lem}\label{sixth}
Let $a(x), b(x)\in \E_\hbar(W)$ and let $\mu\in\C^\times$, $\nu\in\N$, $m=\nu+1$, $p\in [0,m-1]$ such that \eqref{eq:a-a-relation} and \eqref{eq:a-b-relation} hold.
Then
\begin{align*}
  &\prod_{s=1}^{p} z_s\inv\delta\left(\frac{w-(\nu-2(p-s))\mu\hbar}{z_s}\right)
  \prod_{1\le t<s\le m}(z_t-z_s+2\mu\hbar)\\
  &\times\prod_{1\le s\le p}(w-z_s+\mu\nu\hbar)\prod_{p<s\le m}(w-z_s-\mu\nu\hbar)
  \ne 0.
\end{align*}
Moreover,
\begin{align*}
  &Dr_p(z_1,\dots,z_m,w)\\
  =&a(w)_0^pb(w)a(z_{p+1})a(z_{p+2})\cdots a(z_m)
  \prod_{s=1}^{p} z_s\inv\delta\left(\frac{w-(\nu-2(p-s))\mu\hbar}{z_s}\right)\\
  &\times\prod_{1\le s\le p}\frac{1}{w-z_s+\mu\nu\hbar}\prod_{p<s\le m}\frac{1}{w-z_s-\mu\nu\hbar}
  \prod_{1\le s<t\le m}\frac{1}{z_s-z_t+2\mu\hbar}\\
  &\times P_m(S_{m,p}S_p,z_1,\dots, z_m,w).
\end{align*}
\end{lem}

\begin{proof}
The first assertion is clear.
Applying Lemma \ref{fifth} to \eqref{eq:def-Drp}, we get that
\begin{align*}
  &Dr_p(z_1,\dots,z_m,w)\\
  =&
  a(w)_0^pb(w)a(z_{p+1})a(z_{p+2})\cdots a(z_{m})
  \prod_{s=1}^{p} z_s\inv\delta\left(\frac{w-(\nu-2(p-s))\mu\hbar}{z_s}\right)\\
  &\times \prod_{1\le s\le p}\frac{1}{w-z_s+\mu\nu\hbar}\prod_{p<s\le m}\frac{1}{w-z_s-\mu\nu\hbar}
  \prod_{1\le s<t\le m}\frac{1}{z_s-z_t+2\mu\hbar}\\
  &\times \sum_{\sigma\in S_{m,p}}\sum_{r=\sigma(p)}^m
  \binom{m}{r}(-1)^{r+|\sigma|}
  \prod_{1\le s<t\le m}(z_{\sigma\inv(s)}-z_{\sigma\inv(t)}+2\mu\hbar)\\
  &\times
  \prod_{1\le s\le r}(w-z_{\sigma\inv(s)}+\mu\nu\hbar)
  \prod_{r<s\le m}(w-z_{\sigma\inv(s)}-\mu\nu\hbar).
\end{align*}
If $r<\sigma(p)$, then there exists $s>r$ such that $p=\sigma\inv(s)$. It follows that
\begin{align*}
  &z_p\inv\delta\left(\frac{w-\mu\nu\hbar}{z_p}\right)\prod_{1\le s\le r}(w-z_{\sigma\inv(s)}+\mu\nu\hbar)
  \prod_{r<s\le m}(w-z_{\sigma\inv(s)}-\mu\nu\hbar)=0.
\end{align*}
By using this, we get that
\begin{align*}
  &Dr_p(z_1,\dots,z_m,w)\\
  =& a(w)_0^pb(w)a(z_{p+1})a(z_{p+2})\cdots a(z_m)
  \prod_{s=1}^{p} z_s\inv\delta\left(\frac{w-(\nu-2(p-s))\mu\hbar}{z_s}\right)\\
  &\times \prod_{1\le s\le p}\frac{1}{w-z_s+\mu\nu\hbar}\prod_{p<s\le m}\frac{1}{w-z_s-\mu\nu\hbar}
  \prod_{1\le s<t\le m}\frac{1}{z_s-z_t+2\mu\hbar}\\
  &\times \sum_{\sigma\in S_{m,p}}\sum_{r=0}^m
  \binom{m}{r}(-1)^{r+|\sigma|}
  \prod_{1\le s<t\le m}(z_{\sigma\inv(s)}-z_{\sigma\inv(t)}+2\mu\hbar)\\
  &\times
  \prod_{1\le s\le r}(w-z_{\sigma\inv(s)}+\mu\nu\hbar)
  \prod_{r<s\le m}(w-z_{\sigma\inv(s)}-\mu\nu\hbar).
\end{align*}
For any $1\ne g\in S_p$, we choose $1\le t< p$ such that $1\le g(t+1)<g(t)\le p$.
Then $\sigma g(t+1)<\sigma g(t)$, as $\sigma\in S_{m,p}$.
It implies
\begin{align*}
  &(z_{g(t+1)}-z_{g(t)}+2\mu\hbar)\quad\te{divides}\quad\prod_{1\le t<s\le m}(z_{\sigma\inv(t)} -z_{\sigma\inv(s)}+2\mu\hbar).
\end{align*}
So
\begin{align*}
  &(z_{t+1}-z_{t}+2\mu\hbar)\quad\te{divides}\quad\prod_{1\le t<s\le m}(z_{g\inv\sigma\inv(t)} -z_{g\inv\sigma\inv(s)}+2\mu\hbar).
\end{align*}
It implies that
\begin{align*}
  &\prod_{s=1}^{p} z_{\tau(s)}\inv\delta\left(\frac{w-(\nu-2(p-s))\mu\hbar}{z_{\tau(s)}}\right)
  \prod_{1\le s<t\le m}(z_{g\inv\sigma\inv(s)} -z_{g\inv\sigma\inv(t)}+2\mu\hbar)=0.
\end{align*}
Therefore,
\begin{align*}
  &Dr_p(z_1,\dots,z_m,w)\\
  =& a(w)_0^pb(w)a(z_{p+1})a(z_{p+2})\cdots a(z_m)
  \prod_{s=1}^{p} z_s\inv\delta\left(\frac{w-(\nu-2(p-s))\mu\hbar}{z_s}\right)\\
  &\times \prod_{1\le s\le p}\frac{1}{w-z_s+\mu\nu\hbar}\prod_{p<s\le m}\frac{1}{w-z_s-\mu\nu\hbar}
  \prod_{1\le s<t\le m}\frac{1}{z_s-z_t+2\mu\hbar}\\
  &\times \sum_{\sigma\in S_{m,p}S_p}\sum_{r=0}^m
  \binom{m}{r}(-1)^{r+|\sigma|}
  \prod_{1\le s<t\le m}(z_{\sigma\inv(s)}-z_{\sigma\inv(t)}+2\mu\hbar)\\
  &\times
  \prod_{1\le s\le r}(w-z_{\sigma\inv(s)}+\mu\nu\hbar)
  \prod_{r<s\le m}(w-z_{\sigma\inv(s)}-\mu\nu\hbar),
\end{align*}
as desired.
\end{proof}

\begin{lem}\label{seventh}
Let $\mu\in\C^\times$, $\nu\in\N$ and let $m=\nu+1$.
Then $P_m(S_m,z_1,\dots, z_m,w)=0$.
\end{lem}

\begin{proof}
It is no harm to assume $\mu=1$.
We prove this lemma by using induction on $m$.
If $m=1$, it is easy to check that $P_1(S_1,z_1,w)=0$.
Suppose that $$P_{m-1}(S_{m-1},z_1,\dots,z_{m-1},w)=0.$$
Notice that
\begin{align*}
  &S_m=S_{m-1}S_{m,m-1}=S_m^{m-1}S_{m,m-1}.
\end{align*}
Then we have that
\begin{align*}
  &P_m(S_m,z_1,\dots,z_m,w)=
  \sum_{\sigma\in S_m}\sum_{r=0}^m\binom{m}{r}(-1)^{r+|\sigma|}
  \prod_{1\le s<t\le m}(z_{\sigma\inv (s)}-z_{\sigma\inv(t)}+2\hbar)\\
  &\quad\times\prod_{s=1}^{r}(w-z_{\sigma\inv(s)}+\nu\hbar)\prod_{s=r+1}^m(w-z_{\sigma\inv(s)}-\nu\hbar)\\
  =&\sum_{\sigma\in S_{m,m-1}}\sum_{\tau\in S_{m-1}}\sum_{r=0}^{m-1}\binom{m-1}{r}(-1)^{r+|\tau\sigma|}
  \prod_{1\le s<t\le m}(z_{\sigma\inv\tau\inv (s)}-z_{\sigma\inv\tau\inv(t)}+2\hbar)\\
  &\quad\times\prod_{s=1}^{r}(w-z_{\sigma\inv\tau\inv(s)}+\nu\hbar)\prod_{s=r+1}^m(w-z_{\sigma\inv\tau\inv(s)}-\nu\hbar)\\
  &-\sum_{\sigma\in S_{m,m-1}}\sum_{\tau\in S_m^{m-1}}\sum_{r=0}^{m-1}\binom{m-1}{r}(-1)^{r+|\tau\sigma|}
  \prod_{1\le s<t\le m}(z_{\sigma\inv\tau\inv (s)}-z_{\sigma\inv\tau\inv(t)}+2\hbar)\\
  &\quad\times\prod_{s=1}^{r+1}(w-z_{\sigma\inv\tau\inv(s)}+\nu\hbar)\prod_{s=r+2}^m(w-z_{\sigma\inv\tau\inv(s)}-\nu\hbar)\\
  =&\sum_{\sigma\in S_{m,m-1}}\sum_{\tau\in S_{m-1}}\sum_{r=0}^{m-1}\binom{m-1}{r}(-1)^{r+|\tau\sigma|}
  \prod_{1\le s<t\le m-1}(z_{\sigma\inv\tau\inv (s)}-z_{\sigma\inv\tau\inv(t)}+2\hbar)\\
  &\quad\times\prod_{s=1}^{r}(w-z_{\sigma\inv\tau\inv(s)}+\nu\hbar)\prod_{s=r+1}^{m-1}(w-z_{\sigma\inv\tau\inv(s)}-\nu\hbar)\\
  &\quad\times (w-z_{\sigma\inv(m)}-\nu\hbar)
  \prod_{s=1}^{m-1}(z_{\sigma\inv(s)}-z_{\sigma\inv(m)}+2\hbar)\\
  &-\sum_{\sigma\in S_{m,m-1}}\sum_{\tau\in S_m^{m-1}}\sum_{r=0}^{m-1}\binom{m-1}{r}(-1)^{r+|\tau\sigma|}
  \prod_{2\le s<t\le m}(z_{\sigma\inv\tau\inv (s)}-z_{\sigma\inv\tau\inv(t)}+2\hbar)\\
  &\quad\times\prod_{s=2}^{r+1}(w-z_{\sigma\inv\tau\inv(s)}+\nu\hbar)\prod_{s=r+2}^m(w-z_{\sigma\inv\tau\inv(s)}-\nu\hbar)\\
  &\quad\times (w-z_{\sigma\inv(1)}+\nu\hbar)
  \prod_{s=2}^m(z_{\sigma\inv(1)}-z_{\sigma\inv(s)}+2\hbar)\\
  =&\sum_{\sigma\in S_{m,m-1}}(-1)^{|\sigma|}P_{m-1}(z_{\sigma\inv(1)},\dots,z_{\sigma\inv(m-1)},w)(w-z_{\sigma\inv(m)}-\nu\hbar)\\
  &\quad\times\prod_{s=1}^{m-1}(z_{\sigma\inv(s)}-z_{\sigma\inv(m)}+2\hbar)\\
  &-\sum_{\sigma\in S_{m,m-1}}(-1)^{|\sigma|}P_{m-1}(z_{\sigma\inv(2)},\dots,z_{\sigma\inv(m)},w)
  (w-z_{\sigma\inv(1)}+\nu\hbar)\\
  &\quad\times
  \prod_{s=2}^m(z_{\sigma\inv(1)}-z_{\sigma\inv(s)}+2\hbar)\\
  =&0.
\end{align*}
Therefore, we complete the proof.
\end{proof}

\begin{lem}\label{eighth}
Let $a(x), b(x)\in \E_\hbar(W)$ and let $\mu\in\C^\times$, $\nu\in\N$, $m=\nu+1$, $p\in [0,m-1]$ such that \eqref{eq:a-a-relation} and \eqref{eq:a-b-relation} hold.
Then
\begin{align*}
  \sum_{\sigma\in S_m}Dr_p(z_{\sigma(1)},\dots,z_{\sigma(m)},w)=0.
\end{align*}
\end{lem}

\begin{proof}
It is no harm to assume $\mu=1$.
Noticing that $S_m=S_{m,p}S_p\times S_m^p$ and
\begin{align*}
  P_m(S_m,z_1,\dots,z_m,w)=0\quad\te{(see Lemma \ref{seventh})},
\end{align*}
we have
\begin{align*}
  &P_m(S_{m,p}S_p,z_1,\dots,z_m,w)=\sum_{1\ne \sigma_1\in S_m^p}(-1)^{1+|\sigma_1|}P_m(S_{m,p}S_p,z_{\sigma_1\inv(1)},\dots,z_{\sigma_1\inv(m)},w).
\end{align*}
By using Lemma \ref{sixth}, we get that
\begin{align*}
  &\sum_{\sigma\in S_m}Dr_p(z_{\sigma(1)},\dots,z_{\sigma(m)},w)\\
  =&\sum_{\sigma\in S_m}\sum_{1\ne\sigma_1\in S_m^p}a(w)_0^p b(w)a(z_{\sigma(p+1)})\cdots a(z_{\sigma(m)})
  \prod_{s=1}^pz_{\sigma(s)}\inv\delta\left(\frac{w-(\nu-2p+2s)\hbar}{z_{\sigma(s)}}\right)\\
  &\times \prod_{s=1}^p\frac{1}{w-z_{\sigma(s)}+\nu\hbar} \prod_{s=p+1}^m\frac{1}{w-z_{\sigma(s)}-\nu\hbar}
  \prod_{1\le s<t\le m}\frac{1}{z_{\sigma(s)}-z_{\sigma(t)}+2\hbar}\\
  &\times (-1)^{1+|\sigma_1|} P_m(S_{m,p}S_p,z_{\sigma\sigma_1\inv(1)},\dots,z_{\sigma\sigma_1\inv(m)},w)\\
  =&\sum_{\substack{\sigma\in S_m\\ 1\ne\sigma_1\in S_m^p}}
  a(w)_0^p b(w)a(z_{\sigma\sigma_1(p+1)})\cdots a(z_{\sigma\sigma_1(m)})
  \prod_{s=1}^pz_{\sigma(s)}\inv\delta\left(\frac{w-(\nu-2p+2s)\hbar}{z_{\sigma(s)}}\right)\\
  &\times \prod_{p=1}^s\frac{1}{w-z_{\sigma(s)}+\nu\hbar} \prod_{s=p+1}^m\frac{1}{w-z_{\sigma(s)}-\nu\hbar}
  \prod_{1\le s<t\le m}\frac{1}{z_{\sigma\sigma_1(s)}-z_{\sigma\sigma_1(t)}+2\hbar}\\
  &\times P_m(S_{m,p}S_p,z_{\sigma(1)},\dots,z_{\sigma(m)},w).
\end{align*}
There is no harm to assume that $p<m-1$.
For $k=p+1,p+2,\dots, m-1$, define an equivalent relation in $S_m$ by $\sigma\sim_k\sigma'$ if and only if either $\sigma=\sigma'$ or
$\sigma(k)=\sigma'(k+1)$, $\sigma(k+1)=\sigma'(k)$ and $\sigma(k'')=\sigma'(k'')$ for all $k''\ne k,k+1$.
Notice that for each $k$, there exact two element in each equivalent class.
So for each $\sigma\in S_m$, we denote by $\bar\sigma$ the element in $S_m^p$ such that $\sigma\sim_k \bar\sigma$ and $\sigma\ne \bar\sigma$.
Notice that for two distinct $\sigma,\sigma'$, such that $\sigma\sim_k \sigma'$, one has that
\begin{align*}
  \te{either}\quad\sigma(k)>\sigma(k+1)\quad\te{or}\quad\sigma'(k)>\sigma'(k+1).
\end{align*}
Let $S_m^p(k)$ be a complete set of equivalence class representatives,
such that for any $\sigma\in S_m^p(k)$, one has that $\sigma(k)>\sigma(k+1)$.
Then
\begin{align}
  &\sum_{\sigma\in S_m}Dr_p(z_{\sigma(1)},\dots,z_{\sigma(m)},w)\nonumber\\
  =&\sum_{\substack{\sigma\in S_m\\ 1\ne\sigma_1\in S_m^p(k)}}
  \prod_{s=1}^p\frac{1}{w-z_{\sigma(s)}+\nu\hbar} \prod_{s=p+1}^m\frac{1}{w-z_{\sigma(s)}-\nu\hbar}\label{eq:symm-Dr=0-temp}\\
  &\times a(w)_0^p b(w)a(z_{\sigma\sigma_1(p+1)})\cdots a(z_{\sigma\sigma_1(k-1)})
  \prod_{s=1}^pz_{\sigma(s)}\inv\delta\left(\frac{w-(\nu-2p+2s)\hbar}{z_{\sigma(s)}}\right)\nonumber\\
  &\times \bigg(
    a(z_{\sigma\sigma_1(k)})a(z_{\sigma\sigma_1(k+1)})
    \prod_{1\le s<t\le m}\frac{1}{z_{\sigma\sigma_1(s)}-z_{\sigma\sigma_1(t)}+2\hbar}\nonumber\\
  &\quad +a(z_{\sigma\sigma_1(k+1)})a(z_{\sigma\sigma_1(k)})
  \prod_{1\le s<t\le m}\frac{1}{z_{\sigma\bar\sigma_1(s)}-z_{\sigma\bar\sigma_1(t)}+2\hbar}\bigg)\nonumber\\
  &\times  P_m(S_{m,p}S_p,z_{\sigma(1)},\dots,z_{\sigma(m)},w)
  a(z_{\sigma\sigma_1(k+2)})\cdots a(z_{\sigma\sigma_1(m)}).\nonumber
\end{align}
Note that for each $\sigma_1\in S_m^p(k)$, one has that
\begin{align*}
  &(z_{\sigma\sigma_1(k)}-z_{\sigma\sigma_1(k+1)}-2\hbar)\,\,\te{divides}\,\,
  \prod_{1\le s<t\le m}(z_{\sigma\tau\inv(s)}-z_{\sigma\tau\inv(t)}+2\hbar)\,\,\te{for any }\tau\in S_{m,p}S_p.
\end{align*}
Then
\begin{align*}
  &(z_{\sigma\sigma_1(k)}-z_{\sigma\sigma_1(k+1)}-2\hbar)\quad\te{divides}\quad P_m(S_{m,p}S_p,z_{\sigma(1)},\dots,z_{\sigma(m)},w).
\end{align*}
From \eqref{eq:a-a-relation}, we have that
\begin{align*}
  &a(z_{\sigma\sigma_1(k)})a(z_{\sigma\sigma_1(k+1)})
    \prod_{1\le s<t\le m}\frac{1}{z_{\sigma\sigma_1(s)}-z_{\sigma\sigma_1(t)}+2\hbar}\\
  &\quad\times  P_m(S_{m,p}S_p,z_{\sigma(1)},\dots,z_{\sigma(m)},w)\\
  & +a(z_{\sigma\sigma_1(k+1)})a(z_{\sigma\sigma_1(k)})
  \prod_{1\le s<t\le m}\frac{1}{z_{\sigma\bar\sigma_1(s)}-z_{\sigma\bar\sigma_1(t)}+2\hbar}\\
  &\quad\times  P_m(S_{m,p}S_p,z_{\sigma(1)},\dots,z_{\sigma(m)},w)\\
  =&-a(z_{\sigma\sigma_1(k+1)})a(z_{\sigma\sigma_1(k)})
  \frac{z_{\sigma\sigma_1(k)}-z_{\sigma\sigma_1(k+1)}+2\hbar}{z_{\sigma\sigma_1(k+1)}-z_{\sigma\sigma_1(k)}+2\hbar}
    \prod_{1\le s<t\le m}\frac{1}{z_{\sigma\sigma_1(s)}-z_{\sigma\sigma_1(t)}+2\hbar}\\
  &\quad\times  P_m(S_{m,p}S_p,z_{\sigma(1)},\dots,z_{\sigma(m)},w)\\
  & +a(z_{\sigma\sigma_1(k+1)})a(z_{\sigma\sigma_1(k)})
  \prod_{1\le s<t\le m}\frac{1}{z_{\sigma\bar\sigma_1(s)}-z_{\sigma\bar\sigma_1(t)}+2\hbar}\\
  &\quad\times  P_m(S_{m,p}S_p,z_{\sigma(1)},\dots,z_{\sigma(m)},w)\\
  =&-a(z_{\sigma\sigma_1(k+1)})a(z_{\sigma\sigma_1(k)})
  \prod_{1\le s<t\le m}\frac{1}{z_{\sigma\bar\sigma_1(s)}-z_{\sigma\bar\sigma_1(t)}+2\hbar}\\
  &\quad\times  P_m(S_{m,p}S_p,z_{\sigma(1)},\dots,z_{\sigma(m)},w)\\
  & +a(z_{\sigma\sigma_1(k+1)})a(z_{\sigma\sigma_1(k)})
  \prod_{1\le s<t\le m}\frac{1}{z_{\sigma\bar\sigma_1(s)}-z_{\sigma\bar\sigma_1(t)}+2\hbar}\\
  &\quad\times  P_m(S_{m,p}S_p,z_{\sigma(1)},\dots,z_{\sigma(m)},w)\\
  =&0.
\end{align*}
Combining this with \eqref{eq:symm-Dr=0-temp}, we complete the proof.
\end{proof}

\begin{thm}\label{prop:Serre}
Let $W$ be a topologically free $\C[[\hbar]]$-module and  let $a(z),b(z)\in \E_{\hbar}(W)$, $\mu\in\C^\times$, $\nu\in \N$
such that \eqref{eq:a-a-relation} and \eqref{eq:a-b-relation} hold.
Let $m=\nu+1$.
Then the Serre-like relation for $(a(z),b(w),m)$ holds if and only if
\begin{align}\label{eq:Serre-alt}
  a(w)_0^m b(w)=0,
\end{align}
if and only if
\begin{align}\label{eq:Serre-alt2}
  &\nob{a(w+\mu\nu\hbar)a(w+(\nu-2)\mu\hbar)\cdots a(w-\mu\nu\hbar)b(w)}=0.
\end{align}
\end{thm}

\begin{proof}
It is no harm to assume $\mu=1$.
Applying Lemma \ref{eighth} to Lemma \ref{forth}, we get
\begin{align}
  &\sum_{\sigma\in S_m}\sum_{r=0}^m\binom{m}{r}(-1)^r a(z_{\sigma(1)})a(z_{\sigma(2)})\cdots a(z_{\sigma(r)})b(w) a(z_{\sigma(r+1)})
\cdots a(z_{\sigma(m)})\nonumber\\
&\quad=\sum_{\sigma\in S_m}a(w)_0^mb(w)\prod_{s=1}^m z_{\sigma(s)}\inv\delta\left(\frac{w-(\nu-2m+2s)\hbar}{z_{\sigma(s)}}\right).
\label{eq:Serre-temp}
\end{align}
Then \eqref{eq:Serre-alt} implies the Serre-like relation for $(a(z),b(z),m)$.
On the other hand, assume that the Serre-like relation holds for $(a(z),b(z),m)$.
By multiplying $\prod_{s=1}^m\prod_{t\ne s}(z_s-w+(\nu-2m+2t)\hbar)$
and taking $\prod_{s=1}^m\Res_{z_s}$
on both hand sizes of \eqref{eq:Serre-temp},
we get that
\begin{align*}
  &\bigg(\prod_{s=1}^m\prod_{t\ne s}2(t-s)\hbar\bigg) a(w)_0^mb(w)=0.
\end{align*}
Therefore, $a(w)_0^mb(w)=0$.
The equivalence between \eqref{eq:Serre-alt} and \eqref{eq:Serre-alt2} follows immediate from \eqref{eq:no-a-a-b-0}.
\end{proof}

Next, we consider the situation with $\hbar$-adic nonlocal vertex algebras and modules.

\begin{thm}\label{Serre-relation-equivalence-va}
Let $V$ be an $\hbar$-adic nonlocal vertex algebra with $a,b\in V$ and $\mu\in\C^\times$, $\nu\in\N$ such that
\begin{align}
&  (z-w-2\mu\hbar)Y(a,z)Y(a,w)=(z-w+2\mu\hbar)Y(a,w)Y(a,z),\label{thm-a-a-relation-prop}\\
&  (z-w+\mu\nu\hbar)Y(a,z)Y(b,w)=(z-w-\mu\nu\hbar)Y(b,w)Y(a,z).\label{thm-a-b-relation-prop}
\end{align}
Then the Serre-like relation  holds for $(Y(a,z), Y(b,z),\nu+1)$ if and only if
\begin{align}\label{singular-Y-a-b}
a_0^{\nu+1}b=0.
\end{align}
\end{thm}

\begin{proof} From Theorem \ref{prop:Serre}, we have that the Serre-like relation  holds for $$(Y(a,z), Y(b,z),\nu+1)$$ if and only if
\begin{align*}
  0=Y(a,z)_0^{\nu+1}Y(b,z)=Y(a_0^{\nu+1}b,z).
\end{align*}
Since $Y(\cdot, z)$ is injective, we complete the proof.
\end{proof}

\begin{prop}\label{Serre-relation-abstract}
Let $V$ be an $\hbar$-adic nonlocal vertex algebra, let $a,b\in V$ and let $\mu\in\C^\times$, $\nu\in \N$. If
\begin{align}
&  (z-w-2\mu\hbar)Y(a,z)Y(a,w)=(z-w+2\mu\hbar)Y(a,w)Y(a,z),\label{thm-a-a-relation}\\
&  (z-w+\mu\nu\hbar)Y(a,z)Y(b,w)=(z-w-\mu\nu\hbar)Y(b,w)Y(a,z),\label{thm-a-b-relation}
\end{align}
and if the Serre-like relation  holds for $(Y(a,z), Y(b,z),\nu+1)$,
then for any $V$-module $(W,Y_W)$,
\begin{align}
&  (z-w-2\mu\hbar)Y_W(a,z)Y_W(a,w)=(z-w+2\mu\hbar)Y_W(a,w)Y_W(a,z),\label{thm-a-a-relation-module}\\
&  (z-w+\mu\nu\hbar)Y_W(a,z)Y_W(b,w)=(z-w-\mu\nu\hbar)Y_W(b,w)Y_W(a,z),\label{thm-a-b-relation-module}
\end{align}
and the Serre-like relation  holds for $(Y_W(a,z), Y_W(b,z),\nu+1)$.
The converse is also true for any  faithful $V$-module $(W,Y_W)$.
\end{prop}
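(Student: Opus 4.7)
The plan is to reduce the Serre relation, via Propositions \ref{Serre-relation-equivalence} and \ref{Serre-relation-equivalence-va}, to two ingredients that transfer cleanly between $V$ and any (faithful) module: a polynomial commutation relation between $Y(a,z)$ and $Y(a_0b,w)$, together with the internal vanishing $a_0(a_0 b)=0$. Once these are identified, Lemmas \ref{pq-va-module-relation}, \ref{module-rep}, and \ref{a-b-abstract} do the rest.

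For the forward direction, I would first apply Lemma \ref{pq-va-module-relation} to transfer (\ref{thm-a-a-relation}) and (\ref{thm-a-b-relation}) from $V$ to $W$, obtaining (\ref{thm-a-a-relation-module}) and (\ref{thm-a-b-relation-module}). Setting $c=a_0 b$, the computation in the proof of Proposition \ref{Serre-relation-equivalence-va} shows that Serre on $V$ (under the commutations) is equivalent to
\begin{equation*}
Y(a,z)\,Y(c,w)=\left(\frac{w-z-3\hbar}{w-z-\hbar}\right)Y(c,w)\,Y(a,z),
\end{equation*}
which, after multiplying by $w-z-\hbar$, becomes the polynomial relation
\begin{equation*}
(z-w+\hbar)\,Y(a,z)\,Y(c,w)=(z-w+3\hbar)\,Y(c,w)\,Y(a,z);
\end{equation*}
moreover Serre forces $\Sing_z Y(a,z)c=0$, in particular $a_0 c=0$. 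A second application of Lemma \ref{pq-va-module-relation} transfers the displayed polynomial relation to $W$. By Lemma \ref{a-b-abstract} this automatically makes $(Y_W(a,x),Y_W(c,x))$ $\hbar$-adically compatible, so Lemma \ref{module-rep} gives $Y_W(a,x)_0Y_W(c,x)=Y_W(a_0c,x)=0$. Feeding these two facts into Lemma \ref{a-b-abstract} (with $\mu=1$, $\nu=3$) yields
\begin{equation*}
Y_W(a,z)\,Y_W(c,w)=\left(\frac{w-z-3\hbar}{w-z-\hbar}\right)Y_W(c,w)\,Y_W(a,z),
\end{equation*}
which, together with $Y_W(c,w)=Y_W(a,w)_0 Y_W(b,w)$ (Lemma \ref{module-rep} again), is precisely the equivalent condition for Serre on $W$ given by Proposition \ref{Serre-relation-equivalence}.

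For the converse, assume $(W,Y_W)$ is faithful and that the conclusion holds on $W$. The commutations pull back to $V$ by the converse part of Lemma \ref{pq-va-module-relation}. Proposition \ref{Serre-relation-equivalence}, applied to $Y_W(a,x),Y_W(b,x)\in \E_\hbar(W)$, converts Serre on $W$ into the ratio relation, which after multiplying by $w-z-\hbar$ gives a polynomial relation that pulls back to $V$ via Lemma \ref{pq-va-module-relation}. In addition, the proof of Proposition \ref{Serre-relation-equivalence} yields $Y_W(a,w)_0Y_W(a,w)_0Y_W(b,w)=0$, which equals $Y_W(a_0c,w)$ by Lemma \ref{module-rep}; faithfulness then forces $a_0c=0$ in $V$. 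A final application of Lemma \ref{a-b-abstract} on $V$, with these two ingredients, produces the ratio form on $V$, hence Serre on $V$ by Proposition \ref{Serre-relation-equivalence-va}.

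The main obstacle is the mismatch between the polynomial form of the $a$--$c$ commutation, which transfers cleanly via Lemma \ref{pq-va-module-relation}, and the rational form $\frac{w-z-3\hbar}{w-z-\hbar}$ required by the characterization in Proposition \ref{Serre-relation-equivalence}. The bridge between them is exactly the scalar identity $a_0(a_0 b)=0$ supplied by the singular-part analysis in Lemma \ref{a-b-abstract}; recognising this identity as the common ``internal'' datum that travels between $V$ and $W$ via compatibility (Lemma \ref{module-rep}) and faithfulness is the crux of the argument.
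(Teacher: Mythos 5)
Your argument is correct, but it takes a longer route than the paper's.  The paper also begins by invoking Proposition~\ref{Serre-relation-equivalence} on both $V$ and $W$ to replace the Serre relation by the single rational commutation relation
\[
Y(a,z)\,Y(a_0b,w)=Y(a_0b,w)\,Y(a,z)\left(\frac{w-z-3\hbar}{w-z-\hbar}\right),
\]
and then transfers \emph{this entire relation at once} (and pulls it back under faithfulness) by a single application of Proposition~\ref{prop-2.25}, taking all $c^{(n)}=0$; the two polynomial commutations are transferred in the same way.  You instead take apart the rational relation into the polynomial commutation $(z-w+\hbar)Y(a,z)Y(c,w)=(z-w+3\hbar)Y(c,w)Y(a,z)$ together with the internal vanishing $a_0(a_0b)=0$, move each piece separately (the former via Lemma~\ref{pq-va-module-relation}, the latter via compatibility from Lemma~\ref{a-b-abstract} and then Lemma~\ref{module-rep}), and reassemble them into the rational form on $W$ through Lemma~\ref{a-b-abstract} again.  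Both routes are sound.  The paper's version is more economical because Proposition~\ref{prop-2.25} is precisely the ``commutator formula transfer'' statement, so no decomposition is needed; your version is more explicit about the two separable data --- the polynomial commutation and the scalar identity $a_0(a_0b)=0$ --- that Lemma~\ref{a-b-abstract} hides inside the rational form, which is a useful clarification but costs a few extra lemma invocations.
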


\begin{proof} In view of Theorem \ref{prop:Serre},
under the assumption (\ref{thm-a-a-relation}) and (\ref{thm-a-b-relation}),
the Serre-like relation  for $(Y(a,z), Y(b,z),\nu+1)$ holds if and only if
\begin{align}
 a_0^{\nu+1}b=0.
\end{align}
For the same reason, under the assumption that (\ref{thm-a-a-relation-module}) and (\ref{thm-a-b-relation-module}) hold,
the Serre-like relation for $(Y_W(a,z), Y_W(b,z),\nu+1)$  holds if and only if
\begin{align}
 0=Y_W(a,z)_0^{\nu+1}Y_W(b,z)=Y_W(a_0^{\nu+1}b,z),
\end{align}
where the last equation follows from Lemma \ref{module-rep}.
Now it follows.
\end{proof}

As an immediate consequence of Proposition \ref{Serre-relation-abstract} we have:

\begin{coro}\label{Sere-relation}
Let $W$ be a topologically free $\C[[\hbar]]$-module and let $a(x),b(x)\in U$,
where $U$ is an $\hbar$-adically compatible subset of $\E_{\hbar}(W)$, such that
\begin{eqnarray}
&&(x_1-x_2-2\mu\hbar)a(x_1)a(x_2)=(x_1-x_2+2\mu\hbar)a(x_2)a(x_1),\\
&&(x_1-x_2+\mu \nu\hbar)a(x_1)b(x_2)=(x_1-x_2-\mu\nu\hbar)b(x_2)a(x_1)
\end{eqnarray}
for some $\mu\in\C^\times$, $\nu\in \N$.
Assume that the Serre-like relation for $(a(z),b(z),\nu+1)$ holds.
Then
\begin{eqnarray*}
&&(z_1-z_2-2\mu\hbar)Y_{\E}(a(x),z_1)Y_{\E}(a(x),z_2)=(z_1-z_2+2\mu\hbar)Y_{\E}(a(x),z_2)Y_{\E}(a(x),z_1),\\
&&(z_1-z_2+\mu\nu\hbar)Y_{\E}(a(x),z_1)Y_{\E}(b(x),z_2)=(z_1-z_2-\mu\nu\hbar)Y_{\E}(b(x),z_2)Y_{\E}(a(x),z_1)
\end{eqnarray*}
on $\<U\>$, and the Serre-like relation holds for $(Y_{\E}(a(x),z), Y_{\E}(b(x),z),\nu+1)$.
\end{coro}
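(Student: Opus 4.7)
The plan is to exploit the fact that $\<U\>$ is itself an $\hbar$-adic nonlocal vertex algebra on which $W$ is a faithful module, and then pull back the hypotheses from $W$ to $\<U\>$ by invoking the converse direction of Proposition~\ref{Serre-relation-abstract}. No new technical machinery beyond what has already been developed in this section is needed.

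First, Theorem~\ref{thm:h-adic-va-abs-construct} equips $\<U\>$ with the structure of an $\hbar$-adic nonlocal vertex algebra and identifies $W$ as a \emph{faithful} $\<U\>$-module under the rule $Y_W(c(x),z)=c(z)$ for $c(x)\in \<U\>$. Since $U\subset \<U\>$, the fields $a(x),b(x)$ are vectors of $\<U\>$, and through the identifications $Y_W(a(x),z)=a(z)$ and $Y_W(b(x),z)=b(z)$, the three hypotheses of the corollary---namely the two polynomial commutation relations and the order-$2$ Serre relation for $(a(z_1),a(z_2),b(w))$---become precisely the corresponding assertions for $Y_W(a(x),\cdot)$ and $Y_W(b(x),\cdot)$ on the faithful $\<U\>$-module $W$.

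I would then apply the converse direction of Proposition~\ref{Serre-relation-abstract}, with $V=\<U\>$, $a:=a(x)$, $b:=b(x)$, and the faithful module $(W,Y_W)$ just obtained. That converse is stated in exactly the shape needed: a matching pair of commutation relations together with the order-$2$ Serre relation, when they hold on a faithful module, must hold on $V$ as well. Invoking it transfers, in one step, both the two commutation relations and the order-$2$ Serre relation from $W$ back to statements about $Y_\E(a(x),\cdot)$ and $Y_\E(b(x),\cdot)$ on $\<U\>$, which is exactly the desired conclusion.

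The main ``obstacle'' is purely bookkeeping---one must verify that the hypotheses of Proposition~\ref{Serre-relation-abstract} line up on the $W$-side with the assumptions of the corollary---but once the identification $Y_W(c(x),z)=c(z)$ from Theorem~\ref{thm:h-adic-va-abs-construct} is in hand, there is nothing further to check. In particular, the two commutation relations on $\<U\>$ need not be re-derived separately (e.g.\ via Lemma~\ref{lem-fab=gba}), since Proposition~\ref{Serre-relation-abstract} packages all three statements together; if one preferred a more modular approach, one could instead first deduce the two commutation relations on $\<U\>$ from Lemma~\ref{lem-fab=gba} and then invoke only the Serre part of Proposition~\ref{Serre-relation-abstract}.
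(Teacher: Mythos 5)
Your proposal is correct and follows exactly the route the paper intends: the paper introduces the corollary with the phrase ``As an immediate consequence of Proposition~\ref{Serre-relation-abstract} we have,'' and your argument spells out precisely why it is immediate—view $a(x),b(x)$ as elements of the $\hbar$-adic nonlocal vertex algebra $\<U\>$, observe that $W$ is a faithful $\<U\>$-module via $Y_W(c(x),z)=c(z)$ so that the hypotheses are exactly the module-side relations, and then invoke the converse direction of Proposition~\ref{Serre-relation-abstract}. Your closing remark about the alternative modular route (commutation relations first via Lemma~\ref{lem-fab=gba}, then the Serre part alone) is also sound, but the one-shot invocation you give is the cleaner reading of the paper's ``immediate consequence.''
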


Recall the following standard formal series convention (see \cite{FLM}):
\begin{align}
&\log (1+f(x))=\sum_{n\ge 1}(-1)^{n-1}\frac{1}{n}f(x)^n\in xK[[x]],\\
&e^{f(x)}=\sum_{n\ge 0}\frac{1}{n!}f(x)^n\in K[[x]]
\end{align}
for $f(x)\in xK[[x]]$, where $K$ is any associative algebra with identity over $\C$.

 For $f(x,z)\in \C(x,z)$, as a convention we write
\begin{align}
f(x,\hbar):=\(\iota_{x,z}f(x,z)\)|_{z=\hbar}\in \C((x))((\hbar)).
\end{align}
Let $p(x,z),q(x,z)\in \C[x,z]$ with $p(x,0)=q(x,0)\ne 0$.
Then  $\frac{p(x,\hbar)}{q(x,\hbar)}=1+hf(x,\hbar)$ for some $f(x,\hbar)\in \C((x))[[\hbar]]$,
and hence
\begin{align}
\log \left(\frac{p(x,\hbar)}{q(x,\hbar)}\right)\in \hbar \C((x))[[\hbar]].
\end{align}
In particular, for $\mu,\nu\in \hbar \C[[\hbar]],\ a\in K$, we have
\begin{align}
\log \(\frac{x+\mu}{x+\nu}\)&=\log (1+\mu x^{-1})-\log (1+\nu x^{-1}) 
=(e^{\mu\partial_x}-e^{\nu\partial_x})\log x,
\end{align}
\begin{align}
\(\frac{x+\mu}{x+\nu}\)^{a}=\exp \!\(\!a \log \(\frac{x+\mu}{x+\nu}\)\! \)\!=\exp \(a (e^{\mu \partial_x}-e^{\nu \partial_x}) \log x\)\!.
\end{align}

Next, we present a vertex-operator iterate formula. First, we establish the following technical result (cf. \cite{JKLT-22}):

\begin{lem}\label{lem:exp-cal}
Let $W$ be a topologically free $\C[[\hbar]]$-module and let
\begin{align*}
  \al(x)\in \Hom (W,W\wh\ot \C[x,x\inv][[\hbar]]),\ \
  \beta(x)\in(\End W)[[x]],\ \  \gamma(x)\in \C((x))[[\hbar]],
\end{align*}
satisfying the condition
\begin{align}
  [\al(x_1),\al(x_2)]=0=[\beta(x_1),\beta(x_2)],\quad
  [\al(x_1),\beta(x_2)]=\gamma(x_1-x_2).\label{eq:exp-cal-cond2}
\end{align}
Then $U:=\{ \al(x), \beta(x), 1_W\}$ is an $\SY$-local subset of $\E_{\hbar}(W)$.
Furthermore, assume $\al(x),\be(x)\in \hbar (\End W)[[x,x^{-1}]]$ and
set $E_{\gamma}=\Res_zz\inv \gamma(z)\in \hbar^2\C[[\hbar]]$.
Then
\begin{align}
 \exp\(\(\al(x)+\beta(x)\)_{-1}\)1_W=\exp\(\frac{1}{2}E_\gamma\)\exp \al(x)\exp \be(x).
\end{align}
\end{lem}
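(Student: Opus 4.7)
For the $\SY$-locality claim, I would verify the condition pair by pair on $U=\{\alpha(x),\beta(x),1_W\}$. Pairs involving $1_W$ and the pairs $(\alpha,\alpha),(\beta,\beta)$ are trivial because the relevant commutators vanish. For the only nontrivial pair $(\alpha,\beta)$, the hypothesis $[\alpha(x_1),\beta(x_2)]=\gamma(x_1-x_2)$ rearranges as
\[
\alpha(x_1)\beta(x_2)=\beta(x_2)\alpha(x_1)+\gamma(x_1-x_2)\,1_W(x_2)\,1_W(x_1),
\]
which exhibits the left-hand side in the form $Z_W(x_2,x_1)(A(z))$ with $A(z)=\beta\otimes\alpha\otimes 1+1_W\otimes 1_W\otimes\gamma(-z)\in(\C U\otimes\C U\otimes\C((z)))[[\hbar]]$. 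Clearing the poles of $\iota_{x_1,x_2}\gamma(x_1-x_2)$ modulo each $\hbar^n$ by a polynomial $p(x_1-x_2,\hbar)$ and applying Lemma~\ref{new-added} gives the $\SY$-locality of $U$.

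For the exponential identity, my strategy is to first prove, by induction on $n$, that
\[
c(x)_{-1}^{n}\,1_W=c(x)^{n},
\]
where $c(x)=\alpha(x)+\beta(x)$ and the right-hand side is the $n$-fold operator composition (which lies in $\E_\hbar(W)$ by virtue of the $\hbar$-adic smallness of $\alpha,\beta$ and the action structure assumed in the hypothesis). The cases $n\le 1$ are immediate. For the inductive step, the commutator $[c(x_1),c(x_2)]=\gamma(x_1-x_2)-\gamma(x_2-x_1)$ is the antisymmetric part of $\gamma$ and is regular at $x_1=x_2$, so $c(x+z)\cdot c(x)^{n}\,w$ has no $z$-pole. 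Lemma~\ref{iterate-module} then yields
\[
c(x)_{-1}\bigl(c(x)^{n}\bigr)=\bigl(c(x+z)\cdot c(x)^{n}\bigr)\big|_{z=0}=c(x)^{n+1}.
\]
Summing over $n$ gives $\exp(c(x)_{-1})\,1_W=\exp(c(x))$.

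Next I would invoke the Baker--Campbell--Hausdorff identity. Extracting the $z^0$-coefficient from $\alpha(x+z)\beta(x)-\beta(x)\alpha(x+z)=\gamma(z)$ yields the same-point commutator $[\alpha(x),\beta(x)]=\gamma_0=E_\gamma$, which is a central scalar. Since $\alpha(x)$ and $\beta(x)$ each commute with themselves and their commutator is central, the standard BCH identity produces
\[
\exp(\alpha(x)+\beta(x))=\exp\!\Bigl(\tfrac{1}{2}E_\gamma\Bigr)\exp(\alpha(x))\exp(\beta(x)),
\]
which combined with the induction above yields the claim.

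The main obstacle is the careful $\hbar$-adic bookkeeping. One must verify that the operator products such as $\alpha(x)\beta(x)$, the exponentials $\exp(\alpha(x))$ and $\exp(\beta(x))$, and their composition all land in $\E_\hbar(W)$; that the commutator decomposition forces $\gamma$ to be regular at $0$ so that the constant term $E_\gamma=\gamma_0$ genuinely coincides with the same-point commutator; and that $c(x+z)c(x)^{n}\,w$ is free of $z=0$ poles so that the evaluation at $z=0$ in the inductive step is legitimate in the relevant completed tensor-product spaces.
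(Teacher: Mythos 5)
Your $\SY$-locality part is fine. The exponential identity, however, takes a genuinely different route from the paper — same-point BCH on $c(x)=\alpha(x)+\beta(x)$ combined with an induction $c(x)_{-1}^n\mathbf{1}_W=c(x)^n$ — and this route has a real gap that the paper's approach is specifically designed to avoid.

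The paper applies BCH to the modes $\alpha(x)_{-1}$ and $\beta(x)_{-1}$ (whose commutator $E_\gamma$ is cleanly obtained from the $Y_\E$-commutator formula via Lemma~\ref{lem-a-b-C-D}), and then evaluates $\exp(\beta(x)_{-1})\exp(\alpha(x)_{-1})\mathbf{1}_W$ one factor at a time: first $\alpha(x)_{-1}\alpha(x)^n=\alpha(x)^{n+1}$, which is legitimate because $\alpha(x_1)\alpha(x_2)^n\in\E_\hbar^{(2)}(W)$ (so pole order $k=0$ works); then $\beta(x)_{-1}u(x)=\beta(x)u(x)$ for $u(x)\in\langle U\rangle$, legitimate because $\beta(x_1)u(x_2)\in\E_\hbar^{(2)}(W)$ ($\beta$ is regular in its variable). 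The result is $\exp(\tfrac12 E_\gamma)\exp\beta(x)\exp\alpha(x)$; this normal-ordered form is what the proof produces and what gets used in Proposition~\ref{Y-W-E(a,z)} — the displayed $\exp\alpha(x)\exp\beta(x)$ in the lemma statement is a typo, and your attempt to derive that form has the BCH order/sign off.

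Your inductive step — ``$c(x+z)c(x)^n w$ has no $z$-pole'' — fails because the antisymmetrized $[c(x_1),c(x_2)]=\gamma(x_1-x_2)-\gamma(x_2-x_1)$ is \emph{not} regular at $x_1=x_2$; if $\gamma$ has a simple pole this is a nonzero multiple of $x_1^{-1}\delta(x_2/x_1)$, a formal delta distribution. Concretely, terms such as $\alpha(x_1)\beta(x_2)\cdots$ in $c(x_1)c(x_2)^n$ need not lie in $\E_\hbar^{(2)}(W)$: rewriting $\alpha(x_1)\beta(x_2)=\beta(x_2)\alpha(x_1)+\iota_{x_1,x_2}\gamma(x_1-x_2)$, the second summand has unbounded negative $x_1$-degrees mod $\hbar^m$ whenever $\gamma$ has a pole. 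So $(c(x),c(x)^n)$ requires pole order $k>0$, and $c(x)_{-1}c(x)^n$ is then the normal-ordered, not the naive, product. Likewise the same-point commutator $[\alpha(x),\beta(x)]$ and indeed the operator product $\alpha(x)\beta(x)$ need not make sense. You do flag that your argument needs $\gamma$ regular at $0$, but that is not a hypothesis of the lemma — and in the application of this lemma in Proposition~\ref{Y-W-E(a,z)}, $\gamma(x)\in x^{-1}\C[x^{-1}][[\hbar]]$ consists \emph{only} of negative powers, so the very case your argument excludes is the one needed. Re-doing the BCH step at the mode level and evaluating $\exp\alpha$ before $\exp\beta$ (normal ordering) is not a cosmetic choice; it is what makes every intermediate object lie in $\E_\hbar^{(2)}(W)$.
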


\begin{proof} From \eqref{eq:exp-cal-cond2} it can be readily seen that $U$ is $\SY$-local.
Then we have an $\hbar$-adic weak quantum vertex algebra $\<U\>$.
With \eqref{eq:exp-cal-cond2}, by Lemma \ref{lem-a-b-C-D}  we get
\begin{align*}
  [Y_\E(\al(x),x_1),Y_\E(\beta(x),x_2)]=\gamma(x_1-x_2),
\end{align*}
which in particular implies
\begin{eqnarray*}
  [\al(x)_{-1},\beta(x)_{-1}]=E_{\gamma}.
\end{eqnarray*}
Then by the Baker-Campbell-Hausdorff formula  we have
\begin{eqnarray}\label{BCH-exp}
  \exp\((\al(x)+\beta(x))_{-1}\)
  =\exp\(\frac{1}{2} E_{\gamma}\)\exp\(\beta(x)_{-1}\)\exp\(\al(x)_{-1}\)\!.
  \end{eqnarray}
On the other hand, notice that
\begin{eqnarray*}
  \al(x_1)\al(x_2)^n,\,\beta(x_1)u(x_2)\in \E_\hbar^{(2)}(W)\quad\te{for }n\in\N,\,\,u(x)\in \<U\>.
\end{eqnarray*}
Then from the definition of $Y_\E$ we get
\begin{eqnarray*}
  &&\al(x)_{-1}\al(x)^n=\al(x)^{n+1}\quad \te{for }n\in\N,\label{eq:exp-cal-temp2}\\
  &&\beta(x)_{-1}u(x)=\beta(x)u(x)\quad\te{for }u(x)\in \<U\>.\label{eq:exp-cal-temp3}
\end{eqnarray*}
Using (\ref{BCH-exp}) and these relations we obtain
\begin{align*}
  &\exp\((\al(x)+\beta(x))_{-1}\)1_W\\
  =&\exp\(\half E_\gamma\)\exp\(\beta(x)_{-1}\)\exp\(\al(x)_{-1}\)1_W\\
  =&\exp\(\half E_\gamma\)\exp\(\beta(x)_{-1}\)\exp\(\al(x)\)\\
  =&\exp\(\half E_\gamma\)\exp\(\beta(x)\)\exp\(\al(x)\)\!,
\end{align*}
as desired.
\end{proof}

Introduce a formal power series
\begin{align}\label{L-def}
L(x)=\sum_{n\ge 1}\frac{1}{n!}x^{n-1}=\frac{e^{x}-1}{x}\in \C[[x]].
\end{align}
For $v\in V$ (an $\hbar$-adic nonlocal vertex algebra),
denote by $Y^{+}(v,x)$ and $Y^{-}(v,x)$ the regular and singular parts of $Y(v,x)$, respectively.
The following is the vertex-operator iterate formula that we need:

\begin{prop}\label{Y-W-E(a,z)}
Let $V$ be an $\hbar$-adic nonlocal vertex algebra, $(W,Y_W)$ a $V$-module.
Assume $a\in V$ such that
\begin{eqnarray}\label{a-conditions}
[Y^{\pm}(a,x_1),Y^{\pm}(a,x_2)]=0, \quad
[Y^{-}(a,x_1),Y^{+}(a,x_2)]=\gamma(x_1-x_2),
\end{eqnarray}
where $\gamma(x)\in x^{-1} \C[x^{-1}][[\hbar]]$.  Let $z\in \hbar \C[[\hbar]]$ and set
$$E^{\pm }(a,z)=\exp \(\sum_{n\in \pm \Z_{+}}\frac{1}{n}a_{n}z^{-n} \)$$
on $V$ and $W$, where $Y(a,x)=\sum_{n\in \Z}a_nx^{-n-1}$. Then
\begin{align}
&Y_W(E^{-}(-a,z){\bf 1},x)=\exp \(zL(z\partial_x)Y_W^{+}(a,x)\)\exp \(zL(z\partial_x)Y_W^{-}(a,x)\)\\
&\quad = \(1+\frac{z}{x}\)^{a_0}E^{-}(-a,x+z)E^{-}(a,x)E^{+}(-a,x+z)E^{+}(a,x). \nonumber
\end{align}
\end{prop}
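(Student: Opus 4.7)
The statement comprises two equalities, which I will prove in turn. The second (algebraic) equality is a direct computation with commuting exponentials, while the first identifies this operator product with the vertex operator $Y_W(E^-(-a,z){\bf 1}, x)$.

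For the second equality, the hypotheses $[Y^\pm(a,x_1), Y^\pm(a,x_2)] = 0$ give $[a_{-m}, a_{-n}] = 0$ for $m, n \ge 1$ and $[a_n, a_m] = 0$ for $n, m \ge 0$, so each exponential is a well-defined operator-valued series (convergence is ensured $\hbar$-adically by $z \in \hbar\C[[\hbar]]$). Computing termwise via $zL(z\partial_x) F(x) = \sum_{n\ge 1}\frac{z^n}{n!}\partial_x^{n-1}F(x)$ and the antiderivative identities $\int s^k\,ds = \frac{s^{k+1}}{k+1}$ (with $\log s$ for $k=-1$), one obtains
\[
zL(z\partial_x) Y_W^+(a,x) = \sum_{m\ge 1}\tfrac{a_{-m}}{m}\bigl((x+z)^m-x^m\bigr)
\]
and the analogous formula for $Y_W^-$, the latter producing an $a_0 \log(1+z/x)$ term from the $a_0 x^{-1}$ summand plus singular contributions $\tfrac{a_n}{n}(x^{-n}-(x+z)^{-n})$ for $n\ge 1$. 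Exponentiating and splitting each commuting block of modes yields $E^-(-a,x+z) E^-(a,x)$ and $(1+z/x)^{a_0} E^+(-a,x+z) E^+(a,x)$ respectively, from which the displayed product is assembled.

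For the first equality, my plan is to work on the vertex algebra side (temporarily taking $W = V$) and then transfer to the general module via Propositions \ref{va-S-locality-module} and \ref{prop-2.25}. Using $a_{-m}{\bf 1} = \tfrac{\D^{m-1}}{(m-1)!}a$ together with commutativity of the $a_{-m}$, the vector $E^-(-a,z){\bf 1}$ may be written as $\exp\bigl((zL(z\D)a)_{-1}\bigr){\bf 1}$, i.e., as the $\E_\hbar$-exponential of the single vector $zL(z\D)a$ applied to the vacuum. I would then invoke Lemma \ref{lem:exp-cal} (in the form adapted to the present setting) with $\alpha(x) = Y^+(a,x)$ and $\beta(x) = Y^-(a,x)$: the required pairwise commutativity $[\alpha(x_1),\alpha(x_2)] = [\beta(x_1),\beta(x_2)] = 0$ and centrality of $[\beta(x_1),\alpha(x_2)] = \gamma(x_1-x_2)$ come directly from the hypotheses. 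The lemma produces the desired exponential factorization on $V$, and the module version then follows because the entire identity is expressed in modes of $Y_W(a,\cdot)$ whose commutation relations are inherited from $V$.

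The main obstacle I anticipate is that Lemma \ref{lem:exp-cal} as stated requires $\alpha(x) \in \Hom(W, W\wh\ot \C[x,x^{-1}][[\hbar]])$ (Laurent polynomials of bounded degree), whereas $Y^+(a,x)$ is a genuine power series in $x$ of unbounded degree. I expect to address this by reducing modulo $\hbar^n$ at each stage---where only finitely many modes contribute, thanks to the $\hbar$-adic convergence coming from $z \in \hbar\C[[\hbar]]$---applying the lemma in that finitary setting, and then passing to the $\hbar$-adic limit via the topological completeness of $\E_\hbar(W)$. A secondary issue is correctly ordering $(1+z/x)^{a_0}$ relative to the $E^-$ block in the final product, which requires tracking the commutator $[a_0,a_{-m}]$ determined by the "constant term" data of $\gamma$.
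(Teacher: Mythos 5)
The plan of identifying $v = zL(z\mathcal{D})a$ and invoking Lemma~\ref{lem:exp-cal} is the right one — it is exactly what the paper does — but your assignment of $\alpha$ and $\beta$ is backwards, and this is not a minor bookkeeping issue: it is what makes the ``obstacle'' you identify appear, and the workaround you propose for it does not work.

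Lemma~\ref{lem:exp-cal} computes $\exp\((\alpha(x)+\beta(x))_{-1}\)1_W$, so $\alpha(x)+\beta(x)$ must equal $Y_W(v,x)$, not $Y_W(a,x)$. Taking the pieces of $Y(a,x)$ already puts you on the wrong track: you would be exponentiating the wrong $(-1)$-mode. More importantly, the lemma requires $\alpha(x)\in\Hom(W,W\,\widehat{\otimes}\,\C[x,x^{-1}][[\hbar]])$ (i.e. a Laurent polynomial modulo each $\hbar^n$) and $\beta(x)\in(\End W)[[x]]$, and also $\alpha(x),\beta(x)\in\hbar(\End W)[[x,x^{-1}]]$. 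The \emph{singular} part $Y_W^-(v,x)$ satisfies the first condition automatically, because $Y_W(v,x)w\in W_\hbar((x))$ by the module restriction, and the \emph{regular} part $Y_W^+(v,x)$ obviously lies in $(\End W)[[x]]$; both lie in $\hbar(\End W)[[x,x^{-1}]]$ since $v\in\hbar V$. So $\alpha(x)=Y_W^-(v,x)$ and $\beta(x)=Y_W^+(v,x)$ is the correct choice, and once it is made, no $\hbar^n$-reduction is needed. By contrast, you set $\alpha(x)=Y^+(a,x)$, which is the regular part; it is a genuine power series in $x$, and reducing modulo $\hbar^n$ does not make it a Laurent polynomial — the unbounded $x$-degree is not an $\hbar$-adic issue, so the proposed workaround does not address it. The same mismatch affects $\beta$: $Y^-(a,x)$ has unbounded negative degree in $x$ and does not lie in $(\End W)[[x]]$. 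You have in effect transposed which part of the field goes where, which both creates the apparent obstacle and dooms the fix.

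A secondary point: your plan of proving the identity first on $V$ and then transferring to $W$ via Propositions~\ref{va-S-locality-module} and \ref{prop-2.25} is not the mechanism the argument needs. Those propositions handle $\SY$-locality and $\SY$-commutator relations; the identity here equates a vertex operator of a single state with an infinite normal-ordered exponential in the modes of $Y_W(a,\cdot)$, which is a different kind of relation. What the paper actually uses is that $Y_W(\cdot,x)$ is a nonlocal vertex algebra homomorphism (so that $Y_W(\exp(v_{-1}){\bf 1},x)=\exp(Y_W(v,x)_{-1})1_W$) and that the commutation relations from~(\ref{a-conditions}) transfer to $W$ via Lemma~\ref{Y-V-W-a-b-relation}; this lets one run Lemma~\ref{lem:exp-cal} directly inside $\E_\hbar(W)$ with the (correctly chosen) pieces of $Y_W(v,x)$, which is cleaner and avoids any transfer step.
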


\begin{proof} Set $v=\sum_{n\ge 1}\frac{1}{n}z^na_{-n}{\bf 1}\in \hbar V$. We have
\begin{align}
v=\sum_{n\ge 1}\frac{1}{n!}z^{n}\D^{n-1}a\quad \text{ and }\quad v_{-1}=\sum_{n\ge 1}\frac{1}{n}z^na_{-n}.
\end{align}
Then $E^{-}(-a,z)=\exp v_{-1}$. On the other hand, as
\begin{eqnarray*}
Y_W(v,x)=\sum_{n\ge 1}z^{n}\frac{1}{n!}Y_W(\D^{n-1}a,x)=\sum_{n\ge 1}z^{n}\frac{1}{n!}\(\frac{d}{dx}\)^{n-1}Y_W(a,x),
\end{eqnarray*}
we get
\begin{eqnarray}\label{YW-v-pm}
Y_W^{\pm}(v,x)=\sum_{n\ge 1}z^{n}\frac{1}{n!}\(\frac{d}{dx}\)^{n-1}Y_W^{\pm}(a,x)
=zL(z\partial_x)Y_W^{\pm}(a,x).
\end{eqnarray}

Note that the commutation relations in (\ref{a-conditions}) amount to
$$[Y(a,x_1),Y(a,x_2)]=\gamma(x_1-x_2)-\gamma(x_2-x_1).$$
In view of Lemma \ref{Y-V-W-a-b-relation}, we have
$$[Y_W(a,x_1),Y_W(a,x_2)]=\gamma(x_1-x_2)-\gamma(x_2-x_1),$$
which amounts to
\begin{eqnarray}
[Y_W^{\pm}(a,x_1),Y^{\pm}_W(a,x_2)]=0,\quad
[Y_W^{-}(a,x_1),Y_W^{+}(a,x_2)]=\gamma(x_1-x_2)
\end{eqnarray}
as $\gamma(x)\in x^{-1} \C[x^{-1}][[\hbar]]$.
Then using (\ref{YW-v-pm}) we get
\begin{align}
[Y_W^{-}(v,x_1),Y_W^{+}(v,x_2)]
&\ = z^2 L(z\partial_{x_1})L(z\partial_{x_2})\gamma(x_1-x_2)\\
&\ =z^2 L(z\partial_{x_1})L(-z\partial_{x_1})\gamma(x_1-x_2)\nonumber\\
&\ =\tilde{\gamma}(x_1-x_2),\nonumber
\end{align}
where $\tilde{\gamma}(x)=z^2L(z\partial_x)L(-z\partial_x)\gamma(x)\in \hbar^2x^{-1} \C[x^{-1}][[\hbar]].$
Then using Lemma \ref{lem:exp-cal} with $\alpha(x)=Y_W^{-}(v,x)$, $\beta(x)=Y_W^{+}(v,x)$, and using the fact that
 $Y_W(\cdot,x)$ is a nonlocal vertex algebra homomorphism, we obtain
\begin{eqnarray*}
&&Y_W(E^{-}(-a,z){\bf 1},x)=Y_{W}( (\exp v_{-1}){\bf 1},x)
=\exp \(Y_W(v,x)_{-1}\)1_W\nonumber\\
&&=\exp (Y_W^{+}(v,x))\exp (Y_W^{-}(v,x))=\exp \(zL(z\partial_x)Y_W^{+}(a,x)\)\exp \(zL(z\partial_x)Y_W^{-}(a,x)\)\!,
\end{eqnarray*}
noticing that $E_{\tilde{\gamma}}:=z^2\Res_xx^{-1}L(z\partial_x)L(-z\partial_x)\gamma(x)=0$.
Furthermore, we have
\begin{align}
zL(z\partial_x)Y_W^{+}(a,x)=(e^{z\partial_x}-1)\sum_{n\ge 1}\frac{1}{n}a_{-n}x^n=\sum_{n\ge 1}\frac{1}{n}a_{-n} \((x+z)^{n}-x^{n}\)\!,
\end{align}
\begin{align}
zL(z\partial_x)Y_W^{-}(a,x)
&=(e^{z\partial_x}-1)\sum_{n\ge 1}(-1)\frac{1}{n}a_n x^{-n}+a_0(e^{z\partial_x}-1)\log x\nonumber\\
&=-\sum_{n\ge 1}\frac{1}{n}a_n \((x+z)^{-n}-x^{-n}\)+a_0\log \(1+\frac{z}{x}\)\!.\nonumber
\end{align}
Then it follows immediately.
\end{proof}

\section{Double Yangians and $\hbar$-adic quantum vertex algebras}\label{sec:DY}

In this section, for any symmetrizable generalized Cartan matrix (GCM) $A$, we introduce an algebra
$\wh{\mathcal{DY}}(A)$, generalizing centrally extended double Yangians. As the main results,
we give a new field presentation of $\wh{\mathcal{DY}}(A)$ and a (tautological) construction of
a  universal vacuum $\wh{\mathcal{DY}}(A)$-module $\mathcal{V}_A(\ell)$ of any level $\ell\in \C$, and we prove that
$\mathcal{V}_A(\ell)$ has a canonical $\hbar$-adic weak quantum vertex algebra structure
with any (other) restricted $\wh{\mathcal{DY}}(A)$-modules of level $\ell$ as modules.

\subsection{Centrally extended double Yangian}
Let $A=[a_{i,j}]_{i,j\in I}$ be a symmetrizable GCM, which is fixed throughout this section.
Then there are unique relatively prime positive integers $r_i$ ($i\in I$) such that $DA$ is symmetric with $D=\te{diag}\{r_i\}_{i\in I}$.

Now, we introduce the main object for this paper.

\begin{de}
Define $\wh{\mathcal{DY}}(A)$ to be
the topological associative unital algebra over $\C[[\hbar]]$, generated by
\begin{align}\label{eq:DY-gen-set}
\set{H_{i,n},\, X_{i,n}^\pm}{i\in I,\,n\in\Z}\cup \{\kappa\},
\end{align}
subject to a set of relations written in terms of generating functions:
\begin{align}
  &H_i^{+}(z)=1+2\hbar \sum_{n\ge 0}H_{i,n}z^{-n-1},\quad
   H_i^{-}(z)= 1+2\hbar \sum_{n\ge 1}H_{i,-n}z^{n-1},\\
 &\hspace{3.5cm} X_i^\pm(z)=\sum_{n\in\Z}X_{i,n}^\pm z^{-n-1}.
\end{align}
In addition to that $\kappa$ and $H_{i,0}$ for $i\in I$ are central,
the following are the defining relations with $i,j\in I$:
\begin{align*}
    &\te{(DY1)}\quad [H_i^\pm(z),H_j^\pm(w)]=0,\\
  &\te{(DY2)}\quad H_i^+(z)H_j^-(w)=H_j^-(w)H_i^+(z)
  \frac{(z-w-r_ia_{i,j}\hbar-\kappa\hbar)(z-w+r_ia_{i,j}\hbar+\kappa\hbar)}
    {(z-w+r_ia_{i,j}\hbar-\kappa\hbar)(z-w-r_ia_{i,j}\hbar+\kappa\hbar)},\\
  &\te{(DY3)}\quad H_i^+(z)X_j^\pm(w)=X_j^\pm(w)H_i^+(z)
    \(\frac{z-w+r_ia_{i,j}\hbar\pm\half \kappa\hbar}{z-w-r_ia_{i,j}\hbar\pm\half \kappa\hbar}\)^{\pm 1},\\
  &\te{(DY4)}\quad H_i^-(z)X_j^\pm(w)=X_j^\pm(w)H_i^-(z)
    \(\frac{w-z-r_ia_{j,i}\hbar\pm\half k\hbar}{w-z+r_ia_{j,i}\hbar\pm\half \kappa\hbar}\)^{\pm 1},\\
  &\te{(DY5)}\quad [X_i^+(z),X_j^-(w)]
    =\delta_{i,j}\frac{1}{2r_i\hbar}
    \Bigg(\!
        H_i^+(w+\half \kappa\hbar)z\inv\delta\!\(\frac{w+\kappa\hbar}{z}\)\\
  &\qquad\qquad\qquad\qquad
    -   H_i^-(w-\half \kappa\hbar)z\inv\delta\!\(\frac{w-\kappa\hbar}{z}\)\!
    \Bigg),\\
  &\te{(DY6)}\quad (z-w\mp r_ia_{i,j}\hbar)X_i^\pm(z)X_j^\pm(w)
    =(z-w\pm r_ia_{i,j}\hbar)X_j^\pm(w)X_i^\pm(z),\\
  &\te{(DY7)}\quad
 \sum_{\sigma\in S_{1-a_{i,j}}} \sum_{s=0}^{1-a_{i,j}}\binom{1-a_{i,j}}{s}(-1)^s
    X_i^\pm(z_{\sigma(1)})X_i^\pm(z_{\sigma(2)})\cdots X_i^\pm(z_{\sigma(s)}) X_j^\pm(w)\\
    &\qquad\qquad\times X_i^\pm(z_{\sigma(s+1)})\cdots X_i^\pm(z_{\sigma(1-a_{i,j})})=0
    \quad \te{if }a_{i,j}\le 0.
\end{align*}
\end{de}

Note that relation (DY1) is equivalent to that
\begin{align}
 [H_{i,m},H_{j,n}]=0\quad \te{ for }m,n\in \Z\te{ with either }m,n\ge 0\te{ or }m,n< 0.
\end{align}

\begin{rem}
{\em Note that if $A$ is of simply-laced finite type with $\g=\g(A)$,
this definition except (DY7) for $a_{i,j}=0$ is the same as in \cite[page 37]{DK} (after a parameter-change $\hbar\rightarrow 2i\hbar$).
On the other hand, if $\g=\mathfrak{sl}_{n+1}$,  $\wh{\mathcal{DY}}(A)$ is isomorphic to
the algebra $\mathcal {DY}_\hbar(\g)$ defined in \cite[Corollary 3.4]{Io}.}
\end{rem}

\begin{de}\label{restricted-module}
A $\wh{\mathcal{DY}}(A)$-module $W$ is called a {\em restricted module} if
$W$ is a topologically free $\C[[\hbar]]$-module such that for every $i\in I,\ w\in W$,
\begin{align}
\lim_{n\rightarrow \infty}H_{i,n}w=0=\lim_{n\rightarrow \infty}X_{i,n}^{\pm}w.
\end{align}
\end{de}

Note that $H_i^\pm(z),X_i^\pm(z)\in \E_\hbar(W)$  $(i\in I)$ for any restricted $\wh{\mathcal{DY}}(A)$-module $W$.

\begin{de}\label{def-A}
Denote by $\widetilde{\mathcal{A}}$ the associative algebra over $\C$, generated by elements
$$\tilde{\kappa},\ \tilde{H}_{i,n},\ \tilde{X}_{i,n}^{\pm}\ \  (\text{where }i\in I,\ n\in \Z),$$
subject to the relation (condition) that $\tilde{\kappa}$ and $\tilde{H}_{i,0}$ for $i\in I$ are central.
\end{de}

Consider the $\C[[\hbar]]$-algebra $\widetilde{\mathcal{A}}[[\hbar]]$ and
formulate generating functions $\tilde{H}_i^{\pm}(z),\ \tilde{X}_i^{\pm}(z)$ for $i\in I$ in the same way
as for $H_i^{\pm}(z),\ X_i^{\pm}(z)$.
Note that $\tilde{H}_i^{+}(z)$ and $\tilde{H}_i^{-}(z)$ are invertible elements of
$\widetilde{\mathcal{A}}[[z^{-1}]][[\hbar]]\ (=\widetilde{\mathcal{A}}[[\hbar]][[z^{-1}]])$
and $\widetilde{\mathcal{A}}[[z]][[\hbar]]\ (=\widetilde{\mathcal{A}}[[\hbar]][[z]])$, respectively.

A {\em restricted} $\widetilde{\mathcal{A}}[[\hbar]]$-module is defined to be an  $\widetilde{\mathcal{A}}[[\hbar]]$-module $W$
which is a topologically free  $\C[[\hbar]]$-module such that
\begin{align}
\lim_{n\rightarrow \infty}\tilde{H}_{i,n}w=0=\lim_{n\rightarrow \infty}\tilde{X}_{i,n}^{\pm}w
\quad \text{ for }i\in I,\ w\in W.
\end{align}

Set $q=e^{\hbar}\in \C[[\hbar]]$. Introduce the following power series in $x$ and $\hbar$:
\begin{align}
&G(x)=\frac{q^{x}-q^{-x}}{x}:=2\hbar\(1+\sum_{n\ge 1}\frac{\hbar^{2n}}{(2n+1)!}x^{2n}\)\in \hbar\C[[\hbar,x]],\\
&F(x)= \frac{1}{2\hbar}\(1+\sum_{n\ge 1}\frac{\hbar^{2n}}{(2n+1)!}x^{2n}\)^{-1}\in \hbar^{-1}\C[[x,\hbar]].
\end{align}
Then $G(-x)=G(x)$, $F(-x)=F(x)$,
\begin{align}
G(\partial_z)=\frac{q^{\partial_z}-q^{-\partial_z}}{ \partial_z},\ \ \
F(\partial_z)=\frac{\partial_{z}}{q^{\partial_z}-q^{-\partial_z}},\ \ \text{and }\  F(\partial_z)G(\partial_z)=1.
\end{align}
 For $m\in \Z$, in terms of $G(x)$ we have
\begin{align}\label{log-two-terms}
\log \left(\frac{x+m\hbar}{x-m\hbar}\right)=\(\frac{q^{m\partial_x}-q^{-m\partial_x}}{\partial_x}\)x^{-1}
= G(\partial_x) [m]_{q^{\partial_x}}x^{-1}.
\end{align}
Furthermore, we have
\begin{align}\label{log-four-terms}
&\log  \frac{(z-w-m\hbar-\kappa\hbar)(z-w+m\hbar+\kappa\hbar)}
    {(z-w+m\hbar-\kappa\hbar)(z-w-m\hbar+\kappa\hbar)}\\
=\ & -G(\partial_z) [m]_{q^{\partial_z}} (z-w-\kappa\hbar)^{-1}+G(\partial_z)[m]_{q^{\partial_z}} (z-w+\kappa\hbar)^{-1}
\nonumber\\
=\ & G(\partial_z)[m]_{q^{\partial_z}}\( (z-w+\kappa\hbar)^{-1}-(z-w-\kappa\hbar)^{-1}\)\nonumber\\
=\ &  -G(\partial_z)G(\partial_z)[m]_{q^{\partial_z}}[\kappa]_{q^{\partial_z}}(z-w)^{-2}.\nonumber
\end{align}

\begin{de}\label{new-operators}
Let $W$ be a restricted $\widetilde{\mathcal{A}}[[\hbar]]$-module. For $i\in I$, set
\begin{align}
  h_{i,\Y}^\pm(z) =\pm F(\partial_z) \log \tilde{H}_i^\pm\big(z\pm  \frac{1}{2}\tilde{\kappa}\hbar \big)\in \widetilde{\mathcal{A}}[[\hbar]][[z^{\mp 1}]].
    \label{eq:def-h-x-0}
  \end{align}
 Furthermore, set
  \begin{align}
  &h_{i,\Y}(z)=h_{i,\Y}^+(z)+h_{i,\Y}^-(z),\\
  &x_{i,\Y}^+(z)=\tilde{X}_i^+(z),\label{eq:def-h-x-1}\\
  &x_{i,\Y}^-(z)=\tilde{X}_i^-(z-\tilde{\kappa}\hbar)\tilde{H}_i^+\big(z-\frac{1}{2} \tilde{\kappa}\hbar\big)\inv.\label{eq:def-h-x-2}
\end{align}
\end{de}

Note that $h_{i,\Y}^+(z)$ and $h_{i,\Y}^-(z)$ coincide with the {\em singular and regular} parts of  $h_{i,\Y}(z)$
with respect to variable $z$, respectively. For any  restricted $\widetilde{\mathcal{A}}[[\hbar]]$-module $W$, we have
$$h_{i,\Y}(z),\  x_{i,\Y}^\pm(z)\in \E_\hbar(W)\quad \te{for }i\in I.$$

As the first main result of this section, we have:

\begin{thm}\label{main-DY}
Let $W$ be a restricted $\widetilde{\mathcal{A}}[[\hbar]]$-module.
For $i\in I$, set
\begin{align}
C_i(x)&=\tilde{H}^{-}_i(x-\frac{3}{2}\tilde{\kappa}\hbar)\tilde{H}_i^+(x-\frac{1}{2}\tilde{\kappa}\hbar)^{-1}\\
&=\exp\!\(-G(\partial_x)q^{-\tilde{\kappa}\partial_x}h_{i,\Y}^{-}(x)\)
\exp\!\(-G(\partial_x)q^{-\tilde{\kappa}\partial_x}h_{i,\Y}^{+}(x)\)\!,\nonumber
\end{align}
an element of $\E_{\hbar}(W)$. Then $W$ is a restricted $\wh{\mathcal{DY}}(A)$-module with
$$\kappa=\tilde{\kappa}, \ H_{i,n}=\tilde{H}_{i,n},\ X_{i,n}^{\pm}=\tilde{X}_{i,n}^{\pm} \quad \text{ for }i\in I$$
if and only if the following relations hold for $i,j\in I$:
\begin{align*}
&[h_{i,\Y}(z),h_{j,\Y}(w)]
    =[r_ia_{i,j}]_{q^{\partial_{w}}}
        [\kappa]_{q^{\partial_{w}}}\( (z-w+\hbar \kappa)^{-2}-(w-z+\hbar \kappa)^{-2}\)\!,\\
&[h_{i,\Y}(z),x_{j,\Y}^\pm(w)]
=\pm x_{j,\Y}^\pm(w)[r_ia_{i,j}]_{q^{\partial_{w}}}\((z-w+\hbar \kappa)^{-1}+(w-z+\hbar \kappa)^{-1}\)\!,\\
&(z-w-r_ia_{i,j}\hbar)x_{i,\Y}^\pm(z)x_{j,\Y}^\pm(w)=(z-w+r_ia_{i,j}\hbar)x_{j,\Y}^\pm(w)x_{i,\Y}^\pm(z),\\
&x_{i,\Y}^+(z)x_{j,\Y}^-(w)  - \( \frac{w-z+r_ia_{i,j}\hbar}{w-z-r_ia_{i,j}\hbar}\)x_{j,\Y}^-(w)x_{i,\Y}^+(z)\nonumber\\
&\quad  =\delta_{i,j}\frac{1}{2r_i\hbar}
\(\!z\inv\delta\!\(\frac{w}{z}\)-C_i(w)  z\inv\delta\!\(\frac{w-2\kappa\hbar}{z}\)\)\!,
\end{align*}
and the Serre-like relations for $(x_{i,\Y}^\pm(z), x_{j,\Y}^\pm(z),1-a_{i,j})$ hold if $a_{i,j}\le 0$.
\end{thm}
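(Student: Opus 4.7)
The plan is to exploit the fact that Definition \ref{new-operators} provides an \emph{invertible} change of generating fields. Indeed, $F(\partial_z)G(\partial_z)=1$ gives $\log\tilde{H}_i^\pm(z\pm\tfrac{1}{2}\tilde{\kappa}\hbar)=\pm G(\partial_z)h_{i,\Y}^\pm(z)$, and since $\tilde{H}_i^\pm$ are invertible in the $\hbar$-adic topology, exponentiation recovers $\tilde{H}_i^\pm$ from $h_{i,\Y}^\pm$; similarly $\tilde{X}_i^+(z)=x_{i,\Y}^+(z)$ and $\tilde{X}_i^-(z-\tilde{\kappa}\hbar)=x_{i,\Y}^-(z)\tilde{H}_i^+(z-\tfrac{1}{2}\tilde{\kappa}\hbar)$ recover $\tilde{X}_i^\pm$ from the new fields. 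Consequently both directions of the theorem will follow from verifying the ``only if'' direction, since each manipulation below is reversible.

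Taking logarithms of (DY1) yields $[h_{i,\Y}^+,h_{j,\Y}^+]=0=[h_{i,\Y}^-,h_{j,\Y}^-]$. Applying $F(\partial_z)F(\partial_w)$ to the logarithm of (DY2), after substituting $z\mapsto z+\tfrac{1}{2}\tilde{\kappa}\hbar$ and $w\mapsto w-\tfrac{1}{2}\tilde{\kappa}\hbar$, and using the identity (\ref{log-four-terms}) turns the RHS into the stated $[h_{i,\Y}(z),h_{j,\Y}(w)]$ formula. The same strategy applied to (DY3)--(DY4) with (\ref{log-two-terms}) delivers the $[h_{i,\Y},x_{j,\Y}^\pm]$ relations. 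The $x_{i,\Y}^\pm$--$x_{j,\Y}^\pm$ quadratic relations for $a_{i,j}\neq 0$ are immediate in the $+$ case (they \emph{are} (DY6)); for the $-$ case, I would combine (DY6) for $\tilde{X}^-$ with the scalar factors from (DY3) that arise when commuting $\tilde{H}_j^+(w-\tfrac{1}{2}\tilde{\kappa}\hbar)^{-1}$ past $\tilde{X}_i^-(z-\tilde{\kappa}\hbar)$ and vice versa, and check that the rational factors cancel to give the asserted form. The $a_{i,j}=0$ commutation follows from (DY7) and (DY3).

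The delicate part is the $x_{i,\Y}^+$--$x_{j,\Y}^-$ commutator and the $x_{i,\Y}^-$ Serre relations. For the former, I would unfold $x_{j,\Y}^-(w)=\tilde{X}_j^-(w-\tilde{\kappa}\hbar)\tilde{H}_j^+(w-\tfrac{1}{2}\tilde{\kappa}\hbar)^{-1}$, invoke (DY5) to compute $[\tilde{X}_i^+(z),\tilde{X}_j^-(w-\tilde{\kappa}\hbar)]$, and then move $\tilde{H}_j^+(w-\tfrac{1}{2}\tilde{\kappa}\hbar)^{-1}$ past $\tilde{X}_i^+(z)$ using (DY3); this produces exactly the rational factor $\frac{w-z+a_{i,j}\hbar}{w-z-a_{i,j}\hbar}$ in front of $x_{j,\Y}^-(w)x_{i,\Y}^+(z)$, while the two delta-function contributions from (DY5), after right-multiplication by $\tilde{H}_j^+(w-\tfrac{1}{2}\tilde{\kappa}\hbar)^{-1}$ and delta-function substitution, collapse to $z^{-1}\delta(w/z)$ and $C_i(w)\,z^{-1}\delta((w-2\tilde{\kappa}\hbar)/z)$ respectively. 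The Serre relation for $x_{i,\Y}^+$ coincides verbatim with (DY8). For $x_{i,\Y}^-$, the main obstacle is that the three $\tilde{H}_i^+$ factors attached to the three $x_{i,\Y}^-$ operators (two in the $a$-slots and one in the $b$-slot) do not commute with $\tilde{X}_j^-$; one must push them all to the right using (DY4), and then apply Proposition \ref{Serre-relation-equivalence} (or equivalently Theorem \ref{prop:Serre}) to verify that the accumulated rational coefficients leave the order-$2$ Serre combination intact, so that the Serre relation for $\tilde{X}^-$ from (DY8) gives exactly the desired Serre relation for $x_{i,\Y}^-$. This rational-function bookkeeping is where I expect the main technical effort to lie.
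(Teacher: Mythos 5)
Your proposal is essentially the paper's own proof: the reversibility remark, the logarithm/$F(\partial)$ manipulations for (DY1)--(DY4) using \eqref{log-two-terms} and \eqref{log-four-terms}, the factorization $x_{i,\Y}^-(z)=\bar X_i(z)K_i(z)$ with $\bar X_i(z)=\tilde X_i^-(z-\tilde\kappa\hbar)$ and $K_i(z)=\tilde H_i^+(z-\tfrac12\tilde\kappa\hbar)^{-1}$, the treatment of (DY5)--(DY7), and the passage to the $\Y$-Serre relation via Theorem \ref{prop:Serre} are all exactly what the paper does.

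Two small corrections to your wording. First, the $K$-factors are built from $\tilde H^+$, so commuting them past $\tilde X^-$ uses (DY3), not (DY4); (DY4) is only invoked in its logarithmic form to get the regular part of the $h$-$x$ commutator. Second, the fear that you must track the rational coefficients of the order-$2$ Serre combination after pushing all $K$'s to the right is unfounded if you lean on Theorem \ref{prop:Serre} as intended: the Serre relation is characterized there by a single vanishing condition $\nob{a(z_1)a(z_2)b(w)}|_{z_1=w+\hbar,z_2=w-\hbar}=0$, and multiplying the Jacobi-type expression by $z_1^{-1}\delta\!\big(\tfrac{w+\hbar}{z_1}\big)z_2^{-1}\delta\!\big(\tfrac{w-\hbar}{z_2}\big)$ and taking $\Res_{z_1}\Res_{z_2}$ shows (together with Lemma \ref{first} giving the $z_1\leftrightarrow z_2$ symmetry of $\nob{\bar X_i\bar X_i\bar X_j}$) that the two vanishing conditions for $\bar X$ and for $x_{\Y}^-$ differ only by right multiplication by the invertible operator $K_i(w-\hbar)K_i(w+\hbar)K_j(w)$. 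So there is no bookkeeping of rational factors inside the $S_2$-sum: the transfer is immediate from invertibility of the $K$'s.
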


\begin{proof} Note that the relation for $[h_{i,\Y}(z),h_{j,\Y}(w)]$ amounts to
\begin{align*}
&[h_{i,\Y}^+(z),h_{j,\Y}(w)]
    =[r_ia_{i,j}]_{q^{\partial_{w}}}  [\kappa]_{q^{\partial_{w}}}(z-w+\hbar \kappa)^{-2},\\
&[h_{i,\Y}^-(z),h_{j,\Y}(w)]
    =-[r_ia_{i,j}]_{q^{\partial_{w}}}  [\kappa]_{q^{\partial_{w}}}(w-z+\hbar \kappa)^{-2}.
\end{align*}
These relations are furthermore equivalent to
\begin{align}
&[h_{i,\Y}^{\pm}(z),h_{j,\Y}^{\pm}(w)]=0,\label{h-i-y-pm}\\
&[h_{i,\Y}^+(z),h_{j,\Y}^{-}(w)]
    =[r_ia_{i,j}]_{q^{\partial_{w}}}  [\kappa]_{q^{\partial_{w}}}(z-w+\hbar \kappa)^{-2}\label{6.32}
\end{align}
for all $i,j\in I$ (recall that $r_ia_{i,j}=r_ja_{j,i}$).

On the other hand, for $i,j\in I$, by (\ref{log-four-terms}) we have
\begin{align*}
&\log  \frac{(z-w-r_ia_{i,j}\hbar-\kappa\hbar)(z-w+r_ia_{i,j}\hbar+\kappa\hbar)}
    {(z-w+r_ia_{i,j}\hbar-\kappa\hbar)(z-w-r_ia_{i,j}\hbar+\kappa\hbar)}\nonumber\\
=&\  -G(\partial_z)G(\partial_w)[r_ia_{i,j}]_{q^{\partial_z}}[\kappa]_{q^{\partial_z}}(z-w)^{-2}.
\end{align*}
Then (DY2) amounts to
\begin{align*}
[\log H_i^{+}(z),\log H_j^{-}(w)]
=-G(\partial_z)G(\partial_w) [r_ia_{i,j}]_{q^{\partial_z}}[\kappa]_{q^{\partial_z}}(z-w)^{-2},
\end{align*}
 which is equivalent to
\begin{align*}
&[\log H_i^{+}(z+1/2\kappa\hbar),\log H_j^{-}(w-1/2\kappa\hbar)]\\
=\ &- G(\partial_z)G(\partial_w)[r_ia_{i,j}]_{q^{\partial_z}}[\kappa]_{q^{\partial_z}} (z-w+\kappa\hbar)^{-2}.\nonumber
\end{align*}
This is furthermore equivalent to
\begin{align*}
&-[F(\partial_z)\log H_i^{+}(z+1/2\kappa\hbar),F(\partial_w)\log H_j^{-}(w-1/2\kappa\hbar)]\nonumber\\
=& \  [r_ia_{i,j}]_{q^{\partial_w}}[\kappa]_{q^{\partial_w}}(z-w+\kappa\hbar)^{-2}.
\end{align*}
Thus, (\ref{6.32}) is equivalent to (DY2). It is clear that (\ref{h-i-y-pm}) is equivalent to (DY1).

 Recall
$$x_{i,\Y}^+(x)=X_i^+(x),\quad x_{i,\Y}^-(x)=X_i^-(x-\kappa\hbar)H_i^+(x-\frac{1}{2} \kappa\hbar)\inv.    $$
Rewrite (DY2) as
\begin{align*}
H_i^-(z)H_j^+(w)^{-1}=H_j^+(w)^{-1}H_i^-(z)
\frac{(w-z-r_ja_{j,i}\hbar-\kappa\hbar)(w-z+r_ja_{j,i}\hbar+\kappa\hbar)}
{(w-z+r_ja_{j,i}\hbar-\kappa\hbar)(w-z-r_ja_{j,i}\hbar+\kappa\hbar)},
\end{align*}
which is equivalent to
\begin{align*}
&[\log H_i^-(z),H_j^+(w)^{-1}]\\
=\ & H_j^+(w)^{-1}\cdot G(\partial_w)[r_ja_{j,i}]_{q^{\partial_w}}
\((w-z-\kappa\hbar)^{-1}-(w-z+\kappa\hbar)^{-1}\)\!.
\end{align*}
Then
\begin{align*}
&[h_{i,\Y}^-(z),H_j^+(w-\frac{1}{2}\kappa\hbar)^{-1}]\nonumber\\
=\ &-F(\partial_z)[\log H_i^-(z-\frac{1}{2}\kappa\hbar),H_j^+(w-\frac{1}{2}\kappa\hbar)^{-1}]
\nonumber\\
=\ &H_j^+(w-\frac{1}{2}\kappa\hbar)^{-1}[r_ia_{i,j}]_{q^{\partial_z}}
((w-z-\kappa\hbar)^{-1}-(w-z+\kappa\hbar)^{-1}).
\end{align*}
Thus
\begin{align}\label{hiY-Hj+}
&[h_{i,\Y}(z),H_j^+(w-\frac{1}{2}\kappa\hbar)^{-1}]\\
=\ &H_j^+(w-\frac{1}{2}\kappa\hbar)^{-1}[r_ia_{i,j}]_{q^{\partial_z}}
((w-z-\kappa\hbar)^{-1}-(w-z+\kappa\hbar)^{-1}).\nonumber
\end{align}

Note that relation (DY3) is equivalent to
\begin{align}
[\log H_i^{+}(z),X_j^{\pm}(w)]
=\pm X_j^{\pm}(w)
\log\( \frac{z-w+r_ia_{i,j}\hbar\pm \frac{1}{2}\kappa\hbar}{z-w-r_ia_{i,j}\hbar\pm \frac{1}{2}\kappa\hbar}\)\!.
\end{align}
By (\ref{log-two-terms}) this amounts to
\begin{align*}
[h_{i,\Y}^{+}(z),X_j^{+}(w)]
=\ &X_j^{+}(w)F(\partial_z)\log\( \frac{z-w+r_ia_{i,j}\hbar+\kappa\hbar}{z-w-r_ia_{i,j}+\kappa\hbar}\)\\
=\ &X_j^{+}(w)[r_ia_{i,j}]_{q^{\partial_w}}(z-w+\kappa\hbar)^{-1},
\end{align*}
\begin{align*}
[h_{i,\Y}^{+}(z),X_j^{-}(w)]
=\ &-X_j^{-}(w)F(\partial_z)
\log \(\frac{z-w+r_ia_{i,j}\hbar}{z-w-r_ia_{i,j}\hbar}\)\nonumber\\
=\ &-X_j^{-}(w)[r_ia_{i,j}]_{q^{\partial_w}}(z-w)^{-1}.
\end{align*}

Similarly, relation (DY4) is equivalent to
\begin{align}
[\log H_i^{-}(z),X_j^{\pm}(w)]
=\pm X_j^{\pm}(w)
\log \(\frac{w-z-r_ia_{i,j}\hbar\pm \frac{1}{2}\kappa\hbar}{w-z+r_ia_{i,j}\hbar\pm \frac{1}{2}\kappa\hbar}\)\!.
\end{align}
This is equivalent to
\begin{align*}
[h_{i,\Y}^{-}(z),X_j^{+}(w)]
=\ &- X_j^{+}(w)F(\partial_z)
\log \(\frac{w-z-r_ia_{i,j}\hbar+\kappa\hbar}{w-z+r_ia_{i,j}\hbar+\kappa\hbar}\)\nonumber\\
=\ &X_j^{+}(w) [r_ia_{i,j}]_{q^{\partial_w}} (w-z+\kappa\hbar)^{-1}
\end{align*}
and
\begin{align*}
[h_{i,\Y}^{-}(z),X_j^{-}(w)]
=\ &X_j^{-}(w)F(\partial_z)
\log \(\frac{w-z-r_ia_{i,j}\hbar}{w-z+r_ia_{i,j}\hbar}\)\nonumber\\
=\ &-X_j^{-}(w) [r_ia_{i,j}]_{q^{\partial_w}} (w-z)^{-1}.
\end{align*}
Therefore,  (DY3) together with (DY4) is equivalent to
\begin{align}
[h_{i,\Y}(z),x_{j,\Y}^{+}(w)]&\ =[h_{i,\Y}^{+}(z)+h_{i,\Y}^{-}(z),X_{j}^{+}(w)]\\
&\ =X_{j}^+(w)[r_ia_{i,j}]_{q^{\partial_{w}}}((z-w+\hbar \kappa)^{-1}+(w-z+\hbar \kappa)^{-1}) \nonumber\\
&\ =x_{j,\Y}^+(w)[r_ia_{i,j}]_{q^{\partial_{w}}}((z-w+\hbar \kappa)^{-1}+(w-z+\hbar \kappa)^{-1}), \nonumber
\end{align}
\begin{align}
[h_{i,\Y}(z),X_{j}^{-}(w)]=-X_{j}^-(w)[r_ia_{i,j}]_{q^{\partial_{w}}}((z-w)^{-1}+(w-z)^{-1}).
\end{align}
Note that the very last relation amounts to
\begin{align}
&[h_{i,\Y}(z),X_{j}^{-}(w-\kappa\hbar)]\\
=\ &-X_{j}^-(w-\kappa\hbar)[r_ia_{i,j}]_{q^{\partial_{w}}}((z-w+\kappa\hbar)^{-1}+(w-z-\kappa\hbar)^{-1}),\nonumber
\end{align}
which in the presence of (\ref{hiY-Hj+}) is equivalent to
\begin{align}
[h_{i,\Y}(z),x_{j,\Y}^{-}(w)]
=-x_{j,\Y}^-(w)[r_ia_{i,j}]_{q^{\partial_{w}}}((z-w+\kappa\hbar)^{-1}+(w-z+\kappa\hbar)^{-1}).
\end{align}

For $s\in I$, setting
$$\bar{X}_s(z)=X_s^-(z-\kappa\hbar),\ \ K_s(z)=H_s^+(z-\kappa\hbar/2)\inv,$$
we have $x_{s,\Y}^-(z)=\bar{X}_s(z)K_s(z).$
It is clear that (DY6) is equivalent to
\begin{align}
(z-w+r_sa_{s,t}\hbar)\bar{X}_s(z)\bar{X}_t(w)=(z-w-r_sa_{s,t}\hbar) \bar{X}_t(w)\bar{X}_s(z)\label{bar-X-bar-X}
\end{align}
for $s,t\in I$. On the other hand, in the presence of (DY1) and (DY3) we have
\begin{align}
&K_s(z)K_t(w)=K_t(w)K_s(z), \\
&K_s(z)\bar{X}_t(w)=\left(\frac{z-w+r_sa_{s,t}\hbar}
{z-w-r_sa_{s,t}\hbar}\right)\bar{X}_t(w)K_s(z).\label{Kr-bar-Xs}
\end{align}
Furthermore, we have
\begin{align}
&(z-w-r_sa_{s,t}\hbar)x_{s,\Y}^{-}(z)x_{t,\Y}^{-}(w)
=(z-w+r_sa_{s,t}\hbar)\bar{X}_s(z)\bar{X}_t(w)K_s(z)K_t(w),\label{KX-bar}\\
&(z-w+r_sa_{s,t}\hbar)x_{t,\Y}^{-}(w)x_{s,\Y}^{-}(z)
=(z-w-r_sa_{s,t}\hbar)\bar{X}_t(w)\bar{X}_s(z) K_t(w)K_s(z),
\end{align}
noticing that $r_sa_{s,t}=r_ta_{t,s}$. Then it follows that (DY6) is equivalent to
\begin{align}\label{xrY-xsY}
(z-w-r_sa_{s,t}\hbar)x_{s,\Y}^{-}(z)x_{t,\Y}^{-}(w)
=(z-w+r_sa_{s,t}\hbar) x_{t,\Y}^{-}(w)x_{s,\Y}^{-}(z).
\end{align}


Next, we consider the Serre relations. As $x_{s,\Y}^{+}(z)=X_s^{+}(z)$ for $s\in I$,
we only need to consider the Serre relation for $(x_{i,\Y}^{-}(z), x_{j,\Y}^{-}(w),1-a_{i,j})$.
With relation (\ref{bar-X-bar-X}), by Lemma \ref{first} we have
$$(z-w)^{-1}(z-w+2r_i\hbar)\bar{X}_i(z)\bar{X}_i(w)
=(w-z)^{-1}(w-z+2r_i\hbar)\bar{X}_i(w)\bar{X}_i(z).$$
It follows that
$$\nob{\bar{X}_i(z_1)\bar{X}_i(z_2)\cdots \bar X_i(z_{m_{i,j}})\bar{X}_j(w)}= \nob{\bar{X}_i(z_{m_{i,j}})\cdots\bar{X}_i(z_1)\bar{X}_j(w)}.$$
Then using (\ref{xrY-xsY}) and (\ref{KX-bar}) we have
\begin{align*}
 &\prod_{s=1}^{1-a_{i,j}}z_s\inv\delta\left(\frac{w-r_i(a_{i,j}+2s-2)\hbar}{z_s}\right)
 \nob{x_{i,\Y}^{-}(z_1)x_{i,\Y}^{-}(z_2)\cdots x_{i,\Y}^-(z_{1-a_{i,j}})x_{j,\Y}^{-}(w)}\\
 =\ &\prod_{s=1}^{1-a_{i,j}}z_s\inv\delta\left(\frac{w-r_i(a_{i,j}+2s-2)\hbar}{z_s}\right)
 \nob{\bar{X}_i(z_1)\cdots \bar{X}_i(z_{1-a_{i,j}})\bar{X}_j(w)}\\
 &\quad\times K_i(z_1)\cdots K_i(z_{1-a_{i,j}})K_j(w)\\
 =\ &\prod_{s=1}^{1-a_{i,j}}z_s\inv\delta\left(\frac{w-r_i(a_{i,j}+2s-2)\hbar}{z_s}\right)
  \nob{\bar{X}_i(z_{1-a_{i,j}})\cdots\bar{X}_i(z_1)\bar{X}_j(w)}\\
 &\quad\times K_i(z_1)\cdots K_i(z_{1-a_{i,j}})K_j(w).
\end{align*}
Applying $\Res_{z_1}\cdots\Res_{z_{1-a_{i,j}}}$ we get
\begin{eqnarray*}
&& \nob{x_i^-(w-r_ia_{i,j}\hbar) x_i^-(w-r_i(a_{i,j}+2)\hbar)\cdots x_i^-(w+r_ia_{i,j}\hbar)x_j^-(w) }\\
&=&  \nob{\bar X_i(w+r_ia_{i,j}\hbar) \bar X_i(w+r_i(a_{i,j}+2)\hbar)\cdots \bar X_i(w-r_ia_{i,j}\hbar)\bar X_j(w) }\\
&&\quad\times K_i(w-r_ia_{i,j}\hbar)K_i(w-r_i(a_{i,j}+2)\hbar) K_i(w+r_ia_{i,j}\hbar)K_j(w).
  \end{eqnarray*}
From this we see that $$ \nob{x_i^-(w-r_ia_{i,j}\hbar) x_i^-(w-r_i(a_{i,j}+2)\hbar)\cdots x_i^-(w+r_ia_{i,j}\hbar)x_j^-(w) }=0$$
 if and only if
\begin{align*}
  \nob{\bar X_i(w+r_ia_{i,j}\hbar) \bar X_i(w+r_i(a_{i,j}+2)\hbar)\cdots \bar X_i(w-r_ia_{i,j}\hbar)\bar X_j(w) }=0.
\end{align*}
Then by Proposition \ref{prop:Serre}, the Serre-like relation for $(\bar{X}_i(z),\bar{X}_j(z),1-a_{i,j})$ holds if and only if
 the Serre-like relation for $(x_{i,\Y}^{-}(z), x_{j,\Y}^{-}(z),1-a_{i,j})$ holds.

Finally, we consider relation (DY5). A variation of this is
\begin{align*}
 [X_i^+(z),\bar{X}_j(w)]
    =\delta_{i,j}\frac{1}{2r_i\hbar}
    \(\!
        K_i(w)^{-1}z\inv\delta\!\(\frac{w}{z}\)
    - H_i^-(w-3/2 \kappa\hbar)z\inv\delta\!\(\frac{w-2\kappa\hbar}{z}\) \)\!.
    \end{align*}
 From (DY3) we have
     \begin{align*}
  X_i^+(z)K_j(w)=K_j(w)X_i^+(z) \(\frac{w-z+r_ja_{j,i}\hbar}{w-z-r_ja_{j,i}\hbar}\)\!.
  \end{align*}
Then
\begin{align*}
&x_{i,\Y}^+(z)x_{j,\Y}^-(w)  - \( \frac{w-z+r_ja_{j,i}\hbar}{w-z-r_ja_{j,i}\hbar}\)
x_{j,\Y}^-(w)x_{i,\Y}^+(z)\nonumber\\
=\ & X_{i}^+(z)\bar{X}_j(w)K_j(w)
- \( \frac{w-z+r_ja_{j,i}\hbar}{w-z-r_ja_{j,i}\hbar}\)\bar{X}_j(w)K_j(w) X_{i}^+(z)
\nonumber\\
=\ & X_{i}^+(z)\bar{X}_j(w)K_j(w)-\bar{X}_j(w)X_{i}^+(z)K_j(w) \nonumber\\
=\ & [X_{i}^+(z),\bar{X}_j(w)]K_j(w). \nonumber
\end{align*}
On the other hand, we have
  \begin{align*}
 &\delta_{i,j}\frac{1}{2r_i\hbar}
\(\!z\inv\delta\!\(\frac{w}{z}\)-C_i(w)  z\inv\delta\!\(\frac{w-2\kappa\hbar}{z}\)\!\)\!\\
=\ &\delta_{i,j}\frac{1}{2r_i\hbar}
    \(\! z^{-1}\delta\(\frac{w}{z}\)
    - H_i^-(w-\frac{3}{2} \kappa\hbar)H_i^+(w-\frac{1}{2} \kappa\hbar)\inv z\inv\delta\!\(\frac{w-2\kappa\hbar}{z}\)\! \)\nonumber\\
=\ &\delta_{i,j}\frac{1}{2r_i\hbar}
    \(\!
        K_i(w)^{-1}z\inv\delta\!\(\frac{w}{z}\)
    - H_i^-(w-\frac{3}{2} \kappa\hbar)z\inv\delta\!\(\frac{w-2\kappa\hbar}{z}\)\! \)\!K_j(w).
\end{align*}
This proves the equivalence regarding (DY5).
 Now, the proof is complete.
\end{proof}

With Theorem \ref{main-DY},
from now on we shall freely use $x_{i,\Y}^\pm(z), \ h_{i,\Y}(z)$  $(i\in I)$ as generating functions for $\wh{\mathcal{DY}}(A)$.
For $i\in I$, write
\begin{align}
x_{i,\Y}^{\pm}(z)=\sum_{n\in \Z}x_{i,\Y}^{\pm}(n)z^{-n-1},\quad h_{i,\Y}(z)=\sum_{n\in \Z}h_{i,\Y}(n)z^{-n-1}.
\end{align}

\begin{de}\label{def-vacuum-module}
Let $\ell\in \C$. {\em A vacuum $\wh{\mathcal{DY}}(A)$-module} of level $\ell$ is a pair $(W,w_0)$, where $W$ is a
restricted $\wh{\mathcal{DY}}(A)$-module of level $\ell$ and  $w_0$ is a vector in $W$ such that
\begin{align}
&W=(\wh{\mathcal{DY}}(A)w_0)[[\hbar]]',\\
&x_{i,\Y}^{\pm}(n)w_0=0=h_{i,\Y}(n)w_0\quad \text{ for }i\in I,\ n\ge 0.
\end{align}
\end{de}

Let $W$ be any restricted $\wh{\mathcal{DY}}(A)$-module.
Identify $\kappa$ with its corresponding element of $\End W$ and set
\begin{align}
U_W=\set{h_{i,\Y}(z),x_{i,\Y}^\pm(z)}{i\in I}\cup\{1_W, \kappa\}.
\end{align}
It follows from Theorem \ref{main-DY} and Lemmas \ref{lem-a-b-C-D} and \ref{lem-fab=gba}
that $U_W$ is an $\mathcal{S}$-local subset of $\E_{\hbar}(W)$, which is $\hbar$-adically compatible
by Lemma \ref{W-hwqva}. Then  we have an $\hbar$-adic nonlocal vertex algebra $\<U_W\>$ with $W$ as a faithful module.
Furthermore, we have:

\begin{prop}\label{Y-E-mod-struct}
Let $W$ be any restricted $\wh{\mathcal{DY}}(A)$-module.
Then the $\hbar$-adic nonlocal vertex algebra $\<U_W\>$ is also a restricted $\wh{\mathcal{DY}}(A)$-module
with $\kappa$ acting as its associated left multiplication on $\<U_W\>$ and with
\begin{align*}
  h_{i,\Y}(z_0)=Y_\E(h_{i,\Y}(z),z_0),\quad
  x_{i,\Y}^{\pm}(z_0)=Y_\E(x_{i,\Y}^{\pm}(z),z_0)\quad \text{ for } i\in I.
\end{align*}
Furthermore, if $W$ is of level $\ell\in \C$, then $\<U_W\>$ is a $\wh{\mathcal{DY}}(A)$-module of level $\ell$.
\end{prop}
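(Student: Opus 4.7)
The plan is to apply Theorem \ref{main-DY} in both directions. Since $W$ is a restricted $\wh{\mathcal{DY}}(A)$-module, the ``only if'' direction supplies the full list of relations for the fields $h_{i,\Y}(z)$ and $x_{i,\Y}^{\pm}(z)$ in $\E_{\hbar}(W)$. I would transfer each such relation to its $Y_\E$-version on $\<U_W\>$, whereupon the ``if'' direction will furnish the required $\wh{\mathcal{DY}}(A)$-module structure. Note that $\kappa$, being central on $W$, commutes with every element of $U_W$, so its left-multiplication action on $\<U_W\>\subset\E_{\hbar}(W)$ is automatically central and commutes with every $Y_\E(\cdot,z_0)$.

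The transfer of the first three families of relations in Theorem \ref{main-DY} should be routine. The $\SY$-commutators $[h_{i,\Y}(z),h_{j,\Y}(w)]$ and $[h_{i,\Y}(z),x_{j,\Y}^\pm(w)]$ have scalar-valued right-hand sides (with $\kappa$ entering only as a central parameter) and transfer via Lemma \ref{lem-a-b-C-D}. The quadratic relations $(z-w\mp a_{i,j}\hbar)x_{i,\Y}^\pm(z)x_{j,\Y}^\pm(w)=(z-w\pm a_{i,j}\hbar)x_{j,\Y}^\pm(w)x_{i,\Y}^\pm(z)$ (with the commutative case $a_{i,j}=0$ as a degenerate instance) transfer via Lemma \ref{lem-fab=gba}, and the order-$2$ Serre relations at $a_{i,j}=-1$ transfer via Corollary \ref{Sere-relation}.

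The principal obstacle is the mixed relation
\begin{align*}
x_{i,\Y}^+(z)x_{j,\Y}^-(w)-\!\(\frac{w-z+a_{i,j}\hbar}{w-z-a_{i,j}\hbar}\)\!x_{j,\Y}^-(w)x_{i,\Y}^+(z)
=\delta_{i,j}\frac{1}{2\hbar}\!\(z^{-1}\delta\!\(\frac{w}{z}\)-C_i(w)\,z^{-1}\delta\!\(\frac{w-2\kappa\hbar}{z}\)\)\!,
\end{align*}
whose right-hand side couples the composite operator $C_i(w)$ with delta functions. My strategy has two steps. First, show that $C_i(x)\in\<U_W\>$: the exponential formula of Theorem \ref{main-DY} presents $C_i(x)$ as an $\hbar$-adically convergent expression in differentiated $h_{i,\Y}^\pm(x)$, and combining Lemma \ref{lem:exp-cal} with Proposition \ref{Y-W-E(a,z)} realizes $C_i(x)$ explicitly as a vector in the $\hbar$-adic nonlocal vertex subalgebra generated by $h_{i,\Y}(x)$ inside $\<U_W\>$. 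Second, using the identity $z^{-1}\delta\!\((w-2\kappa\hbar)/z\)=e^{-2\kappa\hbar\partial_w}z^{-1}\delta(w/z)$, I rewrite the right-hand side in the iterate form $\sum_{k\ge 0}c_i^{(k)}(w)\frac{1}{k!}\partial_w^k z^{-1}\delta(w/z)$ with each $c_i^{(k)}(w)\in\<U_W\>$ built from $1_W$, $C_i(x)$, and scalars. The relation then fits the shape treated by Proposition \ref{prop-2.25}, and its converse direction---available because $W$ is a faithful $\<U_W\>$-module---yields the required $Y_\E$-identity on $\<U_W\>$.

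Finally, restrictedness of $\<U_W\>$ is automatic from $Y_\E(h_{i,\Y}(x),z_0),\,Y_\E(x_{i,\Y}^\pm(x),z_0)\in\E_{\hbar}(\<U_W\>)$, and the level assertion is immediate: if $\kappa$ acts as the scalar $\ell$ on $W$, then left multiplication by $\kappa$ on $\<U_W\>\subset\E_{\hbar}(W)$ is again the scalar $\ell$, so $\<U_W\>$ is of level $\ell$.
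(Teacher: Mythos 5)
Your proposal is correct and follows essentially the same route as the paper: transfer the bilinear relations via Lemmas \ref{lem-a-b-C-D}/\ref{lem-fab=gba}, Serre via Corollary \ref{Sere-relation}, and handle the $i=j$ mixed commutator by rewriting the $\delta$-shift as a Taylor series and applying the converse of Proposition \ref{prop-2.25}, with Proposition \ref{Y-W-E(a,z)} and Lemma \ref{lem:exp-cal} identifying $C_i(x)$ as the vector $q^{(1-\kappa)\D}E^{-}(-h_{i,\Y}(x),-2\hbar)1_W\in\<U_W\>$. One step you leave tacit, which the paper spells out, is the re-application of Proposition \ref{Y-W-E(a,z)} to the adjoint $\<U_W\>$-module to verify that $Y_\E(C_i(z),x)$ itself has the exponential form $\exp\!\(-G(\partial_x)q^{-\kappa\partial_x}Y_\E^{+}(h_{i,\Y}(z),x)\)\exp\!\(-G(\partial_x)q^{-\kappa\partial_x}Y_\E^{-}(h_{i,\Y}(z),x)\)$ required to invoke the ``if'' direction of Theorem \ref{main-DY} for $\<U_W\>$; this is a straightforward repetition of the same computation but is needed to close the argument.
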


\begin{proof} First, identify $\kappa$ with its associated left multiplication, an operator on $\E_{\hbar}(W)$.
Then $Y_{\E}(\kappa,z)=\kappa$.
As  $\kappa$ is a central element of $\wh{\mathcal{DY}}(A)$, it follows that
\begin{align*}
[\kappa,Y_{\E}(h_{i,\Y}(z),z_2)]=0=[\kappa,Y_{\E}(x_{i,\Y}^\pm(z),z_2)]\quad \text{ for }i\in I.
\end{align*}
With the relations in Theorem \ref{main-DY},
by Lemmas \ref{pq-va-module-relation} and \ref{Y-V-W-a-b-relation} we have
\begin{align*}
&[Y_{\E}(h_{i,\Y}(z),z_1),Y_{\E}(h_{j,\Y}(z),z_2)]\nonumber\\
    =\ &  [r_ia_{i,j}]_{q^{\pd{z_2}}}
        [\kappa]_{q^{\pd{z_2}}}\( (z_1-z_2+ \kappa\hbar)^{-2}-(z_2-z_1+\kappa\hbar)^{-2}\)\!,\\
&[Y_{\E}(h_{i,\Y}(z),z_1),Y_{\E}(x_{j,\Y}^\pm(z),z_2]\nonumber\\
=\ &  \pm Y_{\E}(x_{j,\Y}^\pm(z),z_2)[r_ia_{i,j}]_{q^{\pd{z_2}}}\((z_1-z_2+\kappa\hbar)^{-1}-(-z_2+z_1- \kappa\hbar)^{-1}\)\!,\\
&(z_1-z_2-r_ia_{i,j}\hbar)Y_{\E}(x_{i,\Y}^\pm(z),z_1)Y_{\E}(x_{j,\Y}^\pm(z),z_2)\nonumber\\
=\ &(z_1-z_2+r_ia_{i,j}\hbar)Y_{\E}(x_{j,\Y}^\pm(z),z_2)Y_{\E}(x_{i,\Y}^\pm(z),z_1)
\end{align*}
for $i,j\in I$.
 In case $a_{i,j}\le 0$,  by  Corollary \ref{Sere-relation} the Serre-like relations hold for
 the triples $(Y_{\E}(x_{i,\Y}^\pm(z),z_1),Y_{\E}(x_{j,\Y}^\pm(z),z_1),1-a_{i,j})$.

Now, we consider the remaining commutator relation.
For $i,j\in I$, if $i\ne j$, by Lemma \ref{lem-fab=gba} we have
\begin{align*}
Y_{\E}(x_{i,\Y}^+(z),z_1)Y_{\E}(x_{j,\Y}^-(z),z_2)-\!\( \frac{z_2-z_1+r_ia_{i,j}\hbar}{z_2-z_1-r_ia_{i,j}\hbar}\)\!
Y_{\E}(x_{j,\Y}^-(z),z_2)Y_{\E}(x_{i,\Y}^+(z),z_1)=0.
\end{align*}
Assume $i=j$. Recall
$$C_i(x)
=\exp\!\(-G(\partial_x)q^{-\kappa\partial_x}h_{i,\Y}^{-}(x)\)
\exp\!\(- G(\partial_x)q^{-\kappa\partial_x}h_{i,\Y}^{+}(x)\)\!.$$
Notice that
\begin{align*}
  z\inv\delta\!\(\frac{w-2\kappa\hbar}{z}\)\!C_i(w)
=  \sum_{n\ge 0}(-2\kappa\hbar)^n
    C_i(w) \frac{1}{n!} \left(\frac{\partial}{\partial w}\right)^n  z\inv\delta\!\(\frac{w}{z}\)\!.
\end{align*}
By Proposition \ref{prop-2.25} we have
$$C_i(z)=1_W-2\hbar x_{i,\Y}^+(z)_0x_{i,\Y}^{-}(z)\in \<U_W\>$$
and
\begin{align}\label{Y-E-xi-xj}
&Y_{\E}(x_{i,\Y}^+(z),z_1)Y_{\E}(x_{i,\Y}^-(z),z_2)-\!\( \frac{z_2-z_1+2r_i\hbar}{z_2-z_1-2r_i\hbar}\)\!
Y_{\E}(x_{i,\Y}^-(z),z_2)Y_{\E}(x_{i,\Y}^+(z),z_1)\\
=\ &\frac{1}{2r_i\hbar}\!\(\! z_1\inv\delta\!\(\frac{z_2}{z_1}\)- Y_{\E}\(C_i(z),z_2\)  z_1\inv\delta\!\(\frac{z_2-2\kappa\hbar}{z_1}\)\!\)\!.\nonumber
\end{align}
Recall that $h_{i,\Y}(x)\in \<U_W\>$. In the following,  we use Proposition \ref{Y-W-E(a,z)} to show
\begin{align}\label{Ci(x)-exp}
C_i(x)=q^{(1-\kappa)\D}E^{-}(-h_{i,\Y}(x),-2\hbar)1_W,
\end{align}
where  $\D$ denotes the $D$-operator of $\<U_W\>$.

Note that
$$[Y_{\E}^{\pm }(h_{i,\Y}(x),z),Y_{\E}^{\pm}(h_{i,\Y}(x),w)]=0,$$
$$[Y_{\E}^{-}(h_{i,\Y}(x),z),Y_{\E}^{+}(h_{i,\Y}(x),w)]
=[2r_i]_{q^{\partial_w}}[\kappa]_{q^{\partial_w}}(z-w+\kappa\hbar)^{-2}.$$
With $W$ naturally a $ \<U_W\>$-module, by Proposition \ref{Y-W-E(a,z)} we have
  \begin{align}\label{first-version}
 &Y_W(E^{-}(-h_{i,\Y}(z),\zeta){\bf 1},x)\\
 =&\exp \(\zeta L(\zeta\partial_x)Y_W^{+}(h_{i,\Y}(z),x)\) \exp (\zeta L(\zeta\partial_x)Y_W^{-}(h_{i,\Y}(z),x))\nonumber
 \end{align}
for $\zeta\in \hbar\C[[\hbar]]$, where $L(x)=\frac{e^x-1}{x}\in \C[[x]]$. Note that
 \begin{align}\label{G(x)-L(x)-relation}
 -G(x)q^{-x}=\frac{e^{-2\hbar x}-1}{x}=-2\hbar L(-2\hbar x).
 \end{align}
 With $Y_W^{\pm}(h_{i,\Y}(z),x)=h_{i,\Y}^{\mp}(x)$, using (\ref{first-version}) with $\zeta=-2\hbar$ we get
\begin{align*}
&Y_W\!\(q^{(1-\kappa)\D}E^{-}(-h_{i,\Y}(z),-2\hbar)1_W,x\)\!\\
=\ &q^{(1-\kappa)\partial_x}Y_W\!\(E^{-}(-h_{i,\Y}(z),-2\hbar)1_W,x\)\nonumber\\
=\ &q^{(1-\kappa)\partial_x}\exp\!\(-2\hbar L(-2\hbar \partial_x)Y_W^{+}(h_{i,\Y}(z),x)\)\!
\exp\!\(-2\hbar L(-2\hbar \partial_x)Y_W^{-}(h_{i,\Y}(z),x)\)\nonumber\\
=\ &\exp\!\(-G(\partial_x)q^{-\kappa\partial_x}Y_W^{+}(h_{i,\Y}(z),x)\)\!
\exp\!\(- G(\partial_x)q^{-\kappa\partial_x}Y_W^{-}(h_{i,\Y}(z),x)\)\nonumber\\
=\ &\exp\!\(-G(\partial_x)q^{-\tilde{\kappa}\partial_x}h_{i,\Y}^{-}(x)\)
\exp\!\(-G(\partial_x)q^{-\tilde{\kappa}\partial_x}h_{i,\Y}^{+}(x)\)\!\nonumber\\
=\ &C_i(x)\nonumber\\
=\ & Y_W(C_i(z),x).\nonumber
\end{align*}
Then (\ref{Ci(x)-exp}) follows as $W$ is a faithful module.

Now, using (\ref{Ci(x)-exp}) and using Proposition \ref{Y-W-E(a,z)} for the adjoint $\<U_W\>$-module we get
 \begin{align*}
 &Y_{\E}(C_i(z),x)\\
 =\ &Y_{\E}\!\(q^{(1-\kappa)\D}E^{-}(-h_{i,\Y}(z),-2\hbar)1_W,x\)\nonumber\\
 =\ &q^{(1-\kappa)\partial_x}Y_{\E}(E^{-}(-h_{i,\Y}(z),-2\hbar)1_W,x)\nonumber\\
 =\ &q^{(1-\kappa)\partial_x}\exp\!\(-2\hbar L(-2\hbar \partial_x)Y_{\E}^{+}(h_{i,\Y}(z),x)\)\!
\exp\!\(-2\hbar L(-2\hbar \partial_x)Y_{\E}^{-}(h_{i,\Y}(z),x)\)\nonumber\\
=\ & \exp\!\(-G(\partial_x)q^{-\kappa\partial_x}Y_{\E}^{+}(h_{i,\Y}(z),x)\)\!
\exp\!\(-G(\partial_x)q^{-\kappa\partial_x}Y_{\E}^{-}(h_{i,\Y}(z),x)\)\!.\nonumber
\end{align*}
Finally, by invoking Theorem \ref{main-DY} we conclude that $\<U_W\>$ is a restricted $\wh{\mathcal{DY}}(A)$-module with
$h_{i,\Y}(z_0)=Y_\E(h_{i,\Y}(z),z_0)$,
  $x_{i,\Y}^{\pm}(z_0)=Y_\E(x_{i,\Y}^{\pm}(z),z_0)$ for $i\in I$.

The second assertion is clear.
\end{proof}

\subsection{Universal vacuum $\wh{\mathcal{DY}}(A)$-module ${\mathcal{V}}_A(\ell)$}
 Let $\ell\in \C$. We here construct a universal vacuum $\wh{\mathcal{DY}}(A)$-module ${\mathcal{V}}_A(\ell)$ of level $\ell$
 and present the second main result of this paper.

 First, let $\widetilde{\mathcal{U}}$ denote the associative algebra with identity over $\C$, generated by set
$$\{\tilde{\kappa}\}\cup \{\tilde{e}_i(m), \tilde{f}_i(m), \tilde{h}_i(m)\ |\ i\in I,\ m\in \Z\}.$$
For $X\in \{ \tilde{e}_i,\tilde{f}_i, \tilde{h}_i\ |\ i\in I\}$, form a generating function
\begin{align}
X(z)=\sum_{m\in \Z}X(m)z^{-m-1}.
\end{align}

Second, consider the $\C[[\hbar]]$-algebra $\widetilde{\mathcal{U}}[[\hbar]]$, and
define $\widetilde{J}_{A,\ell}$ to be the two-sided ideal,
generated by element $\tilde{\kappa}-\ell$ and by the following relations for $i,j\in I$:
\begin{align}
&\tilde{h}_i(z)\tilde{h}_j(w)-\tilde{h}_j(w)\tilde{h}_i(z)\\
&\quad\nonumber
=[r_ia_{i,j}]_{q^{\partial_w}}[\ell]_{q^{\partial_w}}((z-w+\ell\hbar)^{-2}-(w-z+\ell\hbar)^{-2}),\\
&\tilde{h}_i(z)\tilde{e}_j(w)-\tilde{e}_j(w)\tilde{h}_i(z)\\
&\quad\nonumber
=\tilde{e}_j(w)[r_ia_{i,j}]_{q^{\partial_w}}((z-w+\ell\hbar)^{-1}-(w-z+\ell\hbar)^{-1}),\\
&\tilde{h}_i(z)\tilde{f}_j(w)-\tilde{f}_j(w)\tilde{h}_i(z)\\
&\quad\nonumber
=-\tilde{f}_j(w)[r_ia_{i,j}]_{q^{\partial_w}}((z-w+\ell\hbar)^{-1}-(w-z+\ell\hbar)^{-1}),\\
&(z-w-r_ia_{i,j}\hbar)\tilde{e}_i(z)\tilde{e}_j(w)
=(z-w+r_ia_{i,j}\hbar)\tilde{e}_j(w)\tilde{e}_i(z),\label{e-i-e-j}\\
&(z-w-r_ia_{i,j}\hbar)\tilde{f}_i(z)\tilde{f}_j(w)
=(z-w+r_ia_{i,j}\hbar)\tilde{f}_j(w)\tilde{f}_i(z),\label{f-i-f-j}\\
&\label{i-not-j}
(z-w+r_ia_{i,j}\hbar) \tilde{e}_i(z)\tilde{f}_j(w)=(z-w-r_ia_{i,j}\hbar) \tilde{f}_j(w)\tilde{e}_i(z)\quad \text{ if }i\ne j,\\
&\label{i=j}
(z-w)(z-w+2\ell\hbar)\!\big((z-w+2r_i\hbar) \tilde{e}_i(z)\tilde{f}_i(w)\\
&\qquad\nonumber-(z-w-2r_i\hbar) \tilde{f}_i(w)\tilde{e}_i(z)\big)\!=0,
\end{align}
and the Serre-like relations for $(\tilde{e}_i(z),  \tilde{e}_j(z),1-a_{i,j})$ and $(\tilde{f}_i(z),  \tilde{f}_j(z),1-a_{i,j})$
for $i,j\in I$ with $a_{i,j}\le 0$.

Note that for any $s,t\in \Z$, the coefficients of monomial $z^sw^t$ in all the relations above indeed
lie in $\widetilde{\mathcal{U}}[[\hbar]]$.

In view of Lemma \ref{basic-facts-top-algebra}, $\overline{[\widetilde{J}_{A,\ell}]}$ is a two-sided ideal of $\widetilde{\mathcal{U}}[[\hbar]]$
and a topologically free $\C[[\hbar]]$-submodule.  Then set
\begin{align}
\mathcal{U}_{A,\ell}=\widetilde{\mathcal{U}}[[\hbar]] / \overline{[\widetilde{J}_{A,\ell}]},
\end{align}
which is an associative $\C[[\hbar]]$-algebra and
a topologically free $\C[[\hbar]]$-module by Lemma \ref{strong-submodule}. For $i\in I,\ m\in \Z$, set
\begin{align}
\bar{e}_i(m)=\tilde{e}_i(m)+\overline{[\widetilde{J}_{A,\ell}]},\  \bar{f}_i(m)=\tilde{f}_i(m)+\overline{[\widetilde{J}_{A,\ell}]},\
\bar{h}_i(m)=\tilde{h}_i(m)+\overline{[\widetilde{J}_{A,\ell}]}\in \mathcal{U}_{A,\ell}.
\end{align}
Then define generating functions $\bar{e}_i(z), \bar{f}_i(z), \bar{h}_i(z)$ for $i\in I$ correspondingly.

\begin{rem}\label{rem-DY-A}
{\em Note that from Theorem \ref{main-DY},
every restricted $\wh{\mathcal{DY}}(A)$-module of level $\ell$ is naturally a restricted
$\mathcal{U}_{A,\ell}$-module. }
\end{rem}

Third, let $\mathcal{J}_{A,\ell}$ be the left ideal of $\mathcal{U}_{A,\ell}$, generated by the subset
$$\{ \bar{e}_i(n),\bar{f}_i(n), \bar{h}_i(n)\ |\ i\in I,\ n\ge 0\}.$$
Then $\overline{[\mathcal{J}_{A,\ell}]}$ is a topologically free $\C[[\hbar]]$-submodule and a left ideal of $\mathcal{U}_{A,\ell}$.
Define
\begin{align}
\mathcal{V}_{A,\ell}=\mathcal{U}_{A,\ell}/\overline{[\mathcal{J}_{A,\ell}]},
\end{align}
which is a (left) $\mathcal{U}_{A,\ell}$-module and a  topologically free $\C[[\hbar]]$-module.  Set
$${\bf 1}=1+\overline{[\mathcal{J}_{A,\ell}]}\in \mathcal{V}_{A,\ell},$$
and furthermore set
\begin{align}
\hat{e}_i=\bar{e}_i(-1){\bf 1},\ \  \hat{f}_i=\bar{f}_i(-1){\bf 1},\ \ \hat{h}_i=\bar{h}_i(-1){\bf 1}\in \mathcal{V}_{A,\ell}
\quad \text{for }i\in I.
\end{align}

As a key result of this section, we have:

\begin{prop}\label{pre-universal-qva}
There exists an $\hbar$-adic weak quantum vertex algebra structure on $\mathcal{V}_{A,\ell}$, which is uniquely determined by
the condition that ${\bf 1}$ is the vacuum vector and
\begin{align}
Y(\hat{e}_i,z)=\bar{e}_i(z),\ \ Y(\hat{f}_i,z)=\bar{f}_i(z),\ \ Y(\hat{h}_i,z)=\bar{h}_i(z)\quad \text{for }i\in I.
\end{align}
\end{prop}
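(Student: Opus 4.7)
The plan is to invoke Theorem \ref{hwqva-construction} with generating set $U_0:=\{\hat{e}_i,\hat{f}_i,\hat{h}_i:i\in I\}\subset\V_{A,\ell}$ and $Y_0$ sending these to $\bar{e}_i(z),\bar{f}_i(z),\bar{h}_i(z)$, respectively. The creation condition $Y_0(u,z)\vac\in\V_{A,\ell}[[z]]$ with $(Y_0(u,z)\vac)|_{z=0}=u$ is immediate from the annihilation property $\bar{X}_i(n)\vac=0$ for $n\geq 0$ (built into $\V_{A,\ell}$ through the ideal $\overline{[\mathcal{J}_{A,\ell}]}$) combined with $\hat{X}_i=\bar{X}_i(-1)\vac$. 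The spanning identity $\V_{A,\ell}=U_0^\infty[[\hbar]]'$ holds because $\V_{A,\ell}=\U_{A,\ell}\vac$ is cyclic, so $U_0^\infty$, the $\C[[\hbar]]$-span of monomials $\bar{X}_{i_1}(m_1)\cdots\bar{X}_{i_r}(m_r)\vac$, is $\hbar$-adically dense.

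Second, I would verify that $Y_0(U_0)$ is $\SY$-local in $\E_\hbar(\V_{A,\ell})$. The polynomial-form relations (\ref{e-i-e-j})--(\ref{i=j}) are of the shape $p(z-w,\hbar)a(z)b(w)=q(w-z,\hbar)b(w)a(z)$ with $p(x,0)\neq 0$, so Lemma \ref{new-added} directly delivers $\SY$-locality of the pairs $(\bar{e}_i,\bar{e}_j)$, $(\bar{f}_i,\bar{f}_j)$, $(\bar{e}_i,\bar{f}_j)$. For the commutator relations $[\bar{h}_i(z),\bar{h}_j(w)]$, $[\bar{h}_i(z),\bar{e}_j(w)]$, $[\bar{h}_i(z),\bar{f}_j(w)]$, the singular parts are delta-like expressions built from $(z-w+c\hbar)^{-k}-(w-z+c\hbar)^{-k}$. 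Using the identity $(z-w)[(z-w+c\hbar)^{-1}-(w-z+c\hbar)^{-1}]=-c\hbar[(z-w+c\hbar)^{-1}-(w-z+c\hbar)^{-1}]$ and its higher-pole analogues, a sufficiently high power of $(z-w)$ kills these singular parts modulo $\hbar^n$. Hence $\bar{h}_i(z)\bar{h}_j(w)\sim\bar{h}_j(w)\bar{h}_i(z)$ and similarly for the mixed pairs, giving $\SY$-locality with $A(z)$ the tensor $\hat{X}_j\otimes\hat{h}_i\otimes 1$.

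The main obstacle is constructing the $\C[[\hbar]]$-module map $\psi_z:\V_{A,\ell}\to\<Y_0(U_0)\>$ required by Theorem \ref{hwqva-construction}. For this I would equip $\<Y_0(U_0)\>$ with the structure of a restricted $\U_{A,\ell}$-module of level $\ell$ by letting $\bar{X}_i(n_0)$ act as the $n_0$-th mode of $Y_\E(\bar{X}_i(z),z_0)$. Verifying the defining relations of $\U_{A,\ell}$ on $\<Y_0(U_0)\>$ is the hard technical step: Lemma \ref{lem-fab=gba} lifts the polynomial-form relations (\ref{e-i-e-j})--(\ref{i=j}); Lemma \ref{lem-a-b-C-D} lifts the $[\bar{h}_i,\bar{h}_j]$, $[\bar{h}_i,\bar{e}_j]$, $[\bar{h}_i,\bar{f}_j]$ commutator relations after recognising their right-hand sides in the form $f(z-w)C(w)+g(w-z)D(w)$; and Corollary \ref{Sere-relation} lifts the order-$2$ Serre relations. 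Since $1_V\in\<Y_0(U_0)\>$ is annihilated by all non-negative modes (by the creation property of the $\hbar$-adic nonlocal vertex algebra $\<Y_0(U_0)\>$), the universal property of $\V_{A,\ell}=\U_{A,\ell}/\overline{[\mathcal{J}_{A,\ell}]}$ yields a unique $\U_{A,\ell}$-equivariant map $\psi_z$ with $\psi_z(\vac)=1_V$, automatically satisfying $\psi_z(u_n v)=u(z)_n\psi_z(v)$ for $u\in U_0$. Theorem \ref{hwqva-construction} then extends $Y_0$ uniquely to the desired $\hbar$-adic weak quantum vertex algebra structure on $\V_{A,\ell}$.
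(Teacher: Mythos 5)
Your overall strategy coincides with the paper's: verify the hypotheses of Theorem \ref{hwqva-construction}, with the heart of the matter being the construction of the $\C[[\hbar]]$-module map $\psi_z$ via making $\<Y_0(U_0)\>$ a restricted $\U_{A,\ell}$-module through Lemmas \ref{lem-a-b-C-D}, \ref{lem-fab=gba}, and Corollary \ref{Sere-relation}, followed by the universal property of $\V_{A,\ell}$. That part of your argument is sound.

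However, there is a genuine gap at the very start: you never verify that the fields $\bar e_i(x)$, $\bar f_i(x)$, $\bar h_i(x)$ actually lie in $\E_\hbar(\V_{A,\ell})$, i.e., that $\V_{A,\ell}$ is a \emph{restricted} $\U_{A,\ell}$-module. This is not automatic. The module $\V_{A,\ell}=\U_{A,\ell}/\overline{[\mathcal J_{A,\ell}]}$ is constructed by quotienting by a left ideal generated by non-negative modes, so the vacuum $\vac$ is killed by those modes; but for a general vector $w=\bar X_{i_1}(m_1)\cdots\bar X_{i_r}(m_r)\vac$ one must push a non-negative mode past a string of creation modes, and because the commutation relations are \emph{not} Lie-algebraic in several cases (the $\bar e$-$\bar f$ commutator carries a $\mathcal W$-algebra-type right-hand side, the $\bar h$-$\bar h$ relation involves infinite-order $\hbar$-tails), the truncation property does not follow from simple normal ordering. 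Concretely: your invocation of Lemma \ref{new-added} (to get $\SY$-locality), your discussion of singular parts of commutators, and your invocation of Theorem \ref{hwqva-construction} (whose hypothesis requires $Y_0:U\to\E_\hbar(V)$) all silently presuppose this restrictedness. The paper makes this the first substantive step of the proof, establishing it by an inductive argument that leans on the two technical Appendix results (Lemma \ref{restricted-V-g-1} for the delta-like $\bar h$-$X$ commutators and Lemma \ref{restricted-V-g-2} for the polynomial-locality relations among $\bar e$'s and $\bar f$'s). Without supplying this inductive truncation argument, every subsequent step of your proof rests on an unverified hypothesis. A secondary, smaller point: when you invoke "the universal property of $\V_{A,\ell}$", note that the ideal you quotient by is $\overline{[\mathcal J_{A,\ell}]}$, not $\mathcal J_{A,\ell}$ itself; it is the content of Lemma \ref{simple-fact-3} that the kernel of the action of $\U_{A,\ell}$ on $1_V\in\<Y_0(U_0)\>$ is already saturated and closed, so that the map factors through the full $\overline{[\,\cdot\,]}$-quotient. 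You should cite that lemma to close the argument.
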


\begin{proof} First of all, it is clear that $\mathcal{V}_{A,\ell}$ is topologically spanned over $\C[[\hbar]]$ by vectors
$X_1\cdots X_r {\bf 1}$ for $r\ge 0,\ X_j\in\{ \bar{e}_i(m),\bar{f} _i(m),\bar{h}_i(m)\ |\ i\in I,\ m\in \Z\}.$
From the construction of $\mathcal{V}_{A,\ell}$ as a $\mathcal{U}_{A,\ell}$-module, we have
$$\bar{e}_i(x){\bf 1},\ \bar{f}_i(x){\bf 1},\ \bar{h}_i(x){\bf 1}\in \mathcal{V}_{A,\ell}[[x]]\subset (\mathcal{V}_{A,\ell})_{\hbar}((x))
\ \ \text{ for }i\in I.$$
Here, we need to show that $\mathcal{V}_{A,\ell}$ is a restricted $\mathcal{U}_{A,\ell}$-module.
Indeed, it follows from induction and technical Lemmas \ref{restricted-V-g-1} and \ref{restricted-V-g-2}
that for every $w\in \mathcal{V}_{A,\ell}$,
$$\bar{e}_i(x)w,\ \bar{f}_i(x)w,\ \bar{h}_i(x)w\in (\mathcal{V}_{A,\ell})_{\hbar}((x))
\ \ \text{ for }i\in I.$$
That is,
\begin{align}
\bar{e}_i(x),\ \bar{f}_i(x), \ \bar{h}_i(x)\in \E_{\hbar}(\mathcal{V}_{A,\ell})\ \ \text{ for }i\in I.
\end{align}
Furthermore, by Lemma \ref{new-added} $U:=\{\bar{e}_i(x),\bar{f}_i(x),\bar{h}_i(x)\ |\  i\in I\}$
is an $\SY$-local subset of $\E_{\hbar}(\mathcal{V}_{A,\ell})$.
Then $U$ generates an $\hbar$-adic nonlocal vertex algebra $\<U\>$ inside $\E_{\hbar}(\mathcal{V}_{A,\ell})$
with $\mathcal{V}_{A,\ell}$ as a faithful module.
It follows from Lemmas  \ref{lem-a-b-C-D} and \ref{lem-fab=gba} that $\<U\>$ is a $\mathcal{U}_{A,\ell}$-module
with $\bar{e}_i(m), \bar{f}_i(m), \bar{h}_i(m)$ for $i\in I,\ m\in \Z$ acting as
$\bar{e}_i(x)_m, \bar{f}_i(x)_m, \bar{h}_i(x)_m$, respectively, and
it is clear that $\<U\>$ is a restricted $\mathcal{U}_{A,\ell}$-module.
Note that
$$u(x)_n1_{\mathcal{V}_{A,\ell}}=0\ \ \text{ for all }u(x)\in \<U\>,\ n\in \N.$$
It follows from the construction of $\mathcal{V}_{A,\ell}$ and Lemma \ref{simple-fact-3}
that there is a $\mathcal{U}_{A,\ell}$-module morphism
$\phi: \mathcal{V}_{A,\ell}\rightarrow \<U\>$ such that $\phi({\bf 1})=1_{\mathcal{V}_{A,\ell}}$.
Now, by invoking Theorem \ref{hwqva-construction} we conclude that there exists
an $\hbar$-adic weak quantum vertex algebra structure on $\mathcal{V}_{A,\ell}$ with the required condition.
\end{proof}

Furthermore, using Theorem \ref{hwqva-module} we immediately have:

\begin{prop}\label{hqva-module-A-ell}
For any restricted $\mathcal{U}_{A,\ell}$-module $W$, there exists a
$\mathcal{V}_{A,\ell}$-module structure $Y_W(\cdot,z)$ which is uniquely determined by
 \begin{align}
 Y_W(\hat{e}_{i},z)=\bar{e}_{i}(z),\ \ Y_W(\hat{f}_{i},z)=\bar{f}_{i}(z),\ \ Y_W(\hat{h}_{i},z)=\bar{h}_{i}(z)\ \ \text{ for }i\in I.
 \end{align}
On the other hand, for any $\mathcal{V}_{A,\ell}$-module $(W,Y_W)$,
there exists a restricted
$\mathcal{U}_{A,\ell}$-module structure on $W$ such that
 \begin{align}
 \bar{e}_{i}(z)=Y_W(\hat{e}_{i},z),\ \ \bar{f}_{i}(z)=Y_W(\hat{f}_{i},z),\ \ \bar{h}_{i}(z)=Y_W(\hat{h}_{i},z)\ \ \text{ for }i\in I.
 \end{align}
\end{prop}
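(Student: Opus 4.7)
The plan is to invoke Theorem \ref{hwqva-module} for the forward direction and use the ``fields-in-a-$V$-module'' compatibility results (Lemmas \ref{Y-V-W-a-b-relation}, \ref{pq-va-module-relation}, and Proposition \ref{Serre-relation-abstract}) for the converse direction. The structural work has essentially been carried out in the proof of Proposition \ref{pre-universal-qva}; the present statement is a module-level analogue running along the same lines.

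For the forward direction, let $W$ be a restricted $\mathcal{U}_{A,\ell}$-module and set $U=\{\hat{e}_i,\hat{f}_i,\hat{h}_i\mid i\in I\}\subset \mathcal{V}_{A,\ell}$. First I would define $Y_W^0:U\to \E_\hbar(W)$ by $Y_W^0(\hat{e}_i,x)=\bar{e}_i(x)$, $Y_W^0(\hat{f}_i,x)=\bar{f}_i(x)$, $Y_W^0(\hat{h}_i,x)=\bar{h}_i(x)$; these land in $\E_\hbar(W)$ precisely because $W$ is restricted. The defining quadratic relations and Serre relations of $\mathcal{U}_{A,\ell}$ hold on $W$ by assumption, so Lemma \ref{new-added} (for the quadratic ones) and the definition of Serre relations show that $Y_W^0(U)$ is $\SY$-local.

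Next I would construct the intertwining map $\psi_x:\mathcal{V}_{A,\ell}\to \<Y_W^0(U)\>$. By Theorem \ref{thm:h-adic-va-abs-construct}, $\<Y_W^0(U)\>$ is an $\hbar$-adic nonlocal vertex algebra with $W$ as a faithful module. Mirroring the argument in the proof of Proposition \ref{pre-universal-qva}, Lemmas \ref{lem-a-b-C-D} and \ref{lem-fab=gba} together with Corollary \ref{Sere-relation} transfer the $\mathcal{U}_{A,\ell}$-relations from the fields $\bar{e}_i(x),\bar{f}_i(x),\bar{h}_i(x)$ on $W$ to their $Y_\E$-images in $\<Y_W^0(U)\>$; this equips $\<Y_W^0(U)\>$ with the structure of a restricted $\mathcal{U}_{A,\ell}$-module in which $1_W$ is annihilated by all $\bar{e}_i(n),\bar{f}_i(n),\bar{h}_i(n)$ for $n\ge 0$. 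The universal property used to construct $\mathcal{V}_{A,\ell}$ as a quotient then yields a $\mathcal{U}_{A,\ell}$-module morphism $\psi_x:\mathcal{V}_{A,\ell}\to\<Y_W^0(U)\>$ with $\psi_x({\bf 1})=1_W$ and $\psi_x(u_nv)=u(x)_n\psi_x(v)$ for $u\in U$. Theorem \ref{hwqva-module} now extends $Y_W^0$ uniquely to a $\mathcal{V}_{A,\ell}$-module structure on $W$ with the prescribed generating functions.

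For the converse, given a $\mathcal{V}_{A,\ell}$-module $(W,Y_W)$, I would put $\bar{e}_i(z)=Y_W(\hat{e}_i,z)$ and similarly for $\hat{f}_i,\hat{h}_i$; all of these lie in $\E_\hbar(W)$ by the module axiom. The remaining task is to check the defining relations of $\mathcal{U}_{A,\ell}$. Since those relations hold inside $\mathcal{V}_{A,\ell}$ itself (by the construction giving Proposition \ref{pre-universal-qva}), Lemmas \ref{Y-V-W-a-b-relation} and \ref{pq-va-module-relation} transfer the commutator-type and rational-prefactor relations from $V$ to $W$, and Proposition \ref{Serre-relation-abstract} transfers the order-$2$ Serre relations. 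Restrictedness is automatic. The two constructions are mutually inverse on generators, so the correspondence is bijective.

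The main obstacle I anticipate is the verification surrounding $\psi_x$: one must confirm that the $\mathcal{U}_{A,\ell}$-relations verified on $W$ actually lift, via the $Y_\E$-operation, to operator relations on $\<Y_W^0(U)\>$ in a form precise enough to invoke the universal quotient. This is exactly the step carried out for the map $\phi$ in the proof of Proposition \ref{pre-universal-qva}, so once that template is in hand the present proposition reduces to a routine application of the machinery of Section 3. No new estimates or calculations beyond those already developed should be required.
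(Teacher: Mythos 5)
Your proposal is correct and follows essentially the same route as the paper: the forward direction is the application of Theorem \ref{hwqva-module} after reproducing the $\psi_x$-construction from the proof of Proposition \ref{pre-universal-qva}, and the converse is handled by transferring the defining relations to $W$ via Lemmas \ref{Y-V-W-a-b-relation}, \ref{pq-va-module-relation} and Proposition \ref{Serre-relation-abstract}. The paper simply records the result as an immediate consequence of Theorem \ref{hwqva-module}, and your proof is a faithful unpacking of that.
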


We continue to define a quotient of the $\hbar$-adic weak quantum vertex algebra $\mathcal{V}_{A,\ell}$,
which will be a vacuum $\wh{\mathcal{DY}}(A)$-module of level $\ell$.
For $i\in I$, set
\begin{align}
C_i=e^{(1-\ell)\hbar \D}E^{-}(-\hat{h}_i,-2\hbar){\bf 1}\in \mathcal{V}_{A,\ell},
\end{align}
where $\D$ is the canonical $D$-operator of $\mathcal{V}_{A,\ell}$ and
$$ E^{-}(-\hat{h}_i,-2\hbar)=\exp\!\(\sum_{n\in \Z_{+}}\frac{1}{n}\hat{h}_i(-n)(-2\hbar)^n\right)\!.$$
Note that $C_i={\bf 1}-2\hbar \hat{h}_i+O(\hbar^2)$ (as $\D {\bf 1}=0$).

Denote by $K$ the subset of $\mathcal{V}_{A,\ell}$, consisting of vectors
 \begin{align}
 (\hat{e}_i)_0(\hat{f}_j)-\frac{\delta_{i,j}}{2r_i\hbar}({\bf 1}-C_i),\ \
 (\hat{e}_i)_n(\hat{f}_j)-\frac{\delta_{i,j}\ell}{r_i} (-2\ell\hbar)^{n-1}C_i\ \ \text{ for }i,j\in I,\ n\ge 1.
 \end{align}

 \begin{de}
 Let $(K)$ denote the strong ideal of $\mathcal{V}_{A,\ell}$, generated by $K$.
 Set
 \begin{align}
 {\mathcal{V}}_{A}(\ell)=\mathcal{V}_{A,\ell}/(K),
 \end{align}
which is a topologically free $\C[[\hbar]]$-module and an $\hbar$-adic weak quantum vertex algebra.
\end{de}

For $i\in I$, set
\begin{align}
\check{e}_i=\hat{e}_i+(K),\ \check{f}_i=\hat{f}_i+(K),\ \check{h}_i=\hat{h}_i+(K)\in {\mathcal{V}}_A(\ell),
\end{align}
 and furthermore set
\begin{align}
\mathfrak{c}_i=e^{(1-\ell)\hbar \D}E^{-}(-\check{h}_i,-2\hbar){\bf 1}\in {\mathcal{V}}_A(\ell),
\end{align}
where $\D$ is the $D$-operator of ${\mathcal{V}}_A(\ell)$.
Then we have:

\begin{thm}\label{YD-vacuum-module}
 Let $\ell\in \C$. Then there exists a restricted $\wh{\mathcal{DY}}(A)$-module structure
 of level $\ell$ on $\mathcal{V}_A(\ell)$, which is uniquely determined by
 \begin{align}
 x_{i,\mathcal{Y}}^{+}(z)=Y(\check{e}_i,z),\ \ x_{i,\Y}^{-}(z)=Y(\check{f}_i,z),\ \ h_{i,\Y}(z)=Y(\check{h}_i,z)\ \ \text{ for }i\in I.
 \end{align}
 Furthermore, $(\mathcal{V}_A(\ell), {\bf 1})$ is a universal vacuum $\wh{\mathcal{DY}}(A)$-module
 of level $\ell$.
\end{thm}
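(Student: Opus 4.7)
My plan is to verify step by step that on the quotient $\mathcal{V}_A(\ell)=\mathcal{V}_{A,\ell}/(K)$ the vertex operators $Y(\check{h}_i,z), Y(\check{e}_i,z), Y(\check{f}_i,z)$ satisfy all the relations in the presentation of Theorem \ref{main-DY}, and then to establish the universal property against an arbitrary vacuum $\wh{\mathcal{DY}}(A)$-module.

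First I would note that by Lemma \ref{strong-ideal}, $\mathcal{V}_A(\ell)$ is topologically free over $\C[[\hbar]]$ and inherits from $\mathcal{V}_{A,\ell}$ a canonical $\hbar$-adic weak quantum vertex algebra structure for which the quotient map is a morphism. Since $\mathcal{V}_{A,\ell}$ is by Proposition \ref{hqva-module-A-ell} a restricted $\mathcal{U}_{A,\ell}$-module, so is $\mathcal{V}_A(\ell)$, with $\bar{e}_i(z),\bar{f}_i(z),\bar{h}_i(z)$ acting as $Y(\check{e}_i,z),Y(\check{f}_i,z),Y(\check{h}_i,z)$. Applying Lemma \ref{pq-va-module-relation} (with $\mathcal{V}_A(\ell)$ regarded as a faithful module over itself) to every defining polynomial relation of $\mathcal{U}_{A,\ell}$ immediately produces the corresponding vertex-operator identities on $\mathcal{V}_A(\ell)$; combined with Proposition \ref{Serre-relation-abstract} this accounts for the $h$--$h$, $h$--$x$, $x^\pm$--$x^\pm$ commutation relations and the order-$2$ Serre relations of Theorem \ref{main-DY}. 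The strict commutativity of $x_{i,\Y}^\pm(z),x_{j,\Y}^\pm(w)$ when $a_{i,j}=0$ then follows from one more step: the $\mathcal{U}_{A,\ell}$-identity $(z-w)\bar{e}_i(z)\bar{e}_j(w)=(z-w)\bar{e}_j(w)\bar{e}_i(z)$ together with the $\mathcal{S}$-commutator formula of Proposition \ref{lem:q-Jacobi} forces $[Y(\check{e}_i,x_1),Y(\check{e}_j,x_2)]$ to be a sum of derivatives of $x_1^{-1}\delta(x_2/x_1)$ with coefficients $Y((\check{e}_i)_n\check{e}_j,x_2)$ for $n\ge 0$, and these coefficients are annihilated in $\mathcal{V}_A(\ell)$ by the very definition of $K$.

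The main obstacle is the $\mathcal{S}$-commutator between $x_{i,\Y}^+$ and $x_{j,\Y}^-$. For $i\ne j$ this is almost immediate: the $\mathcal{U}_{A,\ell}$-identity $(z-w+a_{i,j}\hbar)\tilde{e}_i(z)\tilde{f}_j(w)=(z-w-a_{i,j}\hbar)\tilde{f}_j(w)\tilde{e}_i(z)$ gives $\mathcal{S}$-locality, and combined with the vanishing of all $(\check{e}_i)_n\check{f}_j$ in $\mathcal{V}_A(\ell)$ yields exactly the relation of Theorem \ref{main-DY} with $\delta_{i,j}=0$. The delicate case $i=j$ proceeds as follows: starting from the $\mathcal{U}_{A,\ell}$-identity
\begin{align*}
(z-w)(z-w+2\ell\hbar)\bigl((z-w+2\hbar)\bar{e}_i(z)\bar{f}_i(w)-(z-w-2\hbar)\bar{f}_i(w)\bar{e}_i(z)\bigr)=0,
\end{align*}
Lemma \ref{pq-va-module-relation} combined with Proposition \ref{lem:q-Jacobi} yields
\begin{align*}
&Y(\check{e}_i,x_1)Y(\check{f}_i,x_2)-\Bigl(\frac{x_2-x_1+2\hbar}{x_2-x_1-2\hbar}\Bigr)Y(\check{f}_i,x_2)Y(\check{e}_i,x_1)\\
&\qquad=\sum_{n\ge 0}Y\bigl((\check{e}_i)_n\check{f}_i,x_2\bigr)\frac{1}{n!}\partial_{x_2}^n x_1^{-1}\delta(x_2/x_1).
\end{align*}
I would then substitute the quotient identities $(\check{e}_i)_0\check{f}_i=\frac{1}{2\hbar}(\mathbf{1}-\mathfrak{c}_i)$ and $(\check{e}_i)_n\check{f}_i=\ell(-2\ell\hbar)^{n-1}\mathfrak{c}_i$ for $n\ge 1$ (which hold by construction of $K$), and reassemble the geometric sum via the elementary identity $\sum_{n\ge 0}\frac{(-2\ell\hbar)^n}{n!}\partial_{x_2}^n x_1^{-1}\delta(x_2/x_1)=x_1^{-1}\delta((x_2-2\ell\hbar)/x_1)$. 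The right-hand side then collapses to $\frac{1}{2\hbar}\bigl(x_1^{-1}\delta(x_2/x_1)-Y(\mathfrak{c}_i,x_2)x_1^{-1}\delta((x_2-2\kappa\hbar)/x_1)\bigr)$, matching the formula in Theorem \ref{main-DY} once we identify $Y(\mathfrak{c}_i,x)=C_i(x)$. That identification is precisely the calculation carried out in the proof of Proposition \ref{Y-E-mod-struct}: apply Proposition \ref{Y-W-E(a,z)} to the adjoint module together with (\ref{G(x)-L(x)-relation}), which turns $-2\hbar L(-2\hbar\partial_x)$ into $-G(\partial_x)q^{-\ell\partial_x}$.

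For the universal property, let $(W,w_0)$ be a vacuum $\wh{\mathcal{DY}}(A)$-module of level $\ell$ as in Definition \ref{def-vacuum-module}. By Remark \ref{rem-DY-A}, $W$ is a restricted $\mathcal{U}_{A,\ell}$-module, and the map $\mathcal{U}_{A,\ell}\to W,\ a\mapsto aw_0$, annihilates (by the vacuum assumption together with Lemmas \ref{simple-fact-3} and \ref{def-K[[h]]'}) the $\hbar$-adic closure of the left ideal generated by the nonnegative modes, and so descends to a $\C[[\hbar]]$-linear map $\mathcal{V}_{A,\ell}\to W$ sending $\mathbf{1}\mapsto w_0$. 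Proposition \ref{hqva-module-A-ell} upgrades this to a $\mathcal{V}_{A,\ell}$-module morphism. Since every element of $K$ was constructed precisely as a consequence of the Theorem \ref{main-DY} relations that already hold on $W$, the image of $K$ annihilates $w_0$; Lemma \ref{U-vacuum-like} then forces the morphism to kill the strong ideal $(K)$, so it factors uniquely through $\mathcal{V}_A(\ell)$, and uniqueness follows because $\mathbf{1}$ topologically generates $\mathcal{V}_A(\ell)$ as a $\wh{\mathcal{DY}}(A)$-module.
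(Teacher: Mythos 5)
Your existence argument is essentially the paper's: use the relations inherited from $\mathcal{U}_{A,\ell}$, pass to $\mathcal{S}$-locality and the $\mathcal{S}$-commutator formula, substitute the iterate identities imposed by $K$, reassemble the geometric series, and identify $Y(\mathfrak{c}_i,x)$ via Proposition~\ref{Y-W-E(a,z)}. A few of the lemma citations are slightly off (the paper uses Lemma~\ref{h-adic-compatibility} directly rather than Lemma~\ref{pq-va-module-relation} with $\mathcal{V}_A(\ell)$ as a module over itself), but the substance is the same.

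The universality argument, however, has a genuine gap. You build a $\mathcal{V}_{A,\ell}$-\emph{module} morphism $\phi:\mathcal{V}_{A,\ell}\to W$ with $\phi(\mathbf{1})=w_0$ and then want to conclude that it kills the strong ideal $(K)$. The kernel of a module morphism is only a strong $\mathcal{V}_{A,\ell}$-\emph{submodule}: it is stable under $a_m(\cdot)$ for $a\in\mathcal{V}_{A,\ell}$, but it need not be stable under $(\cdot)_m v$. In contrast, $(K)$ is by definition a strong \emph{ideal}, closed under both $a_m J\subset J$ and $J_m V\subset J$. So even if you establish $\phi(K)=0$, you only get $\phi$ vanishing on the left-ideal $\mathcal{V}_{A,\ell}$-span of $K$, not on $(K)$. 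Concretely, to control $\phi(k_m v)=Y_W(k,z)_m\,\phi(v)$ you need $Y_W(k,z)=0$ for $k\in K$; the statement ``$K$ annihilates $w_0$'' (that is, $\phi(K)=0$, or $Y_W(k,z)_{-1}w_0=0$) is strictly weaker. Your appeal to Lemma~\ref{U-vacuum-like} does not repair this: that lemma only produces a module morphism from a vacuum-like vector, it says nothing about kernels being strong ideals.

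The paper avoids this exactly by interposing $\langle U_W\rangle$: the map $\tilde\psi_z=Y_W(\cdot,z):\mathcal{V}_{A,\ell}\to\langle U_W\rangle\subset\E_\hbar(W)$ is a homomorphism of $\hbar$-adic \emph{nonlocal vertex algebras} (because $Y_W(Y(u,z_0)v,z)=Y_\E(Y_W(u,z),z_0)Y_W(v,z)$, cf.\ Lemma~\ref{module-rep}), so by Lemma~\ref{kernal-strong-ideal} its kernel \emph{is} a strong ideal. Reversing the existence argument shows $K\subset\ker(\tilde\psi_z)$, hence $(K)\subset\ker(\tilde\psi_z)$, giving a factorization $\psi_1:\mathcal{V}_A(\ell)\to\langle U_W\rangle$; only then does one compose with the vacuum-like module map $\psi_0:\langle U_W\rangle\to W$ (where Lemma~\ref{U-vacuum-like} does apply). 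You should restructure your universality argument along these lines: first descend to $\langle U_W\rangle$ where you have a genuine nonlocal vertex algebra morphism and hence a strong-ideal kernel, then come back to $W$ via the vacuum-like map.
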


\begin{proof} We first prove the existence.
By Lemma \ref{h-adic-compatibility}, relations (\ref{i-not-j}) and (\ref{i=j}) imply
 $$Y(\check{e}_i,z)Y(\check{f}_j,w) \sim \(\frac{w-z+r_ia_{i,j}\hbar}{w-z-r_ia_{i,j}\hbar}\)\!Y(\check{f}_j,w)Y(\check{e}_i,z).$$
 Then the $\SY$-commutator formula gives
 \begin{align}\label{e-f-S-commutator}
 &Y(\check{e}_i,z)Y(\check{f}_j,w) -\! \(\frac{w-z+r_ia_{i,j}\hbar}{w-z-r_ia_{i,j}\hbar}\)\!Y(\check{f}_j,w)Y(\check{e}_i,z)\\
 =\ &\sum_{n\ge 0}Y\((\check{e}_i)_n(\check{f}_j),w\)\frac{1}{n!}\(\frac{\partial}{\partial w}\)^nz^{-1}\delta\!\left(\frac{w}{z}\right)\!.\nonumber
 \end{align}
 From the construction of ${\mathcal{V}}_A(\ell)$, we have
\begin{align}\label{e-f-singular}
(\check{e}_i)_0(\check{f}_j)=\frac{\delta_{i,j}}{2r_i\hbar}({\bf 1}-\mathfrak{c}_i),\ \
 (\check{e}_i)_n(\check{f}_j)=\frac{\delta_{i,j}\ell }{r_i} (-2\ell\hbar)^{n-1}\mathfrak{c}_i
 \end{align}
 for $i,j\in I,\ n\ge 1$.
 Combining this with (\ref{e-f-S-commutator}) we get
   \begin{align}\label{key-relation}
 &Y(\check{e}_i,z)Y(\check{f}_j,w) -\! \(\frac{w-z+r_ia_{i,j}\hbar}{w-z-r_ia_{i,j}\hbar}\)\!Y(\check{f}_j,w)Y(\check{e}_i,z)\\
 =\ &\delta_{i,j}\frac{1}{2r_i\hbar}\!\left(\!z^{-1}\delta\!\left(\frac{w}{z}\right)-Y(\mathfrak{c}_i,w)z^{-1}\delta\!\left(\frac{w-2\ell\hbar}{z}\right)\!\)\!.\nonumber
  \end{align}
  Note that
  \begin{align*}
G(x)q^{-\ell x}=2\hbar L(-2\hbar x)q^{(1-\ell)x}.
\end{align*}
Using Proposition \ref{Y-W-E(a,z)} we obtain
 \begin{align}\label{explicit-expression}
 &Y(\mathfrak{c}_i,x) \\
 =\ &e^{(1-\ell)\hbar \partial_x}Y\!\(E^{-}(-\check{h}_i,-2\hbar){\bf 1},x\)\nonumber\\
= \ &e^{(1-\ell)\hbar \partial_x}
\exp\! \(-2\hbar L(-2\hbar\partial_x)Y^{+}(\check{h}_i,x)\)\exp\! \(-2\hbar L(-2\hbar\partial_x)Y^{-}(\check{h}_i,x)\)\nonumber\\
 =\ &\exp\!\(-G(\partial_x)q^{-\ell\partial_x}Y^{+}(\check{h}_i,x)\)\exp\!\(-G(\partial_x)q^{-\ell\partial_x}Y^{-}(\check{h}_i,x)\)\!.\nonumber
  \end{align}
%
Then it follows from Theorem \ref{main-DY} that
 ${\mathcal{V}}_A(\ell)$ is a $\wh{\mathcal{DY}}(A)$-module of level $\ell$ with
 $$x_{i,\Y}^{+}(z)=Y(\check{e}_i,z),\ \ x_{i,\Y}^{-}(z)=Y(\check{f}_i,z),\ \ h_{i,\Y}(z)=Y(\check{h}_i,z)\ \ \text{ for }i\in I.$$
 The uniqueness and vacuum module assertions are clear.

 Next, we prove that $({\mathcal{V}}_A(\ell), {\bf 1})$ is universal.
 Let $W$ be any restricted $\wh{\mathcal{DY}}(A)$-module of level $\ell$ with a vacuum vector $w_0$. Set
 $$U_W=\{ x_{i,\Y}^{+}(z),\ x_{i,\Y}^{-}(z),\ h_{i,\Y}(z)\ |\ i\in I\}\cup \{ 1_W\}.$$
 Then $U_W$ is an $\SY$-local subset of $\E_{\hbar}(W)$, which generates
 an $\hbar$-adic nonlocal vertex algebra $\<U_W\>$ with $W$ as a module.
By Lemma \ref{U-vacuum-like}, there is a $\<U_W\>$-module morphism $\psi_0: \<U_W\>\rightarrow W$ with
 $\psi_0(1_W)=w_0$.
  Note that $\<U_W\>$ is a restricted $\wh{\mathcal{DY}}(A)$-module of level $\ell$ by Proposition \ref{Y-E-mod-struct}.
  It is easy to see that $\psi_0$ is also a $\wh{\mathcal{DY}}(A)$-module morphism.

 On the other hand,  as $\<U_W\>$ is a $\mathcal{U}_{A,\ell}$-module by Remark \ref{rem-DY-A},
  there exists a $\mathcal{U}_{A,\ell}$-module
 morphism $\hat{\psi}_z: \mathcal{U}_{A,\ell}\rightarrow \<U_W\>$ such that $\hat{\psi}_z(1)=1_W$.
 With $\<U_W\>$ torsion free and separated, by Lemma \ref{simple-fact-3} $\hat{\psi}_z$ reduces to
 a $\mathcal{U}_{A,\ell}$-module morphism $\tilde{\psi}_z: \mathcal{V}_{A,\ell}\rightarrow \<U_W\>$ with $\tilde{\psi}_z({\bf 1})=1_W$.
 Since $\mathcal{V}_{A,\ell}$ as a $\mathcal{U}_{A,\ell}$-module is generated from ${\bf 1}$,
it follows that $\tilde{\psi}_z$ is a morphism of $\hbar$-adic nonlocal vertex algebras.
Hence, $W$ is a $\mathcal{V}_{A,\ell}$-module via $\tilde{\psi}_z$.
By reversing the arguments in the existence part above, we see that
  $K\subset \ker Y_W(\cdot,z)=\ker (\tilde{\psi}_z)$.
  Then $\tilde{\psi}_z$ reduces to an $\hbar$-adic nonlocal vertex algebra morphism
  $\psi_1: {\mathcal{V}}_A(\ell)\rightarrow \<U_W\>$.
  Consequently, $\psi:=\psi_0\psi_1$ is a $\wh{\mathcal{DY}}(A)$-module morphism from
  ${\mathcal{V}}_A(\ell)$ to $W$ with $\psi({\bf 1})=w_0$.
\end{proof}

In summary, for any $\ell\in \C$, ${\mathcal{V}}_A(\ell)$ is an $\hbar$-adic weak quantum vertex algebra
and a restricted $\wh{\mathcal{DY}}(A)$-module of level $\ell$ with
 \begin{align}
Y(\check{e}_{i},z)=x_{i,\Y}^{+}(z),\ \  Y(\check{f}_{i},z)=x_{i,\Y}^{-}(z),\ \ Y(\check{h}_{i},z)=h_{i,\Y}(z)\ \ \text{ for }i\in I.
 \end{align}
With the notations
 \begin{eqnarray*}
 &&Y(u,z)=\sum_{m\in \Z}u_mz^{-m-1}\quad \text{for }u\in \{ \check{e}_{i},\  \check{f}_{i},\  \check{h}_{i}\ |\ i\in I\}, \\
 &&X(z)=\sum_{m\in \Z}X(m)z^{-m-1}\quad \text{for }X\in \{ x_{i,\Y}^{\pm},\ h_{i,\Y}\ |\ i\in I\},
 \end{eqnarray*}
we have
 \begin{eqnarray}
 (\check{e}_i)_m=x_{i,\Y}^{+}(m),\ \  (\check{f}_i)_m=x_{i,\Y}^{-}(m),\ \  (\check{h}_i)_m=h_{i,\Y}(m)
 \end{eqnarray}
 for $i\in I,\ m\in \Z.$ For convenience, locally set $\check{e}_i^{+}=\check{e}_i$ and $\check{e}_i^{-}=\check{f}_i$.

\begin{rem}\label{DYhqva-relations}
{\em The following relations hold on ${\mathcal{V}}_A(\ell)$ for $i,j\in I$:
\begin{align*}
&[Y(\check{h}_{i},z),Y(\check{h}_{j},w)]
    =[r_ia_{i,j}]_{q^{\partial_{w}}}
        [\ell]_{q^{\partial_{w}}}\( (z-w+\ell\hbar)^{-2}-(w-z+\ell\hbar)^{-2}\)\!,\\
&[Y(\check{h}_{i},z),Y(\check{e}_{j}^\pm,w)]
=\pm Y(\check{e}_{j}^\pm,w)[r_ia_{i,j}]_{q^{\partial_{w}}}\((z-w+\ell\hbar)^{-1}+(w-z+\ell\hbar)^{-1}\)\!,\\
&(z-w-r_ia_{i,j}\hbar)Y(\check{e}_{i}^\pm,z)Y(\check{e}_{j}^\pm,w)
=(z-w+r_ia_{i,j}\hbar)Y(\check{e}_{j}^\pm,w)Y(\check{e}_{i}^\pm,z),\\
&Y(\check{e}_{i},z)Y(\check{f}_{j},w)  - \( \frac{w-z+r_ia_{i,j}\hbar}{w-z-r_ia_{i,j}\hbar}\)\!Y(\check{f}_{j},w)Y(\check{e}_{i},z)\nonumber\\
&\quad=\delta_{i,j}\frac{1}{2r_i\hbar}\!
\(\!z\inv\delta\!\(\frac{w}{z}\)-Y\!\(q^{(1-\ell)\D}E^{-}(-\check{h}_{i},-2\hbar){\bf 1},w\)
 z\inv\delta\!\(\frac{w-2\ell\hbar}{z}\)\!\)\!,
\end{align*}
where $\D$ denotes the $D$-operator of ${\mathcal{V}}_A(\ell)$.
In addition,  for $i,j\in I$ with $a_{i,j}\le 0$,
 the Serre-like relations for $(Y(\check{e}_{i}^\pm,z), Y(\check{e}_{j}^\pm,z),1-a_{i,j})$ hold.
 On the other hand, for any ${\mathcal{V}}_A(\ell)$-module $(W,Y_W)$, all of  these relations hold on $W$
 by Propositions \ref{pq-va-module-relation} and \ref{prop-2.25} and by Lemma \ref{Y-V-W-a-b-relation}.
 Furthermore, we have
\begin{align}
&Y_W\!\(e^{(1-\ell)\hbar \D}E^{-}(-\check{h}_{i},-2\hbar){\bf 1},x\)\\
=\ & \exp\!\(-G(\partial_x)q^{-\ell\partial_x}Y_W^{+}(\check{h}_{i},x)\)\!
\exp\!\(-G(\partial_x)q^{-\ell\partial_x}Y_W^{-}(\check{h}_{i},x)\)\!.\nonumber
\end{align}}
\end{rem}

\begin{thm}\label{hqva-module-main}
Let $\ell\in \C$. For any restricted $\wh{\mathcal{DY}}(A)$-module $W$ of level $\ell$,  there exists a
${\mathcal{V}}_A(\ell)$-module structure $Y_W(\cdot,z)$ on $W$, which is uniquely determined by
 \begin{align}
 Y_W(\check{e}_{i},z)=x_{i,\Y}^{+}(z),\ \   Y_W(\check{f}_i,z)=x_{i,\Y}^{-}(z),\ \ Y_W(\check{h}_{i},z)=h_{i,\Y}(z)\ \ \text{ for }i\in I.
 \end{align}
On the other hand, for any ${\mathcal{V}}_A(\ell)$-module $(W,Y_W)$,
there exists a restricted $\wh{\mathcal{DY}}(A)$-module structure of level $\ell$ on $W$ such that
 \begin{align}
x_{i,\Y}^{+}(z)=Y_W(\check{e}_i,z),\ \  x_{i,\Y}^{-}(z)=Y_W(\check{f}_i,z),\ \ h_{i,\Y}(z)=Y_W(\check{h}_{i},z)\ \ \text{ for }i\in I.
 \end{align}
\end{thm}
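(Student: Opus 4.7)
The plan is to prove the two directions separately, leveraging the machinery already assembled: Proposition \ref{hqva-module-A-ell} (the $\mathcal{U}_{A,\ell}$--$\mathcal{V}_{A,\ell}$ correspondence), Remark \ref{rem-DY-A} (every restricted $\wh{\mathcal{DY}}(A)$-module of level $\ell$ is a restricted $\mathcal{U}_{A,\ell}$-module), Theorem \ref{main-DY} (the field presentation of $\wh{\mathcal{DY}}(A)$), and the computations carried out inside the proof of Theorem \ref{YD-vacuum-module}.

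For the second direction, suppose $(W,Y_W)$ is a $\mathcal{V}_A(\ell)$-module. Setting $x_{i,\Y}^\pm(z)=Y_W(\check{e}_i^\pm,z)$ and $h_{i,\Y}(z)=Y_W(\check{h}_i,z)$, I would transfer each relation listed in Remark \ref{DYhqva-relations} from $\mathcal{V}_A(\ell)$ to $W$. The polynomial/rational commutation relations pass via Lemma \ref{pq-va-module-relation}, the $\SY$-commutator between $\check{e}_i$ and $\check{f}_j$ passes via Proposition \ref{prop-2.25} (together with Lemma \ref{Y-V-W-a-b-relation}), and the Serre relations pass via Proposition \ref{Serre-relation-abstract}. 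The identity in Remark \ref{DYhqva-relations} computing $Y_W(q^{(1-\ell)\hbar\D}E^{-}(-\check{h}_i,-2\hbar){\bf 1},w)$ in exponential form gives precisely the operator $C_i(w)$ of Theorem \ref{main-DY} (using the relation $G(x)q^{-\ell x}=2\hbar L(-2\hbar x)q^{(1-\ell)x}$ and Proposition \ref{Y-W-E(a,z)}). By Theorem \ref{main-DY}, $W$ then carries a restricted $\wh{\mathcal{DY}}(A)$-module structure of level $\ell$, and uniqueness is clear.

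For the first direction, let $W$ be a restricted $\wh{\mathcal{DY}}(A)$-module of level $\ell$. By Remark \ref{rem-DY-A}, $W$ is a restricted $\mathcal{U}_{A,\ell}$-module, so by Proposition \ref{hqva-module-A-ell} there is a $\mathcal{V}_{A,\ell}$-module structure $Y_W^{0}$ on $W$ with $Y_W^{0}(\hat{e}_i,z)=x_{i,\Y}^+(z)$, $Y_W^{0}(\hat{f}_i,z)=x_{i,\Y}^-(z)$, and $Y_W^{0}(\hat{h}_i,z)=h_{i,\Y}(z)$. To descend $Y_W^{0}$ to $\mathcal{V}_A(\ell)=\mathcal{V}_{A,\ell}/(K)$, I must verify that the strong ideal $(K)$ lies in $\ker Y_W^{0}$. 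First I would record the analogue of Lemma \ref{kernal-strong-ideal} for modules: the $\C[[\hbar]]$-submodule $J:=\{v\in\mathcal{V}_{A,\ell}\mid Y_W^{0}(v,z)=0\}$ is a strong ideal (closedness under $[\cdot]$ and $\hbar$-adic completeness use Lemma \ref{simple-fact-3}, and the ideal property is immediate). It then suffices to check $K\subset J$. Running the computation of Theorem \ref{YD-vacuum-module} in reverse on $W$, the relation (DY5) on $W$ combined with the $\SY$-commutator formula (Proposition \ref{prop-2.25}) and the identification of $C_i(w)$ above forces
\[
Y_W^{0}\bigl((\hat{e}_i)_0(\hat{f}_j)-\tfrac{1}{2\hbar}\delta_{i,j}({\bf 1}-C_i),w\bigr)=0,\qquad Y_W^{0}\bigl((\hat{e}_i)_n(\hat{f}_j)-\delta_{i,j}\ell(-2\ell\hbar)^{n-1}C_i,w\bigr)=0
\]
for $n\ge 1$, and the relations (DY7) together with Lemma \ref{pq-va-module-relation} and Proposition \ref{prop-2.25} force $Y_W^{0}((\hat{e}_i)_n(\hat{e}_j),w)=0=Y_W^{0}((\hat{f}_i)_n(\hat{f}_j),w)$ for $a_{i,j}=0$, $n\ge 0$. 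Hence $K\subset J$ and $(K)\subset J$, so $Y_W^{0}$ descends to a $\mathcal{V}_A(\ell)$-module structure $Y_W$ on $W$ with the required identification of generating fields.

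The main obstacle is the first direction, and within it the verification that $(K)\subset\ker Y_W^{0}$. The computational content is not hard once one notes that the defining elements of $K$ are precisely the coefficients appearing in the $\SY$-commutator expansion of (DY5) after replacing $C_i(w)$ with its exponential description; what requires care is the topological bookkeeping, namely confirming that $\ker Y_W^{0}$ is a strong ideal so that passage from $K$ to $(K)$ is automatic, and applying Proposition \ref{prop-2.25} in the $\hbar$-adic setting (the limit condition on the $c^{(n)}$ is satisfied because the right-hand sides $\delta_{i,j}\ell(-2\ell\hbar)^{n-1}C_i$ lie in $\hbar^{n-1}\mathcal{V}_{A,\ell}$). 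Uniqueness on both sides follows from the fact that ${\bf 1}$ together with the fields $Y(\check{e}_i^\pm,z), Y(\check{h}_i,z)$ generate $\mathcal{V}_A(\ell)$ as an $\hbar$-adic weak quantum vertex algebra.
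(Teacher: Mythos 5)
Your argument is correct, but for the first direction you take a genuinely different route from the paper. The paper builds the ${\mathcal{V}}_A(\ell)$-module structure by noting that $\langle U_W\rangle\subset\E_\hbar(W)$ is, by Proposition \ref{Y-E-mod-struct}, a restricted $\wh{\mathcal{DY}}(A)$-module of level $\ell$ with $1_W$ as a vacuum vector; the universality of ${\mathcal{V}}_A(\ell)$ (Theorem \ref{YD-vacuum-module}) then supplies a $\wh{\mathcal{DY}}(A)$-module morphism $\psi_z:{\mathcal{V}}_A(\ell)\to\langle U_W\rangle$, and the extension machinery of Theorem \ref{hwqva-module} yields the module structure. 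You instead pass through $\mathcal{V}_{A,\ell}$ via Remark \ref{rem-DY-A} and Proposition \ref{hqva-module-A-ell}, and then descend through the quotient by $(K)$, by checking that $\ker Y_W^0$ is a strong ideal containing $K$. This is a valid alternative: the module analogue of Lemma \ref{kernal-strong-ideal} you assert does hold --- closedness under $\hbar$-division and $\hbar$-adic completeness come from Lemma \ref{simple-fact-3}, and the ideal property comes from the iterate formula (Lemma \ref{module-rep}, trivially applicable when one factor vanishes) --- and your extraction of the $n$-th products $(\hat{e}_i)_n\hat{f}_j$ from the $\SY$-commutator formula and (DY5), together with Proposition \ref{Y-W-E(a,z)} identifying $Y_W^0(C_i,w)$ with the exponential $C_i(w)$, correctly forces $K\subset\ker Y_W^0$. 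The paper's proof is shorter because it packages precisely this ``$K$ is killed'' verification into the universality statement of Theorem \ref{YD-vacuum-module} (where the same reverse computation appears), whereas your version re-derives it directly on $W$; what your route buys is a more self-contained and transparent descent argument that does not invoke Proposition \ref{Y-E-mod-struct} or Theorem \ref{hwqva-module}. For the second direction, your expansion of the paper's one-line citation of Remark \ref{DYhqva-relations} and Theorem \ref{main-DY} follows the same route as the paper.
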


\begin{proof} For the first assertion, set
$$U=\{ \check{e}_i,\  \check{f}_i, \ \check{h}_i\ |\ i\in I\}\cup \{ {\bf 1}\}\subset {\mathcal{V}}_A(\ell),$$
 and define a map $Y_W^0(\cdot,z): U\rightarrow \E_{\hbar}(W)$ by $Y_W^0({\bf 1},z)=1_W$,
 $$Y_W^0(\check{e}_i,z)=x_{i,\Y}^{+}(z),\ \ Y_W^0(\check{f}_i,z)=x_{i,\Y}^{-}(z),\ \ Y_W^0(\check{h}_i,z)=h_{i,\Y}(z)\ \ \text{ for }i\in I.$$
By Proposition \ref{Y-E-mod-struct},  $\<U_W\>$ is a restricted $\wh{\mathcal{DY}}(A)$-module of level $\ell$, containing
$1_W$ as a vacuum vector. Then by Theorem \ref{YD-vacuum-module}
 there exists a $\wh{\mathcal{DY}}(A)$-module homomorphism
 $\psi_z: {\mathcal{V}}_A(\ell)\rightarrow \langle U_W\rangle$ with $\psi_z({\bf 1})=1_W$.
 For $i\in I,\ m\in \Z,\ v\in {\mathcal{V}}_A(\ell)$, we have
  \begin{align*}
&\psi_z((\check{e}_{i}^{\pm})_mv)= \psi_z(x_{i,\Y}^{\pm}(m)v)=x_{i,\Y}^{\pm}(m)\psi_z(v)=x_{i,\Y}^{\pm}(z)_{m}\psi_z(v),\\
 &\psi_z((\check{h}_{i})_mv)=h_{i,\Y}(z)_m\psi_z(v).
 \end{align*}
Then the first assertion follows from Theorem \ref{hwqva-module}.
The second assertion follows from Remark \ref{DYhqva-relations} and Theorem \ref{main-DY}.
\end{proof}

\begin{de}
Define $\widehat{\mathcal{L}}_A$ to be the Lie algebra generated by set
\begin{align}\label{eq:Lie-gen-set}
  \set{h_{i,m},\,e_{i,m}^\pm}{i\in I,\  m\in\Z}\cup \{ {\bf k}\}
\end{align}
with ${\bf k}$, subject to the following relations for $i,j\in I$:
\begin{align*}
 \te{(L1)}\quad& [h_i(z),h_j(w)]=r_ia_{i,j}\pd{w}z\inv\delta\!\(\frac{w}{z}\){\bf k},\\
  \te{(L2)}\quad& [h_i(z),e_j^\pm(w)]=\pm r_ia_{i,j}e_j^\pm (w)z\inv\delta\!\(\frac{w}{z}\)\!,\\
  \te{(L3)}\quad& [e_i^+(z),e_j^-(w)]=\frac{\delta_{i,j}}{r_i}
  \(\!h_i(w)z\inv\delta\!\(\frac{w}{z}\)
  +{\bf k}\pd{w}z\inv\delta\!\(\frac{w}{z}\)\)\!,\\
  \te{(L4)}\quad&[e_i^\pm(z),e_j^\pm(w)]=0\quad\te{if }a_{i,j}\ge 0,\\
  \te{(L5)}\quad&(z-w)[e_i^\pm(z),e_j^\pm(w)]=0\quad\te{if }a_{i,j}<0, \\
  \te{(L6)}\quad&[e_i^\pm(z_1),[e_i^\pm(z_2), \dots [e_i^\pm(z_{1-a_{i,j}}), e_j^\pm(w)]\cdots ]]=0\quad\te{ if }a_{i,j}< 0,
\end{align*}
where
\begin{align*}
  &h_i(z)=\sum_{m\in\Z}h_{i,m}z^{-m-1},\quad
  e_i^\pm(z)=\sum_{m\in\Z}e_{i,m}^\pm z^{-m-1}.
\end{align*}
\end{de}

\begin{lem}
The quotient space ${\mathcal{V}}_A(\ell)/\hbar {\mathcal{V}}_A(\ell)$ is an $\wh{\mathcal{L}}_A$-module of level $\ell$ with
$$e_i^\pm(z)=Y(\check{e}_{i}^{\pm},z),\ \ h_i(z)=Y(\check{h}_{i},z)\ \ \text{ for }i\in I.$$
 Furthermore, ${\mathcal{V}}_A(\ell)/\hbar {\mathcal{V}}_A(\ell)$ as
 an $\wh{\mathcal{L}}_A$-module is generated by vector ${\bf 1}+\hbar {\mathcal{V}}_A(\ell)$.
\end{lem}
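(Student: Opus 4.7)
The plan is to reduce the formulas of Remark \ref{DYhqva-relations} modulo $\hbar$ and verify each defining relation (L1)--(L6) of $\wh{\mathcal{L}}_A$ on $V := \mathcal{V}_A(\ell)/\hbar\mathcal{V}_A(\ell)$, with ${\bf k}$ declared to act as $\ell$. First I would record two reduction principles. Since $G(x) = 2\hbar + O(\hbar^3)$ and (\ref{log-two-terms}) give $[m]_{q^{\partial_w}} \cdot x^{-1} = m\,x^{-1} + O(\hbar^2)$, the operator $[m]_{q^{\partial_w}}$ acts as multiplication by $m$ modulo $\hbar$; likewise the identities $(z-w+c\hbar)^{-1}+(w-z+c\hbar)^{-1}\equiv z^{-1}\delta(w/z)$ and $(z-w+c\hbar)^{-2}-(w-z+c\hbar)^{-2}\equiv \partial_w z^{-1}\delta(w/z)$ hold modulo $\hbar$ for any $c\in\C$. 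Substituting these into the first two commutator formulas of Remark \ref{DYhqva-relations} delivers (L1) and (L2) with ${\bf k} = \ell$.

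For (L3), the plan is to expand to first order in $\hbar$. Using $\D{\bf 1} = 0$, we have $q^{(1-\ell)\hbar\D}E^-(-\check{h}_i,-2\hbar){\bf 1} \equiv {\bf 1} - 2\hbar\,\check{h}_i \pmod{\hbar^2}$, while $z^{-1}\delta((w-2\ell\hbar)/z) \equiv z^{-1}\delta(w/z) - 2\ell\hbar\,\partial_w z^{-1}\delta(w/z) \pmod{\hbar^2}$; inserting these into the $[Y(\check{e}_i,z),Y(\check{f}_j,w)]$ formula of Remark \ref{DYhqva-relations} and extracting the $\hbar^0$ coefficient yields $\delta_{i,j}\bigl(h_i(w)z^{-1}\delta(w/z) + \ell\,\partial_w z^{-1}\delta(w/z)\bigr)$, which is (L3). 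For (L4) with $a_{i,j}=0$, $i\ne j$, commutativity already holds on $\mathcal{V}_A(\ell)$ by Remark \ref{DYhqva-relations}; for $i=j$, I would apply Lemma \ref{first} to $(z-w\mp 2\hbar)Y(\check{e}_i^\pm,z)Y(\check{e}_i^\pm,w) = (z-w\pm 2\hbar)Y(\check{e}_i^\pm,w)Y(\check{e}_i^\pm,z)$ and rearrange to obtain
\begin{align*}
  [Y(\check{e}_i^\pm,z),Y(\check{e}_i^\pm,w)] = 2\hbar\bigl[\iota_{z,w}(z-w)^{-1}Y(\check{e}_i^\pm,z)Y(\check{e}_i^\pm,w) + \iota_{w,z}(w-z)^{-1}Y(\check{e}_i^\pm,w)Y(\check{e}_i^\pm,z)\bigr],
\end{align*}
which is manifestly in $\hbar\cdot(\End\mathcal{V}_A(\ell))[[z^{\pm},w^{\pm}]][[\hbar]]$ and so vanishes on $V$. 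Relation (L5), for $a_{i,j}=-1$, follows from the rearrangement $(z-w)[Y(\check{e}_i^\pm,z),Y(\check{e}_j^\pm,w)] = \hbar\bigl(Y(\check{e}_i^\pm,z)Y(\check{e}_j^\pm,w)+Y(\check{e}_j^\pm,w)Y(\check{e}_i^\pm,z)\bigr)$ of the quasi-commutation.

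For (L6) with $a_{i,j}=-1$, I would reduce the order-$2$ Serre relation modulo $\hbar$ and rewrite each summand as a double commutator, obtaining $\sum_{\sigma\in S_2}[e_i^\pm(z_{\sigma(1)}),[e_i^\pm(z_{\sigma(2)}),e_j^\pm(w)]] = 0$; since (L4) yields $[e_i^\pm(z_1),e_i^\pm(z_2)] = 0$, the Jacobi identity makes the two summands equal, forcing each to vanish. The generation claim is immediate: $\mathcal{V}_A(\ell)$ is, by Theorem \ref{YD-vacuum-module} and the construction in Proposition \ref{pre-universal-qva}, topologically generated over $\C[[\hbar]]$ from ${\bf 1}$ by the modes of $x_{i,\Y}^\pm(z)$ and $h_{i,\Y}(z)$, which upon reduction modulo $\hbar$ become exactly the modes of $e_i^\pm(z)$ and $h_i(z)$ acting on ${\bf 1} + \hbar\mathcal{V}_A(\ell)$. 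The hard part will be the $\hbar$-linear bookkeeping in (L3): the $h_i(w)z^{-1}\delta(w/z)$ and $\ell\,\partial_w z^{-1}\delta(w/z)$ summands come from two different sources (the exponential $E^-(-\check{h}_i,-2\hbar){\bf 1}$ and the delta shift, respectively) and must combine correctly against the $\frac{1}{2\hbar}z^{-1}\delta(w/z)$ term; the remaining verifications amount to direct substitutions together with the Lemma \ref{first} rearrangement used for (L4).
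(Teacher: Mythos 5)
Your proposal follows the paper's strategy exactly: reduce the relations recorded in Remark \ref{DYhqva-relations} modulo $\hbar$, with the bulk of the bookkeeping concentrated in (L3), and dispose of (L6) by observing that it is equivalent to the order-$2$ Serre relation once (L4)--(L5) are in place. The paper is terse about (L1), (L2), (L4), (L5) (it simply asserts that they ``follow from Remark \ref{DYhqva-relations}''), whereas you spell them out; this is a reasonable expansion, and your reductions for (L1)--(L3) and (L6) and the generation claim are all correct.

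However, the two rearrangements you write down for (L4) (case $i=j$) and (L5) are not valid identities as stated. For (L4), expanding (\ref{a-a-(-1)}) from Lemma \ref{first} as $\iota_{z,w}\bigl(1-2\hbar(z-w)^{-1}\bigr)a(z)a(w)=\iota_{w,z}\bigl(1-2\hbar(w-z)^{-1}\bigr)a(w)a(z)$ gives
\begin{align*}
[a(z),a(w)] = 2\hbar\bigl[\iota_{z,w}(z-w)^{-1}a(z)a(w) - \iota_{w,z}(w-z)^{-1}a(w)a(z)\bigr],
\end{align*}
with a \emph{minus} between the two terms. Your version with $+$ would force $(z-w-2\hbar)[a(z),a(w)]=0$, but in fact $(z-w-2\hbar)[a(z),a(w)]=4\hbar\,a(w)a(z)\neq 0$. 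Similarly, for (L5) the quasi-commutation $(z-w+\hbar)a(z)b(w)=(z-w-\hbar)b(w)a(z)$ rearranges to $(z-w)[a(z),b(w)]=-\hbar\bigl(a(z)b(w)+b(w)a(z)\bigr)$, i.e.\ with $-\hbar$ rather than $\hbar$. Both corrected right-hand sides are still manifestly in $\hbar\cdot(\End\mathcal V_A(\ell))[[z^{\pm1},w^{\pm1}]]$, so your conclusions $[\bar a(z),\bar a(w)]=0$ and $(z-w)[\bar a(z),\bar b(w)]=0$ are unaffected; only the intermediate formulas need the sign fix.
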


\begin{proof} For $i\in I$, as
 \begin{align*}
  &q^{(1-\ell)\D}E^{-}(-\check{h}_i,-2\hbar){\bf 1}={\bf 1}-2\hbar \check{h}_{i}+O(\hbar^2),\\
 &z\inv\delta\!\(\frac{w-2\ell\hbar}{z}\)\!=z\inv\delta\!\(\frac{w}{z}\)-2\ell \hbar \frac{\partial}{\partial w}z\inv\delta\!\(\frac{w}{z}\)+O(\hbar^2),
 \end{align*}
 we have
  \begin{align*}
  & z\inv\delta\!\(\frac{w-2\ell\hbar}{z}\)\! q^{(1-\ell)\D}E^{-}(-\check{h}_{i},-2\hbar){\bf 1}\\
  =\ &z\inv\delta\!\(\frac{w}{z}\)({\bf 1}-2\hbar\check{h}_{i})-2\hbar \ell \frac{\partial}{\partial w}z\inv\delta\!\(\frac{w}{z}\)+O(\hbar^2).
   \end{align*}
  This implies that relation (L3) holds on ${\mathcal{V}}_A(\ell)/\hbar {\mathcal{V}}_A(\ell)$.
 Note that in the presence of relations (L4) and (L5), (L6) is equivalent to the Serre-like relation.
Then it follows from Remark \ref{DYhqva-relations}
that ${\mathcal{V}}_A(\ell)/\hbar {\mathcal{V}}_A(\ell)$ is an $\wh{\mathcal{L}}_A$-module of level $\ell$.
 It is clear that ${\mathcal{V}}_A(\ell)/\hbar {\mathcal{V}}_A(\ell)$ is generated by
 vector ${\bf 1}+\hbar {\mathcal{V}}_A(\ell)$.
 \end{proof}




{\em Assume that $A$ is of finite type.} Set $\g=\g(A)$.
Then $\widehat{\mathcal{L}}_A$ is isomorphic to $\wh{\g}$ (see \cite{Garland}).
For $\ell\in \C$, let $V_{\wh{\g}}(\ell,0)$ be the universal vacuum $\wh{\g}$-module of level $\ell$,
which is naturally a vertex algebra.
It follows that there exists a surjective $\wh{\g}$-module homomorphism
 \begin{align}
 \phi:\  V_{\wh{\g}}(\ell,0)\rightarrow {\mathcal{V}}_A(\ell)/\hbar {\mathcal{V}}_A(\ell)\ \ \text{ with }
 \phi({\bf 1})={\bf 1}+\hbar \mathcal{V}_A(\ell).
 \end{align}
 Furthermore,  $\phi$ is a vertex algebra homomorphism.
 From \cite{Li-qva2} (cf. \cite{EK-qva}), $V_{\wh{\g}}(\ell,0)$ is non-degenerate.
 On the other hand, if $\ell$ is generic, $V_{\wh{\g}}(\ell,0)$ is a simple vertex algebra.
 With all of these facts, we immediately have:

\begin{prop}
Let $A$ be of finite type and let $\ell\in \C$. Assume that the $\wh{\g}$-module homomorphism $\phi$ defined above
is also injective, which is the case  if $\ell$ is generic and if ${\mathcal{V}}_A(\ell)\ne 0$.
Then ${\mathcal{V}}_A(\ell)$ is a non-degenerate $\hbar$-adic quantum vertex algebra
 with ${\mathcal{V}}_A(\ell)/\hbar {\mathcal{V}}_A(\ell)\cong V_{\wh{\g}}(\ell,0)$ as a vertex algebra.
\end{prop}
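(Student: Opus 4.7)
The plan is to reduce the proposition to two standard inputs already recorded in the excerpt: that $V_{\wh\g}(\ell,0)$ is non-degenerate for every $\ell$ (from \cite{Li-qva2}, cf. \cite{EK-qva}), and that $V_{\wh\g}(\ell,0)$ is a simple vertex algebra for generic $\ell$. With these in hand, the rest is a short structural argument built on the preceding lemma and on Proposition \ref{nondegenerate-wqva}.

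First I would observe that the preceding lemma already gives a surjective vertex algebra homomorphism $\phi:V_{\wh\g}(\ell,0)\to \mathcal{V}_A(\ell)/\hbar\mathcal{V}_A(\ell)$. Under the standing hypothesis that $\phi$ is injective, $\phi$ is therefore an isomorphism of vertex algebras, which establishes the identification $\mathcal{V}_A(\ell)/\hbar\mathcal{V}_A(\ell)\cong V_{\wh\g}(\ell,0)$. Since $V_{\wh\g}(\ell,0)$ is non-degenerate as a vertex algebra, this isomorphism transports non-degeneracy to the nonlocal vertex algebra $\mathcal{V}_A(\ell)/\hbar\mathcal{V}_A(\ell)$. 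By Proposition \ref{nondegenerate-wqva}, the $\hbar$-adic weak quantum vertex algebra $\mathcal{V}_A(\ell)$ is therefore non-degenerate; the same proposition then promotes it automatically to an $\hbar$-adic quantum vertex algebra by producing a unique $\SY$-locality operator that is in addition a unitary rational quantum Yang-Baxter operator.

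To verify the ``in particular'' clause, I would use simplicity of $V_{\wh\g}(\ell,0)$ at generic $\ell$. Because $\mathcal{V}_A(\ell)$ is topologically free over $\C[[\hbar]]$, the condition $\mathcal{V}_A(\ell)\ne 0$ is equivalent to $\mathcal{V}_A(\ell)/\hbar\mathcal{V}_A(\ell)\ne 0$, so the image of $\phi$ is nonzero. Then $\ker\phi$ is a vertex algebra ideal of the simple vertex algebra $V_{\wh\g}(\ell,0)$, and cannot equal $V_{\wh\g}(\ell,0)$; hence $\ker\phi=0$ and $\phi$ is injective. This reduces the generic case to the main statement.

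No serious obstacle is expected. The only point that deserves care is making sure the classical non-degeneracy of $V_{\wh\g}(\ell,0)$ really does feed Proposition \ref{nondegenerate-wqva}, i.e.\ that the $\hbar$-adic non-degeneracy used there is exactly non-degeneracy of $V/\hbar V$ as a nonlocal vertex algebra; this is precisely how the notion is formulated in the excerpt, so the passage is mechanical. All other steps are either direct quotations of cited results or elementary quotient-vertex-algebra manipulations.
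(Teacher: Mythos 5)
Your proof is correct and takes essentially the same route the paper intends: the paper gives no separate argument beyond assembling the surjectivity of $\phi$, the non-degeneracy of $V_{\wh\g}(\ell,0)$ (from \cite{Li-qva2}), simplicity of $V_{\wh\g}(\ell,0)$ at generic $\ell$, and Proposition \ref{nondegenerate-wqva}, and then says ``we immediately have.'' Your write-up is exactly this assembly, including the observation that topological freeness makes $\mathcal V_A(\ell)\ne 0$ equivalent to $\mathcal V_A(\ell)/\hbar\mathcal V_A(\ell)\ne 0$.
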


\section*{Appendix, Two technical results}
 Let $W=W_0[[\hbar]]$ be a topologically free $\C[[\hbar]]$-module.
Note that $(\End W)[[x,x^{-1}]]$ is naturally a $\C[x,x^{-1}][[\hbar]]$-module. Recall that
$$W_{\hbar}((x))=W_0((x))[[\hbar]].$$

\begin{lem}\label{restricted-V-g-1}
Let $a(x),b(x), c(x)\in (\End W)[[x,x^{-1}]],\ \Psi_1(z),\Psi_2(z)\in \C[[z]],\ k,k'\in \Z_{+}$, such that
\begin{align}
[a(x_1),b(x_2)]=c(x_2)\(\Psi_1(\hbar\partial_{x_2})(x_1-x_2)^{-k}+\Psi_2(\hbar\partial_{x_2})(x_2-x_1)^{-k'}\)\!.
\end{align}
Assume $w\in W$ such that $a(x)w, b(x)w, c(x)w\in W_{\hbar}((x))$. Then $a(x)b_mw\in W_{\hbar}((x))$ for every $m\in \Z$,
where $b(x)=\sum_{m\in \Z}b_mx^{-m-1}$.
\end{lem}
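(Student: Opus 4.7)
The plan is to extract the coefficient of $x_2^{-m-1}$ on both sides of the given commutator identity and apply the resulting operator identity to $w$, yielding
\begin{align*}
  a(x)\,b_m\,w \;=\; b_m\,a(x)\,w \;+\; \operatorname{Res}_{x_2}\!\Bigl[\, x_2^m\,c(x_2)w \cdot \bigl(\Psi_1(\hbar\partial_{x_2})(x-x_2)^{-k} + \Psi_2(\hbar\partial_{x_2})(x_2-x)^{-k'}\bigr)\Bigr]\!,
\end{align*}
so it suffices to prove that each of the two summands on the right lies in $W_\hbar((x))$.

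The first summand is immediate: by hypothesis $a(x)w\in W_\hbar((x))$, and $b_m\in\End_{\C[[\hbar]]}W$ is $\hbar$-adically continuous, so applying $b_m$ coefficient-wise preserves the required decay of the $x^{-n}$-coefficients as $n\to\infty$.

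For the second summand I would expand $\Psi_i(\hbar\partial_{x_2})=\sum_{n\ge 0}\alpha^{(i)}_n\hbar^n\partial_{x_2}^n$ and write $c(x_2)w=\sum_{j\in\Z}u_j\,x_2^j$ with $u_j\to 0$ in the $\hbar$-adic topology as $j\to-\infty$. Since $\Psi_i\in\C[[z]]$, modulo any fixed $\hbar^N$ only finitely many $n$ contribute, so it is enough to analyze a single $n$. For the $\Psi_1$-contribution I would use the expansion $\iota_{x,x_2}(x-x_2)^{-k-n}=\sum_{i\ge 0}\binom{-k-n}{i}(-x_2)^i x^{-k-n-i}$ (up to the constant from $\partial_{x_2}^n$); the residue $\operatorname{Res}_{x_2}x_2^m$ then forces $j+i+m=-1$ with $i\ge 0$, so the surviving terms have $x$-exponent $-k-n+m+1+j$ tending to $-\infty$ as $j\to -\infty$, with coefficients proportional to $u_j$, which tend to $0$. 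For the $\Psi_2$-contribution, expanding $\iota_{x_2,x}(x_2-x)^{-k'-n}$ produces only nonnegative powers of $x$, so this part lies in $W[[x]]\subset W_\hbar((x))$.

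The main technical step, where the full hypothesis $c(x)w\in W_\hbar((x))$ is needed, is the $\hbar$-adic truncation in the $\Psi_1$-contribution: one must check, for every $N$, that the coefficients of $x^{-r}$ with $r$ large lie in $\hbar^N W$. This follows from the fact that $u_j\in\hbar^N W$ for all sufficiently negative $j$, translated through the index relation $r=k+n-m-1-j$. Everything else is routine bookkeeping with the standard formal-series expansion conventions used throughout the paper.
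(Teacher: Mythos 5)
Your proof is correct and follows essentially the same route as the paper's: you extract the coefficient of $x_2^{-m-1}$, split into the three terms $b_m\,a(x)w$, the $\Psi_2$-contribution, and the $\Psi_1$-contribution, observe that the first two are manifestly in $W_{\hbar}((x))$, and handle the $\Psi_1$-contribution by truncating modulo $\hbar^N$ and using that the coefficients $c_j w$ tend to $0$ $\hbar$-adically in the relevant (negative) range of indices. The only cosmetic difference is that you spell out the continuity of $b_m$ and the $\Psi_2$-expansion a bit more explicitly, whereas the paper dispatches those two terms in one sentence.
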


\begin{proof} Let $m\in \Z$. We have
\begin{align*}
a(x_1)b_mw
=\ &b_ma(x_1)w+\Res_{x_2}x_2^mc(x_2)w\(\Psi_2(\hbar\partial_{x_2})(x_2-x_1)^{-k'}\)\\
&+\Res_{x_2}x_2^mc(x_2)w \(\Psi_1(\hbar\partial_{x_2})(x_1-x_2)^{-k}\)\!.
\end{align*}
Notice that the first two terms on the right-hand side lie in $W_{\hbar}((x_1))$. Now, consider the third term.
Let $n\in \Z_{+}$. We have
$$ \Psi_1(\hbar\partial_{x_2})(x_1-x_2)^{-k}\equiv \sum_{i=0}^{n-1}\mu_i (x_1-x_2)^{-k-i} \hbar^i\
\mod \hbar^n\C[x_1^{-1}][[x_2,\hbar]],$$
where $\mu_i\in \C$. As $c(x)w\in W_{\hbar}((x))$, there exists $r\in \N$ such that $c_{m+j}w \in \hbar^n W$ for all $j> r$.
Then we get
\begin{align*}
&\Res_{x_2}x_2^mc(x_2)w \(\Psi_1(\hbar\partial_{x_2})(x_1-x_2)^{-k}\)\\
\equiv \ &
 \sum_{i=0}^{n-1}\sum_{j=0}^r \mu_i  \hbar^i \binom{-k-i}{j}(-1)^jx_1^{-k-i-j}c_{m+j}w\ \ \mod \hbar^nW[[x_1,x_1^{-1}]]\\
  \in \ & x_1^{-k}W[x_1^{-1}].
 \end{align*}
 This proves $\Res_{x_2}x_2^mc(x_2)w \(\Psi_1(\hbar\partial_{x_2})(x_1-x_2)^{-k}\)\in W_{\hbar}((x_1))$.
 Therefore, we have $a(x)b_mw\in W_{\hbar}((x))$.
\end{proof}

\begin{lem}\label{restricted-V-g-2}
Let
$$a(x),b(x)\in (\End W)[[x,x^{-1}]], \ p(x,y),q(x,y)\in \C[x,y]$$
 such that $p(x,0)=x^l$ with $l\in \N$ and
\begin{align}\label{pab=qba}
p(x_1-x_2,\hbar)a(x_1)b(x_2)=q(x_2-x_1,\hbar)b(x_2)a(x_1).
\end{align}
Let $w\in W$ such that $a(x)w, b(x)w\in W_{\hbar}((x))$. Then $a(x)b_mw\in W_{\hbar}((x))$ for all $m\in \Z$.
\end{lem}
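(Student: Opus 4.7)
The plan is to split $p$ into its $\hbar=0$ part and an $\hbar$-multiple, and then run a nested induction. I would write $p(x,y)=x^l+y\,\tilde p(x,y)$ with $\tilde p\in\C[x,y]$, so that \eqref{pab=qba} rearranges as
\begin{align*}
(x_1-x_2)^l a(x_1) b(x_2)=q(x_2-x_1,\hbar)\,b(x_2) a(x_1)-\hbar\,\tilde p(x_1-x_2,\hbar)\, a(x_1) b(x_2).
\end{align*}
Applying $\Res_{x_2} x_2^m$ on $w$, expanding $(x_1-x_2)^l=\sum_{s=0}^l\binom{l}{s}(-1)^s x_1^{l-s}x_2^s$, and isolating the $s=0$ term gives a recursion of the shape
\begin{align*}
x_1^l a(x_1) b_m w=A_m(x_1)-\hbar B_m(x_1)-\sum_{s=1}^l\binom{l}{s}(-1)^s x_1^{l-s}a(x_1) b_{m+s} w,
\end{align*}
where, writing $q(x_2-x_1,\hbar)=\sum c_{i,\alpha,\beta}\hbar^i x_1^\alpha x_2^\beta$ and similarly for $\tilde p$, one has $A_m(x_1)=\sum c_{i,\alpha,\beta}\hbar^i x_1^\alpha\, b_{m+\beta}\, a(x_1) w$ and $B_m(x_1)=\sum c'_{i,\alpha,\beta}\hbar^i x_1^\alpha a(x_1) b_{m+\beta} w$, each a finite sum over a bounded range of $\beta$.

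The crucial asymmetry between $A_m$ and $B_m$ is that $A_m(x_1)$ always lies in $W_\hbar((x_1))$, whereas $B_m(x_1)$ involves exactly the objects to be controlled. Indeed, $a(x_1) w \in W_\hbar((x_1))$ by hypothesis, and each coefficient $b_{m+\beta}\in\End W$ is $\C[[\hbar]]$-linear and hence $\hbar$-adically continuous, so $b_{m+\beta}\, a(x_1) w\in W_\hbar((x_1))$; multiplication by the polynomial $x_1^\alpha$ preserves this. Thus the only potentially problematic parts of the recursion are the term $\hbar B_m(x_1)$ and the shifted terms $a(x_1) b_{m+s} w$ on the right.

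I would then argue by induction on $n\in\N$ that, for every $m\in\Z$, there exists $K=K(m,n)$ such that the coefficient of $x_1^{-k}$ in $a(x_1) b_m w$ lies in $\hbar^n W$ for all $k\geq K$; call this property $P(n,m)$. Since $b(x) w \in W_\hbar((x))$, for each $n$ there exists $M_n\in\Z$ with $b_m w\in\hbar^n W$ for all $m\geq M_n$, which makes $P(n,m)$ automatic for $m\geq M_n$ (all coefficients are then in $\hbar^n W$). Within each $n$, a reverse (strong) induction on $m$ starting from $m\geq M_n$ and stepping downward exploits the displayed recursion: once $P(n,m+s)$ is known for $s=1,\dots,l$, the recursion writes $x_1^l a(x_1) b_m w$ as the sum of $A_m(x_1)\in W_\hbar((x_1))$, the term $\hbar B_m(x_1)$ (which lies in $W_\hbar((x_1))\pmod{\hbar^n}$ by the outer inductive hypothesis applied to each $a(x_1) b_{m+\beta} w$, and trivially so when $n=1$), and the already-handled shifted terms; dividing by $x_1^l$, which is harmless since $W_\hbar((x_1))$ is stable under multiplication by $x_1^{\pm1}$, then yields $P(n,m)$.

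The main obstacle will be keeping the two layers of induction straight: the outer induction on $n$ is what tames the $\hbar B_m$ term, in which the unknowns appear multiplied by arbitrary polynomials in $x_1$, while the inner reverse induction on $m$ is what resolves the triangular system produced by the index shift $m\mapsto m+s$ built into the expansion of $(x_1-x_2)^l$. Once this scheme is set up, the remaining analytic input is merely the observation that $\hbar$-adic continuity of the modes $b_k$ preserves $W_\hbar((x_1))$-membership of $a(x_1) w$, which is immediate from $\End W = (\End W_0)[[\hbar]]$.
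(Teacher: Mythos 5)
Your proof is correct, but it takes a genuinely different route from the paper's. The paper avoids any mode-by-mode analysis: fixing $n$ and working in $W/\hbar^n W$, it observes that the two sides of \eqref{pab=qba} place $p(x_1-x_2,\hbar)\,\pi_n\!\left(a(x_1)b(x_2)w\right)$ simultaneously in $(W/\hbar^n W)[[x_1,x_1^{-1}]]((x_2))$ and in $(W/\hbar^n W)[[x_2,x_2^{-1}]]((x_1))$, hence in $(W/\hbar^n W)((x_1,x_2))$; since $p(x,0)=x^l$ makes $p(x,\hbar)$ invertible in $\C[x,x^{-1}][[\hbar]]$, one may multiply by $p(x_1-x_2,\hbar)^{-1}\in\C[x_1,x_1^{-1}][[x_2,\hbar]]$ to conclude $\pi_n\!\left(a(x_1)b(x_2)w\right)\in(W/\hbar^n W)((x_1))((x_2))$, which is the assertion. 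You instead apply $\Res_{x_2}x_2^m$ first and carry out the inversion of $p$ by hand: your outer induction on the $\hbar$-adic level $n$ is the geometric-series inversion of the $1+\hbar(\cdots)$ factor, and your inner reverse induction on $m$ unwinds the binomial expansion of $(x_1-x_2)^{-l}$ in $\C[x_1,x_1^{-1}][[x_2]]$. Your ``crucial asymmetry'' observation that $b_{m+\beta}\,a(x_1)w\in W_\hbar((x_1))$ (because modes of $b$ are $\C[[\hbar]]$-linear, hence $\hbar$-adically continuous) plays the same role as the second display in the paper's proof. Both arguments are sound: the paper's is shorter because it delegates the inversion entirely to the ring $\C[x_1,x_1^{-1}][[x_2,\hbar]]$, while yours is more elementary in spirit and makes visible exactly how the hypothesis $p(x,0)=x^l$ and the polynomiality of $q$ feed into the recursion, at the cost of a double induction whose bookkeeping you do track correctly.
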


\begin{proof} We must prove that for any $m\in \Z$ and for any $n\in \Z_{+}$,
$\pi_n(a(x)b_mw)\in (W/\hbar^nW)((x))$. Let $n$ be arbitrarily fixed.
From assumption, we have
$$p(x_1-x_2,\hbar)\pi_n(a(x_1)b(x_2)w)=q(x_2-x_1,\hbar)\pi_n(b(x_2)a(x_1)w),$$
$$\pi_n(a(x)w), \pi_n(b(x)w)\in (W/\hbar^nW)((x)).$$
Notice that
\begin{eqnarray*}
&&p(x_1-x_2,\hbar)\pi_n(a(x_1)b(x_2)w)\in (W/\hbar^nW)[[x_1,x_1^{-1}]]((x_2)),\\
&&q(x_2-x_1,\hbar)\pi_n(b(x_2)a(x_1)w)\in (W/\hbar^nW)[[x_2,x_2^{-1}]]((x_1)).
\end{eqnarray*}
Consequently, we get
\begin{align}\label{p-pi-n-a-b}
p(x_1-x_2,\hbar)\pi_n(a(x_1)b(x_2)w)\in (W/\hbar^nW)((x_1,x_2)),
\end{align}
while
$$\pi_n(a(x_1)b(x_2)w)\in (W/\hbar^nW)[[x_1,x_1^{-1}]]((x_2)).$$
As $p(x,0)=x^l$ is an invertible element of $\C[x,x^{-1}]$, $p(x,\hbar)$ is an invertible element of $\C[x,x^{-1}][[\hbar]]$, so that
$p(x_1-x_2,\hbar)^{-1}$ exists in $\C[x_1,x_1^{-1}][[x_2,\hbar]]$.
Then multiplying (\ref{p-pi-n-a-b}) by $p(x_1-x_2,\hbar)^{-1}$ we obtain
$$\pi_n(a(x_1)b(x_2)w)\in (W/\hbar^nW)((x_1))((x_2)).$$
This proves that $\pi_n(a(x)b_mw) \in (W/\hbar^nW)((x))$ for every $m\in \Z$, as desired.
\end{proof}

\end{document}